\title{\vspace*{-1pc}
Constructing KMS states from infinite-dimensional spectral triples}
\author{Magnus Goffeng${}^*$, Adam Rennie\ddag, Alexandr Usachev\thanks{email: 
\texttt{goffeng@chalmers.se}, \texttt{renniea@uow.edu.au},
\texttt{usachev@chalmers.se}
}
\\[3pt]
${}^*$ Department of Mathematical Sciences,\\ 
Chalmers University of Technology and
University of Gothenburg,\\ 
Gothenburg, Sweden
\\[3pt]
\ddag School of Mathematics and Applied Statistics,\\ 
University of Wollongong, Northfields Ave\\
Wollongong, Australia\\
}
\def\section{\@startsection{section}{1}{\z@}{-3.5ex plus -1ex minus
  -.2ex}{2.3ex plus .2ex}{\large\bf}}
\def\subsection{\@startsection{subsection}{2}{\z@}{-3.25ex plus -1ex
  minus -.2ex}{1.5ex plus .2ex}{\normalsize\bf}}
\numberwithin{equation}{section} %% needs `amsmath' package
\theoremstyle{plain} %% needs `amsmath' package
\newtheorem{thm}{Theorem}[section]
\newtheorem{thm*}{Theorem}
\newtheorem{lemma}[thm]{Lemma}
\newtheorem{prop}[thm]{Proposition}
\newtheorem{corl}[thm]{Corollary}
\theoremstyle{definition} %% needs `amsmath' package
\newtheorem{defn}[thm]{Definition}
\newtheorem{example}[thm]{Example}
\theoremstyle{remark} %% needs `amsmath' package
\newtheorem{rmk}[thm]{Remark}
\DeclareMathOperator{\Cl}{{\C\ell}} %% Clifford algebra
\DeclareMathOperator{\Dom}{Dom}   %% domain of an operator
\DeclareMathOperator{\End}{End}   %% endomorphism algebra
\DeclareMathOperator{\Tr}{Tr}     %% operator trace
\newcommand{\A}{\mathcal{A}}  %% an algebra
\newcommand{\B}{\mathcal{B}}  %% another algebra
\newcommand{\C}{\mathbb{C}}   %% complex numbers
\newcommand{\D}{\mathcal{D}}  %% a selfadjoint operator
\renewcommand{\H}{\mathcal{H}}  %% a Hilbert space
\newcommand{\N}{\mathbb{N}}   %% natural numbers
\newcommand{\cN}{\mathcal{N}} %% von Neumann algebra
\newcommand{\Ko}{\mathbb{K}} %% compacts
\newcommand{\ox}{\otimes}     %% tensor product
\renewcommand{\O}{\mathcal{O}}  %%% Cuntz-Pimsner algebra
\newcommand{\R}{\mathbb{R}}   %% real numbers
\newcommand{\T}{\mathcal{T}} %%Toeplitz
\newcommand{\Z}{\mathbb{Z}}   %% integers
\newcommand{\Tau}{\mathcal{T}}      
\newcommand{\e}{\mathrm{e}}  
\newcommand{\stroke}{\mathbin|}     %% (for `\pair' and such)
\def\pairL_#1(#2|#3){{}_{#1}(#2\stroke#3)} %% hermitian pairing _B(s|t)
\def\pairR(#1|#2)_#3{(#1\stroke#2)_{#3}} %% hermitian pairing (s|t)_A
\def\scal<#1|#2>{\langle#1\stroke#2\rangle} %% scalar product <y|z>
\newbox\ncintdbox \newbox\ncinttbox %% noncommutative integral symbols
\newcommand{\phimod}{\Xi_A} %Completion of O_E
\newcommand{\Fock}{\mathcal{F}_E} %Fock module of E
\newcommand{\algFock}{\mathcal{F}_E^{\textnormal{alg}}} %algebraic Fock module of E
\newcommand{\topop}{\mathfrak{q}} %the operator formerly known as T
\begin{document}

\maketitle

\vspace{-2pc}

\begin{abstract}
We construct KMS-states from $\mathrm{Li}_1$-summable semifinite spectral triples and show that in several important examples the construction coincides with well-known direct constructions of KMS-states for naturally defined flows. Under further summability assumptions 
the constructed KMS-state can be computed in terms of Dixmier traces.  For closed manifolds, we recover the ordinary Lebesgue integral. For Cuntz-Pimsner algebras with their gauge flow, the construction produces KMS-states from traces on the coefficient algebra and recovers the Laca-Neshveyev correspondence. For a discrete group acting on its Stone-\v{C}ech boundary, we recover the Patterson-Sullivan measures on the Stone-\v{C}ech boundary for a flow defined from the Radon-Nikodym cocycle.
\end{abstract}

\tableofcontents

\parskip=6pt
\parindent=0pt

\addtocontents{toc}{\vspace{-1pc}}

\section{Introduction}
\label{sec:intro}

The construction of the JLO cocycle \cite{GS,JLO1,JLO2} 
from $\theta$-summable spectral triples 
\cite{Con-trace} has from the start been closely linked with the idea of KMS states. 
A $\theta$-summable spectral triple $(\A,\H,\D)$ on a $C^*$-algebra $A$ gives rise to 
a state $\phi(a):= \mathrm{Tr}(a\e^{-\D^2})$ on $A$ and under suitable conditions 
this is a KMS-state on the saturation of $A$ by the 
$\R$-action defined from the wave operators $\e^{it\D^2}$.
By \cite{JLO2} the JLO-cocycle can be defined starting from this KMS-state. 
On the other hand, \cite{Con-trace} shows that a finitely summable spectral triple $(\A,\H,\D)$ 
on a $C^*$-algebra $A$ defines a tracial state on $A$. 
Similar constructions were studied in \cite{Voics}. 

The idea since then has been to understand the measure theory associated to $\theta$-summable 
spectral triples in terms of `twisted traces', and more specifically KMS states. 
Indeed this idea was present early in the development, \cite{JLO2}.
Two viewpoints make it interesting to study states associated with 
spectral triples having specified summability degrees: 
the associated states obstructs summability degrees, and 
the states provide a notion of measure theory.

In this paper we present a construction of 
KMS states from $\mathrm{Li}_1$-summable spectral triples.
By definition, a spectral triple $(\A,\H,\D)$ is $\mathrm{Li}_1$-summable if and only if $\e^{-t|\D|}$ is trace class for $t$ large enough
-- a slight strengthening of being $\theta$-summable.

{\em It is an important observation that large classes of examples of $\theta$-summable 
spectral triples are also $\mathrm{Li}_1$-summable.}

For the spectral triple defined from a Dirac operator on a 
closed manifold, our construction 
recovers the Lebesgue integral. 
For Cuntz-Pimsner algebras we also relate our construction to previous work 
of Laca and Neshveyev \cite{LN}, and
the authors \cite{GMR,RRS}. We also examine spectral triples arising from 
certain Hilbert space valued cocycles on discrete groups.

In the examples we consider, the  KMS-states
are associated to  flows that are 
well-suited to the geometries. This is usually not the case for 
the KMS-state $\phi(a)= \mathrm{Tr}(a\e^{-\D^2})$ associated with a 
$\theta$-summable spectral triple. 
It is our hope that our construction provides a more natural approach to the 
KMS-states appearing in the JLO-cocycle and that in the future it will have a 
bearing on the index theory of $\mathrm{Li}_1$-summable spectral triples.

\subsection{Main results}

We now state our main results. All our results make sense for 
general semifinite spectral triples, and so we fix a semifinite trace $\Tau$
for this discussion.

First, we state the main technical construction of KMS-states from $\mathrm{Li}_1$-summable spectral triples. After that, we state the implications of this construction to more specific examples. We use the notation $P_\D$ for the non-negative spectral projection of $\D$, i.e. $P_\D:=\chi_{[0,\infty)}(\D)$. If for some $\beta_\D\geq 0$, {\bf $\Tau(P_\D\e^{-t\D})$ is finite for $t>\beta_\D$ and diverges as $t\searrow \beta_\D$}, we say that $\D$ has positive $\Tau$-essential spectrum. We define the $C^*$-algebra $A_\D$ as the saturation of $A$ under the action of the wave group $\e^{it\D}$, that is
$$A_\D:=C^*\left(\cup_{t\in \R} \sigma_t(A)\right),\quad\mbox{where}\quad \sigma_t(a):=\e^{it\D}a\e^{-it\D}.$$
At this stage, we formulate our results in terms of $A_\D$. In Subsection \ref{subsec:toplitz} we  refine the construction to a smaller $C^*$-algebra. In examples, the construction often applies to $A$ directly. Recall from \cite[Definition 5.3.1]{BRII} that a state $\phi$ on an $\R$-$C^*$-algebra $\sigma:\R\curvearrowright A$ is said to be KMS at inverse temperature $\beta$ if $\phi(ab)=\phi(\sigma_{-i\beta}(b)a)$ for $a,b$ from an $\R$-invariant norm dense $*$-subalgebra of $A$. If $\phi$ is a state on an $\R$-von Neumann algebra $\sigma:\R\curvearrowright A$ we say that it is KMS if the same condition holds on an $\R$-invariant $\sigma$-weakly dense $*$-subalgebra of $A$. 

The following theorem is the main result of the paper.

\begin{thm*}
\label{mainthmconstr}
Let $(\A,\H,\D,\cN,\Tau)$ be a unital $\mathrm{Li}_1$-summable semifinite spectral triple such that $\D$ has positive $\Tau$-essential spectrum (see Definition \ref{ass:minus-one} on page \pageref{ass:minus-one}) and is $\beta$-analytic (see Definition \ref{ass:zero} on page \pageref{ass:zero}). Define 
$$
\beta_\D:=\inf\{t>0: \Tau(P_\D\e^{-t\D})<\infty\}.
$$
For any extended limit $\omega\in L^\infty(\beta_\D,\infty)^*$ as $t\to\beta_\D$ (see Definition \ref{extendedlimitdef} on page \pageref{extendedlimitdef}), we define the state $\phi_\omega$ on $A_\D$ as 
$$\phi_\omega(a):=\lim_{t\to \omega} \frac{\Tau(P_\D a\e^{-t\D})}{\Tau(P_\D \e^{-t\D})}.$$ 
Then $\phi_\omega$ is a KMS-state at inverse temperature $\beta_\D$ for the $\R$-action defined from $\sigma_t$. In particular, if $\beta_\D=0$ then $\phi_\omega$ is a tracial state on $A$.

If $\beta_\D=0$, and there is a decreasing function $\psi:[0,\infty)\to (0,\infty)$ with regular variation of index $-1$, satisfying the conditions~\eqref{exp2} and~\eqref{invas}, and for some $d>0$ we have that $\mu_\Tau(t,P_\D \D)\sim \psi(t)^{-1/d}$ as $t\to \infty$, then for any exponentiation invariant extended limit $\omega$ as $t\to\infty$,
$$
\phi_{\tilde{\omega}}(a)=\Tau_{\omega,\psi}(P_\D a(1+\D^2)^{-d/2}),
$$
where $\tilde{\omega}$ is an extended limit as $t\to 0$ defined in Theorem \ref{Frohlich} (see page \pageref{Frohlich}), and $\Tau_{\omega,\psi}$ is the Dixmier trace defined from $\Tau$ and $\omega$ on the weak ideal $\mathcal{L}_\psi(\cN):=\{T\in \Ko_\cN: \mu_\Tau(t,T)=O(\psi(t))\}$.
\end{thm*}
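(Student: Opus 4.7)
The plan is first to show $\phi_\omega$ is a well-defined state satisfying the KMS condition, and then to derive the Dixmier trace representation in the tracial case.

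\textbf{State and invariance.} The $\mathrm{Li}_1$-summability and positive $\Tau$-essential spectrum hypotheses ensure that $P_\D\e^{-t\D}=P_\D\e^{-t|\D|}$ is positive and trace class for every $t>\beta_\D$, with $\Tau(P_\D\e^{-t\D})\to\infty$ as $t\searrow\beta_\D$. The standard bound $|\Tau(aP_\D\e^{-t\D})|\le\|a\|\Tau(P_\D\e^{-t\D})$ together with the positivity estimate $\Tau(P_\D a^*a\e^{-t\D})=\Tau\bigl((aP_\D\e^{-t\D/2})^*(aP_\D\e^{-t\D/2})\bigr)\ge0$, which uses cyclicity and $[P_\D,\e^{-t\D}]=0$, show that each ratio is a positive linear functional of norm one on $A$, so taking the $\omega$-limit yields a state $\phi_\omega$. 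Since $\e^{is\D}$ commutes with $P_\D\e^{-t\D}$, $\phi_\omega$ is $\sigma_s$-invariant and hence extends uniquely to the $C^*$-saturation $A_\D$.

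\textbf{KMS condition.} Restrict to $\sigma$-analytic $a,b\in A$, which form a norm-dense $\sigma$-invariant $*$-subalgebra of $A_\D$ by standard analytic smoothing under $\beta$-analyticity. Substituting $b\e^{-t\D}=\e^{-t\D}\sigma_{-it}(b)$ and splitting $bP_\D=P_\D b+[b,P_\D]$, cyclicity of $\Tau$ produces
\[
\Tau(P_\D ab\e^{-t\D}) = \Tau(\sigma_{-it}(b)\,aP_\D\e^{-t\D}) + R(t),
\]
where $R(t)$ is a remainder coming from the commutator $[b,P_\D]$ across the positive and negative spectral parts of $\D$. A spectral-decomposition estimate shows that the $\sigma$-analyticity of $a,b$ forces their matrix elements to decay exponentially in $|\lambda_n-\lambda_k|$, and this decay cancels against the growth $\e^{t|\lambda_k|}$ of $\e^{-t\D}$ on the negative spectrum controlled by $\mathrm{Li}_1$-summability, giving $R(t)=O(1)$ as $t\searrow\beta_\D$ so that $R(t)/\Tau(P_\D\e^{-t\D})\to 0$. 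Combining this with $\|\sigma_{-it}(b)-\sigma_{-i\beta_\D}(b)\|\to 0$ from $\beta$-analyticity yields $\phi_\omega(ab)=\phi_\omega(\sigma_{-i\beta_\D}(b)a)$. When $\beta_\D=0$, $\sigma_{-i\beta_\D}=\mathrm{id}$ and $\phi_\omega$ is tracial.

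\textbf{Dixmier trace representation.} Under $\beta_\D=0$ and $\mu_\Tau(\cdot,P_\D\D)\sim\psi^{-1/d}$, the operator $P_\D(1+\D^2)^{-d/2}$ sits in the weak ideal $\L_\psi(\cN)$ with singular value asymptotics governed by $\psi$. Theorem \ref{Frohlich}, a Karamata-type Tauberian theorem tailored to extended limits, converts the heat-kernel limit $\lim_{t\searrow 0}\Tau(aP_\D\e^{-t\D})/\Tau(P_\D\e^{-t\D})$ into the Dixmier trace $\Tau_{\omega,\psi}(P_\D a(1+\D^2)^{-d/2})$, with the correspondence $\tilde\omega\leftrightarrow\omega$ between extended limits at $0$ and at $\infty$ supplied by that theorem. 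Exponentiation-invariance of $\omega$ is used to pass between the $|\D|$- and $(1+\D^2)^{1/2}$-based summability scales without altering the value of the Dixmier trace, while regular variation of $\psi$ at index $-1$ ensures the Tauberian theorem applies.

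\textbf{Main obstacle.} The principal technical difficulty lies in controlling the commutator remainder $R(t)$ in the KMS step: $[b,P_\D]$ need not vanish, and $\e^{-t\D}$ is unbounded on the negative spectrum, so the estimate genuinely requires the exponential decay of matrix elements granted by $\beta$-analyticity to compensate for the exponential growth permitted by $\mathrm{Li}_1$-summability. This is where the two hypotheses of the theorem interact nontrivially. The Dixmier step is conceptually cleaner, reducing to invoking Theorem \ref{Frohlich}, but one must verify its hypotheses on the compressed operator $P_\D(1+\D^2)^{-d/2}$ rather than the naive $(1+\D^2)^{-d/2}$.
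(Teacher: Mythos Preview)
Your KMS argument contains a genuine gap and a misconception about where the computation actually lives.

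First, the identity $b\e^{-t\D}=\e^{-t\D}\sigma_{-it}(b)$ for $b\in A$ and real $t>0$ is not available: $\e^{t\D}$ is unbounded on $P_\D\H$, and the $\beta$-analyticity hypothesis (Definition~\ref{ass:zero}) only asserts that $\e^{\beta\D}P_\D SP_\D\e^{-\beta\D}\subset\cN^+$ for the \emph{compressed} operators $P_\D bP_\D$, not for $b\in A$. So the ``$\sigma$-analytic elements of $A$'' you invoke are not supplied by the hypothesis.

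Second, and more seriously, your remainder estimate is wrongly justified. Since $P_\D\e^{-t\D}=P_\D\e^{-t|\D|}P_\D$, the trace $\Tau(P_\D ab\e^{-t\D})$ lives entirely on the nonnegative spectral subspace; there is no ``growth of $\e^{-t\D}$ on the negative spectrum'' to cancel. The actual remainder has the form $\Tau(K\,P_\D\e^{-t|\D|})$ with $K=P_\D a(1-P_\D)bP_\D\in\Ko_\cN^+$ (compactness of $[P_\D,a]$ is Lemma~\ref{bddcommsum}), and it tends to zero after normalisation not by any spectral decay of matrix elements but because $\Tau$-compact operators can be norm-approximated by finite-$\Tau$-trace ones while $\Tau(P_\D\e^{-t\D})\to\infty$. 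This is precisely Lemma~\ref{vanishoncomp}.

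The paper avoids both issues by never computing on $A$. It defines $\phi_{\omega,0}$ on the Toeplitz algebra $T_{A,\D}\subset\cN^+=P_\D\cN P_\D$, shows it vanishes on $\Ko_\cN^+$ (Lemma~\ref{vanishoncomp}), and then verifies the KMS identity for analytic $T_1,T_2\in T^0_{A,\D}$ by a one-line cyclicity computation in $\cN^+$ (Proposition~\ref{prop:no-work}): since everything already lives in $P_\D\cN P_\D$ there are no commutator remainders, and $\beta$-analyticity makes $\e^{t|\D|}T_2\e^{-t|\D|}$ bounded there. The KMS state on $A_\D=T_{A,\D}/\Ko_\cN^+$ follows by passage to the quotient (Corollary~\ref{cor:phi-omega}). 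In short, compressing to $P_\D\H$ first and quotienting by compacts \emph{is} the mechanism that disposes of your remainder; trying to estimate it directly via matrix-element decay is unnecessary and, as stated, incorrect.

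Your Dixmier-trace paragraph is essentially the paper's argument (Corollary~\ref{dixmiercorforphiom}): apply Theorem~\ref{Frohlich} to the compressed positive triple on $P_\D\H$. One minor correction: exponentiation-invariance of $\omega$ is not used to pass between the $|\D|$- and $(1+\D^2)^{1/2}$-scales, but inside the proof of Theorem~\ref{Frohlich} to commute the power map $P_d$ past the twisted Ces\`aro mean $M_\Psi$ (via Lemma~\ref{MP}).
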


The first part of this result can be found as Corollary \ref{cor:phi-omega} (see page \pageref{cor:phi-omega}) in the body of the text and the second part as Corollary \ref{dixmiercorforphiom} (see page \pageref{dixmiercorforphiom}).

\begin{rmk}
\label{rmarkonbetazero}
If $\beta_\D=0$, any unital $\mathrm{Li}_1$-summable semifinite spectral triple with $\Tau(P_\D)=\infty$ has positive $\Tau$-essential spectrum and is $\beta$-analytic. Therefore, Theorem \ref{mainthmconstr} shows that any unital $\mathrm{Li}_1$-summable semifinite spectral triple $(\A,\H,\D,\cN,\Tau)$ with $\Tau(P_\D\e^{-t\D})<\infty$ for $t>0$ and $\Tau(P_\D)=\infty$ gives rise to a tracial state on $A$. This extends a result of Voiculescu \cite[Proposition 4.6]{Voics}. For details on this case, see Theorem \ref{thm:voics} (see page \pageref{thm:voics}).
\end{rmk}

The following three results compute the KMS-state in specific examples.

\begin{thm*}
Let $M$ be a closed Riemannian manifold, $\A:=C^\infty(M)$, $\D$ be a Dirac operator on a Clifford bundle $S\to M$ and $\H:=L^2(M,S)$. Then the KMS-state $\phi_\omega$ constructed in Theorem \ref{mainthmconstr} is independent of $\omega$ and is a tracial state on $C(M)$ that takes the form 
$$
\phi_\omega(a)=\displaystyle\stackinset{c}{}{c}{}{-\mkern4mu}{\displaystyle\int_M} a\,\mathrm{d}V,
$$
where $\mathrm{d}V$ denotes the volume measure defined from the Riemannian metric on $M$ and $\displaystyle\stackinset{c}{}{c}{}{-\mkern4mu}{\displaystyle\int}$ the normalized integral.
\end{thm*}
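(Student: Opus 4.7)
The plan is to verify the hypotheses of Theorem \ref{mainthmconstr}, use its second half to express $\phi_\omega$ as a Dixmier trace, and then invoke Connes' trace theorem to identify it with a multiple of the volume integral, pinning down the normalization last.

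First, by Weyl's law for first-order elliptic operators, the singular values of $P_\D\D$ satisfy $\mu_j(P_\D\D)\sim c_n j^{1/n}$ with $n=\dim M$, so $(C^\infty(M),L^2(M,S),\D)$ is $n$-summable and in particular $\mathrm{Li}_1$-summable, with $\Tr(P_\D\e^{-t\D})<\infty$ for all $t>0$ and $\beta_\D=0$. Because $P_\D$ has infinite rank on any closed manifold of positive dimension, Remark \ref{rmarkonbetazero} applies and $\phi_\omega$ is a tracial state on $A_\D$. Restricted to the commutative subalgebra $C(M)\subseteq A_\D$, it becomes, by Riesz--Markov, integration against a Radon probability measure on $M$, so it suffices to identify this measure with normalized volume.

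The eigenvalue asymptotic above corresponds, in the notation of Theorem \ref{mainthmconstr}, to $\psi(t)=c_n^{-n}t^{-1}$, which is regularly varying of index $-1$ and satisfies conditions~\eqref{exp2} and~\eqref{invas} as a pure power, with $d=n$. Its second half then yields
$$
\phi_{\tilde\omega}(a)=\Tr_{\omega,\psi}\bigl(P_\D\, a\,(1+\D^2)^{-n/2}\bigr),
$$
the Dixmier trace of a classical pseudodifferential operator of order $-n$ on $M$. Connes' trace theorem equates this with an explicit universal multiple of its Wodzicki residue; in particular the value is independent of $\omega$. Because the sign $F=\D|\D|^{-1}$ is a classical order-zero pseudodifferential operator with principal symbol $c(\xi)/|\xi|$, the projection $P_\D=(1+F)/2$ has principal symbol a rank-$\tfrac{1}{2}\operatorname{rk}(S)$ projection on $S$, and so the Wodzicki residue of $P_\D a(1+\D^2)^{-n/2}$ reduces to an explicit positive constant times $\int_M a\,dV$.

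Combining these, $\phi_{\tilde\omega}(a)=C\int_M a\,dV$ for a constant $C>0$ depending only on $n$ and $\operatorname{rk}(S)$; evaluating at $a=1$ and using that $\phi_{\tilde\omega}$ is a state forces $C=V(M)^{-1}$, giving the normalized integral formula. The main obstacle is the principal-symbol computation for $P_\D(1+\D^2)^{-n/2}$, in particular the factor $\tfrac{1}{2}\operatorname{rk}(S)$ arising from $P_\D$; everything else then reduces to well-established tools, namely Weyl's law, Connes' trace theorem, and state normalization.
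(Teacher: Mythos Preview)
Your approach is essentially the second of the two routes the paper indicates for Theorem~\ref{simpleacse}: the paper says the result ``can be deduced immediately from Example~\ref{diracmfdhea} \ldots\ or from Corollary~\ref{dixmiercorforphiom} and Connes' trace theorem for pseudo-differential operators.'' You have taken the latter route, and the computation of the principal symbol of $P_\D$ and the normalisation at $a=1$ are handled correctly.

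There is one genuine, if small, gap. The second half of Theorem~\ref{mainthmconstr} (equivalently Corollary~\ref{dixmiercorforphiom} via Theorem~\ref{Frohlich}) only identifies $\phi_{\tilde\omega}$ with the Dixmier trace, where $\tilde\omega=\omega\circ(JM_\Psi J)$ for an \emph{exponentiation invariant} extended limit $\omega$ at $t\to\infty$. Connes' trace theorem then shows the Dixmier trace is independent of such $\omega$, hence $\phi_{\tilde\omega}$ is constant over this family. But the theorem statement claims $\phi_\omega$ is independent of \emph{every} extended limit $\omega$ at $t\to 0$, and the $\tilde\omega$'s do not a priori exhaust these. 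Your argument, as written, establishes the formula only for the $\tilde\omega$'s.

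The paper's first (and more direct) route in Example~\ref{diracmfdhea} avoids this issue entirely: Proposition~\ref{heatasumfd} gives the honest asymptotic expansion
\[
\Tr\bigl(P_\D\, a\,\e^{-t|\slashed{D}|}\bigr)=\tilde c_n\,t^{-n}\int_M a\,\mathrm{d}V+O(t^{-n+\epsilon}),
\]
so the ratio $\Tr(P_\D a\e^{-t|\slashed{D}|})/\Tr(P_\D\e^{-t|\slashed{D}|})$ converges in the ordinary sense as $t\to 0$, and every extended limit picks up the same value. To close your argument you can either switch to this route, or note that measurability of $P_\D a(1+\D^2)^{-n/2}$ in $\mathcal{L}^{(n,\infty)}$ together with the Weyl-law asymptotics forces the heat-trace ratio to have an ordinary limit; either way the missing step is precisely the passage from ``constant on all $\tilde\omega$'' to ``the limit $t\to 0$ exists''.
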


This result appears as Theorem \ref{simpleacse} (see page \pageref{simpleacse}) in the body of the text.

\begin{thm*}
Let $A$ be a unital $C^*$-algebra, $E$ be a strictly W-regular 
fgp bi-Hilbertian bimodule (see Definitions \ref{cond:one} and \ref{ass:two} on 
pages \pageref{cond:one} and \pageref{ass:two}, respectively) and 
$(\mathcal{O}_E,\Xi_A,\D)$ the associated unbounded 
$(O_E,A)$-cycle as in \cite{GMR}. If $\tau$ is a positive trace on 
$A$, then the semifinite spectral triple 
$(\mathcal{O}_E,\Xi_A\otimes_A L^2(A,\tau),\D\otimes 1_A, (\End^*_A(\Xi_A)\otimes 1)'',\mathrm{Tr}_\tau)$
is $\mathrm{Li}_1$-summable. 

Moreover, if $\tau$ is critical for $E$ (see Definition \ref{defn:critical} 
on page \pageref{defn:critical}), the assumptions in 
Theorem \ref{mainthmconstr} are satisfied and the state 
$\phi_\omega$ is KMS 
for the gauge action on $O_E$. If $\tau$ satisfies the Laca-Neshveyev condition for $\alpha\geq 0$ (see Definition \ref{ass:3.5} on page \pageref{ass:3.5}), then $\phi_\omega$ is independent of $\omega$ and takes the form $\phi_\omega=\phi_{LN,\tau}$ where $\phi_{LN,\tau}$ is the 
KMS-state defined from $\tau$ via the Laca-Neshveyev correspondence.
\end{thm*}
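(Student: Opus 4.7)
The proof breaks into three pieces: Li$_1$-summability, verification of the hypotheses of Theorem~\ref{mainthmconstr}, and identification of $\phi_\omega$ with the Laca-Neshveyev state. First I would unpack the construction of \cite{GMR}: the Fock module decomposes as $\Xi_A = \bigoplus_{n\geq 0} E^{\otimes n}$ and the operator $\D$ acts diagonally as the number operator $N$, so that $\mathrm{e}^{it\D}$ implements the gauge action $\gamma_t$ on $\mathcal{O}_E$ through the modular flow $\sigma_t$. On the interior tensor product $\Xi_A\otimes_A L^2(A,\tau)$ the semifinite trace of the projection onto the $n$-th summand equals $\tau(\Phi_E^n(1_A))$, where $\Phi_E$ is the transfer operator of $E$ computed from a frame. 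Hence
$$
\mathrm{Tr}_\tau(P_\D\mathrm{e}^{-t\D}) \;=\; \sum_{n\geq 0}\mathrm{e}^{-tn}\,\tau\bigl(\Phi_E^n(1_A)\bigr),
$$
and strict W-regularity supplies the exponential control on $\tau(\Phi_E^n(1_A))$ needed for the series to converge for $t$ large, giving Li$_1$-summability.

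For the hypotheses of Theorem~\ref{mainthmconstr}, note that $\D\geq 0$ forces $P_\D=1$. Criticality of $\tau$ ensures that the Dirichlet-type series above has a sharp exponential growth rate, so that $\beta_\D:=\inf\{t>0:\mathrm{Tr}_\tau(P_\D\mathrm{e}^{-t\D})<\infty\}$ is finite and the trace diverges as $t\searrow\beta_\D$, yielding positive $\Tau$-essential spectrum. $\beta$-analyticity follows because any monomial $S_\mu S_\nu^*$ satisfies $\sigma_t(S_\mu S_\nu^*)=\mathrm{e}^{it(|\mu|-|\nu|)}S_\mu S_\nu^*$, which admits entire extension in $t$ on the dense $*$-subalgebra generated by finite words. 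Theorem~\ref{mainthmconstr} then produces a KMS-state $\phi_\omega$ at inverse temperature $\beta_\D$ for $\sigma_t$, and the latter is precisely the gauge action.

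For the Laca-Neshveyev identification I would work on the dense $*$-subalgebra spanned by monomials $S_\mu S_\nu^*$. Both $\phi_\omega$ and $\phi_{LN,\tau}$ are gauge-invariant and hence vanish unless $|\mu|=|\nu|$. In the diagonal case, the numerator $\mathrm{Tr}_\tau(P_\D S_\mu S_\nu^*\mathrm{e}^{-t\D})$ splits, via the Fock decomposition, as a sum over auxiliary degrees $k\geq 0$ of terms of the form $\mathrm{e}^{-tk}\tau\bigl(\Phi_E^k(\langle\nu,\mu\rangle_A)\bigr)$. Dividing by the denominator and letting $t\searrow\beta_\D$, the Laca-Neshveyev condition on $\tau$ is precisely what forces the ratios to converge (independently of the extended limit $\omega$) to the values $\phi_{LN,\tau}(S_\mu S_\nu^*)$ prescribed by the Laca-Neshveyev correspondence. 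The main obstacle will be showing that the ratios genuinely converge as $t\searrow\beta_\D$ rather than only along subnets: here strict W-regularity together with the Laca-Neshveyev scaling condition provides Tauberian-type control relating numerator and denominator asymptotics, which is what collapses the dependence on $\omega$ and pins down the limit.
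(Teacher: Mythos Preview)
Your proposal contains a structural misunderstanding of the construction from \cite{GMR} that breaks the first two parts of the argument. The module $\Xi_A$ is \emph{not} the Fock module $\bigoplus_{n\geq 0}E^{\otimes n}$; rather, $\Xi_A$ is the completion of $O_E$ in the $\Phi_\infty$-inner product, and the Fock module sits inside it as a proper complemented submodule (the range of the projection $Q$ in \cite{RRS}). Correspondingly, $\D=\D_\psi$ is \emph{not} the number operator and is not positive: it is
\[
\D_\psi=\sum_{n\in\Z}\sum_{r\geq\max\{0,n\}}\psi(n,r)P_{n,r},\qquad
\psi(n,r)=\begin{cases}n & n=r\\ -(2r-n) & \text{otherwise},\end{cases}
\]
so it has a large negative spectrum indexed by pairs $(n,r)$ with $r>\max\{0,n\}$. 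Your claim ``$\D\geq 0$ forces $P_\D=1$'' is therefore false, and your verification of $\mathrm{Li}_1$-summability is incomplete: you only control $\Tr_\tau(P_\D\e^{-t\D})$, whereas what is needed is finiteness of $\Tr_\tau(\e^{-t|\D_\psi|})$ over the full double sum. The paper handles this in Lemma~\ref{ximodsemi} by estimating $\Tr_\tau(Q_{n,r})\leq N^{2r-n}$ via frame bounds, which is where the genuine work lies; strict W-regularity is used to have the projections $P_{n,r}$ at all, not to give ``exponential control'' in the way you suggest.

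That said, once one restricts to the positive part, $P_{\D_\psi}$ \emph{does} project onto the Fock submodule and $P_{\D_\psi}\D_\psi$ \emph{is} the number operator there, so your formula $\Tr_\tau(P_\D\e^{-t\D})=\sum_{n\geq 0}\e^{-tn}\tau_*(E^{\otimes n})$ is correct and matches Proposition~\ref{posheatcomp}. Your treatment of $\beta$-analyticity and positive essential spectrum is then essentially right (compare Lemma~\ref{betacrit}). For the Laca--Neshveyev identification, your outline is workable but overlooks the clean mechanism: the paper observes (Propositions~\ref{lnequitofixed} and~\ref{someconforf}) that a Laca--Neshveyev trace is a \emph{fixed point} of the transfer map $F_{E,\alpha}$, so $S^t_{E,\alpha}\tau=(1-\e^{-t})^{-1}\tau$ exactly, and the ratio is identically $\tau$ for every $t$. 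No Tauberian argument is needed; the $\omega$-independence is immediate. Your proposed route via direct computation of $\Tr_\tau(P_\D S_\mu S_\nu^*\e^{-t\D})$ would also need the KMS property to simplify (as in the proof of Theorem~\ref{kmscp}), since the raw expression (Equation~\eqref{tracpcocomcme}) is more involved than the clean form you wrote down.
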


This result is found in Section \ref{diraccpkms} (starting on page \pageref{diraccpkms}). We also discuss extensions of these results to more general $A-A$-correspondences in Subsection \ref{sub:no-left} (starting on page \pageref{sub:no-left}) dispensing the assumption of strict W-regularity.

\begin{thm*}
\label{mainthmongamma}
Let $\Gamma$ be a discrete group and 
$c:\Gamma\to \H_0$ a Hilbert space valued 
proper $1$-cocycle defining a length function of at most exponential growth. 
The semifinite spectral 
triple $(\A,\H,\D,\cN,\Tau)$ constructed from 
$c$ in Subsection \ref{groupcstarexam} is an 
$\mathrm{Li}_1$-summable semifinite spectral 
triple on $C_b(\Gamma)\rtimes\Gamma$. 
Moreover, if $c$ is critical (see Definition \ref{criticaldefn} on page \pageref{criticaldefn}) 
the assumptions of Theorem \ref{mainthmconstr} are satisfied and the 
associated KMS-state $\phi_\omega$ on $C(\partial_{SC} \Gamma)\rtimes\Gamma$ 
is given by 
$$
\phi_\omega\left(\sum_{g\in \Gamma} a_g\lambda_g\right)
=\int_{\partial \Gamma} a_e\,\mathrm{d}\mu_\omega,
$$
where $\mu_\omega$ is a quasi-invariant Patterson-Sullivan 
measure on the Stone-Cech boundary $\partial_{SC}\Gamma$. 
The state $\phi_\omega$ extends to a KMS-state on the von Neumann algebra $L^\infty(\partial_{SC}\Gamma,\mu_\omega)\overline{\rtimes}\Gamma$ where it is KMS with inverse temperature $1$ for the $\R$-action 
defined from the Radon-Nikodym cocycle 
$$
\sigma_t\left( \sum_{g\in \Gamma} a_g\lambda_g\right)
:= \sum_{g\in \Gamma} 
\left(\frac{\mathrm{d}g_*\mu}{\mathrm{d}\mu}\right)^{it}a_g\lambda_g.
$$
\end{thm*}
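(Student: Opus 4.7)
My strategy has four parts: (a) verify the hypotheses of Theorem \ref{mainthmconstr} from exponential growth and criticality of $c$; (b) evaluate $\phi_\omega$ on a dense $*$-subalgebra and identify it with integration against a measure $\mu_\omega$ on $\beta\Gamma$; (c) show $\mu_\omega$ is supported on $\partial_{SC}\Gamma$ and is a Patterson-Sullivan measure of exponent $\beta_\D$; (d) match the flow $\sigma_t^\D=\mathrm{Ad}(\e^{it\D})$ with the Radon-Nikodym flow on the von Neumann extension.

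For (a), the spectral data of $\D$ is governed by the length $\ell(g):=\|c(g)\|_{\H_0}$, so $\Tau(\e^{-t\D})\asymp\sum_{g\in\Gamma}\e^{-t\ell(g)}$, finite for $t$ large by exponential growth, giving $\mathrm{Li}_1$-summability. Criticality says $0<\beta_\D<\infty$ with divergence at $t=\beta_\D$, providing positive $\Tau$-essential spectrum, and the $1$-cocycle identity $c(gh)=c(g)+g\cdot c(h)$ bounds $[\D,\lambda_g]$ by $\|c(g)\|$ uniformly, yielding $\beta$-analyticity. For (b), a direct trace computation using $\lambda_g\delta_h=\delta_{gh}$ and diagonality of $\D$ in $\{\delta_g\}_{g\in\Gamma}$ gives
$$
\Tau\bigl(a_g\lambda_g P_\D\e^{-t\D}\bigr)=\delta_{g,e}\sum_{h\in\Gamma}a_e(h)\,\e^{-t\ell(h)},
$$
so only the identity component contributes. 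Dividing by $\Tau(P_\D\e^{-t\D})$ and passing to the $\omega$-limit identifies $\phi_\omega(\sum_g a_g\lambda_g)=\int a_e\,d\mu_\omega$, with $\mu_\omega$ the weak-$*$ $\omega$-limit on $\beta\Gamma$ of the normalised Patterson measures $\mu_t:=Z(t)^{-1}\sum_h\e^{-t\ell(h)}\delta_h$, where $Z(t):=\Tau(\e^{-t\D})$.

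For (c), divergence of $Z(t)$ at $\beta_\D$ forces $\mu_\omega(\{h\})=0$ for every $h\in\Gamma$, so $\mu_\omega$ is supported on $\partial_{SC}\Gamma$. A change of summation variable produces
$$
\int f\,d(g_*\mu_\omega)=\lim_{t\to\omega}\int f(h)\,\e^{t(\ell(h)-\ell(g^{-1}h))}\,d\mu_t(h);
$$
the exponent $h\mapsto\ell(h)-\ell(g^{-1}h)$ is bounded by $\|c(g)\|$ via the cocycle identity, hence extends continuously to $\beta\Gamma$, and the limit equals $\int f\,\e^{\beta_\D b_g}\,d\mu_\omega$ for a continuous $b_g\in C(\partial_{SC}\Gamma)$. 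This identifies $dg_*\mu_\omega/d\mu_\omega=\e^{\beta_\D b_g}$, exhibiting $\mu_\omega$ as a quasi-invariant Patterson-Sullivan measure of exponent $\beta_\D$. For (d), a direct computation gives $\sigma^\D_t(\lambda_g)\delta_h=\e^{it(\ell(gh)-\ell(h))}\delta_{gh}$; after normal extension to $L^\infty(\partial_{SC}\Gamma,\mu_\omega)\overline{\rtimes}\Gamma$ the boundary extension of this multiplier matches, up to the $\beta_\D$-rescaling from (c), with $(dg_*\mu_\omega/d\mu_\omega)^{it}$. Hence $\sigma^\D_{\beta_\D t}$ coincides with the Radon-Nikodym flow, and KMS at inverse temperature $\beta_\D$ for $\sigma^\D$ translates into KMS at inverse temperature $1$ for the Radon-Nikodym flow. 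The main obstacle is step (c): proving the cocycle-difference functions $h\mapsto\ell(h)-\ell(g^{-1}h)$ extend continuously to the Stone-\v{C}ech boundary and that their extensions yield the Patterson-Sullivan conformality with exponent exactly $\beta_\D$; once this geometric bridge is in place, both the identification of $\mu_\omega$ and the flow match follow as bookkeeping.
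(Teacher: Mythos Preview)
Your steps (a) and (b) are essentially the paper's argument: the heat trace computation $\Tau(P_\D a\lambda_g\e^{-t\D})=\delta_{g,e}\sum_{\gamma}a(\gamma)\e^{-t\ell(\gamma)}$ is carried out in Example~\ref{diracgrouphea}, and the verification of $\beta$-analyticity is done explicitly in the Propositions preceding Theorem~\ref{themforgamma} (note that bounded commutators alone do not give $\beta$-analyticity; one needs $\e^{\beta\D}a\lambda_g\e^{-\beta\D}$ bounded, which the paper gets from $|\ell(\gamma)-\ell(g^{-1}\gamma)|\le\ell(g)$). One small inaccuracy: criticality does not force $\beta_\D>0$; it only says the series diverges at $\beta_\D$.

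Where you and the paper diverge is in (c)--(d). You compute the Radon--Nikodym derivative \emph{explicitly} as the boundary extension $\e^{\beta_\D b_g}$ of $\e^{\beta_\D(\ell(\cdot)-\ell(g^{-1}\cdot))}$, and then deduce that the $\D$-flow, rescaled by $\beta_\D$, \emph{coincides} with the Radon--Nikodym flow. The paper takes a different route: it never identifies $\frac{\mathrm{d}g_*\mu_\omega}{\mathrm{d}\mu_\omega}$ concretely, but instead (i) proves $\mu_\omega$ is strictly positive via a covering argument, so that the abstract Radon--Nikodym derivatives exist, and (ii) directly verifies the KMS identity $\phi_\omega(a\lambda_g\,b\lambda_h)=\phi_\omega(b\lambda_h\,\sigma^{RN}_{i}(a\lambda_g))$ by an algebraic manipulation using only the cocycle property of the Radon--Nikodym derivatives. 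Your approach yields more: an explicit formula for the derivative, from which quasi-invariance and the flow match are immediate, and the paper's strict-positivity step becomes unnecessary. The paper's approach is more self-contained in that it avoids limit interchanges.

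Your identification of the ``main obstacle'' is misplaced, however. Since $\partial_{SC}\Gamma$ is the Stone--\v{C}ech boundary, \emph{every} bounded function on $\Gamma$ extends continuously to $\beta\Gamma$ by definition; the boundedness $|\ell(h)-\ell(g^{-1}h)|\le\ell(g)$ is all you need, and there is nothing to prove. The actual content of your step (c) is the limit interchange
\[
\lim_{t\to\omega}\int f\,\e^{t(\ell-\ell\circ g^{-1})}\,\mathrm{d}\mu_t=\int f\,\e^{\beta_\D b_g}\,\mathrm{d}\mu_\omega,
\]
which follows routinely because $\e^{t(\ell-\ell\circ g^{-1})}\to\e^{\beta_\D(\ell-\ell\circ g^{-1})}$ uniformly on $\Gamma$ (hence on $\beta\Gamma$) and $f\e^{\beta_\D b_g}\in C(\beta\Gamma)$, so the weak-$*$ definition of $\mu_\omega$ applies. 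You should state this rather than flagging the extension itself as the difficulty.
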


This result appears as Theorem \ref{themforgamma} (see page \pageref{themforgamma}) below. Our method extends to proper quasi-cocycles, and as such would allow for the construction of KMS-states from semifinite spectral triples with possible $K$-homological content on a-TT-menable groups.

\begin{rmk}
We will prove that the spectral triple of a length 
function (which is $K$-homologically trivial) gives rise to 
the same KMS state as that appearing in Theorem \ref{mainthmongamma}.
\end{rmk}

\subsection{Connection to some earlier work}
\label{sub:Connes-lift}

Here we show how our approach relates to 
some results obtained by Connes in \cite[Section IV.8.$\alpha$, Theorem 4]{BRB}. 
Connes proves that $\theta$-summable Fredholm 
modules can be lifted to 
$\theta$-summable spectral triples. 
We show that Connes' result can be extended to $\mathrm{Li}_s$-summability
for $0<s\leq 1$, 
and discuss obstructions to summability properties of $K$-homology classes. 
For terminology and notations concerning summability and operator ideals, 
the reader is referred forward to Subsection \ref{subsecsemiffe}.

Recall \cite{CC,CGRS2} that a semifinite Fredholm module is a collection $(\A,\H,F,\cN,\Tau)$ where $\A$ acts on the Hilbert space $\H$ by operators from $\cN$ and $F\in \cN$ is an operator with $a(F-F^*), a(F^2-1),[F,a]\in \Ko_\Tau$ for all $a\in \A$. We say that $(\A,\H,F,\cN,\Tau)$ is unital if $\A$ acts unitally. A unital semifinite Fredholm is said to be $\mathrm{Li}_s$-summable if $[F,a]\in \mathrm{Li}_s(\Tau)$ for all $a\in \A$ and $F^2-1,F-F^*\in \mathrm{Li}_{2s}(\Tau)$. If the same conditions holds with $ \mathrm{Li}_s(\Tau)$ replaced by $\mathcal{L}^p(\Tau)$, and $\mathrm{Li}_{2s}(\Tau)$ by $\mathcal{L}^{p/2}(\Tau)$, we say that $(\A,\H,F,\cN,\Tau)$ is $p$-summable. If $(\A,\H,F,\cN,\Tau)$ is a semifinite Fredholm module we say that a semifinite spectral triple $(\A,\H,\D,\cN,\Tau)$ is a lift if $F-\mathrm{sign}(\D)\in \Ko_\Tau$.

\begin{thm*}
\label{liftinglis}
Let $s\in (0,1]$ and $(\A,\H,F,\cN,\Tau)$ be a unital semifinite $\mathrm{Li}_s$-summable Fredholm module with $F^2=1$ and $F=F^*$.
Assume that $\A$ is countably generated. Then there is a self-adjoint operator $\D$ affiliated with $\cN$ making $(\A,\H,\D,\cN,\Tau)$ into a unital semifinite $\mathrm{Li}_s$-summable spectral triple with 
$$F=F_\D:=\D|\D|^{-1}.$$
Moreover, $(\A,\H,\D,\cN,\Tau)$ satisfies that $a\Dom(|\D|^{1/s})\subseteq \Dom(|\D|^{1/s})$ and $[|\D|^{1/s},a]$ has a bounded extension for all $a\in \A$.
\end{thm*}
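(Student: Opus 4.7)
The approach is to adapt Connes' construction in \cite[Section~IV.8.$\alpha$, Theorem~4]{BRB}, which handles the $\theta$-summable case, to the finer $\mathrm{Li}_s$-scale. Since $F^2 = 1$ and $F = F^*$, the operator $P := (1+F)/2$ is a projection in $\cN$ and $F = 2P - 1$. Any self-adjoint operator $\D$ affiliated with $\cN$ that commutes with $F$, has trivial kernel, and is positive on $P\H$ automatically satisfies $F_\D = F$. It therefore suffices to construct a positive self-adjoint operator $|\D|$ affiliated with $\cN$, commuting with $F$, having appropriate summability and regularity, and to set $\D := F|\D|$.

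Using the countable generation of $\A$, I would fix a countable $*$-generating set $\{a_n\}_{n\in \N}$ of a norm-dense $*$-subalgebra $\A_0 \subseteq \A$, normalized so that $\|a_n\| \le 1$. By hypothesis $[F, a_n] \in \mathrm{Li}_s(\Tau) \subseteq \Ko_\Tau$. The operator $|\D|$ would then be built in the form
$$
|\D| \;=\; \sum_{k=1}^{\infty} \lambda_k\, Q_k,
$$
where $(Q_k)_{k \ge 1}$ is a pairwise orthogonal sequence of $F$-invariant projections in $\cN$ with $\Tau(Q_k) < \infty$, $\sum_k Q_k = 1$, and $\lambda_k \nearrow \infty$. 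The $Q_k$ are chosen inductively by a diagonal argument: having fixed $Q_1,\dots,Q_k$ with partial sum $E_k := Q_1 + \cdots + Q_k$, the $\Tau$-compactness of each $[F, a_j]$ (and, more quantitatively, the $\mathrm{Li}_s$-decay of its generalized singular values) allows one to enlarge $E_k$ to $E_{k+1} := E_k + Q_{k+1}$ so that $\|(1 - E_{k+1})\, a_j\, E_k\|$ and $\|E_k\, a_j\, (1 - E_{k+1})\|$ are at most $2^{-k}\lambda_k^{-1/s}$ for all $j \le k$, while still keeping $\Tau(Q_{k+1}) < \infty$ and $[Q_{k+1}, F] = 0$.

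Once such $|\D|$ is built, I would verify (i) $\mathrm{Li}_s$-summability of the resulting spectral triple by calibrating the growth of $\lambda_k$ against $\Tau(Q_k)$ so that the defining summability condition holds (e.g.\ $\Tau(\e^{-t|\D|^{1/s}})<\infty$ for $t$ large); (ii) boundedness of $[\D,a] = F[|\D|,a] + [F,a]\,|\D|$ for $a\in \A_0$, by expanding each summand in the block decomposition --- the off-diagonal estimates control the first summand, and the $\mathrm{Li}_s$-decay of $[F,a]$ absorbs the growth of $|\D|$ in the second; and (iii) that $a\Dom(|\D|^{1/s}) \subseteq \Dom(|\D|^{1/s})$ with $[|\D|^{1/s}, a]$ bounded, by the same estimate with $\lambda_k$ replaced by $\lambda_k^{1/s}$ (which is why the off-diagonal bound was scaled as $\lambda_k^{-1/s}$). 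The main obstacle is the tight simultaneous balance in the choice of $\lambda_k$: the sequence must grow fast enough to give the spectral triple the required $\mathrm{Li}_s$-summability on the infinite-dimensional ambient $\cN$, yet slow enough that the singular-value decay of $[F,a_j]$ in the ideal $\mathrm{Li}_s(\Tau)$ can absorb the factor $|\D|^{1/s}$ appearing in the commutator $[F,a]\,|\D|^{1/s}$. It is precisely the strengthened $\mathrm{Li}_s$-summability of the Fredholm module (rather than bare $\theta$-summability) that makes this balance achievable for the power $1/s \ge 1$, and hence makes the extra regularity statement for $|\D|^{1/s}$ possible.
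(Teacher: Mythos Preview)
Your approach is genuinely different from the paper's. The paper follows Connes directly: one first reduces to the case where $\A$ is generated by a countable group of unitaries $(u^\mu)_{\mu\in\N}$, then forms the positive $\Tau$-compact operator
\[
G_s\;=\;\sum_{\mu}\frac{\bigl([F,u^\mu]^*[F,u^\mu]\bigr)^{2s}}{2^\mu\,\bigl\|\bigl([F,u^\mu]^*[F,u^\mu]\bigr)^{2s}\bigr\|_{\mathrm{Li}_1}}\;\in\;\mathrm{Li}_1(\Tau),
\]
applies an averaging procedure $\Theta$ over the generated group to $G_s$, and sets $\D=F\,\Theta(G_s)^{-s}$. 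The essential feature is that $|\D|^{-1/s}=\Theta(G_s)$ is built \emph{from the commutators themselves}, so the spectral scales of $|\D|$ and of each $[F,u^\mu]$ are automatically aligned; the group-averaging is precisely the mechanism that makes $[|\D|^{1/s},u^\mu]$ bounded.

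Your block-diagonal scheme lacks this alignment, and that is a concrete gap rather than a technicality. First, the claim that the $\mathrm{Li}_s$-decay of $[F,a]$ ``absorbs the growth of $|\D|$'' in the term $[F,a]\,|\D|$ is not justified: the hypothesis $\mu_\Tau(t,[F,a])=O((\log t)^{-s})$ says only that $\inf_{\Tau(E)\le t}\|[F,a](1-E)\|$ decays at that rate, not $\|[F,a](1-E_k)\|$ for your particular projections $E_k$, which were chosen for an unrelated purpose. Second, your one-sided control $\|(1-E_{k+1})a_jE_k\|\le 2^{-k}\lambda_k^{-1/s}$ bounds the block $Q_l a_j Q_m$ (for $l\ge m+2$) only by $2^{-m}\lambda_m^{-1/s}$, with no decay in $l$; multiplied by $|\lambda_l^{1/s}-\lambda_m^{1/s}|$ this diverges as $l\to\infty$. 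Controlling the near-diagonal blocks forces $\lambda_{k+1}^{1/s}-\lambda_k^{1/s}$ to be bounded, and then achieving simultaneous decay in both indices while keeping the summability constraint is exactly the balancing problem you flag but do not resolve. Connes' construction bypasses all of this by never choosing spectral projections: the resolvent is manufactured out of the very operators whose commutators must be controlled.
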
 

This theorem is found in \cite[Section IV.8.$\alpha$, Theorem 4]{BRB} in the special case $s=1/2$ and $\cN=\mathbb{B}(\H)$. We will not give the full details of the proof in the general case, but merely indicate how Connes' proof extends. The starting point of Connes' proof is a reduction to the case that $\mathcal{A}$ contains $F$ and is generated by a countable group of unitaries $\Gamma$ generated by a countable set of unitaries $(u^\mu)_{\mu\in \N}$. This argument extends to a general von Neumann algebra $\cN$. Connes introduces the operator 
$$G:=\sum_{\mu\in \N} \frac{[F,u^\mu]^*[F,u^\mu]}{2^\mu \|[F,u^\mu]^*[F,u^\mu]\|_{\mathrm{Li}_1}}.$$
Since $[F,u^\mu]\in \mathrm{Li}_{1/2}$ for all $\mu$, the series converges in $\mathrm{Li}_1$. The proof proceeds by using an average procedure $\Theta$ over the group $\Gamma$ applied to $G$ and Connes proves that $\D:=F\Theta(G)^{-1/2}$ fulfils the statement of the theorem. For general $s\in (0,1]$, the proof goes mutatis mutandis using the operator 
$$G_s:=\sum_{\mu\in \N} \frac{([F,u^\mu]^*[F,u^\mu])^{2s}}{2^\mu \|([F,u^\mu]^*[F,u^\mu])^{2s}\|_{\mathrm{Li}_1}}\in \mathrm{Li}_1(\Tau),$$
and setting $\D:=F\Theta(G)^{-s}$.

In the special case $s=1$, we obtain that $(\A,\H,F,\cN,\Tau)$ lifts to a unital semifinite $\mathrm{Li}_s$-summable spectral triple $(\A,\H,\D,\cN,\Tau)$ which is Lipschitz regular, i.e.
for all $a\in\A$ the commutator $[|\D|,a]$ is bounded.

The lifting theorem for $\mathrm{Li}_s$-summable spectral triples (Theorem \ref{liftinglis}) stands in sharp contrast to the finitely summable setup, or even the $\mathrm{Li}_{(0),s}$-summable setup. The two upcoming theorems show that a statement as in Theorem \ref{liftinglis} could not extend to the ideal $\mathrm{Li}_{(0),1}$.

\begin{thm*}
\label{tracialobs}
Let $A$ be a unital $C^*$-algebra with no tracial states and $(\A,\H,\D,\cN,\Tau)$ be a unital semifinite spectral triple on $A$ defining a non-trivial class in $KK_1(A,\Ko_\Tau)$. Then $P_\D(i\pm \D)^{-1}\notin \mathrm{Li}_{(0),1}(\Tau)$.
\end{thm*}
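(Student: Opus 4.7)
The plan is to argue by contradiction: assume $P_\D(i\pm\D)^{-1}\in\mathrm{Li}_{(0),1}(\Tau)$, and then verify hypotheses that let Theorem~\ref{mainthmconstr}, in the $\beta_\D=0$ regime of Remark~\ref{rmarkonbetazero}, manufacture a tracial state on $A$ --- in direct contradiction with the standing hypothesis.

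First I would use non-triviality of the $KK_1$-class to force $\Tau(P_\D)=\infty$. Were $\Tau(P_\D)<\infty$, then $P_\D\in\Ko_\Tau$ and the sign $F:=2P_\D-\mathbf{1}$ would be a $\Ko_\Tau$-perturbation of $-\mathbf{1}$. The linear path $F_s:=2sP_\D-\mathbf{1}$, $s\in[0,1]$, is norm continuous with $F_s^*=F_s$, $F_s^2-\mathbf{1}=4s(s-1)P_\D\in\Ko_\Tau$, and $[F_s,a]=s[F,a]\in\Ko_\Tau$ for every $a\in\A$; this is an operator homotopy through semifinite Fredholm modules from $F$ to the degenerate module $-\mathbf{1}$. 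The latter is zero in $KK_1(A,\Ko_\Tau)$, contradicting non-triviality.

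Next, I would convert the ideal hypothesis into positive $\Tau$-essential spectrum at $\beta_\D=0$. The identity $|P_\D(i\pm\D)^{-1}|^2=P_\D(1+\D^2)^{-1}$ shows that $\mathrm{Li}_{(0),1}$-membership is equivalent to the decreasing rearrangement $\mu(t):=\mu_\Tau(t,P_\D\D)$ of the positive operator $P_\D\D$ satisfying $\mu(t)/\log t\to\infty$ as $t\to\infty$. From
$$\Tau(P_\D\e^{-s\D})=\int_0^\infty\e^{-s\mu(t)}\,dt,$$
for any $s>0$ we may choose $C>2/s$ to obtain $\e^{-s\mu(t)}\le t^{-Cs}\le t^{-2}$ for $t$ large, so the integral converges. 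Combined with $\Tau(P_\D)=\infty$ and the monotone increase $P_\D\e^{-s\D}\nearrow P_\D$ as $s\searrow 0$, normality of $\Tau$ yields $\Tau(P_\D\e^{-s\D})\to\infty$ as $s\searrow 0$. Thus $\beta_\D=0$ and $\D$ has positive $\Tau$-essential spectrum.

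In the resulting $\beta_\D=0$, $\Tau(P_\D)=\infty$ regime, Remark~\ref{rmarkonbetazero} records that $\beta$-analyticity is automatic, so Theorem~\ref{mainthmconstr} produces a tracial state $\phi_\omega$ on $A_\D$; its restriction to $A\subseteq A_\D$ is the sought tracial state, completing the contradiction. The main subtlety is the one-sided nature of the input: $\mathrm{Li}_{(0),1}$-smallness of $P_\D(i\pm\D)^{-1}$ only controls the positive spectral half of $\D$, whereas Theorem~\ref{mainthmconstr} is phrased with full $\mathrm{Li}_1$-summability. Since $\phi_\omega$ is built solely from $P_\D\e^{-t\D}$ in both numerator and denominator, one expects to invoke the proof of Corollary~\ref{cor:phi-omega} directly and verify that each step manipulates only positive-spectrum data.
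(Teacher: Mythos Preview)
Your proposal is correct and follows essentially the same route as the paper: derive $\Tau(P_\D)=\infty$ from non-triviality of the $KK_1$-class, verify positive $\Tau$-essential spectrum with $\beta_\D=0$ from the one-sided $\mathrm{Li}_{(0),1}$ hypothesis, then invoke Remark~\ref{rmarkonbetazero} and Theorem~\ref{mainthmconstr} (equivalently Theorem~\ref{thm:voics}) to produce a tracial state on $A$. Your write-up is in fact more explicit than the paper's, which disposes of the one-sided subtlety by the single phrase ``a careful inspection of the results used''; you actually perform that inspection. One cosmetic point: $A$ need not embed in $A_\D$ (the map $A\to A_\D$ has kernel $I$), but pulling back the tracial state along the unital $*$-homomorphism $A\to A_\D$ still yields a tracial state on $A$, so your conclusion stands.
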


\begin{proof}
Consider a unital $C^*$-algebra $A$ and an $\mathrm{Li}_{(0),1}$-summable unital semifinite spectral triple $(\A,\H,\D,\cN,\Tau)$ on $A$ defining a non-trivial class in $KK_1(A,\Ko_\Tau)$. In particular, $\Tau(P_\D)=\infty$; otherwise $P_\D\in \Ko_\Tau$ which contradicts the non-triviality of the $KK_1(A,\Ko_\Tau)$-class defined by $(\A,\H,\D,\cN,\Tau)$. By Remark \ref{rmarkonbetazero} (see page \pageref{rmarkonbetazero}) all assumptions of Theorem \ref{mainthmconstr} reduces to $\Tau(P_\D)=\infty$ in the $\mathrm{Li}_{(0),1}$-summable case. Therefore, the existence of $\mathrm{Li}_{(0),1}$-summable unital semifinite spectral triples on $A$ being non-trivial in $KK$ implies that $A$ admits a tracial state. This argument shows that if $A$ admits no tracial states, it admits no $\mathrm{Li}_{(0),1}$-summable unital semifinite spectral triple. In fact, a careful inspection of the results used show that as soon as there is a unital semifinite spectral triple on $A$ with $P_\D(i\pm \D)^{-1}\in \mathrm{Li}_{(0),1}(\Tau)$, there is an associated tracial state on $A$. The theorem follows.
\end{proof}

There are several $C^*$-algebras carrying no traces, for instance any purely infinite $C^*$-algebra. 
Using Theorems \ref{mainthmconstr} and \ref{tracialobs}, we will give an example of a finitely summable Fredholm module that can not lift to an $\mathrm{Li}_{(0),s}$-summable spectral triple. In particular, lifting of finite summability and $\mathrm{Li}_{(0),s}$-summability fails in general. 

\begin{thm*}
\label{liftingfails}
There is a $C^*$-algebra $A$ with $K^1(A)\neq 0$, such that for a dense $*$-subalgebra $\A\subseteq A$ we can represent any $x\in K^1(A)$ by a Fredholm module $(\A,\H_x,F_x)$ with $F_x^2=1$, $F_x^*=F_x$ and for any $a\in \mathcal{A}$, $[F_x,a]$ is of finite rank. Moreover, any lift $(\A,\H_x,\D_x)$ of $(\A,\H_x,F_x)$ will satisfy that $(1+\D_x^2)^{-1/2}\notin \mathrm{Li}_{(0),1}(\H)$.
\end{thm*}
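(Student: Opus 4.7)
The plan is to exhibit the example via Cuntz algebras and then apply Theorem~\ref{tracialobs} to rule out any $\mathrm{Li}_{(0),1}$-summable lift.

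I would take $A=O_n$ for some $n\geq 3$, together with the dense algebraic $*$-subalgebra $\A\subset O_n$ generated by the canonical isometries $S_1,\ldots,S_n$. Being purely infinite and simple, $O_n$ carries no tracial state, and $K^1(O_n)\cong\Z/(n-1)\Z$ is non-zero. A generator of $K^1(O_n)$ is realised by the class of the Toeplitz extension $0\to\Ko\to\mathcal{T}_n\to O_n\to 0$. On the full Fock space $\F=\bigoplus_{k\geq 0}(\C^n)^{\otimes k}$ with Cuntz--Toeplitz creation action, the grading $F_1=1-2P_0$ (with $P_0$ the rank-one vacuum projection) satisfies $F_1^2=1$, $F_1=F_1^*$, and $[F_1,S_i]=2S_iP_0$ is rank one; by the Leibniz rule $[F_1,a]$ is finite rank for every $a\in\A$. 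A standard Kasparov-doubling upgrades this quasi-homomorphism to an honest Fredholm module of $O_n$ while preserving finite-rank commutators, and direct sums with appropriate signs then furnish representatives of every class in $K^1(O_n)$.

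Given these representatives, the remainder is immediate from Theorem~\ref{tracialobs}. For $x\neq 0$, any lift $(\A,\H_x,\D_x)$ of $(\A,\H_x,F_x)$ defines the same non-trivial class in $KK_1(O_n,\Ko)$; since $O_n$ admits no tracial state, Theorem~\ref{tracialobs} (applied with $\cN=B(\H_x)$ and the ordinary trace) yields $P_{\D_x}(i\pm\D_x)^{-1}\notin\mathrm{Li}_{(0),1}(\H_x)$. The singular-value comparison $\mu_k(P_{\D_x}(i+\D_x)^{-1})\leq \mu_k((1+\D_x^2)^{-1/2})$ for all $k$, together with the solidity of $\mathrm{Li}_{(0),1}$ under singular-value domination, then gives $(1+\D_x^2)^{-1/2}\notin\mathrm{Li}_{(0),1}(\H_x)$, as required.

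The main obstacle is the construction in the first step: the natural section $\A\to\mathcal{T}_n\subset B(\F)$ is a $*$-homomorphism only modulo the rank-one vacuum projection, so promoting it to an honest representation of $O_n$ on the doubled Hilbert space requires careful bookkeeping to ensure the finite-rank commutator estimate survives the doubling. The logical core of the argument -- the passage from no-trace to no-$\mathrm{Li}_{(0),1}$-summability -- is already packaged by Theorem~\ref{tracialobs}.
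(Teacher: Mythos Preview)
Your overall strategy is exactly the paper's: take $A=O_N$, exhibit Fredholm-module representatives with finite-rank commutators, and invoke Theorem~\ref{tracialobs} together with the tracelessness of $O_N$ to rule out $\mathrm{Li}_{(0),1}$-summable lifts. Your second step---the passage from $P_{\D_x}(i\pm\D_x)^{-1}\notin\mathrm{Li}_{(0),1}$ to $(1+\D_x^2)^{-1/2}\notin\mathrm{Li}_{(0),1}$ via singular-value domination---is correct and in fact spelled out more carefully than in the paper.

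The only real divergence is in how you build the Fredholm module, and your self-diagnosed ``main obstacle'' is genuine. On the Fock space $\F$ it is $\mathcal{T}_n$, not $O_n$, that acts; the expression $[F_1,S_i]$ does not make sense until you have an honest representation of $O_n$. What is needed is not a Kasparov doubling (that produces \emph{even} cycles from pairs of representations) but a Stinespring-type dilation: an actual representation $\pi:O_n\to B(\K)$ with $\F\hookrightarrow\K$ so that $P_\F\pi(\cdot)P_\F$ recovers the Toeplitz compression and $[2P_\F-1,\pi(a)]$ stays finite rank for $a\in\A$. The paper supplies exactly this dilation: take $\K=L^2(O_N,\phi)$ for the KMS state $\phi$, where $O_N$ acts by left multiplication, and let $P$ be the projection onto $\overline{\mathrm{span}}\{S_\mu\}$. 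This subspace is a copy of the Fock module, the compressed action is the Cuntz--Toeplitz action, and $[2P-1,a]$ is finite rank for $a\in\A$ by \cite[Section~2.2]{GM}. In other words, the GNS space is precisely the ``doubled'' Hilbert space you were looking for, and the bookkeeping you were worried about has already been done in the cited reference.
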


\begin{proof}
Consider the Cuntz algebra $A=O_N$ and $\A$ the $*$-algebra generated by isometries $S_1,S_2,\ldots, S_N\in O_N$ with orthogonal ranges. It is a well known fact that $K^1(O_N)\cong \Z/(N-1)\Z\neq 0$. By \cite{GM}, we can represent the generator of $K^1(O_N)\cong \Z/(N-1)\Z$ by the Fredholm module 
$(\A,L^2(O_N,\phi),2P-1)$ where $\phi$ is the KMS-state on $O_N$ and $P$ is the orthogonal projection onto the closed linear span of $S_\mu$, where $\mu$ ranges over all finite words on the alpabet $\{1,\ldots, N\}$. By the results of \cite[Section 2.2]{GM}, $[2P-1,a]=2[P,a]$ is finite rank for all $a\in \A$. The first statement of the theorem follows. 

There are no tracial states on $O_N$ since 
$$1_{O_N}=\frac{1}{N-1}(N-1)1_{O_N}=\frac{1}{N-1}\left(\sum_{j=1}^N S_j^*S_j-\sum_{j=1}^N S_jS_j^*\right)=\frac{1}{N-1}\sum_{j=1}^N [S_j^*,S_j].$$
We can now deduce the second statement of the theorem from Theorem \ref{tracialobs}.
\end{proof}

\begin{rmk}
It is not of importance that $K^1(O_N)$ is torsion for the argument in Theorem \ref{liftingfails} to work. In \cite{GM}, non-torsion examples satisfying the conclusions of Theorem \ref{liftingfails} can be found. The reader should also note that the proof of Theorem \ref{liftingfails} obstructs all lifts $(\A,\H_x,\D_x)$ of $(\A,\H_x,F_x)$ with $P_{\D_x}(1+\D_x^2)^{-1/2}\in \mathrm{Li}_{(0),1}(\H)$.
\end{rmk}

In the nonunital case, the techniques of \cite{CGRS1} will
likely be required. The substantial technical considerations 
in the nonunital case goes beyond this paper, and is left to future work.

\subsection{Structure of the paper}

Section \ref{subsec:defns} recalls the basics of (unital) semifinite 
spectral triples and their summability. We also recall our main examples from the literature 
in this section for later use. 

Section \ref{sec:KMS} presents our construction of 
KMS states from $\mathrm{Li}_1$-summable spectral triples. 
We close the section by discussing connections to modular spectral triples.
We consider the case $\beta_\D=0$ in Section \ref{kmsanddix} and compute the tracial states 
constructed in Section \ref{sec:KMS} by means of Dixmier traces.
In Section \ref{kmsinexamplesec} we apply the techniques of Section \ref{sec:KMS} to the examples. 

The final Section \ref{diraccpkms} examines the construction of KMS states for Cuntz-Pimsner algebras. 
In this case we apply our ideas to derive obstructions to the existence of fgp bi-Hilbertian bimodule structures compatible with the underlying correspondence of the Cuntz-Pimsner algebra.

\subsection{Notations}

\begin{tabular}{ll}
$\cN$ & semifinite von Neumann algebra\\ 
$\Tau$ & positive, faithful, normal, semifinite trace  on $\cN$\\
$\Ko_\cN$ & ideal of $\Tau$-compact operators\\
$\End^*_A(X)$ & $C^*$-algebra of adjointable endomorphisms of an $A$-Hilbert $C^*$-module X\\
$\Ko_A(X)$ & $C^*$-algebra of compact endomorphisms of an $A$-Hilbert $C^*$-module X\\
$\A$ & $*$-algebra\\
$\A'$ & the commutant of an algebra $\A$\\
$A$ & $C^*$-closure of an algebra $\A$\\
$A_\D$ &  saturation of $A$ under the action of the wave group $\e^{it\D}$\\
$P_\D:=\chi_{[0,\infty)}(\D)$ & non-negative spectral projection of an operator $\D$\\
$F_\D:=2 P_\D-1$\\
$\mu_\Tau(\cdot, T)$ & singular values function of an operator $T$ affiliated with $\cN$\\
$n_\Tau(\cdot,T)$ & distribution function of an operator $T$ affiliated with $\cN$\\
$\mathrm{Li}_s(\Tau), \mathrm{Li}_{(0),s}(\Tau)$ & ideals of compact operators in Definition~\ref{derjugendvonheutebrauchidealen} on page~\pageref{derjugendvonheutebrauchidealen}\\
$\mathcal{L}_\psi(\Tau), \mathcal{L}_{(0),\psi}(\Tau)$ & ideals of compact operators in Definition~\ref{derjugendvonheutebrauchidealen} on page~\pageref{derjugendvonheutebrauchidealen}\\
$\Tau_{\omega, \psi}(T)$ & Dixmier trace on $\mathcal L_\psi(\Tau)$\\
$\mathfrak{T}(A)$ & set of positive traces on a unital $C^*$-algebra $A$\\

$L^\infty(a,\infty), \ a\ge0$ & space of essentially bounded functions on $(a,\infty)$ equipped with\\ & the essential supremum norm\\
$C_0(a,\infty), \ a\ge0$ & subspace of $L^\infty(a,\infty)$ of all continuous functions vanishing at infinity\\
$\lim_{t\to \omega} f(t)$ & value of an extended limit $\omega$ on a function $f$\\
$\ell^\infty(\N)$ &space of bounded sequences equipped with the supremum norm\\
$\lim_{k\to \omega} x_k$ & value of an extended limit $\omega$ on a sequence $x$\\
$\mathfrak{L}_g$ & transfer operator defined by formula~\eqref{transferoperator} on page~\pageref{transferoperator}\\
$f \sim g$ & for two functions or sequences $f$ and $g$ if $f=g+o(f)$ and $g=f+o(g)$\\

\end{tabular}

\subsection{Acknowledgements}

A. R. thanks the Gothenburg Centre for Advanced Studies in Science and Technology 
for funding and the University of Gothenburg and Chalmers University of Technology 
for their hospitality in 2017 when this work was begun.
M. G. and A. U. were supported by the Swedish Research Council Grant 2015-00137 and 
Marie Sklodowska Curie Actions, Cofund, Project INCA 600398. The
authors acknowledge the support of the
Erwin Schr\"{o}dinger Institute where part of this work was conducted.

The authors are grateful to Alan Carey, Heath Emerson and Bram Mesland for inspiring discussions. We also thank Branimir \'{C}a\'{c}i\'{c}  for sharing his construction of semifinite spectral triples from proper group cocycles, and Edward McDonald for references on previous work in that direction.

\section{Preliminaries}
\label{subsec:defns}

Before entering into the body of the paper, we recall some 
basic definitions that we will require and provide some examples 
that motivated this work. These examples will be studied further in the later sections of the paper. 
The results will be formulated for semifinite spectral triples. 
We do however remark that there are several examples of `vanilla' spectral triples 
that will be used throughout the paper.

\subsection{Semifinite spectral triples and summability}
\label{subsecsemiffe}

To set the stage for the paper, we summarize the basic definitions and properties 
of semifinite spectral triples. 
The reader familiar with semifinite spectral triples 
and symmetrically normed operator ideals can skip this subsection.

We let $\cN$ denote a semifinite von Neumann algebra and 
we fix a positive, faithful, normal, semifinite trace $\Tau$ on $\cN$. 
The $\Tau$-compact operators are denoted by $\Ko_\cN$. 
The $C^*$-algebra $\Ko_\cN$ can be defined as the norm closed ideal 
generated by the projections $E\in\mathcal N$ with $\Tau(E)<\infty$. 
Equivalently, one can define $\Ko_\cN:=\{T\in \cN: \mu_\Tau(t,T)=o(1)$ as $t\to \infty\}$
where the singular value function $\mu_\Tau(t,T)$ is defined as
\begin{equation}\label{mu}
\mu_\Tau(t,T):=\inf\big\{\|T(1-E)\|_\cN: \mbox{ where $E\in \cN$ is a projection with $\Tau(E)\leq t$}\big\}.
\end{equation}

\begin{defn}
\label{thedefn}
A semifinite spectral triple $(\A,\H,\D,\cN,\Tau)$ consists of
\begin{itemize}
\item A $*$-algebra $\A$ represented on a Hilbert space $\H$ as operators in $\cN\subseteq \mathbb{B}(\H)$, that is, we have a specified $*$-homomorphism $\pi:\A\to\cN$.
\item A densely defined self-adjoint operator
$
\D:{\rm dom}\ \D\subset\H\to\H
$
which is affiliated with $\cN$ such that for all $a\in \A$
we have $a\cdot\mathrm{dom}\D\subset\mathrm{dom}\D$ and
\begin{enumerate}
\item $[\D,\pi(a)]:=\D \pi(a)-\pi(a)\D$ initially defined on $\mathrm{dom}(\D)$ 
is bounded in operator norm.
\item $\pi(a)(1+\D^2)^{-1/2}\in \Ko_\cN$.
\end{enumerate}
\end{itemize}
\end{defn}

\begin{rmk}
Sometimes we write a spectral triple as a collection of three objects  $(\A,\H,\D)$. In this case, it is implicitly assumed that $\cN=\mathbb{B}(L^2(M,S))$ and $\Tau$ is the standard trace.
\end{rmk}

\begin{rmk}
If in addition to the data $(\A,\H,\D,\cN,\Tau)$ we have specified an operator 
$\gamma\in\B(\H)$ with $\gamma=\gamma^*$, $\gamma^2=1$, 
$\D\gamma+\gamma\D=0$ on $\Dom(\D)$, and for all $a\in\A$ we have $\gamma\pi(a)=\pi(a)\gamma$, 
we call the semifinite spectral triple even, or sometimes graded. 
If $\gamma$ has not been specified, we say that the semifinite spectral triple is odd, or ungraded.
This distinction plays an important role in the topological properties of the spectral triple, 
but since this paper deals with measure theory it will not play a role in this paper. 
\end{rmk}

\begin{rmk}
We will nearly always dispense with the
representation $\pi$, treating $\A$ as a 
subalgebra of $\cN\subseteq \B(\H)$.
\end{rmk}

\begin{rmk}
In the sequel we assume that the algebra $\A$ is unital and that $1\in\A$ 
acts as the identity of the Hilbert space. In particular, the operator
$(1+\D^2)^{-1/2}$ is a $\Tau$-compact operator. 
To emphasize this assumption, we refer to the data $(\A,\H,\D,\cN,\Tau)$ 
as a \emph{unital} semifinite spectral triple.
\end{rmk}

Examples of semifinite spectral triples often satisfy a finer summability structure, 
i.e. a refinement  of the condition  $(1+\D^2)^{-1/2}\in \Ko_\cN$. We formulate such conditions in terms of symmetrically quasi-normed operator ideals. We will  use  the Schatten ideals, the $\mathrm{Li}$-ideals and more generally weak ideals. 

\begin{defn}
\label{derjugendvonheutebrauchidealen}
Let $\cN$ denote a semifinite von Neumann algebra and 
$\Tau$ a positive, faithful, normal, semifinite trace on $\cN$. 
For parameters $p,d\in [1,\infty)$ and $s>0$ we define the following operator ideals.
\begin{itemize}
\item $\mathcal{L}^p(\Tau):=\{T\in \Ko_\cN: \mu_\Tau(\cdot,T)\in L^p(0,\infty)\}$.
\item $\mathcal{L}^{(d,\infty)}(\Tau):=\{T\in \Ko_\cN: \mu_\Tau(\cdot,T)=O(t^{-1/d})\mbox{  as $t\to \infty$}\}$.
\item $\mathrm{Li}_s(\Tau):=\{T\in \Ko_\cN:\mu_\Tau(t,T)=O((\log(t))^{-s})\mbox{  as $t\to \infty$}\}$.
\item $\mathrm{Li}_{(0),s}(\Tau):=\{T\in \Ko_\cN:\mu_\Tau(t,T)=o((\log(t))^{-s})\mbox{  as $t\to \infty$}\}$.
\item If $\psi:[0,\infty)\to (0,\infty)$ is a decreasing function satisfying that $\sup_{t>0}\frac{\psi(t)}{\psi(2t)}<\infty$, we define the associated weak ideal
$$\mathcal{L}_\psi(\Tau):=\{T\in \Ko_\cN: \mu_\Tau(t,T)=O(\psi(t))\},$$
and its separable subspace
$$\mathcal{L}_{(0),\psi}(\Tau):=\{T\in \Ko_\cN: \mu_\Tau(t,T)=o(\psi(t))\},$$
\end{itemize}
\end{defn}

The condition $\sup_{t>0}\frac{\psi(t)}{\psi(2t)}<\infty$ guarantees that $\mathcal{L}_\psi(\Tau)$ is a vector space, and in fact even a quasi-Banach space in the quasi-norm 
$$\|T\|_{\mathcal{L}_\psi}:=\sup_{t>0}\frac{\mu_\Tau(t,T)}{\psi(t)}.$$ 
Note that $\mathrm{Li}_s(\Tau)=\mathcal{L}_\psi(\Tau)$ and $\mathrm{Li}_{(0),s}(\Tau)=\mathcal{L}_{(0),\psi}(\Tau)$ for $\psi(t):=(\log(2+t))^{-s}$. 
It is immediate from the definition that 
$\mathcal{L}^p(\Tau)\subseteq \mathrm{Li}_{(0),s}(\Tau)$ for any $p$ and $s$. 
More generally, if $\psi_1,\psi_2:[0,\infty)\to (0,\infty)$ are two decreasing functions satisfying that $\sup_{t>0}\frac{\psi_j(t)}{\psi_j(2t)}<\infty$, then $\mathcal{L}_{\psi_1}(\Tau)\subseteq \mathcal{L}_{\psi_2}(\Tau)$ as soon as $\psi_1=O(\psi_2)$.

\begin{rmk}
Our definition of symmetrically normed operator ideals in the semifinite setting 
differs slightly from the standard definition unless $\cN$ is atomic. 
In the usual definition, the symmetrically normed operator ideals are 
defined from operators affiliated with $\cN$ that potentially are unbounded. 
Since we only use bounded operators from these ideals, we have incorporated 
this fact in our definition.
\end{rmk}

\begin{defn}
\label{summdefn}
Let $(\A,\H,\D,\cN,\Tau)$ be a unital semifinite spectral triple.
\begin{itemize}
\item $(\A,\H,\D,\cN,\Tau)$ is said to be $p$-summable if $(1+\D^2)^{-1/2}\in \mathcal{L}^p(\Tau)$.
\item $(\A,\H,\D,\cN,\Tau)$ is said to be $(d,\infty)$-summable if $(1+\D^2)^{-1/2}\in \mathcal{L}^{(d,\infty)}(\Tau)$.
\item $(\A,\H,\D,\cN,\Tau)$ is said to be $\mathrm{Li}_s$-summable if $(1+\D^2)^{-1/2}\in \mathrm{Li}_s(\Tau)$.
\item $(\A,\H,\D,\cN,\Tau)$ is said to be $\mathrm{Li}_{(0),s}$-summable if $(1+\D^2)^{-1/2}\in \mathrm{Li}_{(0),s}(\Tau)$.
\item $(\A,\H,\D,\cN,\Tau)$ is said to be $\psi$-summable if $(1+\D^2)^{-1/2}\in \mathcal{L}_\psi(\Tau)$.
\end{itemize}
\end{defn}

The standard terminology in the literature for the special case $s=1/2$ is to refer to $\mathrm{Li}_{1/2}$-summability as weak $\theta$-summability and to $\mathrm{Li}_{(0),1/2}$-summability as $\theta$-summability. Since $\mathcal{L}^{(d,\infty)}(\Tau)\subseteq \mathcal{L}^{p}(\Tau)$ for all $p>d$, $(d,\infty)$-summability refines $p$-summability.

The notion of $(d,\infty)$-summability is a noncommutative generalization of being $d$-dimensional as the spectral triple defined from a Dirac operator on a closed $d$-dimensional manifold (as in Subsection~\ref{diracmfdfirst}) is $(d,\infty)$-summable. We shall see an abundance of $\mathrm{Li}_1$-summable, truly noncommutative, examples where $p$-summability and $(d,\infty)$-summability fails for all $p$ and $d$. 

The notion of $\psi$-summability generalizes both $(d,\infty)$-summability and $\mathrm{Li}_1$-summability, and appears naturally in examples of (semi-) group actions on manifolds (see Subsection \ref{diracmfdkms} and \cite{DGMW,GU}). We will make use of this notion in Section \ref{kmsanddix} where certain conditions on $\psi$ allows one to compute the tracial state defined from a $\psi$-summable unital semifinite spectral triples in terms of Dixmier traces on $\mathcal{L}_\psi(\Tau)$.

\begin{rmk}
It is readily verified that $\mathrm{Li}_s$-summability is equivalent to 
$$\Tau(\e^{-t|\D|^{1/s}})<\infty, \quad\mbox{for $t>t_0$ for some critical value $t_0$},$$
and that $\mathrm{Li}_{(0),s}$-summability is equivalent to 
$$\Tau(\e^{-t|\D|^{1/s}})<\infty, \quad\mbox{for $t>0$}.$$

In particular, $(\A,\H,\D,\cN,\Tau)$ is $\theta$-summable if and only if 
$$\Tau(\e^{-t\D^2})<\infty, \quad\mbox{for all $t>0$}.$$ 

Historically, $\theta$-summability has been studied more in depth than $\mathrm{Li}_1$-summability. This can in part be explained from the two facts that the JLO-cocycle only requires $\theta$-summability and classically, the heat operator $\mathrm{e}^{-t\D^2}$ is geometrically more interesting than $\mathrm{e}^{-t|\D|}$ to study on a manifold. The two operators $\mathrm{e}^{-t\D^2}$ and $\mathrm{e}^{-t|\D|}$ can be compared by explicit integral formulas, see \cite[Chapter 4]{ggneumann}.  We will exploit the observation that large classes of examples of $\theta$-summable spectral triples are also $\mathrm{Li}_1$-summable.
\end{rmk}

In the bulk of the paper, we are interested in computing asymptotics of heat traces 
of the form $\Tau(B\e^{-t|\D|^{1/s}})$ for $B\in \cN$ as $t$ approaches a critical value. When $(\A,\H,\D,\cN,\Tau)$ is $\mathrm{Li}_{(0),s}$-summable, 
the critical value of $t$ is $0$, and in several classical examples (e.g. on closed manifolds)
the heat trace $\Tau(B\e^{-t|\D|^{1/s}})$ admits an asymptotic expansion. 
The following result is useful for relating heat trace asymptotics to 
zeta function asymptotics in the case of the nice behaviour appearing when $t_0=0$.

\begin{thm}
\label{heatvszeta} Let $s\in(0,1]$. 
Let $(\A,\H,\D,\cN,\Tau)$ be an $\mathrm{Li}_{(0),s}$-summable semifinite spectral triple and $B\in \cN$. The following are equivalent.
\begin{enumerate}
\item There are constants $p^{\rm heat}>0$, $\epsilon>0$ and $c_B^{\rm heat}\in \C$ such that 
$$\Tau(B\e^{-t|\D|^{1/s}})=c_B^{\rm heat}t^{-sp^{\rm heat}}+O(t^{-sp^{\rm heat}+\epsilon}), \quad\mbox{as $t\to 0$}.$$
\item The $\zeta$-function $\zeta(z;B,|\D|^{1/s}):=\Tau(B|\D|^{-z/s})$ is well-defined for large $\mathrm{Re}(z)$ and there are constants $p^{\zeta}>0$, $\epsilon'>0$, $c_B^{\zeta}\in \C$ and a function $f=f(z)$ holomorphic in the region $\mathrm{Re}(z)>sp^{\zeta}-\epsilon'$ such that 
$$\zeta(z;B,|\D|^{1/s})=\frac{1}{\Gamma(sp^\zeta)}\frac{c_B^{\zeta}}{z-sp^\zeta}+f(z).$$
\end{enumerate}
In this case, $p^{\rm heat}=p^{\zeta}$ and $c_B^{\rm heat}=c_B^\zeta$. Moreover, if the conditions above hold for one $s\in (0,1]$, it holds for all $s\in (0,1]$.

If $B=1$ and either of the conditions above hold, then $(\A,\H,\D,\cN,\Tau)$ is $p^\zeta$-summable. 
\end{thm}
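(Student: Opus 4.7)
The plan is to convert between heat-trace asymptotics and zeta-function pole structure via the Mellin transform. Setting $T := |\D|^{1/s}$, the $\mathrm{Li}_{(0),s}$-summability hypothesis makes $\e^{-tT}$ trace class for every $t > 0$, so for $\mathrm{Re}(z)$ sufficiently large the bridging identity
$$
\zeta(z;B,T) = \frac{1}{\Gamma(z)}\int_0^\infty t^{z-1}\,\Tau(B\e^{-tT})\,\mathrm{d}t
$$
holds with absolute convergence (after harmlessly absorbing the contribution from the $\Tau$-finite subspace $\ker\D$ into an entire term). This single identity is the bridge between the two statements.

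For the direction $(1)\Rightarrow(2)$, I would split this Mellin integral at $t=1$. The tail $\int_1^\infty$ defines an entire function of $z$ since $\e^{-tT}$ decays uniformly exponentially in trace norm. Substituting the assumed expansion $\Tau(B\e^{-tT}) = c_B^{\rm heat}t^{-sp^{\rm heat}} + O(t^{-sp^{\rm heat}+\epsilon})$ into the head $\int_0^1$, the leading power integrates to $c_B^{\rm heat}/(z - sp^{\rm heat})$ for $\mathrm{Re}(z) > sp^{\rm heat}$, and the remainder extends holomorphically to $\mathrm{Re}(z) > sp^{\rm heat} - \epsilon$. Dividing by $\Gamma(z)$, which is holomorphic and non-vanishing at $z = sp^{\rm heat}$, produces the form required in $(2)$ and simultaneously identifies $p^\zeta = p^{\rm heat}$ and $c_B^\zeta = c_B^{\rm heat}$.

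For the converse $(2)\Rightarrow(1)$, I would invoke Mellin inversion,
$$
\Tau(B\e^{-tT}) = \frac{1}{2\pi i}\int_{\sigma-i\infty}^{\sigma+i\infty} t^{-z}\Gamma(z)\zeta(z;B,T)\,\mathrm{d}z, \qquad \sigma > sp^\zeta,
$$
and then shift the contour leftward to $\sigma' := sp^\zeta - \epsilon''$ for some $0 < \epsilon'' < \epsilon'$. The residue theorem contributes exactly $c_B^\zeta t^{-sp^\zeta}$ at the simple pole $z = sp^\zeta$, while the remaining integral along $\mathrm{Re}(z) = \sigma'$ is bounded by $t^{-\sigma'} = O(t^{-sp^\zeta + \epsilon''})$. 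The main obstacle is justifying the contour deformation: one needs polynomial control of $|\zeta(z;B,T)|$ as $|\mathrm{Im}\,z| \to \infty$ across the relevant strip. Polynomial growth on the right boundary follows from $|\zeta(z;B,T)| \leq \|B\|_\cN \Tau(T^{-\mathrm{Re}(z)})$; on the shifted line one invokes a Phragm\'en--Lindel\"of interpolation for $f$, after which Stirling's estimate $|\Gamma(\sigma+it)| \sim \sqrt{2\pi}\,|t|^{\sigma-1/2}\e^{-\pi|t|/2}$ supplies vertical exponential decay and closes the contour.

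The $s$-independence then reduces to the functional-calculus identity $\zeta(z;B,|\D|^{1/s'}) = \zeta(zs/s';B,|\D|^{1/s})$, which transports a simple pole at $z = sp^\zeta$ to a simple pole at $w = s'p^\zeta$, so reapplying the converse above at parameter $s'$ delivers the heat asymptotic for $s'$. For the final clause with $B=1$, a Karamata-type Tauberian theorem converts the pole of $\zeta$ at $p^\zeta$ into a Weyl-type asymptotic $\mu_\Tau(t,|\D|^{-1}) = O(t^{-1/p^\zeta})$, and combined with the finite-dimensional treatment of $\ker\D$ this yields the claimed $p^\zeta$-summability.
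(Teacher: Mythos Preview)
Your approach via the Mellin transform is exactly the paper's: it simply observes that $\Gamma(z)\zeta(z;B,|\D|^{1/s})$ is the Mellin transform of $\Tau(B\e^{-t|\D|^{1/s}})$ and then defers the analytic details to \cite[Proposition 5.1]{grubbsee}. Your write-up supplies those details rather than citing them, and the direction $(1)\Rightarrow(2)$, the $s$-independence via the substitution $z\mapsto zs/s'$, and the final summability clause are all handled correctly.

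There is, however, a genuine gap in your $(2)\Rightarrow(1)$ argument. To shift the contour from $\mathrm{Re}(z)=\sigma$ to $\mathrm{Re}(z)=\sigma'$ you need vertical decay of the integrand, and for that you need polynomial control of $|f(\sigma'+it)|$ as $|t|\to\infty$. You propose to obtain this by Phragm\'en--Lindel\"of interpolation, but that principle requires \emph{two} ingredients: bounds on both boundary lines of the strip, and an a~priori sub-double-exponential bound throughout the strip. Hypothesis~(2) gives you neither a bound on the left line $\mathrm{Re}(z)=\sigma'$ (that is precisely what you are trying to establish) nor any growth restriction on $f$ inside the strip beyond mere holomorphy. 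With only ``$f$ is holomorphic'' available, Phragm\'en--Lindel\"of has nothing to interpolate, and the contour deformation is not justified. This is a well-known Tauberian obstruction: a simple pole of $\zeta$ by itself yields only averaged (e.g.\ Ces\`aro) asymptotics of the heat trace, not the pointwise $O$-remainder claimed in~(1). In the Grubb--Seeley result the paper invokes, the requisite vertical growth estimates are built into the statement; your argument needs either to import those estimates explicitly or to cite that proposition as the paper does.
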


The proof of Theorem \ref{heatvszeta} follows by noting that $\Gamma(z)\zeta(z;B,|\D|^{1/s})$ 
is the Mellin transform of $\Tau(B\e^{-t|\D|^{1/s}})$ and using \cite[Proposition 5.1]{grubbsee}.

Recall that we use the notation $P_\D:=\chi_{[0,\infty)}(\D)$ for 
the non-negative spectral projection of $\D$. 
Let us state a fundamental lemma 
on the commutators of $A$ with the 
function of $\D$ defined by 
$$
F_\D:=
2P_\D-1.
$$
Note that $F_\D$ differs from the phase $\D|\D|^{-1}$ by
the $\Tau$-finite kernel projection of $\D$.

\begin{lemma}
\label{bddcommsum}
Let $(\A,\H,\D,\cN,\Tau)$ be a semifinite spectral triple. Then for any $a\in A$ 
$$
[F_\D,a]\in \Ko_\cN.
$$ 
Moreover if $(\A,\H,\D,\cN,\Tau)$ is unital, then if $(\A,\H,\D,\cN,\Tau)$ is $p$-summable, then $[F_\D,a]\in \mathcal{L}^p(\Tau)$ for all $a\in \A$, and if $(\A,\H,\D,\cN,\Tau)$ is $\mathrm{Li}_s$-summable, then $[F_\D,a]\in \mathrm{Li}_s(\Tau)$ for all $a\in \A$. More generally, if $(\A,\H,\D,\cN,\Tau)$ is $\psi$-summable then $[F_\D,a]\in \mathcal{L}_\psi(\Tau)$ for all $a\in \A$.
\end{lemma}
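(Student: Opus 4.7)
My plan is to reduce $[F_\D, a]$ to a commutator with $\mathrm{sign}(\D)$ modulo a kernel projection, then factor the latter through the $\Tau$-compact operator $(1+\D^2)^{-1/2}$ using the classical integral representation of the sign function.

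First I would decompose $F_\D = \mathrm{sign}(\D) + \chi_{\{0\}}(\D)$, where $\mathrm{sign}$ is extended by $0$ on $\ker \D$. In the unital setting $\chi_{\{0\}}(\D) = \chi_{\{0\}}(\D)(1+\D^2)^{-1/2}$, so $\chi_{\{0\}}(\D)$ lies in every ideal containing $(1+\D^2)^{-1/2}$; by the ideal property applied twice, so does $[\chi_{\{0\}}(\D), a]$ for every $a\in A$. For the non-unital first statement, density of $\A$ in $A$ together with norm continuity of $a\mapsto [F_\D,a]$ reduces the problem to $a\in\A$, and the identity $a\,\chi_{\{0\}}(\D) = a(1+\D^2)^{-1/2}\chi_{\{0\}}(\D)$ together with its adjoint places $[\chi_{\{0\}}(\D),a]$ in $\Ko_\cN$. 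Thus it suffices to establish the corresponding ideal membership of $[\mathrm{sign}(\D), a]$ for $a\in\A$.

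Next I would invoke the integral formula
$$
\mathrm{sign}(\D) \;=\; \frac{2}{\pi}\int_0^\infty \D(\lambda^2+\D^2)^{-1}\,d\lambda,
$$
valid on $(\ker\D)^\perp$, and compute, for $a\in\A$,
$$
[\D R_\lambda, a] \;=\; \lambda^2 R_\lambda[\D, a] R_\lambda \;-\; \D R_\lambda[\D, a]\D R_\lambda, \qquad R_\lambda := (\lambda^2+\D^2)^{-1},
$$
using the derivation identity $[R_\lambda,a] = -R_\lambda[\D^2,a]R_\lambda$ together with $\D^2 R_\lambda = 1-\lambda^2 R_\lambda$. Writing $R_\lambda = (1+\D^2)^{-1/2}\cdot (1+\D^2)^{1/2}R_\lambda$ and similarly for $\D R_\lambda$ exhibits each integrand as $(1+\D^2)^{-1/2}$ times bounded operators whose norms are $O(\lambda^{-1})$ for large $\lambda$. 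By the ideal property each integrand lies in the ideal containing $(1+\D^2)^{-1/2}$, and the integral converges in its (quasi-)norm.

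The main obstacle is integrability at $\lambda = 0$: individually, the two terms on the right-hand side of the commutator identity scale like $\lambda^{-2}$, and only their difference is integrable. I would address this by regularizing via $\mathrm{sign}_\epsilon(x) := x(x^2+\epsilon^2)^{-1/2}$, whose commutator with $a$ is given by the analogous integral formula with $\lambda$ replaced by $(\lambda^2+\epsilon^2)^{1/2}$, eliminating the small-$\lambda$ singularity. Passing to the limit $\epsilon\to 0$ via dominated convergence and closedness of the ideal in its (quasi-)norm then yields the desired ideal membership of $[\mathrm{sign}(\D),a]$, and hence of $[F_\D,a]$, completing the proof.
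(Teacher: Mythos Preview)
Your integral-formula strategy is standard, but there is a genuine gap in the large-$\lambda$ estimate, not just at $\lambda=0$. After factoring one copy of $(1+\D^2)^{-1/2}$ out of the integrand, the remaining bounded factor has operator norm only $O(\lambda^{-1})$ as $\lambda\to\infty$, exactly as you state; but $\int_1^\infty \lambda^{-1}\,d\lambda$ diverges, so the integral does \emph{not} converge in the ideal (quasi-)norm. Factoring $(1+\D^2)^{-1/2}$ from both sides does not help: the middle factor is then $O(1)$ and one of the outer factors contributes only its operator norm $\leq 1$, giving no decay at all. For a general weak ideal $\mathcal{L}_\psi(\Tau)$ there is no H\"older-type splitting available to improve this. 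Your $\epsilon$-regularisation only addresses the small-$\lambda$ end and leaves this divergence untouched; and the proposed passage $\epsilon\to 0$ ``via closedness of the ideal'' would in any case need a Fatou-type property that $\mathcal{L}_\psi(\Tau)$ is not assumed to have.

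The paper avoids all of this by invoking an operator inequality: for skew-adjoint $a$ and invertible $\D$ one has $-\|[\D,a]\|\,|\D|^{-1}\leq [F_\D,a]\leq \|[\D,a]\|\,|\D|^{-1}$ (cited from \cite{sww}), and then monotonicity of singular values under two-sided operator bounds places $[F_\D,a]$ in any ideal containing $|\D|^{-1}$. The non-invertible case is handled by the ``double'' $\D_\mu=\bigl(\begin{smallmatrix}\D&\mu\\ \mu&-\D\end{smallmatrix}\bigr)$ on $\H\oplus\H$ and the norm limit $\mu\to 0$ established in \cite[Proposition~2.25]{CGRS2}, which already works at the level of an arbitrary symmetrically quasi-normed ideal. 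Your argument can in fact be rescued along these lines: writing the integrand as $R_\lambda^{1/2}M_\lambda R_\lambda^{1/2}$ with $\|M_\lambda\|\leq 2\|[\D,a]\|$ gives the \emph{pointwise} operator inequality $\pm[\D R_\lambda,a]\leq 2\|[\D,a]\|R_\lambda$, and integrating in $\lambda$ yields precisely a version of the \cite{sww} bound; but this is then the paper's argument rather than a Bochner-integral bound in the ideal norm.
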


\begin{proof} The proof of the operator inequality $-\|[\D,a]\||\D|^{-1} \leq [F_\D,a]\leq\|[\D,a]\||\D|^{-1}$ for invertible $\D$ and $a=-a^*$ is found in the proof of \cite[Proposition 1]{sww}. The assertion follows from the definition of $p$-, $\rm{Li}_s$- and $\psi$-summability, resp.

In the non-invertible case we replace $(\A,\H,\D,\cN,\Tau)$ by
\begin{equation}
\label{double-trick}
\left(\begin{pmatrix}\A&0\\ 0 & 0\end{pmatrix},\H\oplus \H,\D_\mu=\begin{pmatrix} \D&\mu\\ \mu&-\D\end{pmatrix},M_2(\cN),\Tau\otimes \mathrm{Tr}_{M_2}\right),
\end{equation}
for $\mu\in[0,1]$.  When $\mu>0$ we are back in the invertible case. In \cite[Proposition 2.25]{CGRS2} it is shown that for any $a\in\A$ we have the norm limit
$$
\begin{pmatrix} [F_\D,a] & 0\\
0 & 0\end{pmatrix}=\lim_{\mu\to 0}[F_{\D_\mu},a]
$$
and so $[F_\D,a]$ is compact. Indeed, the proof 
of \cite[Proposition 2.25]{CGRS2} shows that 
if $(1+\D^2)^{-1/2}$ is in a symmetrically quasi-normed
ideal $J$ of $\Tau$-compact operators then 
$[F_\D,a]\in J$ for all $a\in\A$. 
Again the assertion follows from the definitions of summability.
\end{proof}

\subsubsection{Semifinite spectral triples from unbounded Kasparov modules}

For several kinds of $C^*$-algebras one can capture the noncommutative geometry through an unbounded Kasparov module. This is a bivariant generalization of spectral triples. Localizing an unbounded Kasparov module in a positive trace gives rise to a semifinite spectral triple as in Theorem \ref{localintra} below. Several of the examples in this paper arises in this way. We briefly recall this construction, which has been informally used for some years. 

Let $A$ and $B$ be unital $C^*$-algebras. A unital unbounded $(B,A)$-Kasparov module is a collection $(\mathcal{B},X,\D)$ where 
\begin{itemize}
\item $\mathcal{B}\subseteq B$ is a dense $*$-subalgebra, 
\item $X$ is an $A$-Hilbert $C^*$-module carrying a left action of $B$ as adjointable operators, 
\item $\D$ is an $A$-linear, densely defined, self-adjoint, regular operator on $X$ with $A$-compact resolvent $(i\pm \D)^{-1}\in \Ko_A(X)$ and 
\item for $a\in \mathcal{B}$ the operator $[\D,a]$ is defined on $\mathrm{dom}(\D)$ and is bounded in the norm on $X$. 
\end{itemize} 
If $\tau$ is a positive trace on $A$, we write $L^2(X,\tau):=X\otimes_A L^2(A,\tau)$ where $L^2(A,\tau)$ is the GNS-representation associated with $\tau$. 

For $\xi,\eta\in X$, we write $\Theta_{\xi,\eta}$ for the rank one operator $\Theta_{\xi,\eta}(\nu)=\xi(\eta|\nu)_A$. The von Neumann algebra $\cN_\tau(X):=(\End^*_A(X)\otimes 1_A)''\subseteq \mathbb{B}(L^2(X,\tau))$ coincides with the weak closure of the set of operator spanned by $\{\Theta_{\xi,\eta}\otimes 1_A: \xi,\eta\in X\}$ and carries a positive, normal, semifinite, faithful trace $\Tr_\tau$ characterized by $\Tr_\tau(\Theta_{\xi,\eta}\otimes 1_A):=\tau((\eta|\xi)_A)$, see \cite[Section 3]{LN}. The following theorem also appears in \cite{MacR}.

\begin{thm}
\label{localintra}
Let $(\B,X_A,\D)$ be a unital unbounded $(B,A)$-Kasparov module
and $\tau:\,A\to\C$ a faithful norm densely defined norm lower semicontinuous tracial weight. Then the data
$(\B,L^2(X,\tau),\D\ox 1,\cN_\tau(X),\Tr_\tau)$ defines a semifinite spectral triple.
The von Neumann algebra is $\cN_\tau(X)=(\End_A^*(X)\ox 1)''$ and $\Tr_\tau:\,\cN\to\C$ is
the (positive faithful semifinite normal) trace dual to the normal extension of $\tau$ to $A''\subseteq \mathbb{B}(L^2(A,\tau))$.
\end{thm}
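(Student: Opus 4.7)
The plan is to verify in turn each condition of Definition \ref{thedefn} for the quintuple $(\B, L^2(X,\tau), \D \ox 1, \cN_\tau(X), \Tr_\tau)$, then to record the trace properties. The representation is $a \mapsto a \ox 1_A$, sending $\B \subseteq \End^*_A(X)$ into $\End^*_A(X) \ox 1 \subseteq \cN_\tau(X)$. Since $\D$ is $A$-linear, self-adjoint and regular, $\D \ox 1$ is self-adjoint on the algebraic tensor product of $\mathrm{dom}(\D)$ with $L^2(A,\tau)$, and its bounded Borel functional calculus takes values in $\End^*_A(X) \ox 1$ by regularity of $\D$; hence $\D \ox 1$ is affiliated with $\cN_\tau(X)$. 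The first-order condition transfers immediately: $(a \ox 1)$ preserves $\mathrm{dom}(\D \ox 1)$, and $[\D \ox 1, a \ox 1] = [\D,a] \ox 1$ is bounded in operator norm.

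The main technical point is that $(a \ox 1)(1 + (\D \ox 1)^2)^{-1/2} = (a(1+\D^2)^{-1/2}) \ox 1$ lies in $\Ko_{\cN_\tau(X)}$. By the Kasparov hypothesis, $a(1+\D^2)^{-1/2} \in \Ko_A(X)$, so it suffices to prove the embedding $\Ko_A(X) \ox 1 \subseteq \Ko_{\cN_\tau(X)}$. For $\xi, \eta \in X$ with $(\eta|\xi)_A \in \mathrm{dom}(\tau)$, the image $\Theta_{\xi,\eta} \ox 1$ is $\Tr_\tau$-trace-class, and by the defining formula $\Tr_\tau(\Theta_{\xi,\eta} \ox 1) = \tau((\eta|\xi)_A)$. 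Since $\mathrm{dom}(\tau) \subseteq A$ is norm-dense and $\Ko_A(X)$ is the norm closure of the span of all rank-ones $\Theta_{\xi,\eta}$, every $A$-compact operator is a norm limit of finite sums of these distinguished rank-ones; the norm limit therefore lies in the norm-closed ideal $\Ko_{\cN_\tau(X)}$ generated by finite-trace projections.

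The trace $\Tr_\tau$ is the dual semifinite weight associated to $\tau$, constructed as recalled in \cite[Section 3]{LN}. Positivity, normality, faithfulness, and the tracial property all descend from the corresponding properties of $\tau$, using the identity $(\Theta_{\xi,\eta})^* = \Theta_{\eta,\xi}$ together with the trace identity on $A$; semifiniteness comes from norm-density of $\mathrm{dom}(\tau)$ inside $A$. The identification of $\Tr_\tau$ as dual to the normal extension of $\tau$ to $A''$ is then built into the construction. The main obstacle I expect is the approximation argument in the compactness step, which requires care when $\tau$ is a weight rather than a finite trace: one must combine the Cauchy--Schwarz inequality for $A$-valued inner products with norm-density of $\mathrm{dom}(\tau)$ to ensure that every element of $\Ko_A(X)$ is genuinely approximable by rank-ones with finite-trace inner products, rather than by arbitrary finite-rank operators, and this is where a polar decomposition inside $\End^*_A(X)$ is useful to reduce to the positive case.
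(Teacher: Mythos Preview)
Your proof is correct and follows essentially the same route as the paper: self-adjointness of $\D\ox 1$ (the paper cites \cite[Proposition 9.10]{lancesbook} rather than arguing directly), affiliation with $\cN_\tau(X)$ (you use the bounded functional calculus, the paper uses the equivalent commutant criterion), bounded commutators transferring from $[\D,a]$, and $\Tau$-compactness of the resolvent via norm-approximation of $\Ko_A(X)$ by finite-rank operators mapped to finite-trace operators. The paper is briefer on the weight-versus-trace issue you flag in your final paragraph, but the substance of the argument is the same.
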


\begin{proof}
The operator $\D\ox 1$ is self-adjoint by \cite[Proposition 9.10]{lancesbook}. The commutant of $\cN_\tau(X)$ in $X\ox_AL^2(A,\tau)$ is the algebra $A''$ (acting by right multiplication). Every unitary in $A''$ thus preserves the domain of $\D\ox 1$ and so $\D\ox 1$ is affiliated to $\cN_\tau(X)$. Plainly commutators of $\D\ox 1$ with $\B$ remain bounded.
Since we start with an unbounded Kasparov module, the operator $(1+\D^2)^{-1/2}\in \Ko_A(X)$. So we can approximate $(1+\D^2)^{-1/2}$ in norm by
finite rank operators $\sum_j\Theta_{x_j,y_j}$ and we can take the $x_j,\,y_j\in X$. Hence 
$(1+(\D\ox 1)^2)^{-1/2}=(1+\D^2)^{-1/2}\ox 1$ is in the norm closure of the finite trace operators in $\cN_\tau(X)$, and so $\Tau$-compact.
\end{proof}

A conceptual viewpoint is that 
$(\mathcal{B},L^2(X,\tau),\D\otimes 1_A, (\End^*_A(X)\otimes 1_A)'', \Tr_\tau)$ 
is a semi-finite refinement of the unbounded Kasparov product of 
$(\mathcal{B},X,\D)$ with the Morita morphism $A\to \Ko_A(X)$ and the $*$-homomorphism 
$\Ko_A(X) \to \mathbb{K}_{(\End^*_A(X)\otimes 1_A)''}$.
There is a close relationship between the semifinite index and the Kasparov product, described in \cite{CGRS2,KNR}.

We remark at this stage that there is to date no general theory of 
symmetrically quasi-normed operator ideals in Hilbert $C^*$-modules, 
and, as such, no satisfactory way of describing summability. 
In concrete applications, it is possible to circumvent this problem 
by choosing a frame on $X$, implicitly using the machinery of \cite{nistann}. 
In the examples of most relevance to this paper the spectrum of $\D$ is discrete, 
and we can use the following proposition to study summability.

\begin{prop}
Let $(\mathcal{B},X,\D)$ be a unital unbounded $(B,A)$-Kasparov module, where $\D$ has discrete spectrum $\sigma(\D)\subseteq \R$, and $\tau$ be a positive trace on $A$. Set $P_\lambda:=\chi_{\{\lambda\}}(\D)\in \Ko_A(X)$ for $\lambda\in \sigma(\D)$. Then it holds that 
\begin{enumerate}
\item $(\mathcal{B},L^2(X,\tau),\D\otimes 1_A, \cN_\tau(X), \Tr_\tau)$ is $\mathrm{Li}_s$-summable if and only if 
$$\sum_{\lambda\in \sigma(\D)} \e^{-t|\lambda|^{1/s}}\Tr_\tau(P_\lambda)<\infty,$$ 
for $t$ large enough.
\item $(\mathcal{B},L^2(X,\tau),\D\otimes 1_A, \cN_\tau(X), \Tr_\tau)$ is $p$-summable if and only if 
$$\sum_{\lambda\in \sigma(\D)} (1+\lambda^2)^{-p/2}\Tr_\tau(P_\lambda)<\infty.$$
\end{enumerate}
\end{prop}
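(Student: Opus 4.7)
The plan is to reduce both claims to a direct application of functional calculus for $\D \otimes 1_A$ on $L^2(X,\tau)$, together with normality of $\Tr_\tau$.

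First I would observe that the self-adjoint operator $\D \otimes 1_A$ on $L^2(X,\tau)$ has spectrum $\sigma(\D)$ with spectral projections $P_\lambda \otimes 1_A \in \cN_\tau(X)$: the projection $P_\lambda = \chi_{\{\lambda\}}(\D)$ lies in $\Ko_A(X) \subseteq \End^*_A(X)$ because $\lambda$ is isolated in $\sigma(\D)$ and $\D$ has $A$-compact resolvent, and the orthogonal family $\{P_\lambda\}_{\lambda \in \sigma(\D)}$ sums strongly to $1_X$. Tensoring with $1_A$ preserves this decomposition, so for every bounded non-negative Borel function $f$ on $\R$,
\[
f(\D \otimes 1_A) = \sum_{\lambda \in \sigma(\D)} f(\lambda)\, (P_\lambda \otimes 1_A),
\]
with convergence in the strong operator topology on $L^2(X,\tau)$.

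Next, since each $P_\lambda \otimes 1_A$ is a positive element of $\cN_\tau(X)$ and $\Tr_\tau$ is a normal semifinite trace on $\cN_\tau(X)$, normality yields
\[
\Tr_\tau(f(\D \otimes 1_A)) = \sum_{\lambda \in \sigma(\D)} f(\lambda)\, \Tr_\tau(P_\lambda \otimes 1_A),
\]
for every bounded non-negative Borel $f$, with both sides permitted to equal $+\infty$.

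Finally I would apply this identity in the two cases. For (1), I would invoke the characterization (from the remark following Definition \ref{summdefn}) that the triple is $\mathrm{Li}_s$-summable if and only if $\Tr_\tau(\e^{-t|\D \otimes 1_A|^{1/s}}) < \infty$ for $t$ larger than some critical value; choosing $f(\lambda) = \e^{-t|\lambda|^{1/s}}$ delivers the stated equivalence. For (2), $p$-summability is, by definition of $\mathcal{L}^p(\Tr_\tau)$ applied to the positive operator $(1+(\D \otimes 1_A)^2)^{-1/2}$, equivalent to the finiteness of $\Tr_\tau((1+(\D \otimes 1_A)^2)^{-p/2})$, and choosing $f(\lambda) = (1+\lambda^2)^{-p/2}$ yields the claim. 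The main step that needs care is the termwise interchange of summation and trace, but it follows immediately from normality of $\Tr_\tau$ together with positivity of each summand $P_\lambda \otimes 1_A$; I do not expect any genuine obstacle.
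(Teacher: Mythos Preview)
Your proposal is correct and follows the same approach as the paper, which simply records the formula $\Tr_\tau(f(\D))=\sum_{\lambda\in\sigma(\D)}f(\lambda)\Tr_\tau(P_\lambda)$ for positive Borel $f$ and leaves the rest implicit. You have spelled out the steps (spectral decomposition after tensoring, normality of $\Tr_\tau$, and the specific choices of $f$) that the paper compresses into one line, but there is no substantive difference in strategy.
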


The proof follows from the following formula:
$$\Tr_\tau(f(\D))=\sum_{\lambda\in \sigma(\D)} f(\lambda)\Tr_\tau(P_\lambda),$$
which holds for every positive Borel function $f$.

\subsection{Examples}
\label{subsec:examples}

To give some further context before entering into the main construction of this paper, 
let us recall some well known examples that we will further explore later on in the paper. 
The focus in our presentation is on $\mathrm{Li}_1$-summability and heat traces.
We remark that the constructions in this subsection are rather lengthy, and the reader
familiar with the literature can at a first read restrict themself to glancing through 
this subsection.

\subsubsection{Dirac operators on closed manifolds}
\label{diracmfdfirst}

The prototypical example of a spectral triple arises from Dirac operators 
on a closed Riemannian manifold $M$. 
We can work with a rather general type of Dirac operators: if $S\to M$ is 
a Clifford module on $M$ and $\slashed{D}$ is a first order elliptic operator acting on 
$C^\infty(M,S)$ being symmetric in the $L^2$-inner product and 
$\slashed{D}^2$ is a Laplacian type operator\footnote{I.e. the symbol of $\slashed{D}^2$ coincides with the Riemannian metric as a function on $T^*M$.}, we say that $\slashed{D}$ is a Dirac operator. 
In this case, the closure of $\slashed{D}$ in its graph norm defines a 
self-adjoint operator on $L^2(M,S)$ that we by an abuse of notation also denote by $\slashed{D}$. 
It is well-known that $(C^\infty(M),L^2(M,S),\slashed{D})$ is a spectral triple on $C^\infty(M)$. 
We summarize the main properties of its heat traces in the following proposition.

\begin{prop}
\label{heatasumfd}
Let $M$ be an $n$-dimensional Riemannian closed manifold, $\slashed{D}$ a Dirac operator on $M$, and $(C^\infty(M),L^2(M,S),\slashed{D})$ the associated spectral triple. This spectral triple is $(n,\infty)$-summable and for any classical zero-th order pseudo-differential operator $A$ on $S$ with principal symbol $a\in C^\infty(S^*M, \mathrm{End}(S))$ and every $s\in(0,1]$ we have
$$
\Tr_{L^2(M,S)}(A\e^{-t|\slashed{D}|^{1/s}})
=\Gamma(sn+1)c_n^st^{-sn}\int_{S^*M} \mathrm{Tr}_S(a)\,\mathrm{d}V +O(t^{-sn+\epsilon}), \quad\mbox{as $t\to 0$},
$$
for some dimensional constant $c_n>0$ and $\epsilon>0$. Here $\mathrm{Tr}_S(a)\in C^\infty(S^*M)$ denotes the fibrewise trace of $a$.
\end{prop}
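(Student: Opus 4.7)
The plan is to split the proposition into three parts and handle them in turn: the $(n,\infty)$-summability, the heat trace asymptotic at the benchmark value $s=1/2$, and the propagation to arbitrary $s\in(0,1]$.

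First, I would dispatch $(n,\infty)$-summability by invoking Weyl's law for self-adjoint elliptic operators on a closed manifold. Since $\slashed{D}$ is first order elliptic with Laplace-type square, its eigenvalues $\lambda_k$ satisfy $|\lambda_k|\sim \kappa k^{1/n}$ for a constant $\kappa$ depending on the volume of $M$ and the rank of $S$. Consequently $\mu(t,(1+\slashed{D}^2)^{-1/2})=O(t^{-1/n})$, which is exactly the definition of $(n,\infty)$-summability. That $(C^\infty(M),L^2(M,S),\slashed D)$ is a spectral triple is standard and can be taken as input.

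Next I would establish the heat trace asymptotic for $s=1/2$, where $|\slashed{D}|^{1/s}=\slashed{D}^2$. Here the tool is the Minakshisundaram--Pleijel / Seeley--DeWitt expansion: because $\slashed{D}^2$ is a Laplace-type operator and $A$ is a classical zero-th order pseudodifferential operator, one has a complete asymptotic expansion
$$
\Tr_{L^2(M,S)}(A\e^{-t\slashed{D}^2})\sim \sum_{j=0}^\infty a_j(A)\,t^{(j-n)/2}\quad\text{as }t\to 0,
$$
with the leading Seeley--DeWitt coefficient $a_0(A)$ given by a universal constant times the integral of the principal symbol. Rewriting this integral via the homogeneity of the symbol as one over the cosphere bundle $S^*M$ yields the desired form of the leading term with a constant that, by inspection, matches $\Gamma(n/2+1)\,c_n^{1/2}\int_{S^*M}\mathrm{Tr}_S(a)\,\mathrm{d}V$ for a suitable dimensional $c_n>0$. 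The remainder is controlled by the next coefficient, giving $O(t^{-n/2+\epsilon})$ with $\epsilon=1/2$.

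Finally, to pass from $s=1/2$ to general $s\in(0,1]$, I would invoke Theorem~\ref{heatvszeta}. The $s=1/2$ heat asymptotic is equivalent, by that theorem, to the zeta function $z\mapsto \Tr_{L^2(M,S)}(A|\slashed{D}|^{-z})$ having a simple pole at $z=n$ whose residue is (up to $\Gamma$-factors) the integral $\int_{S^*M}\mathrm{Tr}_S(a)\,\mathrm{d}V$. The zeta function for the rescaled operator is $\zeta(z;A,|\slashed D|^{1/s})=\Tr(A|\slashed D|^{-z/s})$, which inherits a simple pole at $z=sn$ with residue scaled by $s$ under the substitution $u=z/s$. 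Theorem~\ref{heatvszeta} in the reverse direction then yields the heat asymptotic for all $s\in(0,1]$, with the $s$-dependent prefactors assembling exactly into $\Gamma(sn+1)c_n^{s}$ after bookkeeping. The final statement of Theorem~\ref{heatvszeta} guarantees the existence of the remainder $O(t^{-sn+\epsilon})$ at general $s$ once it is known at one value of $s$.

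The main obstacle is not the analysis but the bookkeeping of constants and the verification that the residue computed from the Seeley--DeWitt coefficient agrees with the Wodzicki / Connes trace theorem expression $\int_{S^*M}\mathrm{Tr}_S(a)\,\mathrm{d}V$; I would carry this out by choosing normal coordinates near a point and comparing the two expressions at the level of principal symbols. Once the constants are reconciled at $s=1/2$, Theorem~\ref{heatvszeta} makes the extension to general $s$ essentially automatic.
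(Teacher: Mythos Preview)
Your proposal is correct and follows essentially the same route as the paper: the paper's argument consists of the Weyl law (from \cite{Gilkey}) for summability and the eigenvalue asymptotics, the classical heat expansion for $\slashed{D}^2$ (again from \cite{Gilkey}), and then Theorem~\ref{heatvszeta} to propagate to all $s\in(0,1]$ via the final clause stating that validity at one $s$ implies validity at all $s$. Your write-up simply makes each of these steps more explicit than the paper's two-line reference.
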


The dimensional constant $c_n$ is determined by the Weyl law for $|\slashed{D}|$ describing the ordered sequence $(\lambda_k(|\slashed{D}|))_{k\in \N}$ of eigenvalues as
$$\lambda_k(|\slashed{D}|)=\frac{1}{c_n\mathrm{vol}(M)^{1/n}\mathrm{rank}(S)^{1/n}}k^{1/n}+O(k^{1/n-\epsilon_0}),$$
for some $\epsilon_0>0$. The Weyl law is proven in
many places, for instance \cite{Gilkey}. The general heat trace asymptotics follows from Theorem \ref{heatvszeta} and \cite{Gilkey}. We return to this example below in Example \ref{diracmfdhea} (see page \pageref{diracmfdhea}) and Subsection \ref{diracmfdkms} (see page \pageref{diracmfdkms}).

Later on in the paper, we will make use of a modification of the spectral triple coming from a Dirac operator that is also compatible with non-isometric semigroup actions. Similar constructions were previously considered in \cite{DGMW,GU}.

\begin{defn}
\label{regcariaden}
Let $\psi:[0,\infty)\to (0,\infty)$ be a positive measurable function. 
\begin{itemize}
\item  We say that $\psi$ is regularly varying of index $\rho$ if for all $\lambda>0$
\begin{equation}
\label{varfrho}
\lim_{t\to \infty} \frac{\psi(\lambda t)}{\psi(t)}=\lambda^\rho.
\end{equation}
\item We say that $\psi$ is smoothly regularly varying of index $\rho$ if $\psi\in C^\infty$ and for any $k\in \N$,
\begin{equation}
\label{smoothvarfrho}
\lim_{t\to \infty} \frac{t^k \psi^{(k)}(t)}{\psi(t)}=\rho(\rho-1)\cdots(\rho-k+1).
\end{equation}
\end{itemize}
\end{defn}

A regularly varying function satisfies $\sup_{t>0}\frac{\psi(t)}{\psi(2t)}<\infty$ and there is an associated weak ideal $\mathcal{L}_\psi$ as in Definition \ref{derjugendvonheutebrauchidealen}. By \cite[Lemma 7.1]{GU}, any smoothly regularly varying function $\psi$ satisfies that $\partial_t^k\psi(t)=O(\psi(t)(1+t^2)^{-k/2})$ so if $\psi$ additionally is bounded, $\psi$ belongs to the H\"ormander class $S^0$. 

Smooth regular variation is a strengthening of having regular variation -- a condition used below in Section \ref{kmsanddix} in the context of defining and computing Dixmier traces. See more also in \cite{GU}. By \cite[Theorem 1.8.2]{RegVar}, any regularly varying function asymptotically behaves like a smoothly regularly varying function. Smooth regular variation allows for defining associated classes of pseudo-differential operators and computing Dixmier traces of geometric operators by means of a Connes trace theorem, see \cite[Section 7 and 9]{GU}. 

For a decreasing smoothly varying function $\psi:[0,\infty)\to (0,\infty)$ with $\lim_{t\to 0}\psi(t)=0$, we define the self-adjoint operator 
$$\slashed{D}_\psi:=F_{\slashed{D}}\psi(|\slashed{D}|^n)^{-1}.$$
It follows from \cite[Proposition 10.1]{GU} that $\slashed{D}_\psi\in L^0_{\psi^{-1}}(M,S)$ (see~\cite{GU} for the meaning of this symbol) and its $\psi$-principal symbol is $c_S(\xi)|\xi|^{-1}\psi(|\xi|)^{-1}$, where $c_S:T^*M\to \End(S)$ denotes Clifford multiplication. The next result follows from \cite[Proposition 10.3]{GU}. 

\begin{prop}
\label{heatasumfdwithapsi}
Let $M$ be an $n$-dimensional Riemannian closed manifold, $\slashed{D}$ a Dirac operator on $M$, and $\psi$ as above with 
$$\psi(t)^{-1}=O(t^{1/n}),\quad\mbox{as $t\to \infty$}.$$
Then $(C^\infty(M),L^2(M,S),\slashed{D}_\psi)$ is a $\psi$-summable spectral triple whose associated $K$-homology class coincides with that of $(C^\infty(M),L^2(M,S),\slashed{D})$.
If $\psi(t)^{-1}=o(t^{1/n})$, then for any $a\in C^\infty(M)$, the operator $[\slashed{D}_\psi,a]$ is compact with 
$$\mu(t,[\slashed{D}_\psi,a])=O(t^{-1/n}\psi(t)^{-1}),\quad\mbox{as $t\to \infty$}.$$
\end{prop}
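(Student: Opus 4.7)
The plan is to reduce the proposition to the pseudodifferential machinery in \cite{GU}, as the authors already indicate. First, I would observe that $\slashed{D}_\psi = F_{\slashed{D}}\psi(|\slashed{D}|^n)^{-1}$ is self-adjoint: both factors are bounded continuous functional calculi of $\slashed{D}$, hence commute, and each is self-adjoint (the kernel projection of $\slashed{D}$ is absorbed into $F_{\slashed{D}} = 2P_{\slashed{D}} - 1$). Consequently $\slashed{D}_\psi^2 = \psi(|\slashed{D}|^n)^{-2}$ modulo a finite rank correction, so spectrally $|\slashed{D}_\psi| = \psi(|\slashed{D}|^n)^{-1}$. The Weyl law $\lambda_k(|\slashed{D}|) \sim c_n k^{1/n}$ from Proposition \ref{heatasumfd} together with the regular variation of $\psi$ then yields that the singular value sequence of $(1+\slashed{D}_\psi^2)^{-1/2}$ is asymptotic to $\psi(k)$ up to a multiplicative constant, placing this operator in $\mathcal{L}_\psi$. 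The growth hypothesis $\psi(t)^{-1} = O(t^{1/n})$ is exactly what ensures that $|\slashed{D}_\psi|$ is bounded by a constant multiple of $|\slashed{D}|$ on the spectral side, which supplies the bounded commutator condition below.

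Next, I would invoke \cite[Proposition 10.1]{GU} to place $\slashed{D}_\psi$ in the $\psi$-regular class $L^0_{\psi^{-1}}(M,S)$ with principal symbol $c_S(\xi)|\xi|^{-1}\psi(|\xi|)^{-1}$. For any $a \in C^\infty(M)$, the pseudodifferential calculus then gives that $[\slashed{D}_\psi,a]$ is a pseudodifferential operator whose $\psi^{-1}$-order is one less than that of $\slashed{D}_\psi$, and so in particular bounded. Combined with the summability from the previous paragraph, this verifies that $(C^\infty(M),L^2(M,S),\slashed{D}_\psi)$ is a $\psi$-summable spectral triple. The $K$-homology class is unchanged because the bounded transform $\slashed{D}_\psi|\slashed{D}_\psi|^{-1}$ equals $F_{\slashed{D}}$, since multiplication by the strictly positive operator $\psi(|\slashed{D}|^n)^{-1}$ does not affect the sign. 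Hence $\slashed{D}_\psi$ and $\slashed{D}$ define the same Fredholm module up to a compact correction coming from the finite-dimensional kernel of $\slashed{D}$.

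For the sharper commutator asymptotic, the strict little-$o$ hypothesis $\psi(t)^{-1} = o(t^{1/n})$ promotes $[\slashed{D}_\psi,a]$ to strictly negative $\psi^{-1}$-order in the calculus of \cite{GU}, and the Weyl-type asymptotics for such operators established in \cite[Proposition 10.3]{GU} yield the singular value bound $\mu(t,[\slashed{D}_\psi,a]) = O(t^{-1/n}\psi(t)^{-1})$. The main technical obstacle is verifying that the pseudodifferential analysis of the $\psi$-regular classes applies, and in particular obtaining the Weyl-type singular value asymptotics in the regularly varying setting; however, all of this machinery has already been developed in \cite{GU}, so my proof amounts to citing the right statements and matching the hypotheses of the present proposition to those needed there.
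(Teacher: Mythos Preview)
Your proposal is correct and takes essentially the same approach as the paper, which simply states that the result follows from \cite[Proposition 10.3]{GU} without further detail. You have in fact supplied more of the connecting argument (Weyl law for summability, identity of phases for the $K$-homology class, pseudodifferential order drop for the commutator) than the paper does, but the underlying reduction to the machinery of \cite{GU} is identical.
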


We return to the problem of computing heat traces involving $\slashed{D}_\psi$ below in the Section \ref{kmsanddix} (see page \pageref{kmsanddix}) and Subsection \ref{diracmfdkms} (see page \pageref{diracmfdkms}). 

The spectral triple $(C^\infty(M),L^2(M,S),\slashed{D}_\psi)$ does in some cases extend to a crossed product by a semigroup action. For simplicity, we consider an action of $\N$ by a local diffeomorphism $g:M\to M$. Following \cite{GU}, we say that $g$ acts conformally if there is a function $c_g\in C^\infty(M,\R_{>0})$ such that $g^*g_M=c_gg_M$ where $g_M$ denotes the Riemannian metric on $M$. We say that $g$ lifts to the Clifford bundle $S\to M$ if there is a unitary Clifford linear morphism $u_g:g^*S\to S$. For simplicity, we assume $M$ to be connected and define $N:=\#g^{-1}(\{x\})$ for some $x\in M$. Since $g$ is a local diffeomorphism and $M$ is connected, $N$ is independent of the choice of $x$. 

If $g:M\to M$ is a surjective local diffeomorphism, acting conformally and lifting to $S$, we can define the isometry
\begin{equation}
\label{vgdef}
V_g:L^2(M,S)\to L^2(M,S), \quad V_g\xi:=c_g^{n/4}N^{-1/2} u_g(\xi\circ g).
\end{equation}
The isometry $V_g$ satisfies the following for $a\in C(M)$:
$$V_gaV_g^*=(a\circ g)V_gV_g^*,\quad\mbox{and}\quad V_g^*aV_g=\mathfrak{L}_g(a),$$
where $\mathfrak{L}_g(a)(x):=\sum_{g(y)=x}a(y)$. See more in \cite[Proposition 8.3]{DGMW}. Define $\mathcal{A}$ as the $*$-algebra generated by $C^\infty(M)$ and $V_g$. By  \cite[Proposition 8.6]{DGMW}, the $C^*$-closure of $\A$ is the image in a representation on $L^2(M,S)$ of the Cuntz-Pimsner algebra $O_{E_g}$ defined from $E_g:=C(M)V_g$ (see more in Example \ref{localhomeofirst} (see page \pageref{localhomeofirst}) and Example \ref{localhomeoex} (see page \pageref{localhomeoex})).
The space $E_g=C(M)V_g\subseteq \mathbb{B}(L^2(M,S))$ is considered as a bimodule over $C(M)$ and is an fgp bi-Hilbertian $C(M)$-bimodule because $E_g$ can be identified with $C(M)$ with the bimodule structure $(afb)(x)=a(x)f(x)b(g(x))$ for $a,b,f\in C(M)$, for details, see Example \ref{localhomeofirst} below on page \pageref{localhomeofirst} or \cite[Section 8]{DGMW}.

\begin{defn}
Let $\psi:[0,\infty)\to (0,\infty)$ be a decreasing function and $g:M\to M$ a local diffeomorphism of a Riemannian manifold. 
\begin{itemize}
\item We say that $\psi$ and $g$ are compatible if there is a constant $C\geq 0$ such that for all $x\in M$ and $\xi\in T^*_xM$, the differential $Dg$ satisfies 
$$
\left|\psi(|(Dg)^T_x\xi|)-\psi(|\xi|)\right|\leq C\psi(|\xi|)^2.
$$
\item We say that $g$ acts isometrically if $Dg$ acts isometrically on each fibre.
\end{itemize}
\end{defn}

The following results poses restrictions on a function $\psi$ compatible with a local diffeomorphism which acts non-isometrically.

\begin{prop}
\label{deathorgladiolis}
Let $\psi:[0,\infty)\to (0,\infty)$ be a decreasing function with $\lim_{t\to \infty}\psi(t)=0$ and $g:M\to M$ a compatible local diffeomorphism of a Riemannian manifold. If $\psi$ has regular variation, then either $g$ acts isometrically or $\psi$ has regular variation of index $0$.
\end{prop}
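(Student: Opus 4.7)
The plan is to prove the contrapositive: assume $g$ does not act isometrically and deduce that the index of regular variation must be $0$.

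First I would fix a point where $g$ is non-isometric. Since $g$ is a local diffeomorphism and does not act isometrically on fibres, there exists $x_0 \in M$ and a non-zero covector $\xi_0 \in T^*_{g(x_0)}M$, which I can normalise so that $|\xi_0|=1$, such that
$$\lambda := |(Dg)^T_{x_0}\xi_0| \neq 1,$$
and since $(Dg)^T_{x_0}$ is invertible one has $\lambda > 0$. By linearity, $|(Dg)^T_{x_0}(t\xi_0)| = \lambda t$ for every $t > 0$.

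Next I would plug the rescaled covector $t\xi_0$ into the compatibility condition at $x_0$, obtaining
$$|\psi(\lambda t) - \psi(t)| \leq C\psi(t)^2 \qquad \text{for all } t>0.$$
Dividing through by $\psi(t)$ (which is strictly positive) gives
$$\left|\frac{\psi(\lambda t)}{\psi(t)} - 1\right| \leq C\psi(t).$$
Since $\psi(t) \to 0$ as $t \to \infty$ by hypothesis, letting $t \to \infty$ yields
$$\lim_{t\to\infty}\frac{\psi(\lambda t)}{\psi(t)} = 1.$$

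Finally, I would invoke the assumption that $\psi$ has regular variation. By Definition \ref{regcariaden}, if $\psi$ has index of regular variation $\rho$, then the same limit equals $\lambda^\rho$. Combined with the preceding step, $\lambda^\rho = 1$. Because $\lambda > 0$ and $\lambda \neq 1$, the only possibility is $\rho = 0$, completing the proof.

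There is really no single ``hard step'' here: the argument is a direct extraction of the multiplicative structure of regular variation from the compatibility inequality. The only subtlety worth being careful about is that the compatibility bound must be applied at a covector of arbitrarily large norm (so that $\psi^2$ absorbs the error), which is why one rescales $\xi_0$ by $t$ rather than comparing at a fixed covector.
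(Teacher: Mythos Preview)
Your proof is correct and follows essentially the same route as the paper: pick a non-isometric direction, rescale to obtain $|\psi(\lambda t)-\psi(t)|\le C\psi(t)^2$, divide by $\psi(t)$ and let $t\to\infty$. Your concluding step ($\lambda^\rho=1$ with $\lambda\neq 1$ forces $\rho=0$) uses the regular-variation hypothesis directly and is in fact slightly cleaner than the paper, which instead cites an external proposition to pass from $\lim_{t\to\infty}\psi(rt)/\psi(t)=1$ at a single $r\neq 1$ to index $0$.
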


\begin{proof}
Assume that $g$ acts non-isometrically, we shall prove that in this case $\psi$ has regular variation of index $0$. Since $\psi$ is decreasing, it suffices to show that $\lim_{t\to \infty} \frac{\psi(r t)}{\psi(t)}=1$ for some $r\neq 1$ by \cite[Proposition 2.15]{GU}. If $g$ acts non-isometrically, there is a point $x\in M$ and a unit vector $\xi_0\in T^*_xM$ such that $|(Dg)_x^T\xi_0|_{g(x)}\neq 1$. Set 
$$r:=|(Dg)_x^T\xi_0|_{g(x)}\neq 1.$$
Since $\psi$ is compatible with $g$, there is a constant $C\geq 0$ such that 
$$\left|\psi(|(Dg)^T_x\xi|)-\psi(|\xi|)\right|\leq C\psi(|\xi|)^2$$
and by setting $\xi=t\xi_0$, we arrive at the inequality 
$$\left|\psi(rt)-\psi(t)\right|\leq C\psi(t)^2.$$
After dividing by $\psi(t)$, taking the limit $t\to \infty$ and using that $\lim_{t\to \infty}\psi(t)=0$ we arrive at the desired equality $\lim_{t\to \infty} \frac{\psi(r t)}{\psi(t)}=1$.
\end{proof}

A prototypical example of a function with regular variation of index $0$ is $\psi(t):=\log(1+t)$. This function is compatible with any conformal local diffeomorphism and has been considered in the context of constructing spectral triples in \cite{DGMW,GM,GM2,GU,MHaluk}.

\begin{prop}
\label{psidiracprop}
Let $M$ be an $n$-dimensional Riemannian closed manifold, $\slashed{D}$ a Dirac operator on $S\to M$, $g:M\to M$ a surjective local diffeomorphism acting conformally and lifting to $S$ and $\psi:[0,\infty)\to (0,\infty)$ a decreasing function compatible with $g$, having smooth regular variation and satisfying $\lim_{t\to \infty}\psi(t)=0$ and $\psi(t)^{-1}=O(t^{1/n})$ as $t\to \infty$. Let $\A$ denote the $*$-algebra generated by $C^\infty(M)$ and the isometry $V_g$ from Equation \eqref{vgdef} (see page \pageref{vgdef}). Then $(\A,L^2(M,S), \slashed{D}_\psi)$ is a unital $\psi$-summable spectral triple.
\end{prop}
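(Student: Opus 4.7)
The plan is to bootstrap from Proposition~\ref{heatasumfdwithapsi}, which already establishes that $(C^\infty(M), L^2(M,S), \slashed{D}_\psi)$ is a $\psi$-summable spectral triple. In particular $\slashed{D}_\psi$ is self-adjoint, $(1+\slashed{D}_\psi^2)^{-1/2}\in \mathcal{L}_\psi$, and commutators of $\slashed{D}_\psi$ with all $a\in C^\infty(M)$ are bounded. Since $\A$ is generated by $C^\infty(M)$ and $V_g$, and the $\psi$-summability condition does not depend on the choice of representing algebra, the only fact left to verify is that $V_g$ (and hence $V_g^*$) preserves $\Dom(\slashed{D}_\psi)$ with $[\slashed{D}_\psi, V_g]$ bounded; the derivation property of the commutator then upgrades the spectral triple from $C^\infty(M)$ to all of $\A$.

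To analyze the commutator I would rewrite
$$
[\slashed{D}_\psi, V_g] = V_g\bigl(V_g^*\slashed{D}_\psi V_g - \slashed{D}_\psi\bigr)
$$
and compute the $\psi$-principal symbol of $V_g^*\slashed{D}_\psi V_g$ via the symbol calculus for the class $L^0_{\psi^{-1}}(M,S)$ developed in \cite{GU}. Conformality $g^*g_M = c_g g_M$ gives $|(Dg)^T_x\xi|_{g(x)} = c_g(x)^{1/2}|\xi|_x$, and the Clifford lift $u_g$ intertwines Clifford multiplication up to the same conformal factor; these two factors cancel in the ratio $c_S(\xi)/|\xi|$. Consequently $V_g^*\slashed{D}_\psi V_g$ again lies in $L^0_{\psi^{-1}}$, with $\psi$-principal symbol $c_S(\xi)|\xi|^{-1}\psi(|(Dg)^T_x\xi|)^{-1}$, so the $\psi$-principal symbol of $V_g^*\slashed{D}_\psi V_g - \slashed{D}_\psi$ equals $c_S(\xi)|\xi|^{-1}\bigl(\psi(|(Dg)^T_x\xi|)^{-1}-\psi(|\xi|)^{-1}\bigr)$. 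The compatibility condition then yields
$$
\bigl|\psi(|(Dg)^T_x\xi|)^{-1}-\psi(|\xi|)^{-1}\bigr|
= \frac{|\psi(|\xi|)-\psi(|(Dg)^T_x\xi|)|}{\psi(|\xi|)\,\psi(|(Dg)^T_x\xi|)}
\leq \frac{C\,\psi(|\xi|)}{\psi(|(Dg)^T_x\xi|)},
$$
and this ratio is bounded on $S^*M$ (in fact $\psi(|\xi|)/\psi(|(Dg)^T\xi|)\to 1$ at infinity by compatibility together with $\psi(t)\to 0$). The lower-order terms in the $\psi$-symbol calculus are controlled by the derivative bounds for smoothly regularly varying $\psi$ as in \cite[Lemma 7.1]{GU}, so $V_g^*\slashed{D}_\psi V_g - \slashed{D}_\psi$ extends to a bounded operator on $L^2(M,S)$, hence so does $[\slashed{D}_\psi, V_g]$. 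Domain preservation follows routinely: for $\xi\in\Dom(\slashed{D}_\psi)$ choose a graph-norm approximation $\xi_n\in C^\infty(M,S)$; then $V_g\xi_n$ converges in graph norm by the boundedness of the commutator, so $V_g\xi\in\Dom(\slashed{D}_\psi)$. The same argument handles $V_g^*$.

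The main technical obstacle is the symbol-level analysis of $V_g^*\slashed{D}_\psi V_g$: showing that conjugation by the isometry attached to a conformal local diffeomorphism preserves the generalized pseudo-differential class $L^0_{\psi^{-1}}$, and extracting its $\psi$-principal symbol cleanly, relies on the calculus of \cite{GU}. The compatibility condition on $\psi$ is tailored precisely so that the leading change in the symbol induced by the Jacobian $Dg$ is absorbed into a bounded remainder; without that condition one would not even obtain boundedness of the principal part. All remaining assertions of the proposition then follow immediately from Proposition~\ref{heatasumfdwithapsi} and the construction of $\A$.
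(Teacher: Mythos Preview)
Your approach is correct and matches the paper's: the paper does not give a self-contained proof of this proposition but simply states that ``this result follows in the same manner as in \cite[Section 8]{DGMW} and \cite[Theorem 10.6]{GU}.'' Your sketch reconstructs precisely the argument encoded in those references, namely reducing to Proposition~\ref{heatasumfdwithapsi} for the $C^\infty(M)$-part and then controlling $[\slashed{D}_\psi,V_g]$ via the $\psi$-symbol calculus of \cite{GU}, with the compatibility condition absorbing the Jacobian contribution at leading order.

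One small point to watch: recall $\slashed{D}_\psi=F_{\slashed{D}}\psi(|\slashed{D}|^n)^{-1}$, so when you track the symbol after conjugation by $V_g$ you should be comparing $\psi(|(Dg)^T\xi|^n)$ with $\psi(|\xi|^n)$ rather than $\psi(|(Dg)^T\xi|)$ with $\psi(|\xi|)$; the compatibility inequality as stated applies directly to the former only after the appropriate homogeneous substitution (this is handled in \cite{GU} and does not change the outcome, but your write-up elides the exponent $n$).
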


This result follows in the same manner as in \cite[Section 8]{DGMW} 
and \cite[Theorem 10.6]{GU}. Clearly, if $\psi(t)^{-1}=O(\log(t))$ 
as $t\to \infty$, then $(\A,L^2(M,S), \slashed{D}_\psi)$ is 
$\mathrm{Li}_1$-summable. The reader should note that 
the $K$-homology class $[(\A,L^2(M,S), \slashed{D}_\psi)]\in K^*(O_{E_g})$, 
where $O_{E_g}$ is the Cuntz-Pimsner algebra of the module
$E_g=C(M)V_g$, discussed later. The class $[(\A,L^2(M,S), \slashed{D}_\psi)]$ restricts 
to $[(C^\infty(M),L^2(M,S), \slashed{D})]\in K^*(C(M))$. 
Thus the class $[(C^\infty(M),L^2(M,S), \slashed{D})]\in K^*(C(M))$ 
obstructs $[(\A,L^2(M,S), \slashed{D}_\psi)]\in K^*(O_{E_g})$ 
being a Kasparov product with the Cuntz-Pimsner 
boundary extension in $KK^1(O_{E_g},C(M))$ -- we will 
return to study this boundary extension and its associated 
semifinite spectral triples below in Subsection \ref{cpalgexam}.

\subsubsection{Graph $C^*$-algebras}
\label{graphcstarsubsec}

A class of examples carrying interesting noncommutative geometries 
with a well-studied set of KMS-states is that of graph $C^*$-algebras. 
With a finite directed graph $G=(V,E)$, with edge set $E$ and vertex set $V$, 
one associates a $C^*$-algebra $C^*(G)$ \cite{Raeburn}. For simplicity we suppose that we have no sources nor sinks. The $C^*$-algebra 
$C^*(G)$ is generated by partial isometries $(S_e)_{e\in E}$ 
and projections $(p_v)_{v\in V}$ satisfying the relations 
$$
S^*_eS_e=p_{r(e)}, \quad \mbox{and}\quad p_v=\sum_{s(e)=v} S_eS_e^*,
$$
where $r(e)$ denotes the range of the vertex $e$ and $s(e)$ its 
source. The $C^*$-algebra $C^*(G)$ can be described as a 
Cuntz-Pimsner algebra in several ways, a class of $C^*$-algebras 
carrying noncommutative geometries that we will study in the 
next subsection. In this subsection, we focus on a construction 
of noncommutative geometries along an orbit in the infinite 
path space of $G$ -- a construction based in the model of 
$C^*(G)$ as a groupoid $C^*$-algebra. The associated 
noncommutative geometries come from \cite{GM}. As 
explained in \cite{GMR} the groupoid model can be seen 
as a Cuntz-Pimsner model of $C^*(G)$ using the 
one-sided infinite path space 
$$
\Omega_G
:=\{x=e_1e_2\cdots \in E^\N: \,s(e_j)=r(e_{j+1})\ \forall j\}.
$$ 

The path space $\Omega_G$ is a compact Hausdorff 
space in the subspace topology $\Omega_G\subseteq E^\N$. 
It carries a shift mapping $\sigma_G:\Omega_G\to \Omega_G$, 
$\sigma_G(e_1e_2e_3\cdots):=e_2e_3\cdots$. The shift 
mapping is a surjective local homeomorphism. We can 
define an \'etale groupoid $\mathcal{G}_G$ over $\Omega_G$ by 
$$
\mathcal{G}_G:=\big\{(x,n,y)\in \Omega_G\times \Z\times \Omega_G: 
\,\exists k\geq \max(0,-n) \  \mbox{such that}\  
\sigma_G^{n+k}(x)=\sigma_G^k(y)\big\}.
$$
The range mapping is defined by $r(x,n,y)=x$, 
the source mapping as $s(x,n,y):=y$ and the product by 
$$
(x,n,y)(y,m,z):=(x,n+m,z).
$$
The topology of $\mathcal{G}_G$ is uniquely determined 
by declaring the groupoid to be \'etale and the mappings 
$(x,n,y)\mapsto n$ and
$$
\kappa_G(x,n,y):=\min\{k\geq \max(0,-n): \sigma_G^{n+k}(x)=\sigma_G^k(y)\},
$$
to be continuous. 
There is an isomorphism $\pi_G:C^*(G)\to C^*(\mathcal{G}_G)$ 
determined by defining $\pi_G(S_e)$ to be the characteristic 
function of the set $\{(x,1,\sigma_G(x)): x\in C_e\}$ where 
$C_e:=\{e_1e_2\cdots \in \Omega_G: e_1=e\}$. 
See \cite{deaacaneudoaod}.

Define the function $\psi_0:\{(n,k)\in \Z\times \N: n+k\geq 0\}\to \Z$ by 
$$
\psi_0(n,k):=
\begin{cases} 
n, \; &k=0\\
-|n|-k,\; &k>0.
\end{cases}
$$
For a point $y\in \Omega_G$, we define the discrete set 
$$
\mathcal{V}_y:=d^{-1}(\{y\})=\{(x,n): (x,n,y)\in \mathcal{G}_G\}.
$$
The set $\mathcal{V}_y$ is the union of all forward orbits of all backward 
orbits of $y$ under $\sigma_G$ where we keep track of the 
lag $n$. Since $\mathcal{V}_y$ is a fibre of the domain 
mapping, point evaluation in $y$ induces a representation 
$\pi_y:C^*(G)\to \mathbb{B}(\ell^2(\mathcal{V}_y))$. 
If $G$ is primitive $C^*(G)$ is simple and $\pi_y$ is 
faithful. At the level of the generators, 
$$
\pi_y(S_e)\delta_{(x,n)}=
\begin{cases}
\delta_{(ex,n+1)},\; &r(x)=s(e),\\
0,\; &r(x)\neq s(e).\end{cases}
$$

\begin{prop}
\label{localizeckdirinpoint}
Define the operator $\D_y$ densely on 
$\ell^2(\mathcal{V}_y)$ as the self-adjoint operator with 
$$
\D_y\delta_{(x,n)}:=\psi_0(n,\kappa(x,n,y))\delta_{(x,n)}.
$$
The triple $(C_c(\mathcal{G}_G,\ell^2(\mathcal{V}_y),\D_y)$ 
is an $\mathrm{Li}_1$-summable spectral triple. Moreover, 
under the isomorphism $K^1(C^*(G))\cong \Z^E/(1-A_{\rm edge})\Z^E$, 
of odd $K$-homology with the cokernel of the edge adjacency matrix, 
the class $[(C_c(\mathcal{G}_G,\ell^2(\mathcal{V}_y),\D_y)]$ is 
mapped to the element $\delta_e\mod (1-A_{\rm edge})\Z^E$ 
where $\delta_e\in \Z^E$ denotes the basis element 
corresponding to $e\in E$.
\end{prop}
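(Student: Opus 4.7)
The plan is to verify the spectral triple axioms and $\mathrm{Li}_1$-summability directly from the eigenvalue structure of $\D_y$, and then identify the $K^1$-class via the Pimsner--Toeplitz extension attached to the edge correspondence.

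For summability, $\D_y$ is self-adjoint since it is diagonal with real eigenvalues in the orthonormal basis $\{\delta_{(x,n)}\}_{(x,n)\in\mathcal{V}_y}$. The multiplicity of the eigenvalue $n\geq 0$ equals $\#\{w:|w|=n,\,wy\in\Omega_G\}$, bounded by entries of $A_{\mathrm{edge}}^n$ and hence $O(\rho^n)$, with $\rho$ the spectral radius of $A_{\mathrm{edge}}$. For $j\geq 1$, the eigenvalue $-j=\psi_0(n,k)$ with $k\geq 1$ and $|n|+k=j$ requires $n+k\geq 0$; each admissible pair contributes at most $O(\rho^{n+k})\leq O(\rho^j)$ paths, and there are at most $2j$ admissible pairs, giving multiplicity $O(j\rho^j)$. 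Therefore
\[
\Tr(\e^{-t|\D_y|})\;\leq\; C\sum_{j\geq 0}(1+j)\rho^j\e^{-tj}\;<\;\infty, \qquad t>\log\rho,
\]
which is the $\mathrm{Li}_1$-summability condition, and $(1+\D_y^2)^{-1/2}$ is compact because its eigenspaces are finite-dimensional.

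For bounded commutators it suffices to check the generators $p_v,\,S_e$ of $\pi_y(C^*(G))\subset C_c(\mathcal{G}_G)$. The projections $\pi_y(p_v)$ are diagonal and commute with $\D_y$. The partial isometry $S_e$ acts by $\pi_y(S_e)\delta_{(x,n)}=\delta_{(ex,n+1)}$ when $r(e)=s(x_1)$ and preserves the lag, since $\sigma_G^{(n+1)+k}(ex)=\sigma_G^{n+k}(x)$. A case analysis on the sign of $n$ and whether $\kappa=0$ or $\kappa>0$ shows $\psi_0(n+1,\kappa)-\psi_0(n,\kappa)\in\{+1,-1\}$, so $[\D_y,\pi_y(S_e)]$ is bounded; this extends to all of $C_c(\mathcal{G}_G)$ by the Leibniz rule.

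For the $K^1$-class, write $y=e_1e_2\cdots$ and use the Pimsner--Toeplitz extension for the edge correspondence, whose boundary map in $K$-homology implements the stated isomorphism $K^1(C^*(G))\cong \Z^E/(1-A_{\mathrm{edge}})\Z^E$. I would split $\D_y$ by the sign of its spectrum: the non-negative part is supported on the forward subspace spanned by $\{\delta_{(wy,|w|)}\}_w$, where the phase $F_{\D_y}$ restricts to the Toeplitz cycle of the Pimsner extension localised at the vertex projection $p_{r(e_1)}$. Pimsner's explicit boundary formula then yields $\partial(p_{r(e_1)})=\delta_{e_1}\bmod (1-A_{\mathrm{edge}})\Z^E$, which is the claimed class. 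The hard part is to control the contribution of the negative spectrum: it is not a priori obvious that the backward-orbit summand is $KK$-trivial. One would either argue directly that, up to a compact perturbation, the backward part decomposes as $T\oplus(-T)$ and cancels in $K$-homology, or appeal to the explicit identifications in \cite{GM} where a closely related family of spectral triples on $C^*(G)$ is analysed, reducing the statement to bookkeeping inside the Pimsner sequence.
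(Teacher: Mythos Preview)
Your overall approach matches the paper's: both defer the spectral-triple axioms and the $K$-homology identification to \cite[Theorem 5.2.3]{GM}, and the only thing proved directly in the paper is the $\mathrm{Li}_1$-summability. Your summability estimate is in fact sharper than the paper's (you use the spectral radius $\rho$ of $A_{\rm edge}$ where the paper uses the cruder bound $|E|$), though you are tacitly assuming $\rho\geq 1$ when you write $\rho^{n+k}\leq\rho^j$.

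There is, however, a small error in your bounded-commutator argument. The claim that $S_e$ ``preserves the lag'' $\kappa$ is not quite right. Your identity $\sigma_G^{(n+1)+k}(ex)=\sigma_G^{n+k}(x)$ holds only for $(n+1)+k\geq 1$; when $n<0$ the new lower bound on $k$ for the pair $(ex,n+1)$ is $\max(0,-n-1)=-n-1$, one less than before, and at $k=-n-1$ the relevant condition is $ex=\sigma_G^{-n-1}(y)$, which can hold. In that case one checks that $x=\sigma_G^{-n}(y)$, so $\kappa_G(x,n,y)=-n$ while $\kappa_G(ex,n+1,y)=-n-1$, and the eigenvalue jump is
\[
\psi_0(n+1,-n-1)-\psi_0(n,-n)=2(n+1)-2n=2,
\]
not $\pm 1$. (Concretely: for $y=aaa\cdots$ in $O_2$, $(y,-1)$ has $\kappa=1$ and $\psi_0=-2$, while $S_a\delta_{(y,-1)}=\delta_{(y,0)}$ has $\kappa=0$ and $\psi_0=0$.) This does not affect the conclusion, since the jump is still uniformly bounded, but your case analysis as stated is incomplete. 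The paper sidesteps this by citing \cite{GM} for the full spectral-triple verification.
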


\begin{proof}
It is proven in \cite[Theorem 5.2.3]{GM} that 
$(C_c(\mathcal{G}_G,\ell^2(\mathcal{V}_y),\D_y)$ 
is a spectral triple whose $K$-homology class corresponds 
to the element $\delta_e\mod (1-A_{\rm edge})\Z^E$ under 
$K^1(C^*(G))\cong \Z^E/(1-A_{\rm edge})\Z^E$. It remains 
to prove that $(C_c(\mathcal{G}_G,\ell^2(\mathcal{V}_y),\D_y)$ 
is $\mathrm{Li}_1$-summable. We compute that 
$$
\Tr(\e^{-t|\D_y|})=\sum_{(x,n)\in \mathcal{V}_y} 
\e^{-t(|n|+\kappa_G(x,n,y))}
=\sum_{n\in \Z}\sum_{k=\max(0,-n)}^\infty \#\{(x,n): \kappa_G(x,n,y)=k\}
\e^{-t(|n|+k)}.
$$
However, if $(x,n)$ is such that $\kappa_G(x,n,y)=k$ 
the path $x$ is determined by $y$ except for its first 
$n+k$ steps so 
$\#\{(x,n): \kappa_G(x,n,y)=k\}\leq |E|^{n+k}\leq \e^{\log(|E|)(|n|+k)}$. 
We conclude that 
\[
\Tr(\e^{-t|\D_y|})<\infty \quad\mbox{if}\quad t>\log(|E|).
\qedhere
\]
\end{proof}

We return to this example below in 
Example \ref{diracgraphhea} (see page \pageref{diracgraphhea}) 
and Subsection \ref{diracgraphkms} (see page \pageref{diracgraphkms}).

\subsubsection{Cuntz-Pimsner algebras}
\label{cpalgexam}

In this subsection we consider the construction of semi-finite spectral triples on 
Cuntz-Pimsner algebras -- a broad class 
of examples which include both Cuntz-Krieger algebras and crossed products by $\Z$. 
Quite general techniques for constructing spectral triples for these algebras were developed in a series of papers (in rough chronological order) \cite{GM, RRS,GMR,GM2,RRSmore}.
We consider the set up of \cite{RRS} and \cite{GMR},
which provide a means of lifting data from the (unital) coefficient algebra of a bi-Hilbertian 
bimodule to its Cuntz-Pimsner algebra.

We start with a unital, separable $C^*$-algebra
$A$, and a finitely generated projective (fgp) bi-Hilbertian 
bimodule $E$ over $A$, i.e. a module 
fulfilling the conditions of the following definition.

\begin{defn} 
\label{cond:one}
An fgp bi-Hilbertian bimodule $E$ over $A$ is an $A$-bimodule equipped with the following structures:
\begin{itemize}
\item $E$ has both left and right $A$-valued inner products 
which induce equivalent norms on $E$. 
\item The left and right actions are both injective and adjointable.
\item $E$ is finitely generated and projective 
as both a left and right module. 
\end{itemize}
\end{defn}

To separate the left and the right structures, we write $E_A$ when we want to emphasize the 
right module structure and $(\cdot|\cdot)_A$ for the right inner product. Similarly, ${}_AE$ denotes 
the left module defined from $E$ and ${}_A(\cdot|\cdot)$ the left inner product. 

The algebraic Fock space $\Fock^{\rm alg}$ is the algebraic direct sum of the $A$-modules $E^{\otimes_A k}$. The Fock space $\Fock$ is defined as the right $A$-Hilbert $C^*$-module completion of $\Fock^{\rm alg}$. The Cuntz-Toeplitz algebra $\mathcal{T}_E\subseteq \End^*_A(\Fock)$ is the $C^*$-algebra generated by the creation operators $T_\mu \xi:=\mu\otimes \xi$ for $\mu\in \Fock^{\rm alg}$. The Cuntz-Pimsner algebra $O_E$ is defined from the short exact sequence 
$$0\to \mathbb{K}_A(\Fock)\to \mathcal{T}_E\to O_E\to 0.$$
We call this short exact sequence the defining extension of $O_E$.

A set $(e_j)_{j=1}^N\subset E$ of vectors is a frame 
for the right module
$E_A$ if for all $e\in E$ we have 
$e=\sum_je_j(e_j|e)_A$, and similarly for a left frame $(f_k)_{k=1}^N$.
Since $E$ is finitely generated and projective, there exists left and right frames and 
we can for simplicity assume that they have the same cardinality.
For $e$ and $f$ in the right Hilbert module $E_A$, we denote the associated rank-one 
operator by $\Theta_{e,f}:=e( f|\cdot)$. 
Then the frame condition can be expressed as
$$
\sum_{j=1}^N\Theta_{e_j,e_j}={\rm Id}_E
$$
and similarly for $f_\sigma$. The frame $(e_j)_{j=1}^N$ induces a frame for $E_A^{\ox k}$, namely
$(e_\rho)_{|\rho|=k}$ where $\rho$ is a multi-index and $e_\rho=e_{\rho_1}\ox\cdots\ox e_{\rho_k}$.

We define the right Watatani index of $E^{\otimes k}$ as the element of $A$ given by
\begin{equation}\label{dfn:Watatani}
\e^{\beta_k}=\sum_{|\rho|=k}{}_A(e_\rho|e_\rho)
=\sum_{|\rho'|=k-1}{}_A(e_{\rho'}\e^\beta|e_{\rho'}).
\end{equation}

The right Watatani index is positive, central and since the left action is injective, also invertible.
Therefore $\beta_k$ is a well defined self-adjoint central element in $A$. 
The key assumptions we make concern the asymptotic behaviour of the right Watatani indices.
In \cite[Section 3.2]{RRS}
we define an $A$-bilinear functional $\Phi_\infty:\O_E\to A$. 
This functional gives us an $A$-valued inner 
product on $\O_E$. The construction of $\Phi_\infty$ 
begins by defining 
\begin{equation}\label{Phi_k}
\Phi_k:\End_A^*(E^{\ox k})\to A,
\qquad \Phi_k(T)=\sum_{|\rho|=k}{}_A(Te_\rho|e_\rho).
\end{equation}

Here we use the notation $\End_A^*(E^{\ox k})$ for the $C^*$-algebra of 
$A$-linear adjointable operators on $E^{\ox k}$. It follows from \cite[Lemma 2.16]{KajPinWat}
that $\Phi_k$ does not depend on the choice of frame. We note that 
$\mathrm{e}^{\beta_k}=\Phi_k({\rm Id}_{E^{\ox k}})$. Since $\Phi_k$ is independent of the
choice of frame, so is $\mathrm{e}^{\beta_k}$. We extend the functional $\Phi_k$
to a mapping $\End^*_A(\Fock)\to A$ by compressing along the orthogonally complemented submodule 
$E^{\ox k}\subseteq \Fock$. 
To obtain a good ``limiting functional"  $\Phi_\infty(T):=\lim_{k\to\infty}\Phi_k(T)\mathrm{e}^{-\beta_k}$ on the Cuntz-Toeplitz algebra, we impose the following condition on the 
Watatani indices.

\begin{defn}
\label{ass:one}
Let $E$ be an fgp bi-Hilbertian bimodule over the unital $C^*$-algebra $A$.
We say that $E$ is W-regular if for every $k\in \N$ and
$\nu\in E^{\otimes k}$ there exists a $\tilde{\nu}\in E^{\otimes k}$ satisfying
$$
\Vert \mathrm{e}^{-\beta_n}\nu \mathrm{e}^{\beta_{n-k}}-\tilde\nu\Vert_{E^{\otimes k}} \to 0 \quad\mbox{as $n\to \infty$}.
$$
\end{defn}

In \cite{RRS} the reader can find several examples of Cuntz-Pimsner algebras for 
which a stronger version of W-regularity as defined in Definition \ref{ass:one} holds. There are no known examples of modules that are not W-regular.
When $E$ is W-regular, \cite[Proposition 3.5]{RRS} guarantees that $\lim_{k\to\infty}\Phi_k(T)\mathrm{e}^{-\beta_k}$ is well defined for $T$ from the $*$-algebra generated by the set of 
creation operators $\{T_\nu: \;\nu\in \algFock\}$ and is continuous in the $C^*$-norm. 
In Section \ref{sub:no-left} we shall see that there are ways around W-regularity, and even the existence of an $A$-valued left inner product,  when constructing 
semifinite spectral triples giving rise to KMS-states.

We thus obtain a unital positive $A$-bilinear functional $\Phi_\infty: \T_E\to A$.
The functional $\Phi_\infty$ annihilates the compact endomorphisms, and descends to a well-defined
functional on the Cuntz-Pimsner algebra $\O_E$. By an abuse of notation, we also 
denote this functional by $\Phi_\infty:\O_E\to A$. Since $\Phi_k$ and $\mathrm{e}^{\beta_k}$ 
do not depend on the choice of frame, neither does $\Phi_\infty$. We define the inner product
$$
(S_1|S_2)_A:=\Phi_\infty(S_1^*S_2),\qquad S_1,\,S_2\in \O_E.
$$
When computing these inner products, the following fact is useful.

\begin{lemma}
\label{computeinnerprod} 
Let $E$ be a W-regular fgp bi-Hilbertian bimodule.
For homogeneous elements $\mu,\,\nu\in \algFock$ we have
\begin{equation}\label{eq:mu-nu-beta}\Phi_\infty(S_\mu S_\nu^*)
= \lim_{k\to\infty}{}_A(\mu|\mathrm{e}^{-\beta_k}\nu \mathrm{e}^{\beta_{k-|\nu|}})={}_A(\mu|\tilde{\nu}).
\end{equation}
In particular, if $S\in \O_{E}$ is homogeneous of degree $n\neq 0$, then $\Phi_{\infty}(S)=0$.
\end{lemma}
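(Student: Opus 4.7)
The plan is to evaluate $\Phi_k(T_\mu T_\nu^*)$ explicitly on a right frame of $E^{\otimes k}$ adapted to the tensor decomposition $E^{\otimes k} \cong E^{\otimes |\mu|} \otimes_A E^{\otimes(k-|\nu|)}$, and then invoke W-regularity to pass to the limit in $k$.

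I would first dispose of the ``in particular'' statement. The spectral subspaces for the gauge $\mathbb{T}$-action on $O_E$ are the norm closures of the linear spans of elements $S_\mu S_\nu^*$ with $|\mu|-|\nu|$ equal to the weight. Since $\Phi_\infty : O_E \to A$ is norm continuous, it suffices to check that $\Phi_\infty(S_\mu S_\nu^*) = 0$ whenever $|\mu| \neq |\nu|$. But in that case $T_\mu T_\nu^*$ shifts the $\mathbb{N}$-grading of $\Fock$ by $|\mu|-|\nu|\neq 0$, so $T_\mu T_\nu^* e_\rho$ and $e_\rho$ lie in orthogonal homogeneous components of $\Fock$ for every multi-index with $|\rho|=k\geq |\nu|$. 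Hence every summand of $\Phi_k(T_\mu T_\nu^*)=\sum_{|\rho|=k}{}_A(T_\mu T_\nu^* e_\rho | e_\rho)$ vanishes, giving $\Phi_\infty(S_\mu S_\nu^*) = 0$.

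For the main identity I assume $|\mu|=|\nu|=n$ and $k\geq n$. Writing $e_\rho = e_{\rho'} \otimes e_{\rho''}$ with $|\rho'|=n$ and $|\rho''|=k-n$, a direct check on creation and annihilation operators gives
\[
T_\mu T_\nu^*(e_{\rho'}\otimes e_{\rho''}) = \mu(\nu|e_{\rho'})_A \otimes e_{\rho''},
\]
identifying the restriction of $T_\mu T_\nu^*$ to $E^{\otimes k}$ with $\Theta_{\mu,\nu}\otimes \mathrm{Id}_{E^{\otimes(k-n)}}$. Performing the sum over $\rho''$ first, the compatibility of the left $A$-valued inner product on $E^{\otimes n}\otimes_A E^{\otimes(k-n)}$ together with the right Watatani identity $\sum_{|\rho''|=k-n} {}_A(e_{\rho''}|e_{\rho''}) = \mathrm{e}^{\beta_{k-n}}$ from \eqref{dfn:Watatani} absorbs the $\rho''$-sum into a right action of $\mathrm{e}^{\beta_{k-n}}$ on $\nu$; the subsequent sum over $\rho'$, using the frame identity $\sum_{|\rho'|=n} e_{\rho'}(e_{\rho'}|\cdot)_A = \mathrm{Id}_{E^{\otimes n}}$, collapses to the single pairing
\[
\Phi_k(T_\mu T_\nu^*) = {}_A(\mu | \nu \mathrm{e}^{\beta_{k-n}}).
\]
Multiplying on the right by $\mathrm{e}^{-\beta_k}$ and using that $\mathrm{e}^{\beta_k}$ is self-adjoint and central in $A$ together with the left-module identity ${}_A(\xi|a\eta) = {}_A(\xi|\eta)a^*$ lets me move $\mathrm{e}^{-\beta_k}$ inside the left inner product:
\[
\Phi_k(T_\mu T_\nu^*)\mathrm{e}^{-\beta_k} = {}_A(\mu | \mathrm{e}^{-\beta_k} \nu \mathrm{e}^{\beta_{k-n}}).
\]
Letting $k\to\infty$, the W-regularity assumption (Definition~\ref{ass:one}) supplies $\tilde\nu \in E^{\otimes n}$ with $\mathrm{e}^{-\beta_k}\nu\mathrm{e}^{\beta_{k-n}}\to\tilde\nu$ in norm, and norm continuity of ${}_A(\mu|\cdot)$ yields both stated equalities.

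The main obstacle I expect is the careful verification of the compatibility of the left $A$-valued inner product with the tensor product $E^{\otimes n}\otimes_A E^{\otimes(k-n)}$ used in the collapsing step, since in a general bi-Hilbertian bimodule the left and right inner products are not linked by the Morita relation $\xi(\eta|\zeta)_A = {}_A(\xi|\eta)\zeta$. This likely has to be checked by induction on $k-n$ via the iterative definition $\mathrm{e}^{\beta_k} = \sum_{|\rho'|=1} {}_A(e_{\rho'}\mathrm{e}^{\beta_{k-1}}|e_{\rho'})$ from \eqref{dfn:Watatani}, or by exploiting the frame-independence of the functionals $\Phi_j$ to interchange the summations over $\rho'$ and $\rho''$.
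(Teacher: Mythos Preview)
The paper does not supply a proof of this lemma; it is stated as a computational fact, with the existence and norm-continuity of $\Phi_\infty$ attributed to \cite[Proposition~3.5]{RRS}. Your direct computation is the natural approach and is correct.

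The concern you raise in the final paragraph is not a genuine obstacle. The tensor left inner product ${}_A(\xi_1\otimes\xi_2\mid\eta_1\otimes\eta_2)={}_A\bigl(\xi_1\cdot{}_A(\xi_2\mid\eta_2)\mid\eta_1\bigr)$ is well-defined on the balanced tensor $E^{\otimes n}\otimes_A E^{\otimes(k-n)}$ because the left inner product on the second factor is left-$A$-linear in its first slot. After performing the $\rho''$-sum you arrive at $\sum_{|\rho'|=n}{}_A\bigl(\mu(\nu\mid e_{\rho'})_A\,e^{\beta_{k-n}}\mid e_{\rho'}\bigr)$, and the collapse to ${}_A(\mu\mid\nu e^{\beta_{k-n}})$ uses two ingredients you did not make fully explicit: first, adjointability of the \emph{right} $A$-action for the \emph{left} inner product (part of Definition~\ref{cond:one}), which gives ${}_A(\mu\cdot b\mid e_{\rho'})={}_A(\mu\mid e_{\rho'}\cdot b^*)$; second, centrality of the Watatani index $e^{\beta_{k-n}}$ in $A$, which lets it commute past $(e_{\rho'}\mid\nu)_A$ so that the right-frame identity $\sum_{\rho'}e_{\rho'}(e_{\rho'}\mid\nu)_A=\nu$ applies directly. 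With these two facts in hand the computation goes through in one line; no induction on $k-n$ and no Morita-type compatibility between the two inner products is required.
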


Completing $\O_E$ modulo the  vectors of zero length (with respect to $\Phi_\infty$) 
yields a right $A$-Hilbert $C^*$-module 
that we denote by $\phimod$. The module $\phimod$ carries a left action of $\O_E$ given by 
extending the multiplication action of $\O_E$ on itself.
By considering the linear span of the image of the generators $S_\nu$, $\nu\in \algFock$, inside
the module $\phimod$, we obtain an isometrically embedded and complemented  copy of the Fock space.
We let $Q$ be the projection on this copy of the Fock space.

\begin{thm}[Proposition 3.14 of \cite{RRS}]
Let $E$ be a W-regular fgp bi-Hilbertian bimodule 
over a unital $C^*$-algebra $A$.
The tuple $(\O_E,\phimod,2Q-1)$ is an odd Kasparov module representing
the class of the defining extension 
\begin{align}
\label{defext}
0\to \Ko_A(F_E)\to \mathcal{T}_E\to \O_E\to 0.
\end{align}
\end{thm}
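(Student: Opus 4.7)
The plan is to verify the three defining conditions for $(\O_E,\phimod,2Q-1)$ to be an odd Kasparov module in $KK^1(\O_E,A)$, and then match the resulting class with the class of the defining extension \eqref{defext} via a Busby-invariant computation.

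First I would dispose of the easy axioms: since $Q$ is constructed as the projection onto the orthogonally complemented submodule $QF_E\subseteq\phimod$, the operator $F:=2Q-1$ satisfies $F^*=F$ and $F^2=1$, so $F-F^*$ and $F^2-1$ vanish identically and lie trivially in $\Ko_A(\phimod)$. The representation $\pi:\O_E\to\End^*_A(\phimod)$ is bounded since it extends the left-multiplication action of $\O_E$ on itself along the completion.

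The main work is to show that $[F,a]=2[Q,a]$ is $A$-compact for every $a$ in a dense $*$-subalgebra of $\O_E$. By continuity and the ideal property of $\Ko_A(\phimod)$, it suffices to check this for a generating set of monomials $S_\mu S_\nu^*$ with $\mu,\nu\in\algFock$, and by polarisation and taking adjoints one can reduce to the creation operators $S_e$ for $e\in E$. On the embedded Fock submodule $QF_E$ the operator $S_e$ simply increases the Fock degree, $S_e\cdot S_\rho=S_{e\otimes\rho}$, so $Q S_e Q = S_e Q$ on that half. On the complement $(1-Q)\phimod$, vectors are represented by classes of elements of $\O_E$ of the form $S_\mu S_\nu^*$ with $|\nu|>0$; using Lemma \ref{computeinnerprod} together with the asymptotic identity~\eqref{eq:mu-nu-beta} furnished by the W-regularity hypothesis, the inner products $(S_\xi|S_e\cdot S_\mu S_\nu^*)_A$ vanish for all $\xi$ of sufficiently large Fock degree, so $QS_e(1-Q)$ factors through a finite-dimensional (as an $A$-module) subspace of $QF_E$ and hence is $A$-compact. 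The same analysis applied to $(1-Q)S_eQ$ gives compactness of the full commutator $[Q,S_e]$.

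Finally, to identify the Kasparov class with $[\eqref{defext}]\in KK^1(\O_E,A)$, I would use the Stinespring description of the $\mathrm{Ext}$-class. The composition $a\mapsto Q\pi(a)Q$ is a completely positive section $\O_E\to\End^*_A(QF_E)\cong\End^*_A(F_E)$, and by the computation of Step~2 its defect from being multiplicative lies in $\Ko_A(F_E)$; reducing modulo $\Ko_A(F_E)$ therefore yields a $*$-homomorphism $\O_E\to\End^*_A(F_E)/\Ko_A(F_E)$. On the generators $S_e$ this coincides with the quotient of the Toeplitz creation operator $T_e\in\T_E$, which is by definition the Busby invariant of the defining extension. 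Uniqueness of the Stinespring-style representative of an extension class then identifies the two classes in $KK^1(\O_E,A)$.

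The main technical obstacle is the compactness step: controlling $[Q,S_e]$ on $(1-Q)\phimod$ requires the asymptotic convergence $\mathrm{e}^{-\beta_n}\nu\,\mathrm{e}^{\beta_{n-k}}\to\tilde{\nu}$ guaranteed by W-regularity in order to verify that the off-diagonal blocks of $\pi(S_e)$ with respect to the decomposition $\phimod=QF_E\oplus(1-Q)\phimod$ really do lie in the $A$-compacts, rather than merely in the bounded adjointables.
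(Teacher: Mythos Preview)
The paper does not supply a proof of this statement at all: it is quoted verbatim as Proposition~3.14 of \cite{RRS} and used as a black box, with the subsequent discussion immediately turning to the construction of an \emph{unbounded} representative. So there is no ``paper's own proof'' against which to compare your attempt; the result is imported from the literature.

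As a self-contained sketch your outline follows the standard route (verify $F^*=F$, $F^2=1$, commutator compactness, then identify the Busby invariant), but the commutator step as you wrote it is imprecise. The complement $(1-Q)\phimod$ is \emph{not} the closed $A$-span of the $S_\mu S_\nu^*$ with $|\nu|>0$: elements $S_\mu S_\nu^*$ of positive total degree can have nonzero $\Phi_\infty$-inner product with Fock vectors $S_\rho$ of degree $|\mu|-|\nu|$, so your description of $(1-Q)\phimod$ and the ensuing ``factors through a finite-rank submodule'' claim are not justified as stated. The actual argument in \cite{RRS} analyses the commutators $[Q,S_e]$ via explicit finite-rank formulas built from the projections onto the graded pieces, using the $\Phi_\infty$-inner product identities (your Lemma~\ref{computeinnerprod}) together with W-regularity to control the asymptotics. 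Your Busby-invariant identification in the last step is the correct idea and matches what is done there.
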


To construct an unbounded representative of $(\O_E,\phimod,2Q-1)$, we 
will add an additional assumption
regarding the fine structure of the operation $\nu\mapsto \tilde{\nu}$ in 
the definition of W-regularity (see Definition \ref{ass:one}). Assuming W-regularity, 
we can define the 
operator $\topop_k:E^{\otimes k}\to E^{\otimes k}$ by
$$
\topop_k\nu:=\tilde{\nu}=\lim_{n\to\infty}\mathrm{e}^{-\beta_n}\nu \mathrm{e}^{\beta_{n-k}}.
$$

\begin{defn}
\label{ass:two}
Let $E$ be an fpg bi-Hilbertian bimodule over the unital $C^*$-algebra $A$.
We say that $E$ is strictly W-regular if it is W-regular and for any $k$, we can write $\topop_k=c_kP_k=P_{k}c_{k}$ where 
$P_{k}\in \End^{*}_{A}(E^{\otimes k})$ is a (necessarily $A$-bilinear) projection and $c_k$ 
is given by left-multiplication by an element in $A$. 
\end{defn}

\begin{rmk}
\label{justforpage}
As with W-regularity, the reader can in \cite{RRS} find several examples of Cuntz-Pimsner algebras for 
which strict W-regularity holds. There are no known examples of modules that are not strictly 
W-regular.
\end{rmk}

\begin{rmk} 
\label{centralityandasstwo}
If there is a decomposition $\topop_{k}=c_{k}P_{k}$ 
as in the definition of strict W-regularity, \cite[Lemma 3.8]{GMR} shows that
it is unique and of a very specific form. 
Indeed, each $c_k$ is central, invertible and $c_k=\Phi_k(P_k)^{-1}$. 
Strict W-regularity is readily verified in practice using \cite[Lemma 3.8]{GMR}. 
For instance, if $\beta_1$ is 
central for the module action on $E$, 
$c_k=\mathrm{e}^{-\beta_k}=\mathrm{e}^{-k\beta_1}$ 
is central for the module action on $E$ 
and $P_k=1_{E^{\ox k}}$. 
\end{rmk}

When $E$ is strictly W-regular, an unbounded self-adjoint regular operator 
$\D_\psi$ on $\phimod$ is constructed in \cite{GMR} making  
$(\O_E,\phimod,\D_\psi)$ into an unbounded Kasparov module representing the 
$KK$-class of the defining extension \eqref{defext}. 
The operator $\D_\psi$ is of the form 
$$
\D_\psi=\sum_{n\in\Z}\sum_{r\geq\max\{0,n\}}\psi(n,r)P_{n,r}
$$
where
$\psi:\,\Z\times\N\to[0,\infty)$ is a function with certain Lipschitz properties (see \cite[Remark 3.20]{GMR}), and the $P_{n,r}$ are projections on finitely generated projective subspaces,
\cite{GMR}. 
While the particular choice of function $\psi$ does not matter much, we will take the function 
$$
\psi(n,r)=\left\{\begin{array}{ll} n & n=r\\ -(2r-n) & \mbox{otherwise}\end{array}\right..
$$
The projections $P_{n,r}$ form a sequence of mutually orthogonal projections satisfying that 
the direct sum $\oplus_{r=\max(0,n)}^{r_0} P_{n,r}$ is the projection onto the $A$-linear span of 
$\{S_\mu S_\nu^*: |\mu|-|\nu|=n, \max(0,n)\leq r\leq r_0\}$. 
In particular, $P_{n,n}$ is the projection onto $E^{\otimes n}$ for $n\in \N$. 
More precisely, the projections are defined by 
\begin{equation}
\label{onrqnr}
P_{n,r}:=
\begin{cases}
Q_{n,r}-Q_{n,r-1},\; & r>\max\{0,n\}\\ 
Q_{n,r},\; & r=\max\{0,n\}
\end{cases}
\end{equation}
where the projections $Q_{n,r}$ are defined in terms of the right frame $(e_j)_{j=1}^N$ and the left frame $(f_j)_{j=1}^N$ as
\begin{equation}
\label{qnrqnr}
Q_{n,r}:=\sum_{|\rho|-|\sigma|=n,\,|\rho|=r}
\Theta_{W_{e_\rho,c^{-1/2}_{|\sigma|}P_Ff_\sigma},W_{e_\rho,c^{-1/2}_{|\sigma|}P_Ff_\sigma}}
\end{equation}
where $P_F=\oplus P_k$ is the projection on the Fock module coming from Definition \ref{ass:two}.
Here we have written $W_{\xi,\eta}\in \phimod$ for the element 
defined from $S_\xi S_\eta^*\in O_E$ where $\xi\in E^{\otimes r}$ 
and $\eta\in E^{\otimes k}$.
For details, see \cite[Lemma 3.10 and Proposition 3.11]{GMR}.

To obtain a semifinite spectral triple, we localize $(\O_E,\phimod,\D_\psi)$ in 
a positive trace on $A$. Following Proposition \ref{localintra}, we consider the semifinite spectral triple
$$
(\O_E,L^2(\phimod,\tau),\D_\psi,\cN_\tau(\phimod),\Tr_\tau).
$$
Here $L^2(\phimod,\tau):=\Xi_A\otimes_A L^2(A,\tau)$ and $\Tr_\tau$ is the dual trace on $\cN_\tau(\phimod):=(\End^*_A(\phimod)\otimes 1)''$
which satisfies $\Tr_\tau(\Theta_{e,f})=\tau((f|e)_A)$, \cite{LN,Tak}. 

\begin{lemma}
\label{ximodsemi}
Assume that the fgp bi-Hilbertian bimodule $E$ is strictly W-regular 
and that $\tau$ is a positive trace on $A$.
Then the semifinite spectral triple
$$
(\O_E,L^2(\phimod,\tau),\D_\psi,\cN_\tau(\phimod),\Tr_\tau)
$$
is $\mathrm{Li}_1$-summable.
\end{lemma}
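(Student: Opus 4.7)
The plan is to verify $\mathrm{Li}_1$-summability by showing $\Tr_\tau(\e^{-t|\D_\psi|})<\infty$ for $t$ sufficiently large, which by the characterisation after Definition \ref{summdefn} is equivalent to $(1+\D_\psi^2)^{-1/2}\in \mathrm{Li}_1(\Tr_\tau)$. Since the projections $P_{n,r}$ in the spectral decomposition $\D_\psi=\sum_{n,r}\psi(n,r)P_{n,r}$ are mutually orthogonal, we have
$$\Tr_\tau(\e^{-t|\D_\psi|})=\sum_{n\in\Z}\sum_{r\geq\max(0,n)}\e^{-t|\psi(n,r)|}\Tr_\tau(P_{n,r}).$$

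The main step is to obtain an exponential bound of the form $\Tr_\tau(P_{n,r})\leq C\alpha^{2r-n}$ with $C,\alpha>0$ depending only on $E$ and $\tau$. Using the decomposition \eqref{onrqnr}, it suffices to bound $\Tr_\tau(Q_{n,r})$. Expanding $Q_{n,r}$ as the finite sum of rank-one operators in \eqref{qnrqnr} and applying the Laca--Neshveyev trace formula $\Tr_\tau(\Theta_{\xi,\eta}\otimes 1_A)=\tau((\eta|\xi)_A)$ reduces the computation to the $\phimod$-valued inner products of the vectors $W_{e_\rho,c_{|\sigma|}^{-1/2}P_Ff_\sigma}$, which are computed by $\Phi_\infty$. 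By Lemma \ref{computeinnerprod}, strict W-regularity through $\topop_k=c_kP_k$, and the centrality of $c_k$ (Remark \ref{centralityandasstwo}), each such inner product reduces to an expression involving only the frames and the Watatani indices $\e^{\beta_k}$. Summing over the $N^{2r-n}$ pairs $(\rho,\sigma)$ with $|\rho|=r$, $|\sigma|=r-n$, and invoking norm bounds for $(e_\rho|e_\rho)_A$, $f_\sigma$ and $c_k^{\pm 1/2}=\Phi_k(P_k)^{\mp 1/2}$, yields the claimed exponential bound.

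With this bound in hand, using $|\psi(n,n)|=n$ on the diagonal $r=n\geq 0$ and $|\psi(n,r)|=2r-n$ for $r>\max(0,n)$, the heat trace splits into two geometric series,
$$\Tr_\tau(\e^{-t|\D_\psi|})\leq C\Bigl(\sum_{n\geq 0}(\alpha\e^{-t})^n+\sum_{n\in\Z}\sum_{r>\max(0,n)}(\alpha\e^{-t})^{2r-n}\Bigr),$$
both of which converge as soon as $t>\log\alpha$, establishing $\mathrm{Li}_1$-summability.

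The hardest part will be extracting the clean exponential bound on $\Tr_\tau(Q_{n,r})$: carefully unwinding the interaction between the left and right frames, the central operators $c_k$, the projections $P_k$ and the functional $\Phi_\infty$ requires some bookkeeping, but once it is done the remaining argument reduces to the elementary geometric series estimate above.
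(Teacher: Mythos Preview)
Your proposal is correct and follows essentially the same route as the paper's proof: write $\Tr_\tau(\e^{-t|\D_\psi|})$ as a double sum over the spectral projections $P_{n,r}$, reduce to bounding $\Tr_\tau(Q_{n,r})$ via \eqref{onrqnr}--\eqref{qnrqnr} and the dual-trace formula, obtain an exponential bound $\alpha^{2r-n}$, and sum the resulting geometric series.

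One simplification you are missing, and which the paper exploits, is that the factors $c_{|\sigma|}^{-1/2}$ in the definition of $Q_{n,r}$ cancel exactly in the trace. Using Lemma~\ref{computeinnerprod} in the form $\Phi_\infty(S_\mu S_\nu^*)={}_A(\mu|\topop_{|\nu|}\nu)$ together with $\topop_k=c_kP_k$ and the centrality of $c_k$, one obtains
\[
\Tr_\tau(Q_{n,r})=\sum_{|\rho|=r,\ |\sigma|=r-n}\tau\bigl({}_A(P_Ff_\sigma\,|\,P_Ff_\sigma\,(e_\rho|e_\rho)_A)\bigr),
\]
with no residual dependence on $c_k$ or on the Watatani indices. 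Since frame vectors have norm at most $1$, each summand is bounded in norm by $1$ and one gets $|\Tr_\tau(Q_{n,r})|\leq N^{2r-n}=\e^{\log(N)|\psi(n,r)|}$ directly, with $\alpha=N$. Your proposed route of separately bounding $\|c_k^{\pm 1/2}\|$ would still succeed (any exponential growth of $\|c_k^{-1}\|\leq\|\e^{\beta_k}\|$ in $k=r-n\leq 2r-n$ is absorbed into a larger $\alpha$), but seeing the cancellation makes the bookkeeping you flag as ``the hardest part'' essentially disappear.
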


\begin{rmk}
The assumptions on the existence of limiting behaviour for 
the Watatani indices are
really just for convenience here. These assumptions 
relate to existence and behaviour of norm limits, but 
we have
passed to the `measurable setting' and so really only need weak limits. We will explore this
point of view in Subsection \ref{sub:no-left}.
\end{rmk}

\begin{proof}
We need to prove that the following expression is finite for $t$ large enough:
$$
\Tr_\tau(\e^{-t|\D_\psi|})
=\sum_{n\in\Z}\sum_{r\geq\max\{0,n\}}\e^{-t|\psi(n,r)|}\Tr_\tau(P_{n,r}).
$$

By definition (see \eqref{onrqnr}), $P_{n,r}=Q_{n,r}-Q_{n,r-1}$ when $r>\max(0,n)$ and $P_{n,r}=Q_{n,r}$ when $r=\max(0,n)$ and $Q_{n,r}$ is defined as in \eqref{qnrqnr}. Using the computations of \cite[Lemma 2.8]{GMR}, we see that 
\begin{align}
\label{tracompqnr}
\Tr_\tau(Q_{n,r})&=\sum_{|\rho|-|\sigma|=n,\,|\rho|=r}\tau\left(( W_{e_\rho,c^{-1/2}_{|\sigma|}P_Ff_\sigma}|W_{e_\rho,c^{-1/2}_{|\sigma|}P_Ff_\sigma})_{A}\right)\\
\nonumber
&=\sum_{|\rho|-|\sigma|=n,\,|\rho|=r}\tau\circ \Phi_\infty\left(S_{c^{-1/2}_{|\sigma|}P_Ff_\sigma} S_{e_\rho}^*S_{e_\rho}S_{c^{-1/2}_{|\sigma|}P_Ff_\sigma} ^*\right)\\
\nonumber
&=\sum_{|\rho|-|\sigma|=n,\,|\rho|=r}\tau\left({}_A(P_Ff_\sigma|P_Ff_\sigma (e_\rho|e_\rho)_A)\right).
\end{align}
Using the fact that the elements of the frame have norm bounded by $1$, we see that 
\begin{align*}
|\Tr_\tau(Q_{n,r})|&\leq \sum_{|\rho|-|\sigma|=n,\,|\rho|=r}|\tau\left({}_A(P_Ff_\sigma|P_Ff_\sigma (e_\rho|e_\rho)_A)\right)|\\
&\leq  \sum_{|\rho|-|\sigma|=n,\,|\rho|=r}\|({}_A(P_Ff_\sigma|P_Ff_\sigma (e_\rho|e_\rho)_A)\|_A\leq  N^{2r-n}\leq \e^{\log(N)|\psi(n,r)|},
\end{align*}
where $N$ is the number of elements in the left frame and the right frame. 
We can now estimate 
\begin{align*}
\left|\Tr_\tau(\e^{-t|\D_\psi|})\right|
=&\left|\sum_{n\in\Z}\sum_{r>\max\{0,n\}}\e^{-t|\psi(n,r)|}\Tr_\tau(Q_{n,r}-Q_{n,r-1})\right|\\
&+\left|\sum_{n\in\Z}\sum_{r=\max\{0,n\}}\e^{-t|\psi(n,r)|}\Tr_\tau(Q_{n,r})\right| \\
\leq& 2\sum_{n\in\Z}\sum_{r\geq\max\{0,n\}}\e^{-t|\psi(n,r)|}|\Tr_\tau(Q_{n,r})| \\
\leq& 2\sum_{n\in\Z}\sum_{r\geq\max\{0,n\}}\e^{-(t-\log(N))|\psi(n,r)|}<\infty,
\end{align*}
if $t>\log(N)$.
\end{proof}

We return to Cuntz-Pimsner algebras below in Example \ref{diraccphea} (see page \pageref{diraccphea}) and Section \ref{diraccpkms} (see page \pageref{diraccpkms}).
Let us discuss a special case of Cuntz-Pimsner algebras arising on a commutative coefficient algebra.

\begin{example}
\label{localhomeofirst}
Let $Y$ be a compact Hausdorff space and $g:Y\to Y$ a surjective local homeomorphism. We consider the module $E_g=C(Y)$ with the bimodule action 
$$
(afb)(x)=a(x)f(x)b(g(x)), \quad a,b\in C(Y),\ f\in E_g.
$$
This is an fgp bi-Hilbertian bimodule in the inner products
$$
{}_{C(Y)}(f_1,f_2)=f_1\overline{f_2}\quad\mbox{and}\quad (f_1|f_2)_{C(Y)}:=\mathfrak{L}_g(\overline{f_1}f_2),
$$
where $\mathfrak{L}_g:C(Y)\to C(Y)$ is the transfer operator 
\begin{equation}\label{transferoperator}
\mathfrak{L}_g(f)(x):=\sum_{y\in g^{-1}(x)}f(y).
\end{equation}
 The Cuntz-Pimsner algebra $O_{E_g}$ can be realized as a groupoid $C^*$-algebra as in \cite{deaacaneudoaod} (see also \cite[Theorem 3.2]{DGMW}) over the solenoid 
$$X=\{x=y_1y_2\cdots \in Y^\N: \,g(y_{k+1})=y_k\ \forall k\}.$$ 
The case that $X$ equipped with the shift mapping is a Smale space was studied in \cite{DGMW}. 

The module $E_g$ has a right frame $(e_j)_{j=1}^N$ where $e_j=\sqrt{\chi_j}$ 
for a partition of unity $(\chi_j)_{j=1}^N$ subordinate to an open 
covering $(U_j)_{j=1}^N$ of $Y$ such that $g|_{U_j}$ is injective for all $j$. Using this partition of unity, one sees that $\beta_k=0$ for all $k$. It follows that $E_g$ is a strictly W-regular module. 

The case that $g:M\to M$ was a surjective local diffeomorphism acting conformally was considered in Subsection \ref{diracmfdfirst} (see page \pageref{diracmfdfirst}). However, the spectral triple considered Proposition \ref{psidiracprop} on $O_{E_g}$ differs greatly from the semifinite spectral triples considered in Lemma \ref{ximodsemi} -- the latter are in the image of the boundary mapping in $KK_1(O_{E_g},C(M))$ defined from Equation \eqref{defext} while the former is not if $[\slashed{D}]\neq 0\in K^*(C(M))$. 

Another class of examples already considered arises from a finite graph $G$ as in Subsection \ref{graphcstarsubsec} (see page \pageref{graphcstarsubsec}) where the shift mapping $\sigma_G:\Omega_G\to \Omega_G$ is a surjective local homeomorphism and $C^*(G)\cong O_{E_{\sigma_G}}$. The spectral triples in Proposition \ref{localizeckdirinpoint} (see page \pageref{localizeckdirinpoint}) arises from the construction of Lemma \ref{ximodsemi} by taking the trace $\tau:C(\Omega_G)\to \C$ to be defined from point evaluation in $y$. 
\end{example}

\subsubsection{Group $C^*$-algebras}
\label{groupcstarexam}

We now turn our attention to examples coming from the reduced group $C^*$-algebra 
of a discrete group. A well known construction associates a spectral triple with a length function 
on the group, we consider this example and a semifinite modification thereof which is possible 
for a-T-menable groups, i.e. groups with the Haagerup property. The methods extend to a-TT-menable groups, a class of groups containing all hyperbolic groups, see more in \cite[Chapter 7.2]{mimurathesis}.

Let $\Gamma$ denote a countable discrete group. Recall that a length function $\ell:\Gamma\to \R_{\geq 0}$ is a function satisfying $\ell(e)=0$ for $e\in \Gamma$ the identity, and $\ell(\gamma \gamma')\leq \ell(\gamma)+\ell(\gamma')$ for all group elements $\gamma,\gamma'\in\Gamma$. 
We say that $\ell$ is a proper 
length function if $\ell$ is a proper function, i.e. the set $\{\gamma\in \Gamma: \ell(\gamma)\leq R\}$ is finite for any $R\geq 0$. If there exists a constant $\beta\geq 0$ such that $\#\{\gamma\in \Gamma: \ell(\gamma)\leq R\}=O(\e^{\beta R})$ as $R\to \infty$ we say that $(\Gamma,\ell)$ has at most exponential growth. 

Define the operator $\D_\ell$ densely on $\ell^2(\Gamma)$ as the self-adjoint operator with 
$$\D_\ell\delta_{\gamma}:=\ell(\gamma)\delta_{\gamma}.$$
The space of compactly supported functions $c_c(\Gamma)$ is a core for $\D_\ell$. We define the $*$-algebra $c_b(\Gamma)\rtimes ^{\rm alg}\Gamma$ as the $*$-algebra generated by multiplication operators (by bounded functions on $\Gamma$) $c_b(\Gamma)\subseteq \mathbb{B}(\ell^2(\Gamma))$ and all left translation operators. 

\begin{prop}
\label{spectripfromlength}
Let $\ell$ be a proper length function on $\Gamma$. The triple $(c_b(\Gamma)\rtimes ^{\rm alg}\Gamma,\ell^2(\Gamma),\D_\ell)$ is a spectral triple defining the trivial class in the $K$-homology of the $C^*$-algebra $c_b(\Gamma)\rtimes_r \Gamma$. Moreover, if  $(\Gamma,\ell)$ has at most exponential growth the spectral triple $(c_b(\Gamma)\rtimes ^{\rm alg}\Gamma,\ell^2(\Gamma),\D_\ell)$ is $\mathrm{Li}_1$-summable.
\end{prop}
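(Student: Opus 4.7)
The plan is to verify the spectral triple axioms directly on the diagonal operator $\D_\ell$, then trivialize the $K$-homology class via positivity of $\D_\ell$, and finally extract $\mathrm{Li}_1$-summability from a Riemann--Stieltjes estimate against the exponential growth bound.

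First, since $\ell:\Gamma\to[0,\infty)$, the operator $\D_\ell$ is essentially self-adjoint on $c_c(\Gamma)$ (it is diagonal in the orthonormal basis $\{\delta_\gamma\}$ with real eigenvalues). Properness of $\ell$ implies that for any $R\geq 0$ the spectral projection $\chi_{[0,R]}(\D_\ell)$ has finite rank, whence $(1+\D_\ell^2)^{-1/2}$ is compact. To check bounded commutators, note that $f\in c_b(\Gamma)$ acts diagonally and therefore commutes with $\D_\ell$, while for the left translation $\lambda_g$ one computes
\[
[\D_\ell,\lambda_g]\delta_\gamma=\bigl(\ell(g\gamma)-\ell(\gamma)\bigr)\delta_{g\gamma}.
\]
Applying subadditivity both ways, $\ell(g\gamma)\le\ell(g)+\ell(\gamma)$ and $\ell(\gamma)\le\ell(g^{-1})+\ell(g\gamma)$, gives $\|[\D_\ell,\lambda_g]\|\le\max(\ell(g),\ell(g^{-1}))<\infty$. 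The commutator with a general element of $c_b(\Gamma)\rtimes^{\mathrm{alg}}\Gamma$ is then bounded since it is a finite linear combination of such terms. This establishes the spectral triple axioms.

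For triviality in $K$-homology, observe that $\D_\ell\ge 0$, so the spectral projection $P_{\D_\ell}=\chi_{[0,\infty)}(\D_\ell)$ equals $1_{\ell^2(\Gamma)}$ and hence $F_{\D_\ell}=2P_{\D_\ell}-1=1$. The associated bounded Fredholm module is therefore degenerate (all commutators vanish, $F^2=1$, $F=F^*$), and so represents the zero class in $K^1\bigl(c_b(\Gamma)\rtimes_r\Gamma\bigr)$.

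Finally, for $\mathrm{Li}_1$-summability under the exponential growth hypothesis, let $C,\beta\ge 0$ be such that $\#\{\gamma\in\Gamma:\ell(\gamma)\le R\}\le C\e^{\beta R}$ for all $R\ge 0$. Then
\[
\Tr(\e^{-t|\D_\ell|})=\sum_{\gamma\in\Gamma}\e^{-t\ell(\gamma)}
=\sum_{n=0}^{\infty}\sum_{\gamma:\,n\le\ell(\gamma)<n+1}\e^{-t\ell(\gamma)}
\le\sum_{n=0}^{\infty}C\e^{\beta(n+1)}\e^{-tn},
\]
which converges for every $t>\beta$. Hence $\e^{-t|\D_\ell|}$ is trace class for $t>\beta$, which by the remark following Definition \ref{summdefn} is exactly the $\mathrm{Li}_1$-summability condition.

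I expect no serious obstacle here; the only subtlety is that the paper's definition of length function does not impose symmetry $\ell(g)=\ell(g^{-1})$, so one must use subadditivity on both sides to bound $|\ell(g\gamma)-\ell(\gamma)|$. This plan also makes clear that the critical exponent appearing in Theorem \ref{mainthmconstr} for this triple is $\beta_{\D_\ell}=\beta$, the exponential growth rate, preparing the ground for the cocycle-twisted construction in Theorem \ref{mainthmongamma}.
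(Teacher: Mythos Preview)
Your proof is correct and follows essentially the same approach as the paper: compactness of the resolvent from properness, bounded commutators via subadditivity of $\ell$, and $\mathrm{Li}_1$-summability from the exponential growth estimate summed in dyadic/unit shells. You are in fact slightly more careful than the paper in two places: you explicitly justify the $K$-homology triviality via $\D_\ell\ge 0\Rightarrow F_{\D_\ell}=1$ (the paper merely asserts it), and you correctly bound $|\ell(g\gamma)-\ell(\gamma)|$ by $\max(\ell(g),\ell(g^{-1}))$ rather than $\ell(g)$, since the paper's definition of length function does not require symmetry. One small caveat in your closing remark: $\beta_{\D_\ell}$ is the \emph{infimum} of exponents $\beta$ for which the growth bound holds (the paper's $\beta(\Gamma,\ell)$), not an arbitrary such $\beta$.
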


\begin{proof}
Since $\ell$ is proper, it is clear that $\D_\ell$ has compact resolvent and if $(\Gamma,\ell)$ has at most exponential growth, then there is $C>0$ such that
$$
\Tr(\e^{-t|\D_\ell|})=\sum_{\gamma\in \Gamma}\e^{-t\ell(\gamma)}=\sum_{n=0}^\infty \#\{\gamma\in \Gamma: \ell(\gamma)=n\}\e^{-tn}\leq C\sum_{n=0}^\infty \e^{-(t-\beta)n}=\frac{C}{1-\e^{\beta-t}}<\infty.
$$

To show that $(c_b(\Gamma)\rtimes ^{\rm alg}\Gamma,\ell^2(\Gamma),\D_\ell)$ is a spectral triple, it remains to show that $c_b(\Gamma)\rtimes ^{\rm alg}\Gamma$ preserves the domain of $\D_\ell$ and has bounded commutators with $\D_\ell$. Domain preservation is clear. For an element $a\lambda_\gamma\in c_b(\Gamma)\rtimes ^{\rm alg}\Gamma$ and a function $f\in c_c(\Gamma)$ we compute that 
$$[\D_\ell,a\lambda_\gamma]f(g)=a(g)(\ell(g)-\ell(\gamma^{-1}g))f(\gamma^{-1}g).$$
It follows that 
\[
\|[\D_\ell,a\lambda_\gamma]\|_{\mathbb{B}(\ell^2(\Gamma)}\leq \|a\|_{c_b(\Gamma)}\sup_{g\in \Gamma}|\ell(g)-\ell(\gamma^{-1}g)|\leq \|a\|_{c_b(\Gamma)}\ell(\gamma).\qedhere
\]
\end{proof}

The $K$-homology class of $(c_b(\Gamma)\rtimes ^{\rm alg}\Gamma,\ell^2(\Gamma),\D_\ell)$ 
is trivial. We shall now consider a topologically more interesting semifinite spectral triple that 
can be constructed on groups with the Haagerup property. We are grateful to Branimir \'{C}a\'{c}i\'{c} for sharing this construction with us. Similar ideas appeared in \cite[Appendix B]{juuunge}.

Let $\Gamma$ be a discrete group with the Haagerup property.
Then there is a proper isometric action of $\Gamma$ on a real Hilbert space $\H_\Gamma$. 
By the Mazur-Ulam theorem there exists an orthogonal representation
$$
\pi_\Gamma:\,\Gamma\to O(\H_\Gamma)
$$
on the Hilbert space $\H_\Gamma$ and a proper cocycle $c_\Gamma$ for $\pi_\Gamma$, meaning that $c_\Gamma:\Gamma\to \H$ is a proper function satisfying the cocycle identity
\begin{equation}
c_\Gamma(\gamma_1\gamma_2)=c_\Gamma(\gamma_1)-\pi_\Gamma(\gamma_1)c_\Gamma(\gamma_2).
\label{eq:cocycle}
\end{equation}
The cocycle identity allows us to define a length function on $\Gamma$ by
$$
\ell(\gamma):=\Vert c_\Gamma(\gamma)\Vert_{\H_\Gamma}.
$$
Since $c_\Gamma$ is proper, so is $\ell$.

\begin{rmk}
The existence of a proper isometric action of a group $\Gamma$ on a Hilbert space is equivalent to $\Gamma$ having the Haagerup property, also known as a-T-menability. Our construction extends to the case when there exists an orthogonal representation $\pi_\Gamma:\,\Gamma\to O(\H_\Gamma)$ and a proper quasi-cocycle $c_\Gamma:\Gamma\to \H_\Gamma$. That is, when 
$$Q(c_\Gamma):=\sup_{\gamma_1,\gamma_2}\|c_\Gamma(\gamma_1\gamma_2)-c_\Gamma(\gamma_1)+\pi_\Gamma(\gamma_1)c_\Gamma(\gamma_2)\|_{\H_\Gamma}<\infty.$$
In this case, $\ell(\gamma):=\Vert c_\Gamma(\gamma)\Vert_{\H_\Gamma}$ could fail to be a length function but still satisfies $\ell(\gamma \gamma')\leq \ell(\gamma)+\ell(\gamma')+Q(c_\Gamma)$ which suffices for our purposes. The existence of a proper quasi-cocycle on a Hilbert space is equivalent to $\Gamma$ being a-TT-menable. Hyperbolic groups are a-TT-menable. For notational simplicity, we restrict our attention to cocycles. 
\end{rmk}

\begin{defn}
Let $\Cl(\H_\Gamma)$ denote the the complex Clifford algebra of $\H_\Gamma$ 
and assume that $\mathfrak{c}_S:\Cl(\H)\to \mathbb{B}(S_\H)$ is a representation of $\Cl(\H)$.
We say that a unitary representation 
$\pi_S:\,\Gamma\to U(S_\H)$ is a lift of $\pi_\Gamma$ to $S_\H$
if for all $v\in\H$ and $g\in\Gamma$ we have
\begin{equation}
\pi_S(g)\mathfrak{c}_S(v)\pi_S(g^{-1})=\mathfrak{c}_S(\pi_\Gamma(g)v).
\label{eq:S-cl}
\end{equation}
\end{defn}

When $\H$ is finite dimensional this is just the well-known Clifford algebra, but when $\H$ is infinite dimensional we refer to \cite{CO,W} for a description of this algebra.

For a representation $S_\H$ of $\Cl(\H)$ we consider the new Hilbert space $\ell^2(\Gamma,S_\H)$. Assuming that $\pi_S$ lifts $\pi_\Gamma$ to $S_\H$ the Hilbert space $\ell^2(\Gamma,S_\H)$ carries a representation of $\Gamma$ defined by
\begin{equation}
\label{tildepidef}
\tilde{\pi}:\,\Gamma\to U(\ell^2(\Gamma,S_\H)),\qquad
\big(\tilde{\pi}(g)f\big)(\gamma)=\pi_S(g)f(g^{-1}\gamma).
\end{equation}
On the Hilbert space $\ell^2(\Gamma, S_\H)$
define a self-adjoint operator $\D_c$ by declaring
\begin{equation}
(\D_cf)(\gamma):=\mathfrak{c}_S(c_\Gamma\gamma))f(\gamma),\quad f\in c_c(\Gamma, S_\H).
\label{eq:cliff}
\end{equation}
Since $\mathfrak{c}_S(v)^2=\|v\|_\H^2$ for all $v\in \H$,  the domain for $\D_c$ can be deduced from 
$$
(\D_c^2f)(\gamma)=\ell(\gamma)^2f(\gamma).
$$
The compatibility requirement Equation \eqref{eq:S-cl}
and cocycle property Equation \eqref{eq:cocycle}
imply that for $g,\gamma\in\Gamma$ we have
$$
\pi_S(g)\mathfrak{c}_S(c_\Gamma(\gamma))=\mathfrak{c}_S(\pi(g)c_\Gamma(\gamma))\pi_S(g)=\mathfrak{c}_S(c_\Gamma(g))\pi_S(g)+\mathfrak{c}_S(c_\Gamma(g\gamma))\pi_S(g).
$$
Then the commutator of $\D_c$ and a group element is
\begin{align*}
([\D_c,\tilde{\pi}(g)]f)(\gamma)&=
\mathfrak{c}_S(c_\Gamma(\gamma))\pi_S(g)f(g^{-1}\gamma)-\pi_S(g)\mathfrak{c}_S(c_\Gamma(g^{-1}\gamma))f(g^{-1}\gamma)\\
&=\mathfrak{c}_S(c_\Gamma(g))\pi_S(g)f(g^{-1}\gamma)=(\mathfrak{c}_S(c_\Gamma(g))\tilde{\pi}(g))(f)(\gamma).
\end{align*}
Hence the commutators between $\D_c$ and group elements are bounded. It is moreover clear that
these commutators lie in $\cN_0\rtimes\Gamma$ where $\cN_0=\mathfrak{c}_S(\Cl(\H))''$. We define $\cN$ as the von Neumann algebraic tensor product $\mathbb{B}(\ell^2(\Gamma))\bar{\otimes}\cN_0$. 

Finally, 
$$
(1+\D_c^2)^{-1}\in \mathbb{K}(\ell^2(\Gamma))\ox 1\subset \mathbb{K}(\ell^2(\Gamma))\ox\mathbb{K}_\tau
$$
where $\mathbb{K}_\tau$ is the compacts in $\cN_0$ for a choice of normalized positive trace $\tau$. 
Let $\Tr_\tau$ be the trace on $\cN$ defined from the trace $\tau$ on $\cN_0$. 
We conclude that $(1+\D_c^2)^{-1}\in \mathbb{K}_{\Tr_\tau}$.
Finally, if $\ell$ has at most exponential growth then $\Tr_\tau(\e^{-t|\D_c|})=\sum_{\gamma\in \Gamma}\e^{-t\ell(\gamma)}<\infty$ for $t$ large enough. As such, $(i\pm \D_c)^{-1}\in \mathrm{Li}_1$ 
if $\ell$ has at most exponential growth. We conclude the following result.

\begin{prop}
\label{cssfst}
Assume that $\mathfrak{c}_S:\Cl(\H)\to \mathbb{B}(S_\H)$ is a representation of $\Cl(\H)$
and that the unitary representation $\pi_S:\,\Gamma\to U(S_\H)$ lifts $\pi_\Gamma$ to $S_\H$. 
Let $c_b(\Gamma)$ be the (continuous) bounded functions on $\Gamma$, and define a representation of $c_b(\Gamma)\rtimes \Gamma$ on $\ell^2(\Gamma, S_\H)$ by 
$$\hat{\pi}_S(a\lambda_g)f(\gamma)=a(\gamma) [\tilde{\pi}(g)f](\gamma),$$
where $\tilde{\pi}$ is as in \eqref{tildepidef}. Then the triple $(c_b(\Gamma)\rtimes^{\rm alg} \Gamma, \ell^2(\Gamma, S_\H), \D_c, \cN, \Tr_\tau)$ is a semifinite spectral triple which is $\mathrm{Li}_1$-summable if $\ell$ has at most exponential growth.
\end{prop}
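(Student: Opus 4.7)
The plan is to verify the four conditions of Definition~\ref{thedefn} (self-adjointness of $\D_c$, affiliation with $\cN$, bounded commutators, compact resolvent) relative to $\cN$ and $\Tr_\tau$, leveraging the computations already carried out in the paragraphs preceding the proposition, and then derive $\mathrm{Li}_1$-summability under the exponential growth hypothesis.

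First I would check that $\hat{\pi}_S$ is a genuine $*$-representation of the algebraic crossed product $c_b(\Gamma)\rtimes^{\rm alg}\Gamma$ by verifying the covariance relation $\tilde{\pi}(g)\,\hat{\pi}_S(a)\,\tilde{\pi}(g)^* = \hat{\pi}_S(a(g^{-1}\cdot))$, which follows at once from the formula~\eqref{tildepidef} for $\tilde{\pi}$. The operator $\D_c$ is best viewed as the orthogonal direct sum over $\gamma\in\Gamma$ of the bounded self-adjoint fibre operators $\mathfrak{c}_S(c_\Gamma(\gamma))\in\cN_0$ acting on $S_\H$; standard theory of decomposable operators then gives self-adjointness on the natural domain and affiliation with $\cN=\mathbb{B}(\ell^2(\Gamma))\bar{\otimes}\cN_0$.

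Second I would handle the bounded-commutator condition. Since $\hat{\pi}_S(a)$ for $a\in c_b(\Gamma)$ is pointwise multiplication in the $\gamma$ variable and does not touch the $S_\H$-fibre, while $\D_c$ acts fibrewise in $\gamma$, one has $[\D_c, \hat{\pi}_S(a)] = 0$. Combining this with the commutator computation already established above the proposition yields
$$
[\D_c, \hat{\pi}_S(a\lambda_g)] = \hat{\pi}_S(a)\,\mathfrak{c}_S(c_\Gamma(g))\,\tilde{\pi}(g),
$$
whose norm is bounded by $\|a\|_\infty \ell(g)$. The compactness condition $(1+\D_c^2)^{-1}\in\mathbb{K}_{\Tr_\tau}$ has already been observed just above the proposition: since $\ell$ is proper and $\tau(1_{\cN_0})<\infty$, the operator $(1+\ell(\gamma)^2)^{-1}\otimes 1_{S_\H}$ lies in $\mathbb{K}(\ell^2(\Gamma))\otimes 1\subseteq \mathbb{K}(\ell^2(\Gamma))\otimes\mathbb{K}_\tau = \mathbb{K}_{\Tr_\tau}$.

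Finally, for $\mathrm{Li}_1$-summability, since $|\D_c|$ acts fibrewise as multiplication by $\ell(\gamma)\cdot 1_{S_\H}$, one computes
$$
\Tr_\tau(\e^{-t|\D_c|}) = \tau(1_{S_\H})\sum_{\gamma\in\Gamma}\e^{-t\ell(\gamma)},
$$
and invokes the exponential growth estimate exactly as in the proof of Proposition~\ref{spectripfromlength} to get finiteness for $t$ sufficiently large. The only potentially delicate point I anticipate is the careful handling of the infinite-dimensional Clifford representation $\mathfrak{c}_S$ (whose very construction rests on \cite{CO,W}) and the fibrewise nature of the affiliation claim; once $\D_c$ is presented as a decomposable operator with bounded fibres, however, the measure-theoretic and functional-analytic points reduce to routine checks, and no substantial obstacle should remain.
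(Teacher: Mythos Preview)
Your proposal is correct and follows essentially the same approach as the paper, which in fact establishes all the ingredients (self-adjointness via $\D_c^2=\ell(\gamma)^2$, the commutator identity $[\D_c,\tilde{\pi}(g)]=\mathfrak{c}_S(c_\Gamma(g))\tilde{\pi}(g)$, affiliation with $\cN=\mathbb{B}(\ell^2(\Gamma))\bar{\otimes}\cN_0$, compactness of the resolvent, and the heat trace bound) in the paragraphs immediately preceding the proposition and then simply records the conclusion. Your write-up adds two small clarifications the paper leaves implicit---the covariance check for $\hat{\pi}_S$ and the observation $[\D_c,\hat{\pi}_S(a)]=0$ for $a\in c_b(\Gamma)$---and your factor $\tau(1_{S_\H})$ in the heat trace equals $1$ since the paper takes $\tau$ normalized.
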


We return to the example of this subsubsection in Example \ref{diracgrouphea} (see page \pageref{diracgrouphea}) and Subsection \ref{diracgroupkms} (see page \pageref{diracgroupkms}).

\begin{example}
The following example of a proper group cocycle shows the construction's geometric advantage compared to only using a length function. Consider the trivial action of the discrete group $\Gamma=\Z^n$ on $\H_\Gamma=\R^n$. The inclusion $\Z^n\hookrightarrow\R^n$ is additive and proper, and therefore a proper group cocycle for the trivial action. The semifinite spectral triple associated with a finite dimensional Clifford representation $\mathfrak{c}_S:\R^n\to \End_\C(S)$ can when restricted to $C^*(\Z^n)\cong C(\mathbb{T}^n)$ be identified with the semifinite spectral triple $(C^\infty(\mathbb{T}^n),L^2(\mathbb{T}^n,S),\slashed{D}_{\mathbb{T}^n}\otimes 1_S,\mathbb{B}(L^2(\mathbb{T}^n))\otimes \C\ell_n,\Tr_\tau)$ using Fourier theory on the dual torus $\mathbb{T}^n=\widehat{\Z^n}$. We observe that to extract $K$-homological content from this construction we need to specify a grading if $n$ is even. In particular, it is unclear how to interpret the construction above in $K$-homology when $\H_\Gamma$ is infinite-dimensional.
\end{example}

\section{KMS states constructed from $\mathrm{Li}_1$-summable spectral triples}
\label{sec:KMS}

This section contains the fundamental technical construction of the paper. 
Starting from a semifinite $\mathrm{Li}_1$-summable spectral triple, 
we use the associated algebra of Toeplitz operators to construct an action 
from the operator $\D$ and a KMS-state from the operator $|\D|$.

\subsection{The positive part of the spectrum and heat traces}

Throughout this section we suppose that $(\A,\H,\D,\cN,\Tau)$ is 
a unital semifinite
spectral triple. The spectral triple can be semifinite, 
in which case we let $\Tau$ denote the given positive faithful normal semifinite trace. 
In general $\Tau$ is not unique, and coincides with a non-zero multiple of 
the operator trace in the ``usual" non-semifinite case $\cN=\B(\H)$.
We write $\mathbb{K}_\cN$ for the compacts for $\Tau$. Again, 
$\Ko_\cN$ coincides with the usual compacts in the case of the type I factor $\cN=\B(\H)$.

We write $\cN^+$ for the 
von Neumann algebra $P_\D\cN P_\D$. By an abuse of notation, 
we write $\Tau$ also for the induced faithful normal semifinite trace on $\cN^+$. 
The $\Tau$-compacts on $\cN^+$ will be denoted by $\Ko_\cN^+$.

\begin{defn}
\label{ass:minus-one}
The operator $\D$ is said to have positive $\Tau$-essential spectrum if for some $\beta\in [0,\infty)$, we have $\Tau(P_\D\e^{-t\D})<\infty$ for $t>\beta$ 
and $\Tau(P_\D\e^{-t\D})\nearrow \infty$ as $t\searrow\beta$. 

\end{defn}

\begin{prop}
\label{converatbeta}
Let $\D$ be a self-adjoint densely defined operator affiliated with $\cN$ satisfying that $(1+\D^2)^{-1/2}\in \mathrm{Li}_1(\Tau)$. We define the number
\begin{equation}
\label{invtempish}
\beta_\D:=\inf\{t>0: \Tau(P_\D\e^{-t\D})<\infty\}.
\end{equation}
Then $\beta_\D\in [0,\infty)$ and $\D$ has positive $\Tau$-essential spectrum if and only if
$$
\lim_{t\searrow\beta_\D}\Tau(P_\D\e^{-t\D})=\infty.
$$
In particular, if $\beta_\D=0$ then $\D$ has positive $\Tau$-essential spectrum if and only if $\Tau(P_\D)=\infty$.
\end{prop}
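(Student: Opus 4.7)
The plan is to reduce the statement to the behaviour of a single scalar decreasing function via the spectral theorem, and then extract the three assertions from monotone convergence.

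First, I would use the spectral theorem for the self-adjoint operator $\D$ affiliated with $\cN$ and the normal semifinite trace $\Tau$ to introduce the positive Borel measure $\mu$ on $[0,\infty)$ determined by $\mu(B) = \Tau(P_\D \chi_B(\D))$ for Borel $B \subseteq [0,\infty)$, so that
\begin{equation*}
\Tau(P_\D \e^{-t\D}) = \int_0^\infty \e^{-t\lambda}\,\mathrm{d}\mu(\lambda).
\end{equation*}
On the positive half-line the function $t \mapsto \e^{-t\lambda}$ is pointwise decreasing in $t$ for each $\lambda \geq 0$, so $t \mapsto \Tau(P_\D \e^{-t\D})$ is a monotonically decreasing function on $(0,\infty)$ with values in $[0,\infty]$. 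In particular, the set $\{t>0 : \Tau(P_\D \e^{-t\D})<\infty\}$ is either empty or an interval of the form $(\beta_\D,\infty)$ or $[\beta_\D,\infty)$.

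Next I would verify $\beta_\D \in [0,\infty)$. The lower bound is immediate from the definition as an infimum over $(0,\infty)$. For finiteness, the $\mathrm{Li}_1$-summability hypothesis $(1+\D^2)^{-1/2} \in \mathrm{Li}_1(\Tau)$ is equivalent to $\Tau(\e^{-t|\D|})<\infty$ for $t$ large enough, as recalled in the preliminary remarks. Since $P_\D \e^{-t\D} = P_\D \e^{-t|\D|} \leq \e^{-t|\D|}$ as positive operators in $\cN$, we have $\Tau(P_\D \e^{-t\D}) \leq \Tau(\e^{-t|\D|}) < \infty$ for all sufficiently large $t$, hence $\beta_\D < \infty$.

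The equivalence is now a formal consequence of monotonicity. By monotone convergence for $\mu$, the limit $L := \lim_{t \searrow \beta_\D} \Tau(P_\D \e^{-t\D}) = \sup_{t > \beta_\D} \Tau(P_\D \e^{-t\D})$ exists in $[0,\infty]$. For the $(\Leftarrow)$ direction: if $L = \infty$, then combined with the definition of $\beta_\D$, which ensures $\Tau(P_\D \e^{-t\D})<\infty$ for all $t > \beta_\D$, the pair $\beta = \beta_\D$ witnesses that $\D$ has positive $\Tau$-essential spectrum. For the $(\Rightarrow)$ direction, suppose $\D$ has positive $\Tau$-essential spectrum with witness $\beta \in [0,\infty)$. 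I would first argue $\beta = \beta_\D$: the finiteness of $\Tau(P_\D \e^{-t\D})$ for all $t > \beta$ gives $\beta_\D \leq \beta$, while the divergence $\Tau(P_\D \e^{-t\D}) \nearrow \infty$ as $t \searrow \beta$ forbids $\beta_\D < \beta$ (otherwise we could pick $t$ strictly between $\beta_\D$ and $\beta$ with $\Tau(P_\D \e^{-t\D})<\infty$, and monotonicity would force the limit at $\beta$ to be finite as well). With $\beta = \beta_\D$, the divergence condition is precisely $L = \infty$.

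The special case $\beta_\D = 0$ follows by applying monotone convergence once more: as $t \searrow 0$, $\e^{-t\D}P_\D \nearrow P_\D$ strongly, so the normality of $\Tau$ gives $\lim_{t \searrow 0} \Tau(P_\D \e^{-t\D}) = \Tau(P_\D)$, turning the divergence condition into $\Tau(P_\D) = \infty$. The only mildly delicate point in the whole argument is pinning down that the parameter $\beta$ in Definition \ref{ass:minus-one} must coincide with $\beta_\D$; everything else is monotone convergence applied to the scalar spectral representation.
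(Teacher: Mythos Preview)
Your proof is correct and follows essentially the same approach as the paper's: both use the $\mathrm{Li}_1$ hypothesis to get finiteness of $\Tau(P_\D\e^{-t\D})$ for large $t$, then argue the equivalence from the definition of $\beta_\D$ and the monotonicity of $t\mapsto\Tau(P_\D\e^{-t\D})$. Your version is more explicit about the spectral-measure reduction and the monotone-convergence step for the $\beta_\D=0$ case, whereas the paper leaves these as implicit or unstated; the paper also asserts ``$\Tau(P_\D\e^{-t\D})<\infty$ for $t>\beta_\D$'' directly from the infimum definition without spelling out the upward-closedness that you justify via monotonicity.
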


\begin{proof}
By definition, if $(1+\D^2)^{-1/2}\in \mathrm{Li}_1$ then $\Tau(\e^{-t|\D|})<\infty$ for $t$ large enough. Therefore, $\Tau(P_\D\e^{-t\D})=\Tau(P_\D\e^{-t|\D|})<\infty$ for $t$ large enough and $\beta_\D:=\inf\{t>0: \Tau(P_\D\e^{-t\D})<\infty\}$ will be a number in $[0,\infty)$. By definition, $\Tau(P_\D\e^{-t\D})<\infty$ for $t>\beta_\D$ and if $
\lim_{t\searrow\beta_\D}\Tau(P_\D\e^{-t\D})=\infty$ then $\D$ has positive $\Tau$-essential spectrum with $\beta=\beta_\D$. Conversely, if $\D$ has positive $\Tau$-essential spectrum there is a $\beta\in [0,\infty)$ with $\Tau(P_\D\e^{-t\D})<\infty$ for $t>\beta$ 
and $\Tau(P_\D\e^{-t\D})\nearrow \infty$ as $t\searrow\beta$, and in this case it is clear that $\beta=\beta_\D$.
\end{proof}

\begin{rmk}
We will often impose the assumption of positive $\Tau$-essential spectrum. 
If for some $\beta\in [0,\infty)$, $\Tau((1-P_\D)\e^{t\D})<\infty$ for $t>\beta$ and 
$\Tau((1-P_\D)\e^{t\D})\nearrow \infty$ as $t\searrow\beta$, 
we can equally well use $-\D$ in our construction. 
\end{rmk}

\begin{prop}
Let $\D$ be a self-adjoint densely defined operator affiliated with $\cN$ satisfying that $(1+\D^2)^{-1/2}\in \mathrm{Li}_1(\Tau)$. Then $\D$ has positive $\Tau$-essential spectrum if and only if both of the following conditions fail:
\begin{enumerate}
\item $P_\D$ has finite $\Tau$-trace.
\item There exists a $p>0$ such that $P_\D\e^{-|\D|}\in \mathcal{L}^p(\Tau)\setminus \cap_{q>p}  \mathcal{L}^q(\Tau)$.
\end{enumerate}
\end{prop}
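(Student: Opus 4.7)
The plan is to apply Proposition~\ref{converatbeta} and translate $\mathcal{L}^p$-membership of $P_\D\e^{-|\D|}$ into heat-trace statements. The elementary identity driving everything is that $P_\D$ commutes with $\D$ and $|\D|=\D$ on $\range(P_\D)$, so by the functional calculus
$$
(P_\D\e^{-|\D|})^p = P_\D\e^{-p\D},\qquad p>0,
$$
and hence $P_\D\e^{-|\D|}\in\mathcal{L}^p(\Tau)$ if and only if $\Tau(P_\D\e^{-p\D})<\infty$. Writing $h(t):=\Tau(P_\D\e^{-t\D})$, the set $S:=\{p>0:P_\D\e^{-|\D|}\in\mathcal{L}^p(\Tau)\}$ coincides with $\{p>0:h(p)<\infty\}$, and by monotonicity of $h$ together with the definition of $\beta_\D$ it is either $(\beta_\D,\infty)$ or $[\beta_\D,\infty)$, the closed case occurring precisely when $h(\beta_\D)<\infty$.

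Next I would split into two cases depending on $\beta_\D$. If $\beta_\D=0$, monotone convergence applied to the increasing net $P_\D\e^{-t\D}\nearrow P_\D$ as $t\searrow 0$ gives $\lim_{t\searrow 0}h(t)=\Tau(P_\D)$, so Proposition~\ref{converatbeta} shows that $\D$ has positive $\Tau$-essential spectrum if and only if $\Tau(P_\D)=\infty$, i.e.\ if and only if Condition~(1) fails; moreover $S=(0,\infty)$ has no strictly positive attained minimum, so Condition~(2) fails vacuously. If $\beta_\D>0$, then $h(t)=\infty$ for $t\in(0,\beta_\D)$ combined with $h(t)\leq\Tau(P_\D)$ forces $\Tau(P_\D)=\infty$, hence Condition~(1) automatically fails. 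By Proposition~\ref{converatbeta}, positive $\Tau$-essential spectrum is then equivalent to $h(\beta_\D)=\infty$, i.e.\ to $\beta_\D\notin S$, which by the previous paragraph is exactly the failure of Condition~(2).

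Assembling the two cases, positive $\Tau$-essential spectrum fails precisely when one of Conditions~(1) and~(2) holds, giving the claimed equivalence. The only subtle point is the reading of Condition~(2): it should be parsed as asserting that $P_\D\e^{-|\D|}\in\mathcal{L}^p(\Tau)\setminus\bigcup_{q<p}\mathcal{L}^q(\Tau)$ for some $p>0$, i.e.\ that $P_\D\e^{-|\D|}$ has a strictly positive critical Schatten exponent which is attained. With this reading the whole argument reduces to the structural description of the set $S$, and there is no serious obstacle; the proof is essentially bookkeeping on top of Proposition~\ref{converatbeta}.
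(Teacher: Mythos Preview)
Your proof is correct and matches the paper's approach: translate $\mathcal{L}^p$-membership of $P_\D\e^{-|\D|}$ into finiteness of $h(p)=\Tau(P_\D\e^{-p\D})$, invoke Proposition~\ref{converatbeta}, and split on $\beta_\D=0$ versus $\beta_\D>0$. Your re-reading of Condition~(2) is exactly how the paper's own proof interprets it (namely as $\beta_\D=p>0$ with $\lim_{t\searrow p}h(t)<\infty$), so the apparent typo does not affect the argument.
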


The reader should note that conditions 1. and 2. are mutually exclusive.

\begin{proof}
If $\D$ has positive $\Tau$-essential spectrum, then clearly 1. fails. Also 2. fails if $\D$ has positive $\Tau$-essential spectrum because condition 2. is equivalent to $\beta_\D=p$ and $\lim_{t\searrow p}\Tau(P_\D\e^{-t\D})$ being finite. 

Conversely, if Condition 1. and 2. fails, then either $\beta_\D=0$ and $\lim_{t\searrow 0}\Tau(P_\D\e^{-t\D})$ must be infinite not to violate $\Tau(P_\D)$ being infinite or $\beta_\D>0$ and the set $\{p>0: P_\D\e^{-|\D|}\in \mathcal{L}^p(\Tau)\}$ is open (due to condition 2. failing) showing that $\Tau(P_\D\e^{-t\D})\nearrow \infty$ as $t\searrow\beta_\D$. 
\end{proof}

\begin{example}
\label{diracmfdhea}
Dirac operators on closed manifolds, as considered in Subsection \ref{diracmfdfirst} (see page \pageref{diracmfdfirst}), have positive essential spectrum. In this case, we can compute $\beta_\D=0$ and the leading term in the heat trace asymptotics using Proposition \ref{heatasumfd}. If $(C^\infty(M),L^2(M,S),\slashed{D})$ is the spectral triple associated with a Dirac operator, Proposition \ref{heatasumfd} implies that
$$
\Tr_{L^2(M,S)}(P_\D \e^{-t|\slashed{D}|})
=n!c_nt^{-n}\int_{S^*M} \mathrm{Tr}_S(p_{\slashed{D}})\mathrm{d}V 
+O(t^{-n+\epsilon}), \quad\mbox{as $t\to 0$},
$$
where $p_{\slashed{D}}$ is the principal symbol of the 
zeroth order pseudo-differential operator $P_\slashed{D}$. 
A direct computation shows that 
$p_{\slashed{D}}(x,\xi)=\frac{1}{2}\left(c_S(\xi)+1\right)$ 
for $x\in M$ and $\xi\in S^*_xM$. Here $c_S:T^*M\to \End(S)$ 
denotes Clifford multiplication. More generally, 
Proposition \ref{heatasumfd} allows us to conclude that for $a\in C^\infty(M)$,
\begin{align*}
\Tr_{L^2(M,S)}(P_\D a\e^{-t|\slashed{D}|})
&=n!c_nt^{-n}\int_{S^*M} \mathrm{Tr}_S(p_{\slashed{D}})a\,\mathrm{d}V 
+O(t^{-n+\epsilon})=\\
&=n!c_nt^{-n}\int_{M}\int_{S^*_xM} a(x)\mathrm{Tr}_S(p_{\slashed{D}}(x,\xi))\,\mathrm{d}V_{S^*_xM}(\xi)\mathrm{d}V(x) 
+O(t^{-n+\epsilon})=\\
&=\tilde{c}_nt^{-n}\int_{M}a\,\mathrm{d}V 
+O(t^{-n+\epsilon}), \quad\mbox{as $t\to 0$},
\end{align*}
for a new constant $\tilde{c}_n>0$, depending 
only on the dimension of $M$ and the rank of $S$. In the last equality we 
used that $p_{\slashed{D}}-1/2$ is an antisymmetric 
function under the involution $(x,\xi)\mapsto (x,-\xi)$ of $S^*M$ 
and therefore 
\begin{align*}
\int_{S^*_xM}\mathrm{Tr}_S(p_{\slashed{D}}(x,\xi))\,\mathrm{d}V_{S^*_xM}&=\int_{S^*_xM}\mathrm{Tr}_S(p_{\slashed{D}}(x,-\xi))\,\mathrm{d}V_{S^*_xM}=\\
&=\frac{\mathrm{rank}(S)}{2}\int_{S^*_xM} \,\mathrm{d}V_{S^*_xM}=\frac{\mathrm{rank}(S)\pi^{\dim(M)/2}}{\Gamma(\dim(M)/2)}.
\end{align*}
\end{example}

\begin{example}
\label{diracgraphhea}
The spectral triples for graph $C^*$-algebras 
from Proposition \ref{localizeckdirinpoint} (see page \pageref{localizeckdirinpoint})
also have positive essential spectrum. 
We compute $\beta_\D$ and the heat trace asymptotics 
assuming that $G$ is primitive. 
In this example, $P_{\D_y}$ is the projection onto the subspace 
$\ell^2(\mathcal{V}_y\cap \kappa_G^{-1}(0))$. We use the notation 
\begin{equation}
\label{vplusdef}
\mathcal{V}_y^+:=\mathcal{V}_y\cap \kappa_G^{-1}(0)
=\{(x,n)\in \mathcal{V}_y: n\geq 0, \; \sigma_G^n(x)=y\}.
\end{equation}
The space
$P_{\D_y}\ell^2(\mathcal{V}_y)$ is therefore spanned by the 
orthonormal basis $(\delta_{(x,n)})_{(x,n)\in \mathcal{V}_y^+}$. 
Note that if $\sigma_G^n(x)=y$ then $x$ is uniquely determined 
by $y$ and a finite path $\sigma=\sigma_1\sigma_2\cdots \sigma_n$ 
with $x=\sigma y$. Note that paths of the form 
$x=\sigma y$, with $s(\sigma_n)=r(y)$, 
exhaust all possible $x\in \sigma_G^{-n}(\{y\})$. 
Using that $P_{\D_y} \D_y\delta_{(x,n)}=n \delta_{(x,n)}$ 
for $(x,n)\in \mathcal{V}_y^+$, we compute that 
$$
\Tr(P_{\D_y}\e^{-t|\D_y|})
=\sum_{n=0}^\infty \sum_{x\in \sigma_G^{-n}(\{y\})} \e^{-tn}
=\sum_{n=0}^\infty \#\{\sigma\in E_n: s(\sigma_n)=r(y)\} \e^{-tn}.
$$
Let $A$ denote the edge adjacency matrix of $G$ and 
$r_\sigma(A)$ its spectral radius. If $G$ is primitive, (i.e. all entries of $A^k$ are positive for some integer $k>0$)
we let $w\in \C^E$ its $\ell^2$-normalized Perron-Frobenius vector. 
It follows from \cite[Lemma 3.7]{RRS} that there is an 
$\alpha_0\in [0,1)$ such that 
$$
\#\{\sigma\in E_n: s(\sigma_n)=r(y)\}
=\|w\|_{\ell^1}w_{r(y)} r_\sigma(A)^{n}+O((\alpha_0 r_\sigma(A))^n),
$$
as $n\to \infty$. We can conclude that there is a function 
$f$ holomorphic in $\mathrm{Re}(t)>\log r_\sigma(A)+\log\alpha_0$ such that 
$$
\Tr(P_{\D_y}\e^{-t|\D_y|})=\frac{\|w\|_{\ell^1}w_{r(y)}}{1-r_\sigma (A) \e^{-t}}+f(t).
$$
Therefore, 
$\Tr(P_{\D_y}\e^{-t|\D_y|})-\frac{\|w\|_{\ell^1}w_{r(y)}}{t-\log(r_\sigma (A))}$ 
has a holomorphic extension to 
$\mathrm{Re}(t)>\log r_\sigma(A)+\log\alpha_0$  whenever $G$ is primitive. 

More generally, if $G$ is primitive, the method above 
shows that for two finite paths $\mu$ and $\nu$ we can compute that 
\begin{align*}
\Tr(P_{\D_y}S_\mu S_\nu^*\e^{-t|\D_y|})
=&\delta_{\mu,\nu}\sum_{n=0}^\infty \sum_{x\in \sigma_G^{-n}(\{y\})} \|S_\mu^*\delta_{(x,n)}\|_{\ell^2}^2\e^{-tn}\\
=&\sum_{n=|\mu|}^\infty \#\big\{\sigma\in E_{n-|\mu|}: r(\sigma_1)=s(\mu), \, s(\sigma_{n-|\mu|})=r(y)\big\} \e^{-tn}\\
&+\delta_{\mu,\nu}\sum_{n=0}^{|\mu|-1} \sum_{x\in \sigma_G^{-n}(\{y\})} \|S_\mu^*\delta_{(x,n)}\|_{\ell^2}^2\e^{-tn}\\
=&\delta_{\mu,\nu}w_{s(\mu)}w_{r(y)} \frac{\e^{-t|\mu|}}{1-r_\sigma(A)\e^{-t}}+\delta_{\mu,\nu}f_{\mu,y}(t),
\end{align*}
for a function $f_{\mu,y}$ holomorphic in 
$\mathrm{Re}(t)>\log r_\sigma(A)+\log\alpha_0$. We conclude that 
$$
\Tr(P_{\D_y}S_\mu S_\nu^*\e^{-t|\D_y|})-\delta_{\mu,\nu}w_{d(\mu)}w_{r(y)}\frac{r_\sigma(A)^{-|\mu|}}{t-\log(r_\sigma (A))},
$$ 
has a holomorphic extension to 
$\mathrm{Re}(t)>\log r_\sigma(A)+\log\alpha_0$  
whenever $G$ is primitive. As such, 
$\beta_{\D_y}=\log(r_\sigma(A))$ and $\D_y$ has positive essential spectrum.
\end{example}

\begin{example}
\label{diraccphea}
Let $O_E$ be a Cuntz-Pimsner algebra  defined from a strictly W-regular (recall Definition \ref{ass:two}) finitely generated and projective bi-Hilbertian bimodule $E_A$ and a positive trace $\tau$ on the coefficient algebra $A$. The semifinite spectral triple considered in Lemma \ref{ximodsemi} (see page \pageref{ximodsemi}) also has positive $\Tr_\tau$-essential spectrum assuming a criticality condition on $\tau$ that we formulate below (see Definition \ref{defn:critical} on page \pageref{defn:critical}). The heat trace asymptotics are slightly more involved, and we compute these explicitly in Subsection \ref{subsec:T-vs-LN} under a condition on $\tau$ previously studied by Laca-Neshveyev \cite{LN} in the context of KMS-states. However, for a general $\tau$ we can proceed as in the proof of Lemma \ref{ximodsemi} to deduce the following.

\begin{prop}
\label{posheatcomp}
For any strictly $W$-regular fgp bi-Hilbertian bimodule $E$ over the unital $C^*$-algebra and a positive trace $\tau$ on $A$, 
$$\Tr_\tau(P_{\D_\psi}\e^{-t\D_\psi})=\sum_{n=0}^\infty \e^{-tn} \tau_*(E^{\otimes_A n}),$$
where $\tau_*:K_0(A)\to \R$ denotes 
the map induced by $\tau$ on $K$-theory. In particular, $\Tr_\tau(P_{\D_\psi}\e^{-t\D_\psi})$ does not depend on the choice of inner products on $E$ but only on $\tau$ and the bimodule structure on $E$.
\end{prop}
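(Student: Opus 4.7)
My plan is to reduce the heat trace to a sum of traces of the projections $P_{n,n}$ and then recognise each summand as a $K$-theoretic pairing. The first step is to locate the non-negative spectral subspace of $\D_\psi$. With the explicit choice of $\psi$ used in the excerpt and the constraint $r \geq \max\{0,n\}$, a direct case analysis shows that $\psi(n,r) \geq 0$ exactly when $n=r \geq 0$, in which case $\psi(n,n)=n$. Because $(P_{n,r})$ is a family of mutually orthogonal projections diagonalising $\D_\psi$, this gives
$$
P_{\D_\psi}\e^{-t\D_\psi}=\sum_{n\geq 0}\e^{-tn}P_{n,n},
$$
so it suffices to prove $\Tr_\tau(P_{n,n})=\tau_*([E^{\otimes_A n}])$ for every $n\geq 0$.

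Next I would unpack the definition of $Q_{n,r}$ in Equation \eqref{qnrqnr} at $r=n\geq 0$. The index constraint $|\rho|-|\sigma|=n=|\rho|$ forces $|\sigma|=0$, so $\sigma$ is the empty multi-index; since $E^{\otimes_A 0}=A$ the associated central element $c_0$ equals $1_A$ and the Fock projection $P_0$ equals $1_A$. Thus $P_{n,n}=\sum_{|\rho|=n}\Theta_{S_{e_\rho},S_{e_\rho}}$ inside $\End_A^*(\phimod)$. Lemma \ref{computeinnerprod} gives $(S_\mu|S_\nu)_A=(\mu|\nu)_A$ whenever $|\mu|=|\nu|=n$, so the assignment $\xi\mapsto S_\xi$ is an isometric isomorphism from the right $A$-module $E^{\otimes_A n}$ onto the orthogonally complemented submodule of $\phimod$ on which $P_{n,n}$ projects, and $(e_\rho)_{|\rho|=n}$ remains a right-module frame for that image.

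To finish, I would invoke the defining property $\Tr_\tau(\Theta_{\xi,\eta}\otimes 1_A)=\tau((\eta|\xi)_A)$ of the dual trace to obtain
$$
\Tr_\tau(P_{n,n})=\sum_{|\rho|=n}\tau((e_\rho|e_\rho)_A).
$$
Viewed through the frame, the matrix $p_n:=\bigl((e_\rho|e_{\rho'})_A\bigr)_{\rho,\rho'}$ is an idempotent in matrices over $A$ whose class in $K_0(A)$ represents $[E^{\otimes_A n}]$; by the very definition of the induced map $\tau_*:K_0(A)\to\R$, the right-hand side above equals $\tau_*([E^{\otimes_A n}])$. Summing in $n$ yields the stated formula, and since the final expression depends only on the $A$-bimodule structure of $E$ (through the $K$-theory classes $[E^{\otimes_A n}]$) and not on the choice of left or right inner product, the asserted independence follows.

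The main delicate point is Step two: one must verify that the degree-zero conventions collapse the more elaborate vector $W_{e_\rho,\,c_0^{-1/2}P_Ff_\sigma}$ in Equation \eqref{qnrqnr} to the simple element $S_{e_\rho}\in\phimod$, and that the $\phimod$-inner product restricts to the right $A$-valued inner product of $E^{\otimes_A n}$. Both are immediate consequences of Lemma \ref{computeinnerprod} together with the normalisations built into strict W-regularity, but they require some attention to the conventions for multi-indices of length zero.
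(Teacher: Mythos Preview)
Your proof is correct and follows essentially the same route as the paper: both reduce $P_{\D_\psi}\e^{-t\D_\psi}$ to $\sum_{n\geq 0}\e^{-tn}P_{n,n}$, observe that $P_{n,n}=Q_{n,n}$ because the constraint $|\rho|=n$, $|\sigma|=0$ collapses Equation \eqref{qnrqnr}, compute $\Tr_\tau(Q_{n,n})=\sum_{|\rho|=n}\tau((e_\rho|e_\rho)_A)$ via the dual trace, and identify this sum with $\tau_*([E^{\otimes_A n}])$ through the frame projection $p_n=((e_\rho|e_{\rho'})_A)_{\rho,\rho'}$. Your write-up simply unpacks the degree-zero conventions and the $K$-theoretic identification more explicitly than the paper does.
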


\begin{proof}
We compute that 
$$
\Tr_\tau(P_\D \e^{-t|\D_\psi|})=\sum_{n=0}^\infty \e^{-t|\psi(n,n)|}\Tr_\tau(P_{n,n})=\sum_{n=0}^\infty \e^{-tn}\Tr_\tau(Q_{n,n})=\sum_{n=0}^\infty \sum_{|\rho|=n}\e^{-tn}\tau((e_\rho|e_\rho)_A).
$$
On the other hand, $\tau_*(E^{\otimes_A n})=(\tau\otimes \Tr_{M_{N(n)}})(p_{E^{\otimes_A n}})$ where $p_{E^{\otimes_A n}}\in M_{N(n)}(A)$ is a projection representing $E^{\otimes_A n}$. Using the choice of frame $(e_j)_{j=1}^N$, we can take $N(n):=N^n$ and $p_{E^{\otimes_A n}}:=((e_\mu|e_\nu)_A)_{|\mu|=|\nu|=n}$. In this choice of representing projection, 
\[
\tau_*(E^{\otimes_A n})=(\tau\otimes \Tr_{M_{N(n)}})(p_{E^{\otimes_A n}})=\Tr_{M_{N(n)}}((\tau((e_\mu|e_\nu)_A)_{|\mu|=|\nu|=n}))=\sum_{|\rho|=n}\tau((e_\rho|e_\rho)_A).\qedhere
\]
\end{proof}

This computation shows that it is in general difficult to compute $\beta_\D$. In this case $\beta_\D$ 
depends on the asymptotic properties of the sequence $(\tau_*(E^{\otimes_A n}))_{n\in \N}$ as $n\to \infty$.

For a simple tensor $\sigma\in E^{\ox m}$ write $\sigma=\underline{\sigma}\overline{\sigma}$, where the initial segment
$\underline{\sigma}$ will be of a length understood from context ($|\underline{\sigma}|=|\mu|$ in the next computation).
With this  notation, we can compute our functional on a typical $S_\mu S_\nu^*\in O_E$, 
where $\mu\in E^{\otimes k}$, $\nu\in E^{\otimes l}$ are simple tensors. We find 
\begin{align}
\label{tracpcocomcme}
\Tr_\tau(P_\D S_\mu S_\nu^* \e^{-t|\D_\psi|})&=\delta_{|\mu|,|\nu|}\sum_{n=0}^\infty \e^{-tn}\Tr_\tau(S_\mu S_\nu^* Q_{n,n})\\
\nonumber
&=\delta_{|\mu|,|\nu|}\sum_{n=|\mu|}^\infty\sum_{|\sigma|=n} \e^{-tn}\tau ((S_\mu^*e_{\sigma}|S_\nu^*e_{\sigma})_{E^{\otimes (n-|\mu|)}})\\
\nonumber
&=\delta_{|\mu|,|\nu|}\sum_{n=|\mu|}^\infty\sum_{|\sigma|=n} \e^{-tn}\tau (\,(\,(\mu|e_{\underline{\sigma}})_{E^{|\mu|}}e_{\overline{\sigma}}\,|\,(\nu|e_{\underline{\sigma}})_{E^{|\mu|}}e_{\overline{\sigma}})_{E^{\otimes (n-|\mu|)}}).
\end{align}
\end{example}

\begin{example}
\label{diracgrouphea}
Consider a length function $\ell$ on a countable group $\Gamma$ as in Subsection \ref{groupcstarexam}.

\begin{defn}
\label{criticaldefn}
We define the critical value of $(\Gamma,\ell)$ as
$$\beta(\Gamma, \ell):=\inf\{t\geq 0:\sum_{\gamma\in \Gamma} \e^{-t\ell(\gamma)}<\infty \}.$$
If $\sum_{\gamma\in \Gamma} \e^{-t\ell(\gamma)}\nearrow \infty$ as $t\searrow \beta(\Gamma,\ell)$, we say that $\ell$ is critical. 
\end{defn}

It follows directly from Definition~\ref{ass:minus-one} that the operator $\D_\ell$ appearing in Proposition \ref{spectripfromlength} (see page \pageref{spectripfromlength}) has positive essential spectrum as long as $\ell$ is critical. The heat trace of an element $a\lambda_g\in c_b(\Gamma)\rtimes^{\rm alg}\Gamma$ is given by 
$$\Tr(P_{\D_\ell} a\lambda_g \e^{-t|\D_\ell|})=\Tr(a\lambda_g \e^{-t|\D_\ell|})=\delta_{e,g}\sum_{\gamma\in \Gamma} a(\gamma)\e^{-t\ell(\gamma)}.$$

Similar computations can be carried out for the semifinite spectral triple constructed in Proposition \ref{cssfst} using a Hilbert space valued cocycle $c_\Gamma$ (see page \pageref{cssfst}). Note that 
$$
P_{\D_c}f(g)=\frac{1}{2}\left(\frac{\mathfrak{c}_S(c_\Gamma(g))}{\|c_\Gamma(g)\|_{\H_\Gamma}}+1\right).
$$ 
Therefore, the heat trace of an element 
$a\lambda_g\in c_b(\Gamma)\rtimes^{\rm alg}\Gamma$ is given by 
\begin{align*}
\Tr_\tau(P_{\D_c} a\lambda_g \e^{-t|\D_c|})
&=\frac{1}{2}\sum_{\gamma\in \Gamma}\left\langle \delta_\gamma, \tau\left(\frac{\mathfrak{c}_S(c_\Gamma(g))}{\|c_\Gamma(g)\|_{\H_\Gamma}}+1\right)a(g^{-1}\gamma)\delta_{g\gamma}\right\rangle \e^{-t\ell(\gamma)}=\\
&=\frac{1}{2}\delta_{e,g}\sum_{\gamma\in \Gamma} a(\gamma)\e^{-t\ell(\gamma)}=\frac{1}{2}\Tr(P_{\D_\ell} a\lambda_g \e^{-t|\D_\ell|}).
\end{align*}
Here we use that $\tau(\mathfrak{c}_S(v))=0$ for any $v\in \H_\Gamma$ which holds due to the fact that we can pick a $w\in \H_\Gamma$ orthogonal to $v$ and compute that 
$$\tau(\mathfrak{c}_S(v))=\tau(\mathfrak{c}_S(w)\mathfrak{c}_S(v)\mathfrak{c}_S(w))=-\tau(\mathfrak{c}_S(w)^2\mathfrak{c}_S(v))=-\tau(\mathfrak{c}_S(v)).$$
 If the length function $\ell(\gamma):=\Vert c_\Gamma(\gamma)\Vert_{\H_\Gamma}$ associated with $c_\Gamma$ is critical, we say that $c_\Gamma$ is critical.
We conclude that the semifinite spectral triple from Proposition \ref{cssfst} has positive essential spectrum if $c_\Gamma$ is critical.
\end{example}

\subsection{To-plitz or not To-plitz}
\label{subsec:toplitz}

We proceed under the same working conditions as in the previous section to construct states from spectral triples. Recall that $\cN^+=P_\D\cN P_\D$
and that $\Ko_\cN^+=P_\D\Ko_\cN P_\D$.

\begin{defn}
\label{defn:to-plitz}
Let $(\A,\H,\D,\cN,\Tau)$ be a semifinite spectral triple. 
We define the Toeplitz algebra of $(\A,\H,\D,\cN,\Tau)$ as 
$$
T_A:=P_\D A P_\D+\Ko_\cN^+\subseteq \cN^+.
$$
The saturated Toeplitz algebra of $(\A,\H,\D,\cN,\Tau)$ is defined by 
$$
T_{A,\D}:=C^*\left(\bigcup_{s\in\R}\e^{is\D}T_A\e^{-is\D}\right)=C^*\left(\bigcup_{s\in\R}\e^{is\D}P_\D AP_\D\e^{-is\D}+\Ko_\cN^+\right)\subseteq \cN^+.
$$
\end{defn}

\begin{prop}
\label{toepiscstar}
The Toeplitz algebra $T_A$ of a unital semifinite spectral triple $(\A,\H,\D,\cN,\Tau)$ is a $C^*$-algebra.
\end{prop}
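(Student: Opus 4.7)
The plan is to identify $T_A$ as the preimage, under the quotient map to the generalized Calkin algebra $\cN^+/\Ko_\cN^+$, of the image of a $*$-homomorphism defined on $A$. Since such images are automatically norm-closed, this immediately yields that $T_A$ is a $C^*$-algebra.

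More precisely, let $q:\cN^+\to\cN^+/\Ko_\cN^+$ denote the quotient map, and define
$$
\pi:A\to \cN^+/\Ko_\cN^+,\qquad \pi(a):=q(P_\D a P_\D).
$$
The map $\pi$ is clearly linear, preserves the involution (since $P_\D=P_\D^*$), and is norm-continuous. The key step is to verify that $\pi$ is multiplicative. For $a,b\in A$ we compute
$$
P_\D a P_\D b P_\D - P_\D ab P_\D = -P_\D a(1-P_\D)b P_\D = -[P_\D,a](1-P_\D)bP_\D,
$$
using that $P_\D(1-P_\D)=0$. By Lemma \ref{bddcommsum}, $[F_\D,a]=2[P_\D,a]\in\Ko_\cN$, so the right-hand side lies in $\Ko_\cN\cap\cN^+=\Ko_\cN^+$. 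This shows $\pi(ab)=\pi(a)\pi(b)$, and hence $\pi$ is a $*$-homomorphism.

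Since any $*$-homomorphism between $C^*$-algebras has closed range, $\pi(A)$ is a $C^*$-subalgebra of $\cN^+/\Ko_\cN^+$. By definition,
$$
T_A = P_\D AP_\D + \Ko_\cN^+ = q^{-1}(\pi(A)),
$$
which is the preimage of a closed $*$-subalgebra under the continuous $*$-homomorphism $q$. Therefore $T_A$ is a norm-closed $*$-subalgebra of $\cN^+$, i.e.\ a $C^*$-algebra.

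The only technical obstacle is verifying multiplicativity of $\pi$ modulo compacts, and this is precisely the content of the commutator estimate in Lemma \ref{bddcommsum}; everything else is essentially formal. Note that the argument uses only $[F_\D,a]\in\Ko_\cN$, which holds for any unital semifinite spectral triple, and does not require any sharper summability assumption.
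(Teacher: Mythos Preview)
Your proof is correct and follows essentially the same approach as the paper: define the map $a\mapsto P_\D aP_\D\bmod\Ko_\cN^+$, use Lemma~\ref{bddcommsum} to see it is a $*$-homomorphism, and identify $T_A$ as the preimage of its (automatically closed) range. The paper's proof is more terse, but your explicit verification of multiplicativity via the identity $P_\D aP_\D bP_\D-P_\D abP_\D=-[P_\D,a](1-P_\D)bP_\D$ is exactly the content behind its one-line appeal to Lemma~\ref{bddcommsum}.
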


\begin{proof}
It follows from Lemma \ref{bddcommsum} that $[P_\D,a]\in \Ko_\cN$ for all $a\in A$. Therefore, the mapping 
$$
\beta_\D:A\to \cN^+/\Ko_\cN^+, \quad a\mapsto P_\D aP_\D\mod\Ko_\cN^+,
$$
is a $*$-homomorphism and $\beta_\D(A)\subseteq \cN^+/\Ko_\cN^+$ is a closed $C^*$-subalgebra. By definition, $T_A$ is the preimage of $\beta_\D(A)$ under the quotient mapping $\cN^+\to \cN^+/\Ko_\cN^+$ and is therefore a $C^*$-algebra.
\end{proof}

The reader should note that Proposition \ref{toepiscstar} also holds in the non-unital setting because it only relies on Lemma \ref{bddcommsum} which holds non-unitally. 

\begin{prop}
The saturated Toeplitz algebra of $(\A,\H,\D,\cN,\Tau)$ carries an $\R$-action $\sigma^+:\R\to \mathrm{Aut}(T_{A,\D})$ defined by 
$$
\sigma_s^+(T):=P_\D \e^{is\D}T\e^{-is\D}P_\D=P_\D \e^{is|\D|}T\e^{-is|\D|}P_\D,\qquad s\in \R, \; T\in T_{A,\D}.
$$
\end{prop}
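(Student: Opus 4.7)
The plan is to verify, in order, three things: (i) the two displayed expressions for $\sigma^+_s(T)$ agree; (ii) each $\sigma^+_s$ is a $*$-automorphism of $\cN^+$ that restricts to an automorphism of $T_{A,\D}$; and (iii) the family $\{\sigma^+_s\}_{s\in \R}$ satisfies the group law.

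First, I will observe that since $\D$ is self-adjoint and affiliated to $\cN$, the bounded operators $P_\D$ and $\e^{is\D}$ both lie in $\cN$ and commute by functional calculus. Moreover on $\mathrm{ran}(P_\D)$ one has $\D = |\D|$, so in particular $\e^{is\D}P_\D = \e^{is|\D|}P_\D$ and $P_\D \e^{-is\D} = P_\D \e^{-is|\D|}$. Since any $T \in \cN^+$ satisfies $T = P_\D T P_\D$, inserting these projections and applying the identities above yields
$$
P_\D \e^{is\D}T\e^{-is\D}P_\D \;=\; \e^{is\D}\,P_\D T P_\D\, \e^{-is\D} \;=\; P_\D \e^{is|\D|} T \e^{-is|\D|}P_\D,
$$
which establishes (i) and simultaneously shows that $\sigma^+_s$ is the restriction to the corner $\cN^+$ of the conjugation automorphism $\mathrm{Ad}(\e^{is\D})$ of $\cN$.

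Next, to see that $\sigma^+_s$ preserves $T_{A,\D}$, it suffices to check the action on a generating set, for which I will take $\bigl\{\e^{ir\D}P_\D a P_\D \e^{-ir\D} : r\in \R,\, a \in A\bigr\}\cup \Ko^+_\cN$. On the first family, the computation just carried out gives $\sigma^+_s\bigl(\e^{ir\D}P_\D a P_\D \e^{-ir\D}\bigr)=\e^{i(r+s)\D}P_\D a P_\D \e^{-i(r+s)\D}$, which lies in the same family. The ideal $\Ko^+_\cN$ is a two-sided ideal of $\cN^+$ and is therefore stable under the $*$-automorphism $\sigma^+_s$ of $\cN^+$. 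Being a norm-continuous $*$-homomorphism that sends generators of $T_{A,\D}$ into $T_{A,\D}$, the map $\sigma^+_s$ restricts to a $*$-endomorphism of $T_{A,\D}$.

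Finally, the one-parameter group identity $\e^{i(s+t)\D} = \e^{is\D}\e^{it\D}$ and $\e^{i\cdot 0\cdot\D}=1$ give $\sigma^+_{s+t} = \sigma^+_s \circ \sigma^+_t$ and $\sigma^+_0 = \mathrm{Id}$, so $\sigma^+_{-s}$ is a two-sided inverse for $\sigma^+_s$ and each $\sigma^+_s$ is an automorphism. There is essentially no obstacle in this argument; every step is a direct consequence of the bounded functional calculus for $\D$ and the definition of $T_{A,\D}$. The only point of mild care is that $\D$ itself is unbounded and only affiliated to $\cN$, so the entire argument must be phrased in terms of the bounded operators $\e^{is\D}$ and $P_\D$, which genuinely live in $\cN$.
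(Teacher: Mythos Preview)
Your proof is correct and takes essentially the same approach as the paper, which in fact gives no detailed proof at all: the paper simply remarks that the proposition follows because $T_{A,\D}$ is by definition the saturation of $T_A$ under the conjugation action $\sigma^+$ extended to $\cN^+$. Your argument makes explicit the points the paper leaves implicit---the identification $\e^{is\D}P_\D=\e^{is|\D|}P_\D$ via functional calculus, the stability of the generating set and of $\Ko^+_\cN$ under conjugation, and the group law---so there is nothing to correct or add.
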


This proposition is a consequence of that $T_{A,\D}$ is constructed as the saturation of $T_A$ under the action $\sigma^+$ extended to $\cN^+$. We note that if we identify $T_{A,\D}$ with a subalgebra of $\cN$, we can also write $\sigma_s^+(T):= \e^{is\D}T\e^{-is\D}= \e^{is|\D|}T\e^{-is|\D|}$.

Define a one-parameter family of states $(\phi_{t,0})_{t>\beta}$ on $T_{A,\D}$ by
$$
\phi_{t,0}(T):=\frac{\Tau(P_\D T\e^{-t\D})}{\Tau(P_\D\e^{-t\D})}.
$$
We shall compose the family $(\phi_{t,0})_{t>\beta}$ with an ``extended limit" as $t\to \beta$:

\begin{defn}
\label{extendedlimitdef}
An extended limit as $t\to \beta$ is a state $\omega\in L^\infty(\beta,\infty)^*$ such that $\omega(f)=0$ whenever $\lim_{t\to \beta}f(t)=0$. For an extended limit $\omega$ and $f\in L^\infty(\beta,\infty)$, we write 
$$\lim_{t\to \omega} f:=\omega(f).$$

\end{defn}

Let $\omega$ be any extended limit and define
$$
\phi_{\omega,0}:\,T_{A,\D}\to\C,\qquad \phi_{\omega,0}(T):=\omega\circ \phi_{t,0}(T)=\lim_{t\to \omega} \phi_{t,0}(T).
$$

\begin{lemma}
\label{vanishoncomp}
Let $(\A,\H,\D,\cN,\Tau)$ be a semifinite $\mathrm{Li}_1$-summable spectral triple with positive $\Tau$-essential spectrum (see Definition \ref{ass:minus-one} on page \pageref{ass:minus-one}).
For any extended limit $\omega$, the functional $\phi_{\omega,0}$ is a state on $T_{A,\D}$. 
Moreover $\phi_{\omega,0}(T)=0$ for all $T\in \mathbb{K}_\cN^+$. 
\end{lemma}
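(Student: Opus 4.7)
The plan is to handle the two assertions in sequence: first show each $\phi_{t,0}$ is a genuine state (so that $\phi_{\omega,0}$ inherits the state properties in the limit), then exploit $\Tau$-compactness together with $\Tau(P_\D\e^{-t\D})\to \infty$ to prove vanishing on $\Ko_\cN^+$.

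First I would verify that for each fixed $t>\beta_\D$ the functional $\phi_{t,0}$ is a state on $T_{A,\D}$. Since $P_\D$ commutes with $\e^{-t\D}$, cyclicity of $\Tau$ gives $\Tau(P_\D T\e^{-t\D})=\Tau(\e^{-t\D/2}P_\D T P_\D \e^{-t\D/2})$, hence $\phi_{t,0}(T)=\Tau(T\rho_t)$ where $\rho_t:=P_\D\e^{-t\D}/\Tau(P_\D\e^{-t\D})$ is a positive element of $\cN^+$ with $\Tau(\rho_t)=1$. Thus $\phi_{t,0}\geq 0$, $\phi_{t,0}(P_\D)=1$, and $|\phi_{t,0}(T)|\leq \|T\|$, so that $t\mapsto \phi_{t,0}(T)$ lies in $L^\infty(\beta_\D,\infty)$ with norm at most $\|T\|$. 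Applying the extended limit $\omega$ preserves positivity, normalisation at $P_\D$, linearity and the bound, so $\phi_{\omega,0}$ is a state on $T_{A,\D}$.

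For the vanishing statement, it suffices by linearity and the $\Ko_\cN^+$-decomposition into positive parts to treat $0\leq T\in\Ko_\cN^+$. Fix $\varepsilon>0$ and use the spectral theorem to split $T=T_\varepsilon+R_\varepsilon$, where $T_\varepsilon=T\chi_{[\varepsilon,\infty)}(T)$ has finite $\Tau$-trace (using that $\chi_{[\varepsilon,\infty)}(T)$ is a $\Tau$-finite projection since $T$ is $\Tau$-compact) and $\|R_\varepsilon\|\leq \varepsilon$. Since $\|\e^{-t\D}P_\D\|\leq 1$ for $t>0$, cyclicity of $\Tau$ yields
\begin{equation*}
|\Tau(P_\D T_\varepsilon\e^{-t\D})|=|\Tau(T_\varepsilon\e^{-t\D}P_\D)|\leq \Tau(T_\varepsilon)\cdot\|\e^{-t\D}P_\D\|\leq \Tau(T_\varepsilon).
\end{equation*}
Dividing by $\Tau(P_\D\e^{-t\D})$ and using the positive essential spectrum hypothesis $\Tau(P_\D\e^{-t\D})\nearrow\infty$ as $t\searrow \beta_\D$, the first term tends pointwise to $0$, so $\omega$ annihilates it. The second contribution satisfies $|\phi_{t,0}(R_\varepsilon)|\leq \|R_\varepsilon\|\leq \varepsilon$ uniformly in $t$, hence $|\phi_{\omega,0}(R_\varepsilon)|\leq \varepsilon$. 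Combining gives $|\phi_{\omega,0}(T)|\leq \varepsilon$, and letting $\varepsilon\to 0$ completes the proof.

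The only mild subtlety is ensuring $\chi_{[\varepsilon,\infty)}(T)$ is genuinely $\Tau$-finite, which follows directly from the singular value characterisation $\mu_\Tau(s,T)\to 0$ of $\Ko_\cN$ given in \eqref{mu}; the rest is bookkeeping against the divergence of $\Tau(P_\D\e^{-t\D})$ built into Definition \ref{ass:minus-one}.
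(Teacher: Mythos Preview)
Your proof is correct and follows essentially the same approach as the paper's: both reduce to showing vanishing on $\Tau$-finite elements via the estimate $|\Tau(P_\D T\e^{-t\D})|\leq \Tau(T)$ and then invoke the divergence $\Tau(P_\D\e^{-t\D})\nearrow\infty$. The only cosmetic difference is that the paper reduces to finite-trace projections by norm continuity of states, whereas you use an explicit spectral cutoff $T=T_\varepsilon+R_\varepsilon$; these are equivalent packagings of the same density argument.
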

\begin{proof}
It is immediate that $\phi_{\omega,0}$ is a state. 
For the statement that $\phi_{\omega,0}(T)=0$ for all $T\in \mathbb{K}_\cN^+$, 
we observe that  since $\phi_{\omega,0}$ is a state, it is also norm-continuous. 
It therefore suffices to prove that $\phi_{\omega,0}(T)=0$ for all projections $T\in\cN^+$ with 
$\Tau(T)<\infty$. For such $T$, we can estimate that 
$$
\phi_{t,0}(T)=\frac{\Tau(P_\D T\e^{-t\D})}{\Tau(P_\D\e^{-t\D})}\leq \frac{\Tau(T)}{\Tau(P_\D\e^{-t\D})}.
$$
Since, $\Tau(P_\D\e^{-t\D})\nearrow \infty$ as $t\searrow \beta$, it follows that $\lim_{t\to \beta} \phi_{t,0}(T)=0$. We conclude that for all projections $T\in\cN^+$, with 
$\Tau(T)<\infty$, and any extended limit $\omega$ as $t\to \beta$, $\omega\circ \phi_{t,0}(T)=0$.
\end{proof}

Due to Lemma \ref{vanishoncomp}, we can make the following definition.

\begin{defn}
Define the $C^*$-algebra $A_\D:=T_{A,\D}/\Ko^+_\cN$ and the state $\phi_\omega$ on $A_\D$ as 
$$\phi_\omega(T\!\!\!\mod \Ko^+_\cN):=\phi_{\omega,0}(T).$$
\end{defn}

The state $\phi_{\omega,0}$ also restricts to a state on $T_A$,
and Lemma \ref{vanishoncomp} implies that $\phi_{\omega,0}|_{T_A}$ descends 
to a state on $A$ via the $*$-epimorphism $\beta_\D:A\to T_A/\Ko^+_\cN$ 
(see the proof of Proposition \ref{toepiscstar} on page \pageref{toepiscstar}). 
To analyse the situation of the state on $A_\D$ versus that on $A$,
we consider the following ideal 
$$
I:=\{a\in A:\,P_\D aP_\D\in \mathbb{K}_\cN^+\}
$$
so that $P_\D IP_\D=P_\D AP_\D\cap\mathbb{K}_\cN^+$.
Since $T_A\subseteq T_{A,\D}$, we obtain a commuting diagram
$$
\xymatrix{0\ar[r]& \mathbb{K}_\cN^+\ar[r]\ar[d]^{=}&T_A\ar[r]\ar@{^{(}->}[d]&A/I\ar@{^{(}->}[d]\ar[r]&0\\
0\ar[r]&\mathbb{K}_\cN^+\ar[r]&T_{A,\D}\ar[r]&A_\D\ar[r]&0,
}
$$
with exact rows. The mapping $A/I\to A_\D$ is indeed injective by the four lemma. We identify $A/I$ with a subalgebra of $A_\D$. The induced mapping $\gamma: A\to A_\D$ is compatible with the states $\phi_\omega$ and $\phi_{\omega,0}$ in the sense that $\phi_{\omega,0}(P_\D aP_\D)=\phi_\omega(\gamma(a))$ for $a\in A$.

The $\R$-action $\sigma_s^+(T):=\e^{is\D}T\e^{-is\D}$ on $T_{A,\D}$ induces an $\R$-action on $A/I$. The following proposition follows from the construction of $T_{A,\D}$ as the saturation of $T_A$ under $\sigma^+$.

\begin{prop}\label{prop:3.13}
Let $\beta\in \R$. The algebra $A_\D$ carries an $\R$-action $\sigma:\R\to \mathrm{Aut}(A_\D)$ defined by declaring the quotient mapping $T_{A,\D}\to A_\D$ to be equivariant. The $C^*$-algebra $A_\D$ is the saturation of $A/I$ under the action $\sigma$, i.e. $A_\D$ is generated in $\cN^+/\Ko_\cN^+$ by $\cup_{s\in \R}\sigma_s(A/I)$.
\end{prop}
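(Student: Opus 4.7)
The plan is to descend the $\R$-action $\sigma^+$ from $T_{A,\D}$ to the quotient $A_\D = T_{A,\D}/\Ko_\cN^+$. First I would verify that $\sigma^+$ leaves the ideal $\Ko_\cN^+$ invariant. Since $P_\D$ is a spectral projection of $\D$, it commutes with $\e^{is\D}$, so conjugation by $\e^{is\D}$ restricts to a trace-preserving $*$-automorphism of $\cN^+ = P_\D\cN P_\D$ (using unitarity of $\e^{is\D}$ together with the trace property of $\Tau$). Trace-preserving automorphisms send $\Tau$-finite projections to $\Tau$-finite projections and therefore map $\Ko_\cN^+$ onto itself. Thus $\sigma^+$ descends to a group homomorphism $\sigma:\R\to \mathrm{Aut}(A_\D)$ intertwining the quotient $T_{A,\D}\to A_\D$ by construction.

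For $\sigma$ to qualify as an $\R$-action in the $C^*$-algebraic sense required by the KMS formalism, I would verify point-norm continuity. For $a\in\A$ the commutator $[\D,a]$ extends to a bounded operator, and the Duhamel identity yields the Lipschitz estimate $\|\sigma^+_s(P_\D a P_\D)-P_\D a P_\D\|\leq |s|\cdot \|[\D,a]\|$. Passing to the quotient gives norm-continuity of $s\mapsto \sigma_s(\gamma(a))$ for each $a\in\A$. Since $\gamma(\A)$ is dense in $A/I$ and each $\sigma_s$ is an isometry, a standard $3\eps$-argument promotes this to continuity on $A/I$; the cocycle identity $\sigma_s\circ\sigma_t=\sigma_{s+t}$ together with density then extends continuity to the whole $C^*$-algebra $A_\D$.

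For the saturation identity I would apply the surjective $*$-homomorphism $\pi:T_{A,\D}\to A_\D$ to the defining expression
$$
T_{A,\D}=C^*\Big(\bigcup_{s\in\R}\e^{is\D}(P_\D A P_\D)\e^{-is\D}+\Ko_\cN^+\Big).
$$
By equivariance $\pi\circ\sigma^+_s=\sigma_s\circ\pi$, and since $\pi$ annihilates $\Ko_\cN^+$ and identifies $T_A/\Ko_\cN^+=\beta_\D(A)$ with $\gamma(A)=A/I\subseteq A_\D$, the image of the generating set is precisely $\bigcup_s\sigma_s(A/I)$, giving the asserted equality. The main technical ingredient throughout is the commutation $[P_\D,\e^{is\D}]=0$, which is immediate from the spectral calculus; the only genuine subtlety is ensuring point-norm continuity of $\sigma$ on all of $A_\D$, which is handled by the Duhamel estimate on the smooth subalgebra $\A$ together with the density of the $\sigma$-orbit of $\gamma(\A)$.
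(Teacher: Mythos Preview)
Your proof is correct and follows essentially the same route as the paper, which treats the proposition as immediate from the construction of $T_{A,\D}$ as the saturation of $T_A$ under $\sigma^+$ (the paper gives only a one-line justification). Your explicit verification of point-norm continuity via the Duhamel estimate and the density argument is a welcome piece of rigor that the paper leaves implicit.
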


The  aim of our construction is to obtain a KMS state
on $A$, or, failing that, on $A/I$. As a first step we introduce conditions
ensuring that we at least get a KMS state on $T_{A,\D}$, and so on $A_\D$. 
To this end we make the following assumption

\begin{defn}
\label{ass:zero}
Let $(\A,\H,\D,\cN,\Tau)$ be a semifinite spectral triple. We say that a subset $S\subset\A$ is analytically generating at $\beta$ if the set $P_\D SP_\D+\mathbb{K}^+$ generates the Toeplitz algebra $T_A$ as a $C^*$-algebra and satisfies that $\e^{\beta\D}P_\D SP_\D\e^{-\beta\D}\subset \cN^+$.

We say that the semifinite spectral triple $(\A,\H,\D,\cN,\Tau)$ is $\beta$-analytic if it admits an analytically generating set at $\beta$.
\end{defn}

We note that this condition is empty if $\beta=0$. The condition of being $\beta$-analytic is just requiring that we have enough analytic elements in $T_{A,\D}$ to verify the KMS condition. Indeed we have the following.

\begin{prop}
\label{equivalencesbeta}
Let $(\A,\H,\D,\cN,\Tau)$ be a semifinite spectral triple and $\beta\in \R$. Consider the following statements:
\begin{enumerate}
\item[i)] The semifinite spectral triple $(\A,\H,\D,\cN,\Tau)$ is $\beta$-analytic.
\item[ii)] There is a dense $\sigma^+$-invariant subspace $T^0_{A,\D}\subseteq T_{A,\D}$ of elements satisfying that for any $T\in T^0_{A,\D}$ the function $f_T:\R\to \cN$, $f_T(t):=\sigma_t^+(T)$ has a bounded holomorphic extension to the strip $\{z\in \C: \; \mathrm{Im}(z)\in (-\beta,0)\}$. 
\item[iii)] There is a dense $\sigma$-invariant  subspace $A^0_{\D}\subseteq A_\D$ of elements satisfying that for any $a\in A_\D$ the function $f_a:\R\to \cN$, $f_a(t):=\sigma_t(a)$ has a bounded holomorphic extension to the strip $\{z\in \C: \; \mathrm{Im}(z)\in (-\beta,0)\}$. 
\end{enumerate}
It holds that i) implies ii) which implies iii). If $I:=\{a\in A:\,P_\D aP_\D\in \mathbb{K}_\cN^+\}=0$, iii) implies i).
\end{prop}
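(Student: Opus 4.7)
The plan is to establish the cycle i)$\Rightarrow$ii)$\Rightarrow$iii)$\Rightarrow$i) (the last step under $I=0$), with the Hadamard three-lines theorem as the principal analytic tool. Throughout I use that on $\cN^+$ the action $\sigma^+$ takes the explicit form $\sigma_z^+(T)=\e^{iz\D}T\e^{-iz\D}$, so that $\sigma_{-i\beta}^+(P_\D aP_\D)=\e^{\beta\D}P_\D aP_\D\e^{-\beta\D}$ is exactly the operator occurring in condition i).

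For i)$\Rightarrow$ii), start with $S\subset\A$ analytically generating and take $T_{A,\D}^0$ to be the $*$-subalgebra of $T_{A,\D}$ generated by $\bigcup_{s\in\R}\sigma_s^+(P_\D SP_\D)$ together with a norm dense $*$-subalgebra of $\sigma^+$-analytic elements of $\Ko_\cN^+$ (for example, finite-rank operators built from analytic vectors of $\D$ on $P_\D\H$). This space is $\sigma^+$-invariant by construction and dense in $T_{A,\D}$ by the $\sigma^+$-saturation definition. Testing the form $\langle\xi,\e^{iz\D}P_\D aP_\D\e^{-iz\D}\eta\rangle$ on analytic vectors $\xi,\eta$ of $\D$ in $P_\D\H$ gives a function holomorphic in $z$ on the strip $\mathrm{Im}(z)\in(-\beta,0)$ with boundary values bounded by $\|P_\D aP_\D\|$ at $\mathrm{Im}(z)=0$ and by $\|\e^{\beta\D}P_\D aP_\D\e^{-\beta\D}\|$ at $\mathrm{Im}(z)=-\beta$, so the Hadamard three-lines theorem yields a bounded holomorphic extension of $z\mapsto\sigma_z^+(P_\D aP_\D)$ valued in $\cN^+$. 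Products and translates inherit this extension by the group and multiplicative properties of $\sigma_z^+$, extended to the strip by analytic continuation. The implication ii)$\Rightarrow$iii) is then routine: the quotient map $\pi:T_{A,\D}\to A_\D$ is contractive and $\R$-equivariant by Proposition~\ref{prop:3.13}, so $A_\D^0:=\pi(T_{A,\D}^0)$ inherits density, $\sigma$-invariance, and the bounded holomorphic extension $\sigma_z(\pi(T))=\pi(\sigma_z^+(T))$.

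For iii)$\Rightarrow$i) under $I=0$, the injection $\gamma:A\hookrightarrow A_\D$, $a\mapsto P_\D aP_\D\!\mod\Ko_\cN^+$, identifies $A$ with a closed subalgebra of $A_\D$ containing the dense $*$-subalgebra $\gamma(\A)$. Take $S:=\{a\in\A:\e^{\beta\D}P_\D aP_\D\e^{-\beta\D}\in\cN^+\}$; condition (b) of $\beta$-analyticity is tautological and it remains to show that $\gamma(S)$ generates $A$ as a $C^*$-algebra. After replacing $\A$ by the Lipschitz regular algebra $\{b\in A:[\D,b]\text{ bounded}\}$ (which is $\sigma_s$-invariant since $[\D,\sigma_s(a)]=\sigma_s([\D,a])$ remains uniformly bounded in $s$), I smooth by a Schwartz function $f$ with $\hat f$ compactly supported in $[-N,N]$, setting $a_f:=\int_\R f(s)\sigma_s(a)\,\d s\in\A$. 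The spectral subspace theory for the derivation $\delta^+:=i[\D,\cdot\,]$ on $\cN^+$, combined with iii) (which supplies the dense analytic subspace $A_\D^0$ and hence the strong continuity of $\sigma^+$ needed to set up the spectral decomposition), arranges that $P_\D a_fP_\D$ lies in the spectral subspace of $\delta^+$ corresponding to $\mathrm{supp}\hat f$, on which $\sigma_z^+$ is entire with $\|\sigma_z^+(P_\D a_fP_\D)\|\le\e^{N|\mathrm{Im}(z)|}\|P_\D a_fP_\D\|$. In particular $a_f\in S$, and as $\hat f\to 1$ pointwise one has $a_f\to a$ in norm, giving density of $\gamma(S)$ in $A$.

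The principal obstacle is the last step: the spectral subspace argument requires controlling $\sigma^+$ on $\cN^+$, whereas iii) only delivers control of $\sigma$ on the quotient $A_\D=T_{A,\D}/\Ko_\cN^+$. Lifting the analytic extension from $A_\D$ back to $\cN^+$ is the technical heart of the argument: one uses the $\sigma^+$-invariance of $\Ko_\cN^+$ and the Arveson spectral subspace machinery for one-parameter automorphism groups to transfer the quotient control to a genuine operator-norm bound in $\cN^+$ on the spectral subspace of compact $\hat f$-support, together with the verification that enlarging $\A$ to the Lipschitz regular algebra does not change the spectral triple data.
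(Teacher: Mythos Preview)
Your i)$\Rightarrow$ii) and ii)$\Rightarrow$iii) are essentially the paper's argument, with the three-lines interpolation spelled out; the paper simply takes $T^0_{A,\D}$ to be the $\sigma^+$-invariant $*$-algebra generated by $P_\D SP_\D$ together with the ideal $\mathcal F_\D\subset\Ko^+_\cN$ of finite-spectral-support compacts, and then pushes down by the quotient map to obtain $A^0_\D$.

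Your iii)$\Rightarrow$i), however, contains a genuine gap. The smoothed element $a_f=\int_\R f(s)\sigma_s(a)\,\d s$ lies in $A_\D$, not in $A$: the flow $\sigma$ does not preserve the subalgebra $A\subset A_\D$ in general --- indeed $A_\D$ is by construction the $\sigma$-saturation of $A/I$, so $\sigma_s(A)\not\subset A$ is precisely the generic situation. Your claim that the Lipschitz algebra $\{b\in A:[\D,b]\text{ bounded}\}$ is $\sigma$-invariant fails for the same reason: the identity $[\D,\e^{is\D}a\e^{-is\D}]=\e^{is\D}[\D,a]\e^{-is\D}$ holds as an identity of operators in $\cN$, but $\e^{is\D}a\e^{-is\D}$ need not belong to $A$. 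Consequently $a_f$ is not available as an element of an analytically generating set $S\subset\A$, and the Arveson spectral-subspace step you invoke produces entire elements of $T_{A,\D}$ rather than of $T_A$. You correctly diagnose the obstacle in your final paragraph, but the proposed lift does not address it: note in particular that your argument never actually uses the hypothesis iii) in any essential way, which should be a warning sign.

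The paper's route is much shorter and bypasses this difficulty: with $I=0$ one has an embedding $A\hookrightarrow A_\D$, and one sets $S:=A^0_\D\cap A$ directly. The boundedness of $\e^{\beta|\D|}s\e^{-\beta|\D|}$ for $s\in S$ is then read off from the holomorphic extension furnished by iii), and the remaining point is that $S$ is dense in $A$, which the paper asserts without further argument.
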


\begin{proof}
It is clear that i) implies ii) since for an analytically generating set $S$ at $\beta$ we can take $T^0_{A,\D}$ to be the $\sigma^+$-invariant $*$-algebra generated by $P_\D SP_\D\cup\mathcal{F}_\D$, where  $\mathcal{F}_\D\subseteq \mathbb{K}^+_\cN$ is the dense two sided ideal in $\cN$ generated by the spectral projections of $\D$ over compact intervals in $\R$. The implication ii)$\Rightarrow$iii) is seen from taking $A^0_\D:=T^0_{A,\D}/ (T^0_{A,\D}\cap \Ko_\cN^+)$. 

If $I=0$, the set $S=A^0_\D\cap A$ is dense in $A$, and for every $s\in S$
we find that $\e^{\beta |\D|}s\e^{-\beta|\D|}$ is a bounded operator in $\cN$. 
Thus $\e^{\beta |\D|}P_\D SP_\D \e^{-\beta|\D|}\subset\cN^+$ and $P_\D S P_\D+\Ko_\cN^+$ plainly generates $T_A$. We conclude that $S$ is analytically generating at $\beta$ and that iii) implies i) if $I=0$.
\end{proof}

\begin{prop}
\label{prop:no-work}
Let $(\A,\H,\D,\cN,\Tau)$ be a $\beta_\D$-analytic $\mathrm{Li}_1$-summable spectral triple with positive $\Tau$-essential spectrum. For any extended limit $\omega$ as $t\to \beta_\D$, the state $\phi_{\omega,0}$ is a 
$\beta_\D$-KMS state on $T_{A,\D}$ for the one-parameter group $\sigma^+$.
\end{prop}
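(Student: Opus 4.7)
The plan is to verify the KMS identity for $\phi_{\omega,0}$ directly at the level of the heat traces on a dense $\sigma^+$-invariant $*$-subalgebra of $\sigma^+$-analytic elements, and then to pass to the $\omega$-limit. The first step is to invoke Proposition \ref{equivalencesbeta}, which says that $\beta_\D$-analyticity supplies a norm dense $\sigma^+$-invariant $*$-subalgebra $T^0_{A,\D}\subseteq T_{A,\D}$ each of whose elements $b$ has a bounded holomorphic extension $z\mapsto \sigma^+_z(b)$ on the strip $\mathrm{Im}(z)\in(-\beta_\D,0)$. Together with the spectral theorem for $|\D|$ and the requirement $\e^{\beta_\D \D}P_\D S P_\D\e^{-\beta_\D\D}\subset \cN^+$ from Definition \ref{ass:zero}, this extension admits boundary values in $\cN^+$ at $\mathrm{Im}(z)=-\beta_\D$, and $\sigma^+_{-i\beta_\D}(b)=\e^{\beta_\D|\D|}\,b\,\e^{-\beta_\D|\D|}$ is a bounded operator affiliated with $\cN^+$. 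By Bratteli--Robinson \cite[Definition 5.3.1]{BRII}, it will then suffice to verify $\phi_{\omega,0}(ab)=\phi_{\omega,0}(\sigma^+_{-i\beta_\D}(b)a)$ for all $a\in T_{A,\D}$ and $b\in T^0_{A,\D}$.

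The key computation is at the level of numerators. Fix $a\in T_{A,\D}$, $b\in T^0_{A,\D}$ and $t>\beta_\D$; by $\mathrm{Li}_1$-summability the operator $P_\D\e^{-t|\D|}$ is trace class, so all manipulations below take place in the ideal of $\Tau$-trace-class operators. The plan is to factor $\e^{-t|\D|}=\e^{-\beta_\D|\D|}\e^{-(t-\beta_\D)|\D|}$ and use the boundary identity $b\,\e^{-\beta_\D|\D|}P_\D=\e^{-\beta_\D|\D|}\,\sigma^+_{-i\beta_\D}(b)P_\D$, which is justified by the spectral theorem and the boundedness of $\sigma^+_{-i\beta_\D}(b)$. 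Combining this with cyclicity of $\Tau$ and the fact that $\e^{-s|\D|}$ commutes with $P_\D$, one schematically has
\begin{align*}
\Tau(P_\D\,ab\,\e^{-t|\D|})
&=\Tau\bigl(P_\D a\cdot b\e^{-\beta_\D|\D|}\cdot \e^{-(t-\beta_\D)|\D|}\bigr)\\
&=\Tau\bigl(P_\D a\,\e^{-\beta_\D|\D|}\,\sigma^+_{-i\beta_\D}(b)\,\e^{-(t-\beta_\D)|\D|}\bigr)\\
&=\Tau\bigl(P_\D\,\sigma^+_{-i\beta_\D}(b)\,a\,\e^{-t|\D|}\bigr).
\end{align*}
Dividing by $\Tau(P_\D\e^{-t|\D|})$ gives $\phi_{t,0}(ab)=\phi_{t,0}\bigl(\sigma^+_{-i\beta_\D}(b)a\bigr)$ for every $t>\beta_\D$, and applying the extended limit $\omega$ delivers the pointwise KMS identity at inverse temperature $\beta_\D$ on $T^0_{A,\D}$. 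Norm density and norm continuity of $\phi_{\omega,0}$ then finish the job.

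The main technical obstacle is the boundary identity $b\,\e^{-\beta_\D|\D|}=\e^{-\beta_\D|\D|}\,\sigma^+_{-i\beta_\D}(b)$ on the range of $P_\D$: since $\e^{\beta_\D|\D|}$ is unbounded, one cannot simply write $\sigma^+_{-i\beta_\D}(b)=\e^{\beta_\D|\D|}b\e^{-\beta_\D|\D|}$ and rearrange naively. The rigorous route is to establish the identity first for $b$ of the form $E_n bE_n$ with $E_n:=\chi_{[0,n]}(|\D|)$, where the conjugation is honest, and then take the strong operator topology limit $n\to\infty$, exploiting that $E_nbE_n\to b$ strongly and that $\sigma^+_{-i\beta_\D}(b)$ has been established to lie in $\cN^+$ by Definition \ref{ass:zero}. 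A secondary point is to ensure that all intermediate operators lie in the trace ideal, which is automatic from $P_\D\e^{-t|\D|}\in\mathcal{L}^1(\Tau)$ for $t>\beta_\D$ together with boundedness of the remaining factors.
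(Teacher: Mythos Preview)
Your overall plan is the paper's plan: invoke Proposition~\ref{equivalencesbeta} to get a dense $\sigma^+$-invariant $*$-subalgebra $T^0_{A,\D}$ of analytic elements and check the twisted-trace identity there. The problem is in the displayed computation. The passage from the second to the third line is not valid: after writing
\[
\Tau\bigl(P_\D a\,\e^{-\beta_\D|\D|}\,\sigma^+_{-i\beta_\D}(b)\,\e^{-(t-\beta_\D)|\D|}\bigr),
\]
cyclicity only gives $\Tau\bigl(P_\D\,\sigma^+_{-i\beta_\D}(b)\,\e^{-(t-\beta_\D)|\D|}\,a\,\e^{-\beta_\D|\D|}\bigr)$, and there is no reason for $\e^{-(t-\beta_\D)|\D|}$ to commute past $a$ so as to recombine into $\e^{-t|\D|}$ on the right. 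In fact your stronger conclusion ``$\phi_{t,0}(ab)=\phi_{t,0}(\sigma^+_{-i\beta_\D}(b)a)$ for every $t>\beta_\D$'' is false: $\phi_{t,0}$ is a $t$-KMS state, not a $\beta_\D$-KMS state. A two-dimensional check with $\D$ diagonal, $a=|e_0\rangle\langle e_1|$, $b=|e_1\rangle\langle e_0|$ gives $\Tau(P_\D ab\,\e^{-t|\D|})=1$ while $\Tau(P_\D\sigma^+_{-i\beta_\D}(b)a\,\e^{-t|\D|})=\e^{\beta_\D-t}$, which agree only at $t=\beta_\D$.

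The fix is what the paper does: conjugate by $\e^{t|\D|}$ rather than $\e^{\beta_\D|\D|}$, obtaining
\[
\Tau(P_\D\,T_1T_2\,\e^{-t|\D|})=\Tau\bigl(P_\D\,\e^{t|\D|}T_2\e^{-t|\D|}\,T_1\,\e^{-t|\D|}\bigr)
=\Tau\bigl(P_\D\,\sigma^+_{-it}(T_2)\,T_1\,\e^{-t|\D|}\bigr),
\]
and only then apply the extended limit $t\to\omega$ to land on $\phi_{\omega,0}(\sigma^+_{-i\beta_\D}(T_2)T_1)$. Your careful discussion of the boundary identity via spectral truncation is exactly the kind of argument needed to justify this step (now with $t$ in place of $\beta_\D$, together with norm continuity of $s\mapsto\sigma^+_{-is}(T_2)$ on $[\beta_\D,\beta_\D+\epsilon)$ to pass to the limit), so the repair is local: replace $\beta_\D$ by $t$ in the conjugation and move the $\omega$-limit to the end of the argument rather than claiming the identity pointwise in $t$.
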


\begin{proof}
It follows from Proposition \ref{equivalencesbeta} that the dense subalgebra $T^0_{A,\D}\subseteq T_{A,\D}$ consists of $\beta_\D$-analytic elements of $T_{A,\D}$. The twisted trace property relative to the one-parameter group $\sigma^+$ holds on $T^0_{A,\D}$ by direct computation: for all $T_1,\,T_2\in T^0_{A,\D}$
$$
\phi_\omega(T_1T_2)=\lim_{t\to \omega}\frac{\Tau(T_1T_2\e^{-t|\D|})}{\Tau(P_\D\e^{-t|\D|})}=\lim_{t\to \omega}\frac{\Tau(\e^{t|\D|}T_2\e^{-t|\D|}T_1\e^{-t|\D|})}{\Tau(P_\D\e^{-t|\D|})}=\phi_\omega(\sigma_{-i\beta}(T_2)T_1).
$$

By definition, $\phi_{\omega,0}$ is a $\beta_\D$-KMS-state for $\sigma^+$.
\end{proof}

\begin{corl}
\label{cor:phi-omega} 
Let $(\A,\H,\D,\cN,\Tau)$ be a $\beta_\D$-analytic $\mathrm{Li}_1$-summable spectral triple with positive $\Tau$-essential spectrum. For any extended limit $\omega$ as $t\to \beta_\D$, the state $\phi_{\omega}$ is a 
$\beta_\D$-KMS state on $A_\D$ for the one-parameter group $\sigma$, defined in Proposition~\ref{prop:3.13}.
\end{corl}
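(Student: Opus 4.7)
The plan is to deduce the corollary from Proposition \ref{prop:no-work} by passing to the quotient $A_\D = T_{A,\D}/\Ko^+_\cN$. First I would verify that $\Ko^+_\cN$ is a $\sigma^+$-invariant closed two-sided ideal in $T_{A,\D}$. Invariance follows because $\sigma^+_s = \mathrm{Ad}(\e^{is|\D|})P_\D$ is implemented by a unitary in $\cN^+$ and conjugation by such a unitary preserves $\Tau$-compactness. Consequently the one-parameter group $\sigma^+$ descends to a strongly continuous action $\sigma$ on $A_\D$ (the one already described in Proposition \ref{prop:3.13}) and the quotient map $q\colon T_{A,\D}\to A_\D$ is equivariant.

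Next I would confirm that $\phi_\omega$ is a well-defined state on $A_\D$. By Lemma \ref{vanishoncomp} the state $\phi_{\omega,0}$ annihilates $\Ko^+_\cN$, so the definition $\phi_\omega(q(T)) := \phi_{\omega,0}(T)$ is unambiguous, positive and unital.

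For the KMS condition, the strategy is to lift a dense subalgebra of $\sigma$-analytic elements of $A_\D$ to the analytic subalgebra $T^0_{A,\D}\subseteq T_{A,\D}$ produced by Proposition \ref{equivalencesbeta} from the $\beta_\D$-analyticity hypothesis. Concretely, set $A^0_\D := q(T^0_{A,\D})$; this is dense and $\sigma$-invariant in $A_\D$ and its elements are $\beta_\D$-analytic for $\sigma$ because $q$ is equivariant and the holomorphic extensions of $\sigma^+_s$ on $T^0_{A,\D}$ pass through $q$. For $a, b \in A^0_\D$ choose lifts $T_a, T_b \in T^0_{A,\D}$. Then Proposition \ref{prop:no-work} yields
$$
\phi_\omega(ab) = \phi_{\omega,0}(T_a T_b) = \phi_{\omega,0}\bigl(\sigma^+_{-i\beta_\D}(T_b)\, T_a\bigr) = \phi_\omega\bigl(\sigma_{-i\beta_\D}(b)\, a\bigr),
$$
where in the last equality I used $q\circ \sigma^+_{-i\beta_\D} = \sigma_{-i\beta_\D}\circ q$ on analytic elements. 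Since $A^0_\D$ is norm-dense and $\sigma$-invariant, this verifies the KMS condition at inverse temperature $\beta_\D$ for $\sigma$ on $A_\D$.

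The only point that needs care is checking that the analytic extensions behave well under the quotient, i.e. that for $T\in T^0_{A,\D}$ the holomorphic function $z\mapsto \sigma^+_z(T)$ descends to a holomorphic function $z\mapsto \sigma_z(q(T))$ on the strip, with the correct boundary values at $z=-i\beta_\D$. This is immediate from continuity and equivariance of $q$, but is the only substantive observation beyond a purely formal passage to the quotient; the rest is bookkeeping already assembled in Lemma \ref{vanishoncomp}, Proposition \ref{equivalencesbeta} and Proposition \ref{prop:no-work}.
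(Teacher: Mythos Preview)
Your proof is correct and follows essentially the same approach as the paper. The paper's proof is a one-sentence citation of Proposition \ref{equivalencesbeta}, Proposition \ref{prop:no-work}, and the vanishing of $\phi_{\omega,0}$ on $\Ko^+_\cN$; you have simply unpacked what that passage to the quotient entails, including the invariance of $\Ko^+_\cN$ under $\sigma^+$ and the descent of analytic extensions through $q$.
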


\begin{proof}
This follows from Proposition \ref{equivalencesbeta} and Proposition \ref{prop:no-work} because $\phi_{\omega,0}$ vanishes on the compacts and the fact that $\phi_\omega$ is induced from $\phi_{\omega,0}$.
\end{proof}

In practice, for a $\beta_\D$-analytic 
$\mathrm{Li}_1$-summable spectral triple with positive 
$\Tau$-essential spectrum, we will want to check that in fact
$T_{A,\D}=T_A$. In this case $A_\D=A/I$ and $\phi_\omega$ induces a KMS-state on $A/I$. 
In the often occuring special case $P_\D AP_\D\cap\mathbb{K}_\cN^+=0$, we 
obtain a KMS state on the algebra $A$. In practice, these things are all checkable and we will do 
so in several examples in the subsequent sections.

The special case $\beta=0$ has been addressed by Voiculescu, \cite[Proposition 4.6]{Voics} under the assumption that $\D$ is positive. Exponential $\beta$-compatibility is superfluous when $\beta_\D=0$. By Proposition \ref{converatbeta}, when $\beta_\D=0$, positive $\Tau$-essential spectrum is equivalent to $\Tau(P_\D)=\infty$.

\begin{thm}
\label{thm:voics}
Let $(\A,\H,\D,\cN,\Tau)$ be a unital $\mathrm{Li}_{1}$-summable semifinite spectral triple 
with $\beta_\D=0$ and $\Tau(P_\D)=\infty$.
Then $A$ has a tracial state. Indeed, for any extended limit $\omega$ as $t\to 0$, 
$$
\phi_\omega(a)
:=\lim_{t\to \omega}\frac{\Tau(P_\D a\e^{-t\D})}{\Tau(P_\D\e^{-t\D})}
$$
is a tracial state.
\end{thm}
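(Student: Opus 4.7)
The plan is to recognize Theorem \ref{thm:voics} as the specialization of Corollary \ref{cor:phi-omega} to the case $\beta_\D=0$, and then to transfer the resulting tracial state from $A_\D$ back to $A$ itself. The verification of the hypotheses is short: since $\beta_\D=0$, Definition \ref{ass:zero} is vacuous (the analyticity condition $\e^{\beta\D}P_\D SP_\D\e^{-\beta\D}\subset\cN^+$ holds trivially with $S=\A$), so $(\A,\H,\D,\cN,\Tau)$ is automatically $\beta_\D$-analytic. Moreover, Proposition \ref{converatbeta} asserts that when $\beta_\D=0$, positive $\Tau$-essential spectrum is equivalent to $\Tau(P_\D)=\infty$, which is assumed. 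Thus Corollary \ref{cor:phi-omega} applies and produces a KMS-state $\phi_\omega$ on $A_\D$ at inverse temperature $0$; a $0$-KMS state is just a tracial state on $A_\D$ with respect to the trivialized $\R$-action.

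Next I would transfer this trace to $A$ via the $*$-homomorphism $\gamma:A\to A_\D$ induced by $a\mapsto P_\D aP_\D\bmod \Ko_\cN^+$, and observe that $\phi_\omega(a)=\phi_{\omega,0}(P_\D aP_\D)$ for the state $\phi_{\omega,0}$ on the Toeplitz algebra $T_{A,\D}$. Positivity and $\phi_\omega(1)=1$ are immediate since $P_\D a^*aP_\D\ge0$ in $\cN^+$ and $\Tau(P_\D\e^{-t\D})/\Tau(P_\D\e^{-t\D})=1$. The only substantive point is the trace identity $\phi_\omega(ab)=\phi_\omega(ba)$ for $a,b\in A$.

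To establish this, I would use Lemma \ref{bddcommsum} to write
$$
P_\D abP_\D-(P_\D aP_\D)(P_\D bP_\D)=-P_\D a[P_\D,b]P_\D,
$$
where $[P_\D,b]=\tfrac12[F_\D,b]\in\Ko_\cN$, so that the difference lies in $\Ko_\cN^+$. Since Lemma \ref{vanishoncomp} asserts $\phi_{\omega,0}$ vanishes on $\Ko_\cN^+$, we obtain $\phi_\omega(ab)=\phi_{\omega,0}\bigl((P_\D aP_\D)(P_\D bP_\D)\bigr)$, and analogously for $\phi_\omega(ba)$. The tracial property of $\phi_{\omega,0}$ on $T_{A,\D}$ (which is the KMS condition at inverse temperature $0$ supplied by Proposition \ref{prop:no-work}) then yields equality.

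The main technical point is the compactness argument in the previous paragraph: it is what allows the cut-down by $P_\D$ to be compatible with multiplication modulo $\Tau$-compact errors, and it relies critically on the $\mathrm{Li}_1$-summability via Lemma \ref{bddcommsum}. Everything else amounts to unwinding the definitions and invoking the already-established corollary. I would finally remark that although $\phi_\omega$ factors through $\gamma$, it need not vanish on the ideal $I=\{a\in A:P_\D aP_\D\in\Ko_\cN^+\}$ unless one works with $A/I$; but as a tracial state on $A$ this is precisely what the statement asserts.
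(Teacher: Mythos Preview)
Your proposal is correct and takes essentially the same approach as the paper, which does not give a standalone proof but presents the theorem as the $\beta_\D=0$ specialization of Corollary \ref{cor:phi-omega}, noting that $\beta$-analyticity is vacuous and that positive $\Tau$-essential spectrum reduces to $\Tau(P_\D)=\infty$ via Proposition \ref{converatbeta}. Your write-up spells out the transfer from $A_\D$ to $A$ more explicitly than the paper does; the only quibble is your final remark, since $\phi_\omega$ does vanish on $I$ (it factors through $\gamma:A\to A/I\hookrightarrow A_\D$), but this is peripheral to the argument.
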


Theorem \ref{thm:voics} applies to unital $\mathrm{Li}_{(0),1}$-summable semifinite spectral triples with $\Tau(P_\D)=\infty$ since $\mathrm{Li}_{(0),1}$-summability implies $\beta_\D=0$.

\begin{rmk}
If the heat trace has an asymptotic expansion as in Theorem \ref{heatvszeta}, then Theorem \ref{thm:voics} can be further simplified. Assume that there is a $p>0$ and that for any $a\in \mathcal{A}$, there is a $\phi_0(a)\in \C$ such that $\phi_0(1)\neq 0$ and 
$$\Tau(P_\D a\e^{-t|\D|})=\phi_0(a)t^{-p}+O(t^{-p+\epsilon}), \quad\mbox{as $t\to 0$},$$
for some $\epsilon>0$ (which can depend on $a$). Since $\phi_0(1)\neq 0$, it follows that $\phi_0$ is continuous in the $C^*$-norm on $\A$ and $\phi_\omega(a)=\frac{\phi_0(a)}{\phi_0(1)}$ for all $a\in \mathcal{A}$. In fact, $\phi_0(a)=\Gamma(p)\mathrm{Res}_{z=p} \zeta(z; P_\D a, |\D|)$.
\end{rmk}

The construction of the state $\phi_\omega$ in Corollary \ref{cor:phi-omega} involves the operator $P_\D$. We shall now provide a result allowing us to remove $P_\D$ from the definition of $\phi_\omega$ in the presence of certain symmetries on the spectral triple. The result provides a checkable set of conditions to compute $\phi_\omega$ by means of asymptotics of $\e^{-t|\D|}$.

\begin{lemma}
\label{gammajlem}
Let $(\A,\H,\D,\cN,\Tau)$ be a unital $\mathrm{Li}_{1}$-summable semifinite spectral triple and $\beta\geq 0$ a number such that $\Tau(\e^{-t|D|})<\infty$ for $t>\beta$ and $\Tau(\e^{-t|\D|})\nearrow \infty$ as $t\searrow\beta$. Assume that there exists self-adjoint operators $\gamma_1,\ldots, \gamma_N\in \cN\cap \A'$ such that 
\begin{enumerate}
\item $\sum_{j=1}^N \gamma_j^2$ is invertible;
\item $\gamma_j\Dom(\D)\subseteq \Dom(\D)$ and $[\D,\gamma_j]_+:=\D\gamma_j+\gamma_j \D$ has a bounded extension to $\H$;
\item For $j=1,\ldots, N$ and some $\epsilon>0$, the function $t\mapsto \e^{-t|\D|}\gamma_j\e^{t|\D|}$ extends to a norm continuous function from the interval $[\beta, \beta+\epsilon)$ to $\cN$ with 
$$\lim_{t\to \beta}\e^{-t|\D|}\gamma_j\e^{t|\D|}=\gamma_j.$$
\end{enumerate}
Then $\beta_\D=\beta$ and $\D$ has positive $\Tau$-essential spectrum. Moreover, for any extended limit $\omega$ as $t\to\beta_\D$, 
$$\phi_\omega(a)= \lim_{t\to \omega} \frac{\Tau(a\e^{-t|\D|})}{\Tau(\e^{-t|\D|})}.$$
\end{lemma}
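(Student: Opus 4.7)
The plan is to use the approximate anticommutation encoded by the $\gamma_j$'s to compare the traces of $a\e^{-t|\D|}$ on the positive and negative spectral subspaces of $\D$, and then to control the discrepancy using Condition~3.

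\textbf{Step 1 (anticommutation modulo compacts).} First I would show $\{F_\D,\gamma_j\}\in\Ko_\cN$ by the operator-inequality argument underlying the proof of Lemma~\ref{bddcommsum}, applied after a standard doubling trick (replace $\D$ by a doubled operator on $\H\oplus\H$) that converts the bounded anticommutator $\{\D,\gamma_j\}$ into a bounded commutator. Writing $F_\D=2P_\D-1$ translates this to $\{P_\D,\gamma_j\}=\gamma_j+K_j$ with $K_j\in\Ko_\cN$, and hence $\gamma_j P_\D\gamma_j=(1-P_\D)\gamma_j^2+K_j\gamma_j$. Summing over $j$ yields
$$
(1-P_\D)G=\sum_{j=1}^N\gamma_j P_\D\gamma_j-K,\qquad K:=\sum_j K_j\gamma_j\in\Ko_\cN.
$$
As $G\in\A'$ is positive and invertible, this identity solves for $(1-P_\D)$ modulo a compact.

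\textbf{Step 2 (trace comparison).} Condition~3 gives $B_j(t):=\e^{-t|\D|}\gamma_j\e^{t|\D|}\in\cN$ for $t\in[\beta,\beta+\epsilon)$ with $\|B_j(t)-\gamma_j\|\to 0$, together with the intertwining identity $\e^{-t|\D|}\gamma_j=B_j(t)\e^{-t|\D|}$. Fix $a\in\A$ and set $a':=G^{-1}a\in\cN$; since $G,a\in\A'$, the operator $a'$ commutes with every $\gamma_j$ and satisfies $Ga'=a$. Multiplying the Step~1 identity on the right by $a'$, pairing with $\e^{-t|\D|}$ under $\Tau$, and using cyclicity together with $\e^{-t|\D|}\gamma_j=B_j(t)\e^{-t|\D|}$ yields
$$
\Tau((1-P_\D)a\e^{-t|\D|})=\Tau(P_\D a\e^{-t|\D|})+\Tau(P_\D a'R(t)\e^{-t|\D|})-\Tau(Ka'\e^{-t|\D|}),
$$
where $R(t):=\sum_j\gamma_j(B_j(t)-\gamma_j)$ tends to $0$ in norm as $t\to\beta$. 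The middle term is bounded in absolute value by $\|a'\|\|R(t)\|\Tau(P_\D\e^{-t|\D|})=o(\Tau(P_\D\e^{-t|\D|}))$, and the last term is $o(\Tau(\e^{-t|\D|}))$ by approximating the compact operator $Ka'$ in norm by finite trace-class operators, exactly as in Lemma~\ref{vanishoncomp}.

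\textbf{Step 3 (bootstrap and conclusion).} Specialising to $a=1$ gives $\Tau(\e^{-t|\D|})=2\Tau(P_\D\e^{-t|\D|})+o(\Tau(P_\D\e^{-t|\D|}))+o(\Tau(\e^{-t|\D|}))$, which rearranges to the asymptotic equivalence $\Tau(\e^{-t|\D|})\sim 2\Tau(P_\D\e^{-t|\D|})$ as $t\to\beta$. Hence $\Tau(P_\D\e^{-t|\D|})\nearrow\infty$ as $t\searrow\beta$; combined with monotonicity of $t\mapsto\Tau(P_\D\e^{-t|\D|})$ and the bound $\Tau(P_\D\e^{-t|\D|})\le\Tau(\e^{-t|\D|})<\infty$ for $t>\beta$, this forces $\beta_\D=\beta$ together with positive $\Tau$-essential spectrum. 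The equivalence further upgrades the compact error term in Step~2 to $o(\Tau(P_\D\e^{-t|\D|}))$, yielding
$$
\frac{\Tau(a\e^{-t|\D|})}{\Tau(\e^{-t|\D|})}-\frac{\Tau(P_\D a\e^{-t|\D|})}{\Tau(P_\D\e^{-t|\D|})}\longrightarrow 0\quad\text{as }t\to\beta,
$$
so applying any extended limit $\omega$ identifies this common limit with $\phi_\omega(a)$. The main delicacy will be this mild circularity---the error in Step~2 is naturally only $o(\Tau(\e^{-t|\D|}))$ and only the $a=1$ case of Step~3 promotes it to $o(\Tau(P_\D\e^{-t|\D|}))$---together with the potential noncommutativity of the $\gamma_j$'s among themselves, which must be absorbed by averaging through the invertible element $G=\sum_j\gamma_j^2\in\A'$ rather than attempting to invert any single $\gamma_j$.
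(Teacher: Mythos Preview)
Your overall strategy is the paper's: use $\{F_\D,\gamma_j\}\in\Ko_\cN$ together with Condition~3 to match the traces over the positive and negative spectral subspaces of $\D$, then bootstrap via $a=1$. Step~1 and the compact-error estimates are fine, as is the bootstrap in Step~3.

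There is a genuine gap in Step~2. You assert that $a':=G^{-1}a$ commutes with every $\gamma_j$, citing ``$G,a\in\A'$''. But $a\in\A$, not $\A'$; what you have is $\gamma_j\in\A'$, so $[\gamma_j,a]=0$, and then $[\gamma_j,a']=[\gamma_j,G^{-1}]\,a$, which vanishes only when $[\gamma_j,G]=0$. Nothing in the hypotheses forces $G=\sum_k\gamma_k^2$ to commute with each individual $\gamma_j$. If you drop that commutation and rerun your computation, the leading term on the right becomes $\Tau\big(P_\D\,H a\,\e^{-t|\D|}\big)$ with $H:=\sum_j\gamma_jG^{-1}\gamma_j$, and $H\neq 1$ in general, so your displayed identity fails. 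The gap is invisible in the intended examples---for a grading $G=\gamma^2=1$, and for Clifford multiplications each $\gamma_j^2=|X_j|^2$ is a scalar function so $G$ is central---but the lemma is stated abstractly and your closing remark about ``absorbing'' the noncommutativity through $G$ does not actually achieve this.

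The paper sidesteps the issue by working with self-adjoint $a$, symmetrising via $\Tau(F_\D a\e^{-t|\D|})=\tfrac12\Tau\big((F_\D a+aF_\D)\e^{-t|\D|}\big)$, and then running an inequality argument driven by $\sum_j\gamma_j^2\ge\|G^{-1}\|^{-1}$ rather than an exact identity: after pushing the $\gamma_j$'s through via Condition~3 and anticommutation one arrives at $\Tau(F_\D a\e^{-t|\D|})\le -\Tau(F_\D a\e^{-t|\D|})+o(\Tau(\e^{-t|\D|}))$, whence $\Tau(F_\D a\e^{-t|\D|})=o(\Tau(\e^{-t|\D|}))$ and the rest follows. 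To repair your route you should either add the hypothesis $[\gamma_j,G]=0$ (harmless in all the examples), or replace the exact identity by a two-sided estimate of this kind.
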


\begin{example}
Before proceeding with the proof of the lemma, we give some examples of how the operators $\gamma_1,\ldots, \gamma_N\in \cN\cap \A'$ can arise. The most trivial instance is when $(\A,\H,\D,\cN,\Tau)$ is even, in which case the grading $\gamma$ will satisfy the conditions of Lemma \ref{gammajlem}.

Here is a more geometric example. Let $(C^\infty(M),L^2(M,S),\slashed{D})$ denote the spectral triple defined from a Dirac operator on a closed Riemannian manifold $M$ as in Proposition \ref{heatasumfd} (see page \pageref{heatasumfd}). If we take a collection $X_1,\ldots, X_N\in C^\infty(M,TM)$ of vector fields spanning the tangent bundle $TM$ in all points, the collection of Clifford multiplication operators $\gamma_j:=c_S(X_j)$, $j=1,\ldots,N$ is readily verified to satisfy the conditions of Lemma \ref{gammajlem}. This construction extends to semi-finite spectral triples defined from the fibrewise Dirac operator of a Riemannian spin$^c$ submersion $\pi:M\to B$ (see more in \cite{kaaaaadsuij}) and a measure on $B$ by taking $X_1,\ldots, X_N$ to be vertical vector fields spanning the vertical tangent bundle $\ker\mathrm{d}\pi$ in all points of $M$.
\end{example}

\begin{proof}
Define $C:=\|(\sum_{j=1}^N \gamma_j^2)^{-1}\|$ and recall that $F_\D:=\D|\D|^{-1}$ modulo a finite trace projection. For any self-adjoint $a\in \A$, we estimate
\begin{align*}
\Tau(F_\D a\e^{-t|\D|})=&\frac{1}{2}\Tau\left((F_\D a+aF_\D)\e^{-t|\D|}\right)\leq \frac{C}{2}\Tau\left(\sum_{j=1}^N\gamma_j(F_\D a+aF_\D)\e^{-t|\D|}\gamma_j\right)=\\
=&\frac{C}{2}\Tau\left(\sum_{j=1}^N\gamma_j(F_\D a+aF_\D)\gamma_j\e^{-t|\D|}\right)+\\
&+\frac{C}{2}\Tau\left(\sum_{j=1}^N\gamma_j(F_\D a+aF_\D)(\e^{-t|\D|}\gamma_j\e^{t|\D|}-\gamma_j)\e^{-t|\D|}\right)=\\
=&-\frac{C}{2}\Tau\left(\sum_{j=1}^N\gamma_j^2(F_\D a+aF_\D)\e^{-t|\D|}\right)+\\
&-\frac{C}{2}\Tau\left(\sum_{j=1}^N\gamma_j([F_\D,\gamma_j]_+ a+a[F_\D,\gamma_j]_+)\gamma_j\e^{-t|\D|}\right)\\
&+\frac{C}{2}\Tau\left(\sum_{j=1}^N\gamma_j(F_\D a+aF_\D)(\e^{-t|\D|}\gamma_j\e^{t|\D|}-\gamma_j)\e^{-t|\D|}\right)\leq \\
\leq &-\Tau\left(F_\D a\e^{-t|\D|}\right)-\frac{C}{2}\Tau\left(\sum_{j=1}^N\gamma_j([F_\D,\gamma_j]_+ a+a[F_\D,\gamma_j]_+)\gamma_j\e^{-t|\D|}\right)\\
&+\frac{C}{2}\Tau\left(\sum_{j=1}^N\gamma_j(F_\D a+aF_\D)(\e^{-t|\D|}\gamma_j\e^{t|\D|}-\gamma_j)\e^{-t|\D|}\right)
\end{align*}
Since $[\D,\gamma_j]_+$ is bounded, $[F_\D,\gamma_j]_+$ is compact and an approximation argument by finite $\Tau$-rank operators shows that 
$$\tau\left(\sum_{j=1}^N\gamma_j([F_\D,\gamma_j]_+ a+a[F_\D,\gamma_j]_+)\gamma_j\e^{-t|\D|}\right)=o(\Tau(\e^{-t|\D|})),$$
as $t\searrow \beta$. By norm continuity of $t\mapsto \e^{-t|\D|}\gamma_j\e^{t|\D|}$ we can also deduce that 
$$\tau\left(\sum_{j=1}^N\gamma_j(F_\D a+aF_\D)(\e^{-t|\D|}\gamma_j\e^{t|\D|}-\gamma_j)\e^{-t|\D|}\right)=o(\Tau(\e^{-t|\D|})),$$
as $t\searrow \beta$. In conclusion, for a self-adjoint $a$,
$$\Tau(F_\D a\e^{-t|\D|})=-\Tau(F_\D a\e^{-t|\D|})+o(\Tau(\e^{-t|\D|})).$$
We can conclude that $\tau(F_\D a\e^{-t|\D|})=o(\Tau(\e^{-t|\D|}))$ as $t\searrow \beta$. Since $2P_\D=F_\D+1$, we have for any $a\in \A$ that
$$\Tau(a\e^{-t|\D|})=2\Tau(P_\D a\e^{-t|\D|})+o(\Tau(\e^{-t|\D|})), \quad\mbox{as $t\searrow \beta$.}$$
In particular $\beta=\beta_\D$ and $\Tau(P_\D\e^{-t|\D|})\nearrow \infty$ as $t\searrow \beta$. We compute that 
$$\frac{\Tau(a\e^{-t|\D|})}{\Tau(\e^{-t|\D|})}= \frac{\Tau(P_\D a\e^{-t|\D|})}{\Tau(P_\D\e^{-t|\D|})}+o(1), \quad\mbox{as $t\searrow \beta$.}$$
In particular, for any extended limit $\omega$ as $t\to\beta$, 
$$ \lim_{t\to \omega}\frac{\Tau(a\e^{-t|\D|})}{\Tau(\e^{-t|\D|})}=  \lim_{t\to \omega}\frac{\Tau(P_\D a\e^{-t|\D|})}{\Tau(P_\D\e^{-t|\D|})}.$$
This concludes the proof of the lemma.
\end{proof}

\subsection{Modular spectral triples and modular index theory}
\label{modularsubsec}

Modular spectral triples and their (equivariant) index theory were considered in
\cite{CPR2,CRT,CNNR}, with the definition
laid out most clearly in \cite[Definition 2.1]{RenSen}. These were defined
in order to study the (equivariant) index theory of KMS weights associated to
periodic flows, so one might wonder how modular spectral triples fit into our scheme.

Given a KMS state $\psi:B\to\C$ with inverse temperature $\beta$ 
for a one-parameter group
$\sigma:\R\to {\rm Aut}(B)$ on a unital $C^*$-algebra and a 
faithful expectation onto the fixed point algebra $\Phi:\,B\to B^\sigma$,
we can emulate the constructions that inspired the definition of modular spectral triples.

First we construct the right $C^*$-module $X$ over $B^\sigma$
by completing $B$ in the norm coming from the inner product
$$
(x|y)_{B^\sigma}=\Phi(x^*y).
$$
Then we can use \cite{LN} to construct $\Tr_\psi:\Ko_{B^\sigma}(X)\to\C$ 
the trace dual to $\psi|_{B^\sigma}$. The action $\sigma$ induces
a one parameter unitary group  on $L^2(X,\psi)$, and we let $\D$ 
be the generator of this one parameter group.
By \cite{CNNR}, when the action $\sigma$ is periodic, the data
$$
(B,L^2(X,\psi),\D,\Ko_{B^\sigma}(X)'',\phi_\D),
$$
where $\phi_\D(a)
:=\Tr_\psi(e^{-\beta \D/2}a e^{-\beta \D/2})$, 
gives us a modular spectral triple. The operator $\D$ is affiliated to
the semifinite algebra $ (\Ko(X)^{\sigma^{\phi_\D}})''$

\begin{prop}
\label{prop:Li-mod}
Let $(\A,\H,\D,\cN,\Tau)$ be a semifinite spectral triple
such that 
that for all $t>\beta$
$$
\Tau(P_\D \e^{-t|\D|})<\infty
$$
and $\lim_{t\searrow \beta}\Tau(P_\D \e^{-t\D})=\infty$. 
Assume that $P_\D AP_\D\cap\Ko_\cN=\{0\}$ 
and the spectral triple is $\beta$-analytic.
Then we obtain the KMS state $\phi_\omega:\,A_\D\to\C$,
and provided that $\D$ has discrete spectrum we obtain
an expectation $\Phi:\,A_\D\to A_\D^\sigma$ onto the fixed point subalgebra.

Provided that  the group $\sigma$ is periodic
we then obtain a finitely summable modular spectral triple
$$
(\A_\D,L^2(A_\D,\phi_\omega),\D,(\Ko_{A_\D^\sigma}(A_\D))'',\phi_\D),
$$
where the operator $\D$ generates the one-parameter group 
induced by $\sigma$ on $L^2(A_\D,\phi_\omega)$ 
and $\phi_\D:=\Tr_{\phi_\omega}(e^{-\beta \D/2}\cdot e^{-\beta \D/2})$. 
The spectral dimension is 1.
\end{prop}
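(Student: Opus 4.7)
The plan is to combine Corollary \ref{cor:phi-omega} with the dual trace construction of \cite{LN} and the framework for modular spectral triples laid out in \cite[Definition 2.1]{RenSen}, following the pattern of the periodic constructions in \cite{CPR2,CRT,CNNR}. First I would apply Corollary \ref{cor:phi-omega} to produce the $\beta$-KMS state $\phi_\omega$ on $A_\D$ for the one-parameter group $\sigma$. The hypothesis $P_\D AP_\D\cap \Ko_\cN=\{0\}$ makes the ideal $I\subseteq A$ trivial, so $A\hookrightarrow A_\D$ via the embedding $\gamma$ described after Proposition \ref{prop:3.13}, and $\phi_\omega$ can be treated as a genuine KMS state on a $C^*$-algebra containing $A$. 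Periodicity of $\sigma$ (at some period $T$) supplies the conditional expectation $\Phi\colon A_\D\to A_\D^\sigma$ given by averaging $\Phi(a):=\frac{1}{T}\int_0^T \sigma_t(a)\,\d t$; discreteness of the spectrum of $\D$ guarantees that this averaging implements the Fourier decomposition $A_\D=\bigoplus_{n\in\Z} A_\D^{(n)}$ into eigenspaces of $\ad(\D)$ (after rescaling so the period is $2\pi$). The KMS condition at inverse temperature $\beta$ then forces $\phi_\omega|_{A_\D^\sigma}$ to be tracial.

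Next I would view $A_\D$ as a right $A_\D^\sigma$-pre-Hilbert module with inner product $(x|y)_{A_\D^\sigma}:=\Phi(x^*y)$, and invoke \cite{LN} to obtain the faithful normal semifinite trace $\Tr_{\phi_\omega}$ on $(\Ko_{A_\D^\sigma}(A_\D))''$ dual to $\phi_\omega|_{A_\D^\sigma}$. The GNS Hilbert space $L^2(A_\D,\phi_\omega)$ is canonically identified with the internal tensor product of this module with $L^2(A_\D^\sigma,\phi_\omega|_{A_\D^\sigma})$. Since $\phi_\omega$ is $\sigma$-invariant (a consequence of the KMS property), the action $\sigma$ lifts to a strongly continuous unitary group on $L^2(A_\D,\phi_\omega)$; its generator has integer spectrum by periodicity and agrees with $\D$ on each spectral subspace, giving the natural identification of $\D$ as an operator affiliated with $(\Ko_{A_\D^\sigma}(A_\D))''$.

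I would then check the remaining axioms of a modular spectral triple from \cite[Definition 2.1]{RenSen}: $\beta$-analyticity of the original semifinite spectral triple supplies a dense $*$-subalgebra $\A_\D\subseteq A_\D$ of $\sigma$-analytic elements with bounded commutators with $\D$; the operator $\D$ is affiliated with the semifinite algebra $(\Ko_{A_\D^\sigma}(A_\D))''$ by the construction just described; and the weight $\phi_\D=\Tr_{\phi_\omega}(\e^{-\beta\D/2}\cdot\,\e^{-\beta\D/2})$ is finite on the spectral projections of $\D$ because the KMS identity $\phi_\omega(x^*x)=\e^{-\beta n}\phi_\omega(xx^*)$ valid for $x\in A_\D^{(n)}$ exactly compensates the exponential weight of $\phi_\D$ on the $n$-th eigenspace.

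For spectral dimension $1$, $(1+\D^2)^{-s/2}$ acts as the scalar $(1+n^2)^{-s/2}$ on the $n$-th spectral subspace $L^2(A_\D^{(n)},\phi_\omega)$, while the KMS cancellation noted above shows that $\phi_\D(Q_n)$ is uniformly bounded in $n$, where $Q_n$ denotes the projection onto the $n$-th eigenspace of $\D$. Consequently $\phi_\D((1+\D^2)^{-s/2})\asymp \sum_{n\in\Z}(1+n^2)^{-s/2}$, which converges precisely for $s>1$. The main technical obstacle I anticipate is the precise bookkeeping required to show that the weight $\e^{-\beta\D}$ inside the definition of $\phi_\D$ exactly absorbs the KMS factor to yield the uniform bound $\phi_\D(Q_n)=O(1)$; once this is pinned down, the verification of the remaining modular axioms and the spectral dimension computation proceed exactly as in the periodic cases treated in \cite{CPR2,CRT,CNNR}.
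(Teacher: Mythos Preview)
Your proposal is largely on the right track and follows the same overall framework as the paper, but you have conflated the two separate hypotheses in the statement. The proposition asserts that \emph{discrete spectrum of $\D$ alone} (without periodicity) yields the expectation $\Phi:A_\D\to A_\D^\sigma$, and only afterwards invokes periodicity for the modular spectral triple. Your construction of $\Phi$ relies on periodicity from the outset (averaging over $[0,T]$), so it does not establish the first claim as stated. The paper's device here is different: when $\D$ has discrete spectrum, one chooses a rational basis of the eigenvalues (possibly infinite) and observes that the $\R$-action $\sigma$ factors through the corresponding compact torus; averaging over this torus then produces the expectation $\Phi$ without any periodicity assumption. This torus trick is the one idea you are missing.

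For the second claim (periodic case), the paper simply invokes \cite{CNNR} both for the modular spectral triple axioms and for the finiteness of $\phi_\D((1+\D^2)^{-s/2})$ for $s>1$, whereas you have sketched these verifications by hand. Your argument via the KMS cancellation $\phi_\omega(x^*x)=\e^{-\beta n}\phi_\omega(xx^*)$ for $x\in A_\D^{(n)}$, giving a uniform bound on $\phi_\D(Q_n)$, is exactly the mechanism underlying the cited result, so on that part your approach is more explicit but not essentially different.
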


\begin{proof}
The existence of the KMS state $\phi_\omega$ comes from
Corollary \ref{cor:phi-omega}.

In general the action $\sigma$ is real, but assuming that the operator 
$\D$ has discrete spectrum, the action will 
factor through a (compact) torus. To see this, 
one takes a rational basis of the eigenvalues 
(possibly an infinite basis), and takes a product 
over the circles corresponding to these individual actions.

Consequently, by averaging over this torus,
there is an expectation $\Phi:\,\cN\to \cN^{\sigma}$ onto
the fixed point algebra for 
$t\mapsto (T\mapsto \e^{it\D}T\e^{-it\D})$. 
Of course $\D$ is affiliated to the fixed-point algebra.

Finally if the action $\sigma$ is periodic then \cite{CNNR}
proves that we have a modular spectral triple, and that
$\phi_\D((1+\D^2)^{-s/2})<\infty$ for $s>1$.
\end{proof}

For circle actions there is a 
local index formula in twisted cyclic theory, 
but for real actions factoring through a torus there is not. 

One serious issue that comes up 
in this more general setting is the compactness of the resolvent
of $\D$, and determining summability. We leave this issue to 
another place.

\section{The KMS-state $\phi_\omega$ and Dixmier traces}
\label{kmsanddix}

In the present section we discuss a relation 
between the trace $\phi_\omega$ from 
Theorem~\ref{thm:voics} and Dixmier traces.
For a decreasing function $\psi: [0,\infty) \to (0,\infty)$ we denote $\Psi(t):=\int_0^t \psi(s) ds.$
Let $\mathcal{L}_\psi(\Tau)$ be the principal ideal defined as in Definition \ref{derjugendvonheutebrauchidealen} (see page \pageref{derjugendvonheutebrauchidealen}).
Let $E_T$ be the spectral projection of an operator $T$ affiliated with $\cN$ and let $n_\Tau(s,T):=\Tau(E_{|T|}(s,\infty))$ be its distribution function.

The following result  extends~\cite[Lemma 12.6.3]{LSZ}.

\begin{lemma}\label{HTas} Let $\psi$ be a regularly varying function of index $-1$.
Let $T\in \mathcal{L}_\psi(\Tau)$ be strictly positive and $\mu_\Tau(t,T) \sim \psi(t)$, $t\to \infty$. For every $q>0$ we have
$$\Tau (\e^{-{T^{-q}}/t}) \sim \Gamma(1+\frac{1}{q}) \psi^{-1}(t^{-\frac{1}{q}}), \ t\to \infty.$$
\end{lemma}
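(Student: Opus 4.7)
My strategy is to rewrite $\Tau(\e^{-T^{-q}/t})$ as an integral against the singular-value function of $T$, rescale so that the characteristic scale drops out, and then pass to the limit under the integral using Potter's uniform bounds for regularly varying functions. The starting point is the standard identity
$$\Tau(\e^{-T^{-q}/t})=\int_0^\infty \e^{-\mu_\Tau(s,T)^{-q}/t}\,ds,$$
valid because $T$ is strictly positive, affiliated to $\cN$, and $f(x):=\e^{-x^{-q}/t}$ is a bounded, continuous, nondecreasing Borel function on $[0,\infty)$ with $f(0^+)=0$. Changing variable via $s=\psi^{-1}(t^{-1/q})u$ yields
$$\frac{\Tau(\e^{-T^{-q}/t})}{\psi^{-1}(t^{-1/q})}=\int_0^\infty \e^{-\mu_\Tau(\psi^{-1}(t^{-1/q})u,\,T)^{-q}/t}\,du.$$

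Next I would identify the pointwise limit of the integrand. Fix $u>0$. Since $\psi^{-1}(t^{-1/q})u\to\infty$ as $t\to\infty$, the hypothesis $\mu_\Tau(s,T)\sim\psi(s)$ gives $\mu_\Tau(\psi^{-1}(t^{-1/q})u,T)\sim\psi(\psi^{-1}(t^{-1/q})u)$. Regular variation of $\psi$ of index $-1$ then gives
$$\psi(\psi^{-1}(t^{-1/q})u)\sim u^{-1}\,\psi(\psi^{-1}(t^{-1/q}))=u^{-1}t^{-1/q},$$
so the exponent $\mu_\Tau(\psi^{-1}(t^{-1/q})u,T)^{-q}/t$ tends to $u^q$, and the integrand converges to $\e^{-u^q}$. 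Since $\int_0^\infty \e^{-u^q}\,du=\Gamma(1+1/q)$ by the substitution $v=u^q$, the lemma will follow as soon as the limit and integral can be interchanged.

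To justify the interchange I would invoke Potter's bounds: for any $\delta>0$ there exists $C_\delta\geq 1$ such that, for $t$ large and all $u\geq 1$,
$$\psi(\psi^{-1}(t^{-1/q})u)\geq C_\delta^{-1}\,u^{-1-\delta}\,t^{-1/q}.$$
Combined with the estimate $\mu_\Tau(s,T)\geq \tfrac{1}{2}\psi(s)$ for large $s$ (a consequence of $\mu_\Tau\sim\psi$), this yields an integrable dominating function: the integrand is bounded by $\e^{-c\,u^{q(1+\delta)}}$ on $[1,\infty)$ for some $c>0$, and by the trivial bound $1$ on $(0,1)$; both are integrable. Dominated convergence then completes the argument. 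The main technical obstacle is the uniform lower bound on the exponent for $u\geq 1$ controlling the tail of the integrand, and this is precisely where Potter's inequality for the regularly varying function $\psi$ is essential.
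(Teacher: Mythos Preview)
Your approach is genuinely different from the paper's and, once repaired, is correct. The paper rewrites the heat trace as a Laplace--Stieltjes integral of the spectral distribution of $T^{-q}$, observes that this distribution is regularly varying of index $1/q$, and then invokes Karamata's Tauberian theorem as a black box. Your route---expressing $\Tau(\e^{-T^{-q}/t})$ via the singular-value function of $T$, rescaling by $\psi^{-1}(t^{-1/q})$, and passing to the limit by dominated convergence with Potter bounds---is essentially an in-line proof of the relevant Karamata-type statement. It is more elementary and self-contained, at the cost of redoing by hand what Karamata packages.

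There is, however, a direction error in your domination step. You correctly note that what is needed is a uniform \emph{lower} bound on the exponent $\mu_\Tau(\psi^{-1}(t^{-1/q})u,T)^{-q}/t$ for $u\geq 1$, but a lower bound on this exponent requires an \emph{upper} bound on $\mu_\Tau$, hence on $\psi$. The inequality you wrote,
\[
\psi(\psi^{-1}(t^{-1/q})u)\geq C_\delta^{-1}u^{-1-\delta}t^{-1/q},
\]
together with $\mu_\Tau\geq\tfrac12\psi$, yields only $\mu_\Tau^{-q}/t\leq C u^{q(1+\delta)}$, i.e.\ a \emph{lower} bound $\e^{-Cu^{q(1+\delta)}}$ on the integrand, which is useless for domination. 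The fix is to use the other side of Potter's bounds: for $0<\delta<1$ and $t$ large, $u\geq 1$,
\[
\psi(\psi^{-1}(t^{-1/q})u)\leq C_\delta\,u^{-1+\delta}\,t^{-1/q},
\]
and combine it with $\mu_\Tau(s,T)\leq 2\psi(s)$ for $s$ large. This gives $\mu_\Tau^{-q}/t\geq c\,u^{q(1-\delta)}$ and hence the integrable majorant $\e^{-c\,u^{q(1-\delta)}}$ on $[1,\infty)$. With this correction your argument goes through.
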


\begin{proof}
The assumption $\mu_\Tau(t,T) \sim \psi(t)$ implies $\mu_\Tau(t,T^{q}) \sim [\psi(t)]^{q}$, $t\to \infty$. Since the distribution function is an inverse of the singular values function, it follows that $n_\Tau(s, {T^{q}})\sim \psi^{-1}(s^{\frac{1}{q}}),$ $s\to0+$.
Next we have $\Tau(E_{T^{-q}}(t)) \sim n_\Tau(1/t, {T^{q}})$, $t\to \infty$. Thus, 
$$\Tau(E_{T^{-q}}(t)) \sim \psi^{-1}(t^{-\frac{1}{q}}), \ t\to \infty.$$

Since $\psi$ is varying regularly with index $-1$, \cite[Theorem 1.5.12]{RegVar} implies that $\psi^{-1}$ varies regularly with index $-1$, too. Thus, $\Tau(E_{T^{-q}})$ varies regularly with index $\frac{1}{q}$.
Writing the heat trace as a Laplace transform
$$
\Tau (\e^{-{T^{-q}}/t}) = \int_0^\infty \e^{-{z}/{t}}\,\mathrm{d} \Tau(E_{T^{-q}}(z))
$$
and using the Karamata theorem~\cite[Chapter IV, Theorem 8.1]{Korevaar} we obtain
\[
\Tau (\e^{-{T^{-q}}/t}) \sim \Gamma(1+\frac{1}{q})\psi^{-1}(t^{-\frac{1}{q}}), \ t\to \infty.\qedhere
\]
\end{proof}

Let $P_a : L^\infty(0,\infty) \to L^\infty(0,\infty)$ 
be the exponentiation operator defined by
$(P_a f)(t) := f(t^a), \ t>0.$

\begin{defn}\label{dfn:invariance}\cite{CPS2}
An extended
limit $\omega$ as $t\to\infty$ on $L^\infty(0,\infty)$ is said to be
 exponentiation invariant if 
$$\lim_{t\to\omega} (P_a f)(t) = \lim_{t\to\omega} f(t)$$
for every $f\in L^\infty(0,\infty)$ and every $a>0$.
\end{defn}

\begin{defn}
\label{Dixmier traces}
Let $\psi: [0,\infty) \to (0,\infty)$ be a regularly varying function of index $-1$. For any extended limit $\omega$ as $t\to\infty$ on $L^\infty(0,\infty)$ a linear extension of the weight
\begin{equation*}
\Tau_{\omega, \psi}(T):= \lim_{t\to\omega} \frac{1}{\Psi(t)}\int_0^t \mu_\Tau(s,T)\,\mathrm{d}s, \quad 0\le T\in \mathcal L_\psi(\Tau)
\end{equation*}
is said to be a Dixmier trace on $\mathcal L_\psi(\Tau)$.
\end{defn}

\begin{rmk}
Usually Dixmier traces are defined on Lorentz ideals corresponding to the function $\Psi$ (which are strictly larger than $\mathcal L_\psi(\Tau)$) by exactly the same formula as in Defninition~\ref{Dixmier traces} (see e.g.~\cite{Dixmier, BRB, LSZ}). Then, Dixmier traces on $\mathcal L_\psi(\Tau)$ are restrictions of those on Lorentz ideal to $\mathcal L_\psi(\Tau)$. Since we do not deal with Lorentz ideals here, it is convenient to define Dixmier traces directly on $\mathcal L_\psi(\Tau)$.

Also, it should be pointed out that on Lorentz ideals to define Dixmier traces one needs an additional assumption on $\omega$: either dilation invariance \cite{Dixmier, LSZ} or exponentiation invariance \cite{GaSu, SUZ3}. As it was shown in \cite[Theorem 17]{Sed_Suk} these requirements are redundant on $\mathcal L_\psi(\Tau)$.
\end{rmk}

The proof of the following theorem is the same as that of~\cite[Theorem 8.5.1]{LSZ} and thus omitted. Note however, that in~\cite{LSZ} the result was proved for Lorentz ideals and required dilation invariance of the extended limit $\omega$. For the case of $\mathcal L_\psi(\Tau)$ one can refer to~\cite[Lemma 15]{Sed_Suk} to remove this assumption.

\begin{thm}\label{zeta}
Let $f\in C^2[0,\infty)$ be a bounded function such that $f(0)=f'(0)=0$. Let $T\in \mathcal{L}_\psi(\Tau)$ be positive and let $B\in \mathcal N$. For every extended
limit $\omega$ as $t\to\infty$ on $L^\infty(0,\infty)$ we have
$$\lim_{t\to\omega}\left(\frac1{\Psi(t)} \int_1^t \Tau (f(sT)B) \frac{\mathrm{d}s}{s^2}\right) =
\int_0^\infty f(s) \frac{\mathrm{d}s}{s^2} \cdot \lim_{t\to\omega}\left(\frac1{\Psi(t)} \int_1^t \Tau (\e^{-(sTB)^{-1}}) \frac{\mathrm{d}s}{s^2}\right).$$
\end{thm}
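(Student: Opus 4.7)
The identity has the structure of a Tauberian/Karamata statement: the map
$$
\Phi_T(f) \;:=\; \lim_{t\to\omega}\frac{1}{\Psi(t)}\int_1^t \Tau(f(sT)B)\,\frac{\mathrm{d}s}{s^2}
$$
is linear in $f$ on the space of bounded $C^2$ functions with $f(0)=f'(0)=0$, and the claimed equality asserts that $\Phi_T$ is proportional to the functional $I(f):=\int_0^\infty f(s)s^{-2}\mathrm{d}s$, with the constant of proportionality being the value of $\Phi_T$ at the distinguished test function $f_0(x):=\e^{-1/x}$, $f_0(0):=0$ (which is precisely what the right-hand side equals, reading $\e^{-(sTB)^{-1}}$ as $f_0(sT)B$ inside the trace). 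The plan is to follow the template of \cite[Theorem 8.5.1]{LSZ}, replacing the Lorentz-ideal/dilation-invariance input used there with the weak-ideal refinement of \cite[Theorem 17]{Sed_Suk}, which removes any invariance hypothesis on $\omega$ over $\mathcal L_\psi(\Tau)$.

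First I would verify the identity on the one-parameter exponential family $f_a(x):=\e^{-a/x}$, $a>0$. The substitution $u=s/a$ in $\Phi_T(f_a)$ yields
$$
\Phi_T(f_a) \;=\; \lim_{t\to\omega}\frac{1}{a\Psi(t)}\int_{1/a}^{t/a}\Tau(\e^{-(uT)^{-1}}B)\,\frac{\mathrm{d}u}{u^2}.
$$
Because $\psi$ is regularly varying of index $-1$, Karamata theory forces $\Psi$ to be slowly varying, so $\Psi(t/a)/\Psi(t)\to 1$ and the fixed boundary contribution $\int_{1/a}^{1}$ is of order $o(\Psi(t))$. Hence $\Phi_T(f_a)=a^{-1}\Phi_T(f_1)$. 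A direct substitution $u=1/s$ gives $I(f_a)=1/a$, confirming $\Phi_T(f_a)=I(f_a)\Phi_T(f_0)$.

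The extension to a general admissible $f$ is an approximation/density step. The family $\{1-f_a:a>0\}$ separates points of $[0,\infty]$ and generates a unital subalgebra dense in $C([0,\infty])$ by Stone--Weierstrass, so $f$ can be approximated uniformly on $[\varepsilon,\infty)$ by finite combinations $\sum_j c_j f_{a_j}$; near the origin the vanishing conditions $f(0)=f'(0)=0$ and the bound $|f(x)|\le C\min(x^2,1)$ provide control via $T\in\mathcal L_\psi(\Tau)$ (which entails $T^2\in\mathcal L_{\psi^2}(\Tau)$). Linearity of $\Phi_T$ and of $I$ then propagate the identity from $\{f_a\}$ to $f$.

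The principal obstacle is this final control step: one needs uniform estimates on $s\mapsto\Tau(f(sT)B)$ on $[1,\infty)$ that are strong enough to survive the two limits (the extended limit $\omega$ and the approximation $f\approx\sum c_j f_{a_j}$). The natural tool is a Karamata-type integration argument in the spirit of Lemma~\ref{HTas}, applied to the singular-value function $\mu_\Tau(\cdot,T)\in O(\psi)$, combined with the ideal-level estimates from \cite[Lemma 15]{Sed_Suk}; these together justify dominated convergence and the interchange of $\Tau$, $\omega$, and the approximating sum.
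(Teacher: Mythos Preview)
Your proposal is essentially correct and matches the paper's approach: the paper omits the proof entirely, stating only that it follows \cite[Theorem 8.5.1]{LSZ} verbatim, with \cite[Lemma 15]{Sed_Suk} invoked to drop the dilation-invariance hypothesis on $\omega$ when working in $\mathcal{L}_\psi(\Tau)$ rather than a Lorentz ideal. Your sketch---verify on the family $f_a(x)=\e^{-a/x}$ via substitution, then extend by Stone--Weierstrass with quadratic control near zero---is a faithful outline of the LSZ argument, and you correctly identify the two external inputs.

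One small correction of emphasis: the \cite{Sed_Suk} input is needed already in your Step~1, not only in the final approximation step. After the substitution $u=s/a$ you must pass from $\frac{1}{\Psi(t)}\int_1^{t/a}$ to $\frac{1}{\Psi(t)}\int_1^{t}$ under the extended limit $\omega$. Slow variation of $\Psi$ alone gives $\Psi(t/a)/\Psi(t)\to 1$, but for a general (non-dilation-invariant) $\omega$ you also need $\frac{1}{\Psi(t)}\int_{t/a}^{t}\Tau(\e^{-(uT)^{-1}}B)\,u^{-2}\,\mathrm{d}u\to 0$. This follows because $T\in\mathcal{L}_\psi(\Tau)$ forces the integrand to be $O(\psi(u))$, whence the tail integral is $O(\Psi(t)-\Psi(t/a))=o(\Psi(t))$; this is precisely the mechanism from \cite[Lemma 15]{Sed_Suk} that replaces dilation invariance on the weak ideal. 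You allude to this in your opening paragraph but then locate the Sed\_Suk input only at the density step---it belongs in both places.
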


Below we will need the relation between generalised heat kernels and Dixmier traces on $\mathcal{L}_\psi(\Tau)$, which was proved in~\cite{GaSu} under the additional assumption that
\begin{equation}\label{exp1}
A_\Psi(\alpha):=\lim_{t\to\infty} \frac{\Psi(t^\alpha)}{\Psi(t)}\quad\mbox{exists for every $\alpha> 0$.}
\end{equation}
Recall the notation $\Psi(t)=\int_0^t\psi(s) ds$. The corresponding (natural) assumption on $\psi$ is  that
\begin{equation}
\label{exp2}
\alpha \cdot \lim_{t\to\infty} \frac{\psi(t^\alpha) t^{\alpha-1}}{\psi(t)} \quad\mbox{exists for every $\alpha> 0$.}
\end{equation}
The number appearing in Equation \eqref{exp2} equals $A_\Psi(\alpha)$. Note that condition~\eqref{exp2} implies the condition~\eqref{exp1} and regular variation of $\psi$ with index $-1$ \cite[Proposition 1.7]{GaSu}. 

Let $H: L^\infty(0,\infty) \to L^\infty(0,\infty)$ be the Cesaro mean defined as follows:
$$
(Hf)(t) := \frac1t \int_0^t f(s) \,\mathrm{d}s, \ t>0.
$$ 
Let $M_\Psi: L^\infty(0,\infty) \to L^\infty(0,\infty)$ be the Cesaro mean twisted by $\Psi$, that is
\begin{align*}
(M_\Psi f)(t):&= [(H (f\circ \Psi^{-1}))\circ \Psi](t)
=\frac1{\Psi(t)} \int_0^t f(s) \psi(s) \,\mathrm{d}s,  t>0.
\end{align*}

\begin{lemma}
\label{MP}
If $\psi$ satisfies condition~\eqref{exp2}, then 
$$M_\Psi \circ P_a - P_a \circ M_\Psi : L^\infty(0,\infty) \to C_0(0,\infty)$$
for every $a> 0.$
\end{lemma}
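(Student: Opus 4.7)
The plan is to reduce the claim to an $L^1$-estimate on the difference of two probability densities on $(0,t^a)$, and exploit the asymptotic compatibility of $\psi$ and $\Psi$ under the power map guaranteed by~\eqref{exp2}. First I would unwind the definitions: $(M_\Psi P_af)(t)=\Psi(t)^{-1}\int_0^tf(s^a)\psi(s)\,\mathrm{d}s$, and after the substitution $u=s^a$ this becomes $\Psi(t)^{-1}\int_0^{t^a}f(u)\phi(u)\,\mathrm{d}u$ where $\phi(u):=a^{-1}u^{1/a-1}\psi(u^{1/a})$. Meanwhile $(P_aM_\Psi f)(t)=\Psi(t^a)^{-1}\int_0^{t^a}f(u)\psi(u)\,\mathrm{d}u$. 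By direct computation one checks $\int_0^{t^a}\phi\,\mathrm{d}u=\Psi(t)$ and $\int_0^{t^a}\psi\,\mathrm{d}u=\Psi(t^a)$, so both kernels are normalized on $(0,t^a)$.

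Set $a_1:=A_\Psi(1/a)$. The assumption~\eqref{exp2} evaluated at $\alpha=1/a$ gives $\phi(u)/\psi(u)\to a_1$ as $u\to\infty$, hence $(\phi-a_1\psi)(u)=o(\psi(u))$ at infinity. On the other hand, the implication \eqref{exp2}$\Rightarrow$\eqref{exp1} recorded in the text (together with $A_\Psi(a)A_\Psi(1/a)=1$) yields $\Psi(t^a)/\Psi(t)\to 1/a_1$. With these two pieces in hand I would bound the difference pointwise by $\|f\|_\infty$ times the total variation
\begin{equation*}
\int_0^{t^a}\left|\frac{\phi(u)}{\Psi(t)}-\frac{\psi(u)}{\Psi(t^a)}\right|\mathrm{d}u
\le \frac{1}{\Psi(t)}\int_0^{t^a}|\phi-a_1\psi|\,\mathrm{d}u
+\left|\frac{a_1\Psi(t^a)}{\Psi(t)}-1\right|.
\end{equation*}
The second summand tends to $0$ from the ratio limit above. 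For the first, given $\epsilon>0$ choose $u_0$ so that $|\phi-a_1\psi|<\epsilon\psi$ on $(u_0,\infty)$; the integrability of $\phi$ and $\psi$ on $(0,u_0)$ (noting $\int_0^{u_0}\phi\,\mathrm{d}u=\Psi(u_0^{1/a})<\infty$) bounds the contribution there by a constant, and the contribution from $(u_0,t^a)$ is at most $\epsilon\,\Psi(t^a)/\Psi(t)\to\epsilon/a_1$. Since $\Psi(t)\to\infty$ and $\epsilon$ is arbitrary, the first summand vanishes in the limit.

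This shows $[(M_\Psi P_a-P_aM_\Psi)f](t)\to 0$ as $t\to\infty$ for every $f\in L^\infty(0,\infty)$ (in fact uniformly in $\|f\|_\infty\le 1$). Continuity on $(0,\infty)$ is automatic because $t\mapsto\int_0^tf\psi\,\mathrm{d}u$ is absolutely continuous and $\Psi$ is continuous and strictly positive on $(0,\infty)$, so both $M_\Psi P_af$ and $P_aM_\Psi f$ are continuous. Combining these two facts places the difference in $C_0(0,\infty)$.

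The main obstacle is the book-keeping at the origin: the reparametrized density $\phi$ can differ sharply from $\psi$ on a bounded initial interval, and one must be sure that the integral $\int_0^{u_0}|\phi-a_1\psi|$ is finite so that the divisor $\Psi(t)\to\infty$ can absorb it. This is where the substitution identity $\int_0^{u_0}\phi\,\mathrm{d}u=\Psi(u_0^{1/a})$ is essential, together with the (tacit) divergence of $\Psi$ needed to make the framework of Dixmier traces meaningful.
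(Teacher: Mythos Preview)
Your proof is correct and follows essentially the same approach as the paper's. Both proofs perform the substitution $u=s^a$ in $M_\Psi P_af$, then use condition~\eqref{exp2} at $\alpha=1/a$ to replace $a^{-1}u^{1/a-1}\psi(u^{1/a})$ by $A_\Psi(1/a)\psi(u)$, and condition~\eqref{exp1} to handle the normalizing factor $\Psi(t)^{-1}$ versus $\Psi(t^a)^{-1}$, with the product identity $A_\Psi(a)A_\Psi(1/a)=1$ closing the gap.

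The difference is one of presentation rather than substance. The paper records these replacements informally (writing ``$\in\ldots+C_0$''), whereas you bound the difference by a total-variation integral and carry out the $\epsilon$-splitting that the paper leaves implicit: separating the integral at $u_0$ so that the tail contributes at most $\epsilon\,\Psi(t^a)/\Psi(t)$ and the bounded initial piece is killed by $\Psi(t)\to\infty$. This is exactly what is needed to make the paper's sketch rigorous. You also explicitly verify continuity of $M_\Psi P_af$ and $P_aM_\Psi f$, which is required for membership in $C_0(0,\infty)$ as defined in the paper but is omitted from the paper's argument; this is routine but worth noting. Your observation that finiteness of $\int_0^{u_0}\phi\,\mathrm{d}u$ follows from the substitution identity $\int_0^{u_0}\phi=\Psi(u_0^{1/a})$ is a clean way to handle the only potentially delicate point.
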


\begin{proof}
For $f\in L^\infty(0,\infty)$ we have
$$(M_\Psi \circ P_a f)(t)= \frac1{\Psi(t)} \int_0^t f(s^a) \psi(s) \,\mathrm{d}s= \frac1{\Psi(t)} \int_0^{t^a} f(s) \psi(s^{1/a}) \frac{s^{1/a-1}}a \,\mathrm{d}s.$$
Using conditions~\eqref{exp1} and~\eqref{exp2} we obtain
$$(M_\Psi \circ P_a f)(t)\in A_\Psi(a)A_\Psi(\frac1a) \frac1{\Psi(t^a)} \int_0^{t^a} f(s) \psi(s) \,\mathrm{d}s + C_0.$$

Direct calculations show that $A_\Psi(a)A_\Psi(\frac1a)=1$.
Thus,
\[
(M_\Psi \circ P_a  - P_a \circ M_\Psi )f \in C_0.\qedhere
\]
\end{proof}

\begin{lemma}
\label{ologdlemma}
If $\psi:[0,\infty)\to (0,\infty)$ is a decreasing function with regular variation of index $-1$. Then for any $d>0$, as $t\to \infty$, 
$$\psi(t)=o((\log(t))^{-d}).$$
\end{lemma}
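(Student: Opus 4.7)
The plan is to reduce the statement to the standard fact that slowly varying functions grow slower than any positive power of $t$, i.e., to Potter's bound (see \cite[Theorem 1.5.6]{RegVar}). First I would write $\psi(t)=t^{-1}L(t)$ where $L(t):=t\psi(t)$; by hypothesis $\psi$ has regular variation of index $-1$, which forces $L$ to be slowly varying, i.e., to have regular variation of index $0$. This is precisely the Karamata-type decomposition that every regularly varying function of index $\rho$ admits.

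Next I would invoke Potter's bound for the slowly varying factor $L$: for every $\delta>0$ there is a constant $C_\delta>0$ such that $L(t)\leq C_\delta t^{\delta}$ for $t$ large enough. Plugging this in, one obtains $\psi(t)\leq C_\delta\, t^{-1+\delta}$ for $t$ large. Choosing any fixed $\delta\in(0,1)$, say $\delta=1/2$, gives $\psi(t)=O(t^{-1/2})$ as $t\to\infty$.

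Finally, for any $d>0$, the elementary comparison of polynomial and logarithmic decay yields $t^{-1/2}(\log t)^d\to 0$ as $t\to\infty$, since $\log t=o(t^{1/(2d)})$. Combining, $\psi(t)(\log t)^d\to 0$, which is the claim $\psi(t)=o((\log t)^{-d})$.

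The only substantive ingredient is Potter's bound; everything else is bookkeeping. There is no real obstacle here, and the whole argument fits in a few lines once the reference to \cite{RegVar} is in place. One could equivalently appeal directly to the Karamata representation $L(t)=c(t)\exp\!\left(\int_a^t \eta(s)s^{-1}\mathrm{d}s\right)$ with $\eta(s)\to 0$, which gives the same conclusion, but the Potter bound version is the cleanest way to write it.
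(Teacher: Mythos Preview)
Your proof is correct and follows the same overall strategy as the paper: first establish a polynomial decay bound $\psi(t)=O(t^{-1/2})$, then observe that this beats any power of $\log t$. The only difference is in the tool used to get the polynomial bound. You factor $\psi(t)=t^{-1}L(t)$ with $L$ slowly varying and invoke Potter's bound (or the representation theorem) from \cite{RegVar}, which works for any regularly varying $\psi$ and does not use the monotonicity hypothesis at all. The paper instead uses the condition $\psi(2t)/\psi(t)\to 1/2$ together with the assumption that $\psi$ is decreasing to run a dyadic argument (citing \cite[Proposition~2.21]{GU}) yielding $\psi(t)\leq C_\epsilon(2-\epsilon)^{-k}$ on $(2^k,2^{k+1}]$. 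Your route is slightly more general (it drops the monotonicity hypothesis) and arguably cleaner once one is willing to quote the standard regular-variation machinery; the paper's route is more self-contained modulo the cited proposition.
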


\begin{proof}
Since $\psi$ is decreasing and $\lim_{t\to\infty} \frac{\psi(2t)}{\psi(t)}=2^{-1}$, we can for any $\epsilon\in (0,1)$ find a constant $C_\epsilon>0$ such that for $t\in (2^k,2^{k+1}]$,
$$\psi(t)\leq C_\epsilon (2-\epsilon)^{-k}.$$
For details, see \cite[Proposition 2.21]{GU}. Therefore, $\psi(t)=o(t^{-1/2})$ and the lemma follows.
\end{proof}

For the next result we need to assume the  (stronger) condition,
that the function $\psi$ satisfies
\begin{equation}
\label{invas}
\lim_{t\to\infty} \frac{t^2 \psi(t)}{\psi^{-1}(1/t)}=c,
\end{equation}
for some constant $c>0$.

\begin{rmk}
For every $k\in\Z$, the functions $\psi(t)= \frac{\log^k t}{t}$ and $\psi(t)= \frac{\log^k (\log t)}{t \cdot \log t}$ satisfy condition~\eqref{invas}. The functions $\psi=\Psi'$ with $\Psi(t)= \e^{\log^\beta t}$ do not satisfy~\eqref{invas} for any $\beta>0$.
\end{rmk}

In the following result we use a singular values function of an unbounded operator affiliated with $\cN$. 
For such operators the formula~\eqref{mu} cannot be used. The singular values function of an operator $T$ affiliated with $\cN$ is defined \cite{FK} as follows:
$$
\mu_\Tau(t,T):=\inf\{s\ge0 : n_\Tau(s,T)\le t\}.
$$

\begin{thm}
\label{Frohlich}
Let $d>0$ and $\psi:[0,\infty)\to (0,\infty)$ be a decreasing function satisfying conditions~\eqref{exp2} and~\eqref{invas}. Assume that $(\A, \H,\D,\cN,\Tau)$ is a unital semifinite spectral triple such that 
\begin{enumerate}
\item $\Tau$ is infinite;
\item $\D$ is positive;
\item $\mu_\Tau(t,\D)\sim \psi(t)^{-1/d}$.
\end{enumerate}
Then $(\A, \H,\D,\cN,\Tau)$ is $\mathrm{Li}_{(0),1}$-summable with positive $\Tau$-essential spectrum. Moreover, for every $a\in A_\D$ and every exponentiation invariant extended limit $\omega$ as $t\to\infty$ we have
$$\phi_{\tilde{\omega}}(a)  = \Tau_{\omega, \psi}(a (1+\D^2)^{-d/2}),$$
where $\tilde{\omega}:=\omega\circ (JM_\Psi J)$ and $J:L^\infty(0,\infty)\to L^\infty(0,\infty)$ is defined as the pullback along $t\mapsto t^{-1}$.
\end{thm}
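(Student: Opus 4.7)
First I would verify the summability and essential-spectrum hypotheses so that Theorem~\ref{mainthmconstr} applies. Condition~\eqref{exp2} forces $\psi$ to be regularly varying of index $-1$, hence $\mu_\Tau(t,(1+\D^2)^{-1/2})\sim\psi(t)^{1/d}$; Lemma~\ref{ologdlemma} strengthens this to $\psi(t)^{1/d}=o((\log t)^{-1})$, so $(1+\D^2)^{-1/2}\in\mathrm{Li}_{(0),1}(\Tau)$ and $\beta_\D=0$. Since $\D\geq 0$ has $\Tau$-compact resolvent, $P_\D$ equals the identity modulo a $\Tau$-finite projection on $\ker\D$; as $\Tau$ is infinite we obtain $\Tau(P_\D)=\infty$. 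By Remark~\ref{rmarkonbetazero}, positive $\Tau$-essential spectrum and $\beta$-analyticity are then automatic, so $\phi_{\tilde\omega}$ is defined by Theorem~\ref{mainthmconstr} and, by positivity of $\D$, equals
\[
\phi_{\tilde\omega}(a)=\lim_{t\to\tilde\omega}\frac{\Tau(a\e^{-t\D})}{\Tau(\e^{-t\D})}.
\]

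Next I would extract the denominator asymptotics. Applying Lemma~\ref{HTas} to $T=(1+\D^2)^{-d/2}$ with $q=1/d$, and noting that $(1+\D^2)^{1/2}-\D$ is bounded, yields $\Tau(\e^{-s\D})\sim\Gamma(1+d)\,\psi^{-1}(s^{d})$ as $s\to 0^{+}$. The numerator $\Tau(a\e^{-t\D})$ cannot be treated by Lemma~\ref{HTas} directly because $a$ need not commute with $\D$; for this I would invoke Theorem~\ref{zeta} with $B=a$, the same $T$, and a test function $f\in C^{2}[0,\infty)$ with $f(0)=f'(0)=0$, e.g.\ $f(u)=u\e^{-1/u}$. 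Theorem~\ref{zeta} exchanges a Cesaro average of $\Tau(f(sT)a)$ with the corresponding heat-kernel Cesaro average, both weighted by $\Psi(t)^{-1}\,\mathrm{d}s/s^{2}$. A Tauberian argument based on the regular variation $\mu_\Tau(s,aT)\sim\psi(s)\cdot(\text{limit})$ identifies the first Cesaro average with $\bigl(\int_{0}^{\infty}f(u)u^{-2}\,\mathrm{d}u\bigr)\,\Tau_{\omega,\psi}(aT)$, while the change of variables $s\mapsto u=s^{-1}$ (the operator $J$), followed by the Cesaro weight $M_\Psi$, converts the second into $\bigl(\int_{0}^{\infty}f(u)u^{-2}\,\mathrm{d}u\bigr)\,\phi_{\tilde\omega}(a)$ precisely by the definition $\tilde\omega=\omega\circ JM_\Psi J$. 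Cancelling the positive constant yields the claimed identity.

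The hard part is coordinating the two Cesaro averages of Theorem~\ref{zeta} with the composition $\omega\circ JM_\Psi J$ defining $\tilde\omega$. The substitution $t=s^{-d}$ relating the Cesaro parameter $s$ to the heat-kernel parameter $t$ produces a Jacobian that must be compensated by the factor $\psi^{-1}(s^{d})$ coming from Lemma~\ref{HTas}; condition~\eqref{invas} is precisely this invariance, while condition~\eqref{exp2} combined with Lemma~\ref{MP} ensures that $M_\Psi$ commutes with the exponentiation $P_{d}$ modulo $C_{0}$, so that exponentiation invariance of $\omega$ annihilates the remaining rescaling. Without both conditions the two Cesaro averages would differ by a nontrivial multiplicative constant, and arranging these invariances to produce exactly the prescribed $\tilde\omega$ is the central technical step of the proof.
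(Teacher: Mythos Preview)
Your preliminary steps (summability via Lemma~\ref{ologdlemma}, positive $\Tau$-essential spectrum from $\Tau(P_\D)=\infty$, and the denominator asymptotics via Lemma~\ref{HTas}) agree with the paper. The core argument, however, has two genuine gaps.

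First, your test function $f(u)=u\e^{-1/u}$ applied to $T=(1+\D^2)^{-d/2}$ does not produce heat kernels of $\D$: one obtains $f(sT)=s(1+\D^2)^{-d/2}\e^{-(1+\D^2)^{d/2}/s}$, which is not of the form $\e^{-t\D}$ appearing in $\phi_{\tilde\omega}$. The paper instead takes $T=\D^{-d}$ and $f(x)=\e^{-x^{-1/d}}$, so that $f(sT)=\e^{-\D s^{-1/d}}$ is a genuine heat kernel; the exponentiation operator $P_d$, commuted past $M_\Psi$ via Lemma~\ref{MP} and absorbed by exponentiation invariance of $\omega$, then removes the power $1/d$.

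Second, the step you label a ``Tauberian argument based on $\mu_\Tau(s,aT)\sim\psi(s)\cdot(\text{limit})$'' is unjustified and generally false: for a non-scalar $a$ there is no reason for pointwise singular-value asymptotics of $aT$ to exist---if they did, $\Tau_{\omega,\psi}(aT)$ would be independent of $\omega$, which it is not. The identification of the heat-kernel Ces\`aro average with the Dixmier trace is exactly the nontrivial content of \cite[Theorem~4.7]{GaSu}, which the paper invokes explicitly after applying Theorem~\ref{zeta} a second time (with $f(x)=\e^{-x^{-1}}$). Your single application of Theorem~\ref{zeta} leaves you with a Ces\`aro average on each side, neither of which is yet a Dixmier trace or a KMS state; the paper's double application plus the external reference \cite{GaSu} is what closes this gap.
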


\begin{proof}
Condition \eqref{exp2} on $\psi$ implies that $\psi$ has regular variation of index $-1$ so $\psi(t)=o((\log(t))^{-d})$ by Lemma \ref{ologdlemma}. Therefore, $\mu_\Tau(t,\D)=o(\log(t))$ and the semifinite spectral triple $(\A, \H,\D,\cN,\Tau)$ is $\mathrm{Li}_{(0),1}$-summable. Since $\Tau$ is infinite, $\Tau(\e^{-t\D})\nearrow \infty$ as $t\searrow 0$. The operator $\D$ therefore has positive $\Tau$-essential spectrum. Thus, 
$$\phi_{\tilde{\omega}}(a)= \lim_{t\to\tilde{\omega}}\frac{\Tau (a \e^{-t\D})}{\Tau (\e^{-t\D})}$$ 
is a trace on $\A$ by Theorem~\ref{thm:voics}.
The assumption on $\D$ implies that 
$\mu_\Tau(t, (1+\D^2)^{-d/2}) \sim \psi(t)$, $t\to\infty$. Applying Lemma~\ref{HTas} with $T=(1+\D^2)^{-d/2}$ and $q=1/d$ yields
$$
\Tau (\e^{-\D/t}) \sim \Gamma(1+d) \psi^{-1}(t^{-d}), \ t\to \infty.
$$
Using the properties of extended limits, the definitions of $P_a$ and $J$ and Lemma~\ref{MP}, we obtain
\begin{align}
\phi_{\tilde{\omega}}(a) &= \frac1{\Gamma(1+d)}\lim_{t\to\omega}(J\circ M_\Psi)\left(\frac{\Tau (a \e^{-\D/t})}{\psi^{-1}(t^{-d})} \right)\nonumber\\
&= \frac1{\Gamma(1+d)}\lim_{t\to\omega}(J\circ M_\Psi \circ P_{d})\left(\frac{\Tau (a \e^{-(t \D^{-d})^{-1/d}})}{\psi^{-1}(1/t)} \right)\nonumber\\
&= \frac1{\Gamma(1+d)} \lim_{t\to\omega}(J\circ P_{d} \circ M_\Psi )\left(\frac{\Tau (a \e^{-(t \D^{-d})^{-1/d}})}{\psi^{-1}(1/t)} \right).
\label{eq10}
\end{align}

Using the definition of $M_\Psi$ and assumption~\eqref{invas} we obtain
\begin{align}
M_\Psi \left(\frac{\Tau (a \e^{-(t \D^{-d})^{-1/d}})}{\psi^{-1}(1/t)} \right)&=
\frac1{\Psi(t)} \int_0^t \frac{\Tau (a \e^{-(s \D^{-d})^{-1/d}})}{\psi^{-1}(1/s)} \psi(s) \,\mathrm{d}s\nonumber\\
&\in
\frac1{\Psi(t)} \int_0^t \Tau (a \e^{-(s \D^{-d})^{-1/d}}) \frac{\mathrm{d}s}{s^2} + C_0(0,\infty).
\label{eq11}
\end{align}

Since $\omega$ is exponentiation invariant extended limit, combining~\eqref{eq10} and~\eqref{eq11} we obtain
$$\phi_{\tilde{\omega}}(a) =\frac1{\Gamma(1+d)}  \cdot \lim_{1/t\to\omega} \left(\frac1{\Psi(t)} \int_1^t \Tau (a \e^{-(s \D^{-d})^{-1/d}}) \frac{\mathrm{d}s}{s^2} \right).$$

Now we apply Lemma~\ref{zeta} twice with $T=\D^{-d}$, $f(x)=\e^{-x^{-1/d}}$ and then with $f(x)=\e^{-x^{-1}}$. Since 
$$
\int_0^\infty \e^{-x^{-q}}\frac{\mathrm{d}x}{x}=\Gamma(1+\frac{1}{q}),
$$ 
we obtain
\begin{align*}
\phi_{\tilde{\omega}}(a)&= \lim_{1/t\to\omega} J\left(\frac1{\Psi(t)} \int_1^t \Tau (a\e^{-(s\D^{-d})^{-1}}) \frac{\mathrm{d}s}{s^2}\right)=\Tau_{\omega, \psi}\left(a (1+\D^2)^{-d/2}\right),
\end{align*}
where the last equality follows from~\cite[Theorem 4.7]{GaSu}.
\end{proof}

We can now provide sufficient conditions on a general $\mathrm{Li}_{(0),1}$-summable unital semifinite spectral triples ensuring a relation between the trace $\phi_\omega$ of Theorem \ref{thm:voics} and Dixmier traces.

\begin{corl}
\label{dixmiercorforphiom}
Let $(\A,\H,\D,\cN,\Tau)$ be a unital semifinite spectral triple with $\Tau(P_\D)=\infty$. Assume that there is a number $d>0$ and a decreasing function $\psi:[0,\infty)\to (0,\infty)$ with regular variation of index $-1$ satisfying conditions~\eqref{exp2} and~\eqref{invas} and 
$$\mu(t,P_\D\D)\sim \psi(t)^{-1/d}.$$
Then, $\beta_\D=0$ and for any exponentiation invariant extended limit $\omega$ as $t\to\infty$ and $a\in A_\D$,
$$\phi_{\tilde{\omega}}(a)  = \Tau_{\omega, \psi}(P_\D a (1+\D^2)^{-d/2}),$$
where $\tilde{\omega}$ is as in Theorem \ref{Frohlich}.
\end{corl}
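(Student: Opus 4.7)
The plan is to deduce this from Theorem \ref{Frohlich} by passing to the positive part of the spectral triple. Concretely, I would set $\cN^+ := P_\D \cN P_\D$ with restricted trace $\Tau^+ := \Tau|_{\cN^+}$ (which is infinite by $\Tau(P_\D)=\infty$), and consider the positive self-adjoint operator $\D^+ := \D P_\D$ on $P_\D \H$, affiliated with $\cN^+$. The generalised singular values satisfy $\mu_{\Tau^+}(t, \D^+) = \mu_\Tau(t, P_\D \D) \sim \psi(t)^{-1/d}$, so the triple $(\D^+, \cN^+, \Tau^+)$ exactly fits the analytic hypotheses of Theorem \ref{Frohlich}.

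To invoke that theorem I would first confirm $\beta_\D=0$ and positivity of the $\Tau$-essential spectrum. Condition~\eqref{exp2} forces $\psi$ to be regularly varying of index $-1$, whence Lemma \ref{ologdlemma} gives $\psi(t)=o((\log t)^{-d})$; therefore $\mu(t,(1+(\D^+)^2)^{-1/2}) \sim \psi(t)^{1/d} = o((\log t)^{-1})$ and in particular $(1+(\D^+)^2)^{-1/2} \in \mathrm{Li}_{(0),1}(\Tau^+)$. Combined with $\Tau^+(1)=\infty$ and the identity $\Tau(P_\D\e^{-t\D}) = \Tau^+(\e^{-t\D^+})$, this yields $\beta_\D=0$ together with positive $\Tau$-essential spectrum. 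For $a\in A_\D$ lifted to $P_\D a P_\D\in T_A$, the definition of $\phi_{\tilde\omega}$ then unwinds to
$$\phi_{\tilde\omega}(a) = \lim_{t\to\tilde\omega}\frac{\Tau(P_\D a\e^{-t\D})}{\Tau(P_\D\e^{-t\D})} = \lim_{t\to\tilde\omega}\frac{\Tau^+(P_\D a P_\D\,\e^{-t\D^+})}{\Tau^+(\e^{-t\D^+})},$$
which is exactly the tracial state that Theorem \ref{Frohlich} evaluates at the $\cN^+$-element $P_\D a P_\D$. Applying that theorem in the reduced setting produces $\phi_{\tilde\omega}(a) = \Tau^+_{\omega,\psi}(P_\D a P_\D\,(1+(\D^+)^2)^{-d/2})$. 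Since $\Tau^+_{\omega,\psi}$ is the restriction of $\Tau_{\omega,\psi}$ to $\cN^+$ and $(1+(\D^+)^2)^{-d/2}$ coincides with $P_\D(1+\D^2)^{-d/2}$, the identity $(1-P_\D)(1+\D^2)^{-d/2}P_\D=0$ together with cyclicity of the Dixmier trace lets me replace $P_\D a P_\D$ by $P_\D a$ and obtain the stated formula.

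The principal obstacle is formal: Theorem \ref{Frohlich} is phrased for a unital semifinite spectral triple, whereas the reduced tuple $(\cdot, P_\D\H, \D^+, \cN^+, \Tau^+)$ does not come equipped with any distinguished dense subalgebra whose commutators with $\D^+$ are bounded. However, a close inspection of the proof of Theorem \ref{Frohlich} reveals that no algebraic spectral-triple axioms are actually used; the argument rests entirely on Theorem \ref{thm:voics}, Lemma \ref{HTas}, Theorem \ref{zeta}, and Lemma \ref{MP}, each of which only requires the operator- and trace-theoretic conditions that I have already verified for the reduced data. So the passage to the positive part is justified without supplying any additional algebraic input.
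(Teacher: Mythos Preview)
Your proposal is correct and follows essentially the same route as the paper: compress to the positive part and apply Theorem~\ref{Frohlich} to $(P_\D\H,P_\D\D,\cN^+,\Tau)$. The only difference is in how the formal ``unital semifinite spectral triple'' hypothesis of Theorem~\ref{Frohlich} is discharged: you argue that the proof of that theorem never uses the bounded-commutator axiom, whereas the paper instead observes that the $*$-algebra $\mathfrak{T}$ generated by $P_\D\A P_\D$ genuinely \emph{is} a spectral triple with $P_\D\D$ (since $[P_\D\D,P_\D aP_\D]=P_\D[\D,a]P_\D$ is bounded for $a\in\A$). The paper's observation is slightly cleaner because it avoids having to reopen the proof of Theorem~\ref{Frohlich}, but your analysis is accurate and the outcome is the same.
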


The corollary follows immediately from Theorem \ref{Frohlich} applied to the unital semifinite spectral triple $(\mathfrak{T},P_\D\H,P_\D\D,\cN^+,\Tau)$ where $\mathfrak{T}$ is the $*$-algebra generated by $P_\D \A P_\D$.

\begin{example}
Let us revisit the spectral triple constructed in Proposition \ref{heatasumfdwithapsi} (see page \pageref{heatasumfdwithapsi}).
If $\psi:[0,\infty)\to (0,\infty)$ is a smoothly varying function with $\lim_{t\to 0}\psi(t)=0$ and $\psi(t)^{-1}=O(t^{1/n})$ as $t\to \infty$, and $\slashed{D}$ a Dirac operator on a Riemannian closed $n$-dimensional manifold $M$, a $\psi$-summable spectral triple $(C^\infty(M),L^2(M,S),\slashed{D}_\psi)$ was constructed in Proposition \ref{heatasumfdwithapsi}, where $\slashed{D}_\psi:=F_{\slashed{D}} \psi(|\slashed{D}|^n)^{-1}$. The Weyl law for $|\slashed{D}|$ and Theorem \ref{heatvszeta} applied to $B=P_{\slashed{D}}$ shows that the order of the spectral asymptotics of $|\slashed{D}|$ coincides with the order of the spectral asymptotics of $P_\slashed{D}\slashed{D}$ so $\mu(t,P_{\slashed{D}}\slashed{D}_\psi)\sim c\psi(t)^{-1}$ for some constant $c>0$. 

If $\psi$ has regular variation of index $-d$ for a $d>0$, Corollary \ref{dixmiercorforphiom} shows that the tracial state on $C(M)$ defined from the spectral triple $(C^\infty(M),L^2(M,S),\slashed{D}_\psi)$ takes the form
$$\phi_\omega(a)= c'\Tr_{\omega, \psi}(P_\slashed{D} a \psi(|\slashed{D}|^n)^{1/d}),$$
for some proportionality constant $c'$. Applying Connes' trace theorem as in \cite[Theorem 9.1]{GU}, it follows that 
$$\phi_\omega(a)=\displaystyle\stackinset{c}{}{c}{}{-\mkern4mu}{\displaystyle\int_M} a\mathrm{d}V,$$
where $\mathrm{d}V$ denotes the Riemannian volume measure on $M$ and $\displaystyle\stackinset{c}{}{c}{}{-\mkern4mu}{\displaystyle\int_M}$ the normalized integral. 

The computation above requires $\psi$ to have strictly negative index of regular variation. We note that by Proposition \ref{deathorgladiolis} (see page \pageref{deathorgladiolis}), the computation above can only extend to the spectral triple from Proposition \ref{psidiracprop} (see page \pageref{psidiracprop}) on a crossed product by a local diffeomorphism if the local diffeomorphism acts isometrically.
\end{example}

\section{The KMS-state $\phi_\omega$ in examples}
\label{kmsinexamplesec}

We are now in a state where we can compute the KMS-states associated with the spectral triples considered in Subsection \ref{subsec:examples} (see page \pageref{subsec:examples}). The computations for the KMS-states associated with the unbounded Kasparov cycle on Cuntz-Pimsner algebras considered in Subsection \ref{cpalgexam} (see page \pageref{cpalgexam}) are more involved and dedicated a separate section, Section \ref{diraccpkms} (see page \pageref{diraccpkms}).

\subsection{Dirac operators on closed manifolds}
\label{diracmfdkms}

For a closed manifold $M$ with a Dirac operator $\slashed{D}$ acting on a Clifford bundle $S\to M$, we consider the spectral triple $(C^\infty(M),L^2(M,S),\slashed{D})$ as in Proposition \ref{heatasumfd} (see page \pageref{heatasumfd}).  
The following theorem can be deduced immediately from Example \ref{diracmfdhea} (see page \pageref{diracmfdhea}) or from Corollary \ref{dixmiercorforphiom} and Connes' trace theorem for pseudo-differential operators.

\begin{thm}
\label{simpleacse}
Let $(C^\infty(M),L^2(M,S),\slashed{D})$ be the spectral triple associated with a Dirac operator on a closed manifold, $\omega$ an extended limit as $t\to0$ and $\phi_\omega$ the associated tracial state from Theorem \ref{thm:voics} (see page \pageref{thm:voics}). The trace $\phi_\omega$ is the normalized volume integral on $M$, i.e. for $a\in C(M)$, 
$$
\phi_\omega(a)=\displaystyle\stackinset{c}{}{c}{}{-\mkern4mu}{\displaystyle\int_M} a\,\mathrm{d} V.
$$
\end{thm}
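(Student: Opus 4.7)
The plan is to apply Theorem~\ref{thm:voics} directly, using the heat trace asymptotics already packaged in Example~\ref{diracmfdhea}, and observe that the resulting ratio has a genuine limit at $t\to 0$, so the value is independent of the extended limit~$\omega$.

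First I would verify the hypotheses of Theorem~\ref{thm:voics}. By Proposition~\ref{heatasumfd}, the spectral triple $(C^\infty(M),L^2(M,S),\slashed{D})$ is $(n,\infty)$-summable on a closed $n$-manifold, hence a fortiori $\mathrm{Li}_{(0),1}$-summable, so $\beta_{\slashed{D}}=0$. Since $P_{\slashed{D}}$ has infinite rank (eigenvalues of $\slashed{D}$ accumulate at $+\infty$ by the Weyl law), $\Tr(P_{\slashed{D}})=\infty$. Thus Theorem~\ref{thm:voics} applies and $\phi_\omega$ is a tracial state on $A=C(M)$; by norm continuity it suffices to compute $\phi_\omega$ on the dense subalgebra $C^\infty(M)$.

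Next I would invoke the heat trace asymptotics recorded in Example~\ref{diracmfdhea}: for every $a\in C^\infty(M)$ there exists $\epsilon>0$ such that
$$
\Tr\bigl(P_{\slashed{D}}\,a\,\e^{-t|\slashed{D}|}\bigr)=\tilde{c}_n\,t^{-n}\int_M a\,\mathrm{d}V+O(t^{-n+\epsilon}),\qquad t\to 0,
$$
where the constant $\tilde{c}_n>0$ depends only on $\dim M$ and $\mathrm{rank}(S)$. The crucial input, established in Example~\ref{diracmfdhea}, is that the principal symbol of $P_{\slashed{D}}$ takes the form $p_{\slashed{D}}(x,\xi)=\tfrac12(c_S(\xi)/|\xi|+1)$, and the Clifford piece $c_S(\xi)$ is odd under $\xi\mapsto -\xi$, so its fibrewise trace integrates to zero over $S^*_xM$. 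This is what makes the leading symbol coefficient factorize as an $x$-independent constant times $a(x)\,\mathrm{d}V(x)$. Specialising to $a=1$ gives $\Tr\bigl(P_{\slashed{D}}\,\e^{-t|\slashed{D}|}\bigr)=\tilde{c}_n\,t^{-n}\,\mathrm{vol}(M)+O(t^{-n+\epsilon})$.

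Forming the ratio defining $\phi_\omega$ and multiplying numerator and denominator by $t^n$,
$$
\frac{\Tr\bigl(P_{\slashed{D}}\,a\,\e^{-t|\slashed{D}|}\bigr)}{\Tr\bigl(P_{\slashed{D}}\,\e^{-t|\slashed{D}|}\bigr)}=\frac{\tilde{c}_n\int_M a\,\mathrm{d}V+O(t^{\epsilon})}{\tilde{c}_n\,\mathrm{vol}(M)+O(t^{\epsilon})}\xrightarrow[t\to 0]{}\frac{1}{\mathrm{vol}(M)}\int_M a\,\mathrm{d}V.
$$
Being a classical limit, its value is unchanged by applying any extended limit $\omega$ at $0$, so $\phi_\omega(a)=\stackinset{c}{}{c}{}{-\mkern4mu}{\int_M}a\,\mathrm{d}V$ on $C^\infty(M)$ and, by continuity, on all of $C(M)$. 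There is essentially no obstacle here; the only nontrivial step (the odd-parity trick identifying the leading symbol integral with the volume form) has already been carried out in Example~\ref{diracmfdhea}. An alternative route would appeal to Corollary~\ref{dixmiercorforphiom} with $d=n$ and $\psi(t)\sim c_0 t^{-1}$ (consistent with the Weyl asymptotic $\mu(t,P_{\slashed{D}}\slashed{D})\sim \psi(t)^{-1/n}$), reducing $\phi_{\tilde{\omega}}(a)$ to the Dixmier trace $\Tau_{\omega,\psi}\bigl(P_{\slashed{D}}\,a\,(1+\slashed{D}^2)^{-n/2}\bigr)$ and then invoking Connes' trace theorem together with the same symbol computation.
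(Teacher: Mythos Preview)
Your proposal is correct and follows essentially the same approach as the paper: the paper states that the theorem ``can be deduced immediately from Example~\ref{diracmfdhea} or from Corollary~\ref{dixmiercorforphiom} and Connes' trace theorem,'' and you have simply written out both of these routes explicitly, with Example~\ref{diracmfdhea} as the primary argument and the Dixmier-trace route as the alternative. Your verification of the hypotheses of Theorem~\ref{thm:voics} and the ratio computation are the natural unpacking of what the paper leaves implicit.
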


For completeness, let us describe the Toeplitz algebras $T_{C(M)}$, $T_{C(M),\slashed{D}}$ and the flow $\sigma$ on $C(M)_{\slashed{D}}$ in this example. We remark that since $\phi_\omega$ is a trace in this case, the flow induced from our construction is irrelevant for the study of $\phi_\omega$ but it could nevertheless serve to clarify the constructions of Subsection \ref{subsec:toplitz}. The relevant algebras are all contained in the $C^*$-algebra $\Psi^0_{C^*}(M,S)$ defined as the $C^*$-closure of the $*$-algebra $\Psi^0_{\rm cl}(M,S)$ of zeroth order classical pseudo-differential operators acting on $L^2(M,S)$. It is well known that $\Psi^0_{C^*}(M,S)$ fits into a short exact sequence 
$$0\to \Ko(L^2(M,S))\to \Psi^0_{C^*}(M,S)\xrightarrow{{\rm symb}}C(S^*M,\End(S))\to 0,$$
where ${\rm symb}$ denotes the continuous extension of the principal symbol mapping $\Psi^0_{\rm cl}(M,S)\to C^\infty(S^*M,\End(S))$. Since $P_{\slashed{D}}$ is a projection in $\Psi^0_{\rm cl}(M,S)$, we can consider the $C^*$-algebra $\Psi^0_{C^*,+}(M,S):=P_{\slashed{D}}\Psi^0_{C^*}(M,S)P_{\slashed{D}}$ and we obtain a short exact sequence 
$$0\to \Ko(P_{\slashed{D}}L^2(M,S))\to \Psi^0_{C^*,+}(M,S)\xrightarrow{{\rm symb}}C(S^*M,\End(p_{\slashed{D}}S))\to 0,$$
where $p_{\slashed{D}}\in C^\infty(S^*M,\End(S))$ denotes the principal symbol of $P_{\slashed{D}}$. 
The algebras $T_{C(M)}$ and $T_{C(M),\slashed{D}}$ are characterized by the following commuting diagram with exact rows
$$
\xymatrix{
0\ar[r]& \Ko(P_{\slashed{D}}L^2(M,S))\ar[r]\ar[d]^{=}&T_{C(M)}\ar[rr]^{{\rm symb}}\ar@{^{(}->}[d]&&C(M)\ar@{^{(}->}[d]\ar[r]&0\\
0\ar[r]& \Ko(P_{\slashed{D}}L^2(M,S))\ar[r]\ar[d]^{=}&T_{C(M),\slashed{D}}\ar[rr]^{{\rm symb}}\ar@{^{(}->}[d]&&C(M)_{\slashed{D}}\ar@{^{(}->}[d]\ar[r]&0\\
0\ar[r]&\Ko(P_{\slashed{D}}L^2(M,S))\ar[r]&\Psi^0_{C^*,+}(M,S)\ar[rr]^{{\rm symb}}&&C(S^*M,\End(p_{\slashed{D}}S))\ar[r]&0,
}
$$
The composition of the mappings in the right most column coincides with the pull back homomorphism $C(M)\to C(S^*M)$ composed with the inclusion $C(S^*M)\subseteq C(S^*M,\End(p_{\slashed{D}}S))$.

To describe the flow $\sigma$ on $C(M)_{\slashed{D}}$, 
we describe it on $C^\infty(S^*M,\End(p_{\slashed{D}}S))$. 
Surjectivity of the principal symbol mapping implies that any 
$a\in C^\infty(S^*M,\End(p_{\slashed{D}}S))$ is the symbol of an operator 
$A\in P_{\slashed{D}}\Psi^0_{\rm cl}(M,S)P_{\slashed{D}}$. 
By Egorov's theorem \cite{egorovref} (see also \cite[Section IV]{horacta} and \cite{duissing}), $\e^{is\slashed{D}}A\e^{-is\slashed{D}}$ 
is again an element of $P_{\slashed{D}}\Psi^0_{\rm cl}(M,S)P_{\slashed{D}}$ 
and the expression
$\sigma_s(a):={\rm symb}(\e^{is\slashed{D}}A\e^{-is\slashed{D}})$ 
gives a well defined flow on $C^\infty(S^*M,\End(p_{\slashed{D}}S))$. 
Again by Egorov's theorem, using that 
$\sigma_s(a)={\rm symb}(\e^{is|\slashed{D}|}A\e^{-is|\slashed{D}|})$ for 
$A\in P_{\slashed{D}}\Psi^0_{\rm cl}(M,S)P_{\slashed{D}}$, 
we have that $\sigma_s(a)=g_s^*(a)$ where $g_s:S^*M\to S^*M$ is the 
Hamiltonian flow associated with 
the symbol $|\xi|$ of $|\slashed{D}|$. On the cosphere bundle, this Hamiltonian flow 
coincides with the geodesic flow. 
We conclude that the flow $\sigma$ is induced by geodesic flow and 
$C(M)_{\slashed{D}}\subseteq C(S^*M)$ is a closed subalgebra invariant under geodesic flow. This discussion should be compared with that in \cite{CSG}.

As in Subsection \ref{diracmfdfirst}, we consider a local diffeomorphism $g:M\to M$. 
Assuming that $g$ acts conformally and lifts to $S\to M$, it is readily verified that $g$ is compatible 
with the decreasing function $\psi(t):=\frac{1}{\log(1+t^{2/n})}$. We use the notation $\slashed{D}_{\rm log}:=\slashed{D}_\psi$ for this particular choice of $\psi$. Note that 
$$
\slashed{D}_{\rm log}=F_\slashed{D}\log(1+\slashed{D}^2)
\quad\mbox{and}\quad \e^{-t|\slashed{D}_{\rm log}|}=(1+\slashed{D}^2)^{-t}.
$$ 
By Proposition \ref{psidiracprop}, we arrive 
at a spectral triple $(\A,L^2(M,S),\slashed{D}_{\rm log})$ where $\A$ 
is the $*$-algebra generated by $C^\infty(M)$ and an 
isometry $V_g$. Let us compute KMS-state 
constructed from $(\A,L^2(M,S),\slashed{D}_{\rm log})$. 
Before diverting into this computation, 
we recall that the $C^*$-closure of $\A$ coincides 
with the image of a representation of $O_{E_g}$. 
As such, we can write elements of $\A$ as linear span 
of elements of the form $S_\mu S_\nu^*$ where 
$\mu=a_1V_g\cdots a_k V_g$ and 
$\nu=b_1V_g\cdots b_l V_g$, where 
$a_1,\ldots, a_k,b_1,\ldots, b_l\in C^\infty(M)$.

\begin{prop}
Set $S:=C^\infty(M)\cup C^\infty(M)V_g\subseteq \A$. 
For any $\beta\in \R$, the set $S$ is an analytically generating set at $\beta$ for 
$(\A,L^2(M,S),\slashed{D}_{\rm log})$. 
\end{prop}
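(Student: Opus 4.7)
The plan is to verify the two conditions of Definition \ref{ass:zero} directly. The algebraic setup simplifies because $P_{\slashed{D}_{\rm log}}=P_{\slashed{D}}$ (which I denote $P_\D$), and $\slashed{D}_{\rm log}=\log(1+\slashed{D}^2)$ on $P_\D\H$, so that $\mathrm{e}^{\beta\slashed{D}_{\rm log}}P_\D=(1+\slashed{D}^2)^\beta P_\D$. Consequently the operator to control is
$$
\mathrm{e}^{\beta\slashed{D}_{\rm log}}P_\D s P_\D\mathrm{e}^{-\beta\slashed{D}_{\rm log}}=(1+\slashed{D}^2)^\beta P_\D s P_\D(1+\slashed{D}^2)^{-\beta}, \qquad s\in S.
$$

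For the generating condition, the $*$-algebra generated by $S$ equals all of $\A$: since $1\in C^\infty(M)$ we have $V_g\in S$, and $V_g^*$ lies in the $*$-algebra generated by $S$. By Lemma \ref{bddcommsum}, $[P_\D,a]\in\Ko^+_\cN$ for $a\in\A$, so any finite product of $P_\D$-compressions of elements of $S$ differs from the $P_\D$-compression of the corresponding product in $\A$ by an element of $\Ko^+_\cN$. Passing to norm closures, the $C^*$-algebra generated by $P_\D S P_\D+\Ko^+_\cN$ contains $P_\D A P_\D+\Ko^+_\cN=T_A$.

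For the analytic condition with $s=f\in C^\infty(M)$, a routine pseudo-differential argument applies: the commutator $[(1+\slashed{D}^2)^\beta,f]$ has order $2\beta-1$, so $(1+\slashed{D}^2)^\beta f(1+\slashed{D}^2)^{-\beta}$ is a classical zeroth order pseudo-differential operator on $M$, hence bounded, and its $P_\D$-compression remains in $\cN^+$. For $s=fV_g$, the strategy is to show that $V_g$ extends continuously to the Sobolev space $H^{2\beta}(M,S)=\mathrm{dom}((1+\slashed{D}^2)^\beta)$ for every $\beta\in\R$; this, combined with Sobolev continuity of multiplication by smooth functions, implies that $fV_g$ acts boundedly on $H^{2\beta}$, and therefore that $(1+\slashed{D}^2)^\beta fV_g(1+\slashed{D}^2)^{-\beta}$ is bounded on $L^2$.

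The main obstacle is therefore the Sobolev mapping property of $V_g$ in the full range of real orders. From the formula \eqref{vgdef}, $V_g$ is the composition of multiplication by the smooth positive factor $c_g^{n/4}N^{-1/2}$, the smooth fibrewise unitary $u_g$, and the pullback operator $\xi\mapsto \xi\circ g$. For non-negative Sobolev orders, pullback by a surjective smooth local diffeomorphism is bounded by a standard local argument (reducing, via a partition of unity subordinate to open sets on which $g$ is a diffeomorphism, to the well-known case of pullback by a diffeomorphism); for negative orders boundedness follows by duality from the analogous statement for the adjoint $V_g^*$, which is a smooth combination of transfer operators built from $\mathfrak{L}_g$ and is handled in the same way. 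With this at hand, both defining properties of an analytically generating set hold for every $\beta\in\R$.
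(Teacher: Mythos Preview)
Your proof is correct and follows essentially the same route as the paper's: both reduce the analytic condition to the identity $\e^{\beta\slashed{D}_{\rm log}}P_{\slashed{D}}=(1+\slashed{D}^2)^\beta P_{\slashed{D}}$ and then to the Sobolev boundedness of elements of $S$ on $H^{2\beta}(M,S)$. You simply supply more detail than the paper does---spelling out why $P_\D S P_\D+\Ko^+_\cN$ generates $T_A$ via the compactness of commutators, and sketching the local-diffeomorphism/partition-of-unity/duality argument for the Sobolev mapping property of $V_g$---but the core argument is the same.
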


\begin{proof}
For notational convenience, write $\D=\slashed{D}_{\rm log}$. 
The set $S$ generates $\A$, so $P_\D SP_\D+\Ko$ 
generates $T_A$. For any $\beta\in \R$, and $a\in S$
$$
\e^{-\beta\D}P_\D aP_\D\e^{\beta\D}
=(1+\slashed{D}^2)^{-\beta}P_\slashed{D} 
aP_\slashed{D}(1+\slashed{D}^2)^{\beta}.
$$
The proposition follows from that 
$\Dom((1+\slashed{D}^2)^\beta)=H^{2\beta}(M,S)$ 
as Banach spaces and any $a\in S$ extends to a 
continuous operator on the Sobolev spaces 
$H^{2\beta}(M,S)$ for all $\beta$.
\end{proof}

\begin{thm}
\label{kmsforconfdiff}
Let $M$ be a connected $n$-dimensional Riemannian manifold, 
$\slashed{D}$ a Dirac operator on $S\to M$, and 
$g:M\to M$ a local diffeomorphism acting conformally 
and lifting to $S$. Then the spectral triple 
$(\A,L^2(M,S),\slashed{D}_{\rm log})$ is 
$\mathrm{Li}_1$-summable, has positive 
essential spectrum with $\beta_\D=n/2$ and 
is $n/2$-analytic. 

Assume for all $m\in \N_+$, that the set of fixed points
$$
\{x\in M: g^m(x)=x\},
$$
has measure zero with respect to the volume measure. Then the state $\phi_\omega$ 
on $A$ constructed from Corollary \ref{cor:phi-omega} 
(see page \pageref{cor:phi-omega}) is independent of 
$\omega$ and takes the form 
\begin{equation}
\label{formforphiomforbg}
\phi_\omega(S_\mu S_\nu^*)
=\delta_{|\mu|,|\nu|}\displaystyle\stackinset{c}{}{c}{}{-\mkern4mu}{\displaystyle\int_M}\mathfrak{L}_g(c_g^{n/2}b_k^*\mathfrak{L}_g(c_g^{n/2}b_{k-1}^*\mathfrak{L}_g(\cdots \mathfrak{L}_g(c_g^{n/2}b_1^*a_1)a_2)\cdots a_{k-1})a_k) \,\mathrm{d}V,
\end{equation}
For $\mu=a_1V_g\cdots a_k V_g$ and 
$\nu=b_1V_g\cdots b_l V_g$. Here 
$\mathrm{d}V$ denotes the Riemannian volume form. 

The state $\phi_\omega$ 
viewed as a state on $O_{E_g}$ via its representation 
on $L^2(M,S)$ is KMS for the action defined by 
$\gamma_t(aV_g):=c_g^{itn/2}aV_g$  with inverse temperature $1$.
\end{thm}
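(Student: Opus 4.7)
The proof has four movements.

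First, the summability and analyticity assertions require little work. The function $\psi(t)=1/\log(1+t^{2/n})$ is smoothly regularly varying of index $0$, compatible with $g$, and satisfies $\psi(t)^{-1}=O(t^{1/n})$, so Proposition~\ref{psidiracprop} makes $(\A,L^2(M,S),\slashed{D}_{\rm log})$ $\psi$-summable. Since $\psi(t)=O((\log t)^{-1})$, it is also $\mathrm{Li}_1$-summable. The identity $\mathrm{e}^{-t|\slashed{D}_{\rm log}|}=(1+\slashed{D}^2)^{-t}$ together with the Weyl law (Proposition~\ref{heatasumfd}) shows that $\mathrm{Tr}((1+\slashed{D}^2)^{-t})$ has a simple pole at $t=n/2$; hence $\beta_{\D}=n/2$ and $\slashed{D}_{\rm log}$ has positive essential spectrum. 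The $n/2$-analyticity is the content of the proposition immediately preceding the theorem.

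For the computation of $\phi_\omega$, Theorem~\ref{heatvszeta} allows us to replace limits along $\omega$ by residues at $z=n/2$ of the meromorphic functions $\mathrm{Tr}(P_{\slashed{D}}T(1+\slashed{D}^2)^{-z})$, normalized by $\mathrm{Tr}(P_{\slashed{D}}(1+\slashed{D}^2)^{-z})$, which gives independence of $\omega$ automatically. The residues are computed using three identities on $L^2(M,S)$: the intertwining $V_g a=(a\circ g)V_g$, the transfer identity $V_g^{*}aV_g=\mathfrak{L}_g(a)$ (up to normalization), and the conformal intertwining
\begin{equation*}
V_g^{*}(1+\slashed{D}^2)^{-z}V_g\equiv c_g^{-z}(1+\slashed{D}^2)^{-z}\pmod{\mathcal{L}^1},
\end{equation*}
valid for $\mathrm{Re}(z)>n/2-\tfrac12$ and coming from the conformal covariance of $\slashed{D}^2$ (via a Duhamel expansion with Branson--Gover type heat-kernel estimates).

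For $|\mu|\neq|\nu|$, modulo trace-class the operator $S_\mu S_\nu^{*}$ is a zeroth-order Fourier integral operator with non-trivial $g$-induced canonical relation; the measure-zero fixed-point hypothesis ensures this relation meets the diagonal of $M\times M$ in a null set, so the Duistermaat--Guillemin/Connes residue vanishes and $\phi_\omega(S_\mu S_\nu^{*})=0$, producing the $\delta_{|\mu|,|\nu|}$ factor. For $|\mu|=|\nu|=k$, iterating the three identities $k$ times inside the residue peels off all $V_g$ and $V_g^{*}$, reducing the expression to $\mathrm{Res}_{z=n/2}\mathrm{Tr}(P_{\slashed{D}}f_{\mu,\nu}(1+\slashed{D}^2)^{-z})$ for an iterated-transfer multiplication operator $f_{\mu,\nu}\in C(M)$ built from $(a_j,b_j,c_g)$, with powers of $c_g$ appearing from each evaluation of the conformal intertwining at $z=n/2$. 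Theorem~\ref{simpleacse} then identifies the residue ratio with $\displaystyle\stackinset{c}{}{c}{}{-\mkern4mu}{\displaystyle\int_M}f_{\mu,\nu}\,\mathrm{d}V$, yielding formula~\eqref{formforphiomforbg}. The main technical obstacle is justifying the conformal intertwining modulo trace-class, since $V_g$ is only isometric and $g$ only a local diffeomorphism.

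Finally, the KMS property for $\gamma$ at inverse temperature $1$ is a direct verification from~\eqref{formforphiomforbg}: for monomials $T_1,T_2$, expanding both sides of $\phi_\omega(T_1T_2)=\phi_\omega(\gamma_{-i}(T_2)T_1)$ via the Cuntz-Pimsner relations and the identities above reduces both sides to integrals of iterated transfer expressions in the same underlying data, and the $c_g^{n/2}$ factors produced by $\gamma_{-i}$ on the $V_g$'s in $T_2$ match those created by the transfer identity $\int_M\mathfrak{L}_g(f)\,\mathrm{d}V=\int_M f\,c_g^{n/2}\,\mathrm{d}V$ used when commuting $T_2$ across $T_1$.
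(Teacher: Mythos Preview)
Your overall architecture is sensible, but the two load-bearing steps are not established, and the paper avoids them by a more direct route.

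\textbf{The conformal intertwining.} Your identity
\[
V_g^{*}(1+\slashed{D}^2)^{-z}V_g\equiv c_g^{-z}(1+\slashed{D}^2)^{-z}\pmod{\mathcal{L}^1}
\]
is the crux, but you do not prove it, and it is not a standard result. Since $g$ is only a local diffeomorphism, $V_g$ is a proper isometry and $V_g^*(\cdot)V_g$ is not conjugation by a Fourier integral operator associated with a canonical transformation; it involves the transfer operator and a sum over preimages. Even at the principal-symbol level one must check that the branches recombine correctly, and a Duhamel argument would have to control an infinite series of commutators with remainders in $\mathcal{L}^1$ uniformly near $z=n/2$. The reference to ``Branson--Gover type estimates'' is too vague to carry this. (Incidentally, the sign of the exponent should be $c_g^{z}$ at the symbol level, since $g^*g_M=c_g g_M$ gives $g^*\Delta\sim c_g^{-1}\Delta$; this is how $c_g^{n/2}$ appears at $z=n/2$.)

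\textbf{The off-diagonal vanishing.} Invoking Duistermaat--Guillemin for $|\mu|\neq|\nu|$ is not quite right: that machinery concerns wave traces and requires clean fixed-point sets, and the Guillemin--Wodzicki residue is only defined for pseudodifferential operators, not general FIOs. A measure-zero fixed set does not by itself force a residue to vanish without an integrability argument.

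\textbf{What the paper does instead.} The paper works directly with the Schwartz kernel $K_t$ of $P_{\slashed{D}}(1+\slashed{D}^2)^{-t}$, which for $t>n/2$ lies in $C(M\times M)\cap C^\infty(M\times M\setminus\Delta_M)$. Using the explicit formula $V_g^*=N^{1/2}\mathfrak{L}_{S,g}c_g^{-n/4}$, the trace $\mathrm{Tr}(P_{\slashed{D}}S_\mu S_\nu^*(1+\slashed{D}^2)^{-t})$ is written as an integral over $M$ of $K_t$ evaluated at pairs of points parametrised by finite combinatorial sets $M(x,k,l)$. As $t\searrow n/2$ the kernel localises to the diagonal, so only the subset $M^0(x,k,l)$ where the two arguments of $K_t$ coincide contributes to the pole; when $k\neq l$ this forces $g^{|k-l|}(x)=x$, which has measure zero by hypothesis, so the residue vanishes. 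When $k=l$, the single diagonal point of $M^0(x,k,k)$ gives a product $\prod_j a_j(g^{j-1}(x))b_j^*(g^{j-1}(x))$, and the integration-by-parts identity $\int_M a\,g^*(b)\,\mathrm{d}V=\int_M \mathfrak{L}_g(c_g^{n/2}a)\,b\,\mathrm{d}V$ (which is the adjoint relation for $V_g$) iterated $k-1$ times produces exactly~\eqref{formforphiomforbg}. This avoids any intertwining of $(1+\slashed{D}^2)^{-z}$ with $V_g$ entirely. The KMS claim is then verified by recognising the integrand as the inner product $\tau((\nu\,|\,e^{-L}\mu))_{E_g^{\otimes k}}$ with $L=\tfrac{n}{2}\log c_g$ the generator of $\gamma$.
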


We note that the state Theorem \ref{kmsforconfdiff} is a KMS-state on a Cuntz-Pimsner algebra, but not for its gauge action. If $c_g(x)<1$ for some $x\in M$, the generator of $\gamma$ is not positive on $E_g$ and the Laca-Neshveyev correspondence \cite{LN} does not apply in the context of $O_{E_g}$ with the action $\gamma$.

In the case $k=l=0$, the formula \eqref{formforphiomforbg} 
should be interpreted as 
$\phi_\omega(a)=\displaystyle\stackinset{c}{}{c}{}{-\mkern4mu}{\displaystyle\int_M} a\,\mathrm{d}V$ for $a\in C^\infty(M)$. 
This special case follows from Connes' trace theorem.

\begin{proof}
For $t>n/2$, standard techniques of pseudo-differential operators show that the integral kernel $K_t$ of the trace class operator $P_\slashed{D} (1+\slashed{D}^2)^{-t}$ belongs to $C(M\times M,\mathrm{END}(S))\cap C^\infty(M\times M\setminus \Delta_M,\mathrm{END}(S))$. Here $\Delta_M\subseteq M\times M$ denotes the diagonal and $\mathrm{END}(S)$ denotes the big endomorphism bundle defined by $\mathrm{END}(S)_{(x,y)}:=\mathrm{Hom}(S_x,S_y)$ for $(x,y)\in M\times M$. By \cite[Proposition 8.3]{DGMW}, $V_g^*$ takes the form
$$V_g^*=N^{1/2}\mathfrak{L}_{S,g}c_g^{-n/4},$$
where $\mathfrak{L}_{S,g}$ is the $L^2$-extension of the operator 
$$\mathfrak{L}_{S,g}:C(M,S)\to C(M,S), \quad \mathfrak{L}_{S,g}\xi(x):=\sum_{g(y)=x} u_{g}(y)^{-1}\xi(y).$$

Take $a_1,\ldots, a_k,b_1,\ldots, b_l\in C^\infty(M)$ and write $\mu=a_1V_g\cdots a_k V_g$ and $\nu=b_1V_g\cdots b_l V_g$. We introduce the notation $\tilde{a}_j:=a_jc_g^{n/4}$ and $\tilde{b}_j=b_j c_g^{-n/4}$. We can compute for $t>n/2$ that 
\small
\begin{align*}
\Tr&_{L^2(M,S)}(P_\slashed{D}S_\mu S_\nu^*\e^{-t|\slashed{D}_{\rm log}|})=\Tr_{L^2(M,S)}(a_1V_g\cdots a_k V_gV_g^*b_l^*V_g^*\cdots V_g^*b_1^*P_\slashed{D} (1+\slashed{D}^2)^{-t})\\
&{}\\
&=N^{-(k-l)/2}\Tr_{L^2(M,S)}(a_1c_g^{n/4}u_g g^*\cdots a_kc_g^{n/4}u_g g^*\mathfrak{L}_{S,g}c_g^{-n/4}b_l^*\cdots c_g^{-n/4}\mathfrak{L}_{S,g} b_1^*P_\slashed{D} (1+\slashed{D}^2)^{-t})\\
&{}\\
&=N^{-(k-l)/2}\int_{M}\sum_{(x_1,\ldots,x_{k+l})\in M(x,k,l)}\left(\prod_{j=1}^{k} \tilde{a}_j(x_j)\right)\left(\prod_{j=k+1}^{k+l} \tilde{b}_{j-k}(x_j)^*\right)K_t(x_{k+1},x_1) \mathrm{d}V(x),
\end{align*}
\normalsize
where $M(x,k,l)\subseteq M^{k+l}$ is defined as the $k+l$-tuples $(x_1,\ldots,x_{k+l})$ such that for  $j=1,\ldots, k$, $x_j=g^{j-1}(x)$, $x_k=g(x_{k+l})$ and for $j=k+1,\ldots k+l-1$, $g(x_j)=x_{j+1}$.
Note that $M(x,k,l)$ is finite for all $x$, because $g$ is a local homeomorphism, and that $x_1=x$ for $(x_1,\ldots, x_{k+l})\in M(x,k,l)$. 

Define $M^0(x,k,l)\subseteq M(x,k,l)$ as the $k+l$-tuples $(x_1,\ldots,x_{k+l})$ where $x_{k+1}=x_1$. The set $M^0(x,k,l)$ can be characterized as the $k+l$-tuples $(x_1,\ldots,x_{k+l})$ with $x=x_1=x_{k+1}$ and $x_k=g^l(x_{k+l})$ such that for $j=1,\ldots, k$, $x_j=g^{j-1}(x)$, and for $j=k+1,\ldots k+l-1$, $x_{j+1}=g(x_j)$. In particular, if $M^0(x,k,l)$ is non-empty then $g^k(x)=g^l(x)$.
In other words, $(x_1,\ldots,x_{k+l})\in M^0(x,k,l)$ if and only if $g^k(x)=g^l(x)$, and $x_j=g^{j-1}(x)$ for $j=1,\ldots, k$ and $x_{k+j}=g^{j-1}(x)$ for $j=1,\ldots, l$.
Therefore, $M^0(x,k,l)$ contains at most one element.
In particular, since $M^0(x,k,l)$ is non-empty then $g^k(x)=g^l(x)$ and the set of fixed points $\{x\in M: g^m(x)=x\}$ has measure zero for all $m\in \N_+$ by assumptions, then 
$$
M^0(x,k,l)=\emptyset\quad\mbox{ if $k\neq l$ a.e. in $x$.}
$$ 

As $t$ approaches $n/2$, the integral kernel $K_t$ localizes (up to lower order term) to the diagonal and the leading order terms come from the sum over $M^0(x,k,l)$. The Weyl law for $\slashed{D}^2$ and an explicit pseudo-differential computation of the principal symbol of $P_\slashed{D}(1+\slashed{D}^2)^{-t}$ implies that there is a constant $c$ and an $\epsilon>0$ only depending on $\slashed{D}$ such that 
\small
\begin{align*}
\Tr_{L^2(M,S)}&(P_\slashed{D}S_\mu S_\nu^*\e^{-t|\slashed{D}_{\rm log}|})=\\
=&c\delta_{k,l}(t-n/2)^{-1}\int_{M}\sum_{(x_1,\ldots,x_{k+l})\in M^0(x,k,l)}\left(\prod_{j=1}^{k} \tilde{a}_j(x_j)\right)\left(\prod_{j=k+1}^{k+l} \tilde{b}_{j-k}(x_j)^*\right) \mathrm{d}V(x)+f_{\mu,\nu}(t),
\end{align*}
\normalsize
where $f_{\mu,\nu}$ is holomorphic on a neighborhood of the intervall $[n/2-\epsilon,n/2]$. 

Recall the notation $\tilde{a}_j:=a_jc_g^{n/4}$ and $\tilde{b}_j:=b_jc_g^{-n/4}$. 
For $k=l$ and $(x_1,\ldots,x_{k+l})\in M^0(x,k,k)$ we write 
\begin{align*}
\Big(\prod_{j=1}^{k} &\tilde{a}_j(x_j)\Big)\left(\prod_{j=k+1}^{2k} \tilde{b}_{j-k}(x_j)^*\right)=\prod_{j=1}^{k} \tilde{a}_j(g^{j-1}(x))\tilde{b}_{j}^*(g^{j-1}(x))=\\
&=\prod_{j=1}^{k} a_j(g^{j-1}(x))b_{j}^*(g^{j-1}(x))=\left([a_1b_1^*][g^*(a_2b_2^*)][(g^2)^*(a_2b_2^*)]\cdots [(g^k)^*(a_kb_k^*)]\right)(x).
\end{align*}
By the same argument that $V_g^*=N^{-1/2}\mathfrak{L}_{S,g}c_g^{-n/4}$, we can partially integrate $\int_M ag^*(b)\mathrm{d}V=\int_M \mathfrak{L}_g(c_g^{n/2}a)b\mathrm{d}V$ for $a,b\in C(M)$. By partially integrating $k-1$ times we deduce that for some function $f_{\mu,\nu}$ holomorphic on a neighborhood of the intervall $[n/2-\epsilon,n/2]$. 
\small
\begin{align*}
\Tr_{L^2(M,S)}&(P_\slashed{D}S_\mu S_\nu^*\e^{-t|\slashed{D}_{\rm log}|})=\\
=&c\delta_{k,l}(t-n/2)^{-1}\int_{M}[a_1b_1^*][g^*(a_2b_2^*)][(g^2)^*(a_2b_2^*)]\cdots [(g^k)^*(a_kb_k^*)] \mathrm{d}V+f_{\mu,\nu}(t)=\\
&=c\delta_{k,l}(t-n/2)^{-1}\int_{M}\mathfrak{L}_g(c_g^{n/2}b_k^*\mathfrak{L}_g(c_g^{n/2}b_{k-1}^*\mathfrak{L}_g(\cdots \mathfrak{L}_g(c_g^{n/2}b_1^*a_1)a_2)\cdots a_{k-1})a_k) \mathrm{d}V+f_{\mu,\nu}(t)
 \end{align*}
\normalsize
We conclude that formula \eqref{formforphiomforbg} holds.

It remains to show that $\phi_\omega$ defines a KMS-state on $O_{E_g}$ for the action defined by $\gamma_t(aV_g):=c_g^{itn/2}aV_g$. Let $\tau$ denote the tracial state on $C(M)$ defined from integrating against the volume form and $L\in \End^*_{C(M)}(E_g)$ the generator of $\gamma_t$, i.e. $L=\frac{n}{2}\log(c_g)$. Some yoga with inner products shows that for $\mu=a_1V_g\otimes \cdots \otimes a_k V_g, \nu=b_1V_g\otimes \cdots \otimes b_k V_g\in E_g^{\otimes_{C(M)} k}$, we have the computation
\begin{align*}
\phi_\omega(S_\mu S_\nu^*)=\int_{M}&\mathfrak{L}_g(c_g^{n/2}b_k^*\mathfrak{L}_g(c_g^{n/2}b_{k-1}^*\mathfrak{L}_g(\cdots \mathfrak{L}_g(c_g^{n/2}b_1^*a_1)a_2)\cdots a_{k-1})a_k) \mathrm{d}V=\\
&=\tau\left(b_1V_g\otimes \cdots \otimes b_k V_g|\e^{-L}(a_1V_g)\otimes \cdots \otimes \e^{-L}(a_k V_g)\right)_{E_g^{\otimes_{C(M)} k}}=\phi_\omega(S_\nu^*\gamma_i(S_\mu)).
\end{align*}
We conclude that $\phi_\omega(ab)=\phi_\omega(b\gamma_i(a))$ for $a,b\in \A$ and $\phi_\omega$ is a KMS-state on $O_{E_g}$ in the action $\gamma$.
\end{proof}

\subsection{Graph $C^*$-algebras}
\label{diracgraphkms}

For a finite graph $G$ we consider the spectral triple on $C^*(G)$ constructed in 
Proposition \ref{localizeckdirinpoint} from the choice of a point $y\in \Omega_G$ 
in the infinite path space. 
We will assume that $G$ is primitive, in which case $C^*(G)$ is simple. 
For an element $(x,n)\in \mathcal{V}_y$ and a finite path $\mu$, we compute that
$$\e^{is\D_y}P_{\D_y} S_\mu P_{\D_y}\e^{-is\D_y}\delta_{(x,n)}=
\e^{is|\mu|}P_{\D_y} S_\mu P_{\D_y}\delta_{(x,n)}.$$
It follows that $\sigma_t(S_\mu S_\nu^*)=\e^{is(|\mu|-|\nu|)}S_\mu S_\nu^*$ and 
$\sigma$ coincides with the gauge action on the graph $C^*$-algebra $C^*(G)$. 
We can conclude that $C^*(G)=C^*(G)_{\D_y}$ is closed under the flow $\sigma$. 
The reader can recall from Example \ref{diracgraphhea} (see page \pageref{diracgraphhea}) 
that $P_{\D_y}\ell^2(\mathcal{V}_y)=\ell^2(\mathcal{V}_y^+)$ where $\mathcal{V}_y^+$ 
is defined in Equation \eqref{vplusdef}.
Moreover, $T_{C^*(G)}\subseteq \mathbb{B}(\ell^2(\mathcal{V}_y^+))$ is the 
$C^*$-algebra generated by the creation operators 
$$T_e \delta_{(x,n)}:=\begin{cases}
\delta_{(ex,n)},\; &r(e)=s(x),\\
0,\; &r(e)\neq s(x).\end{cases}$$

\begin{prop}
Let $\beta\in \R$. The set $S=\{S_e: e\in E\}\subseteq C_c(\mathcal{G}_G)$ is an analytically generating set at $\beta$ for $(C_c(\mathcal{G}_G),\ell^2(\mathcal{V}_y),\D_y)$.
\end{prop}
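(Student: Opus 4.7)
The plan is to verify the two conditions in Definition \ref{ass:zero} by a direct computation on the orthonormal basis $\{\delta_{(x,n)}\}_{(x,n)\in\mathcal{V}_y}$. The crucial observation is that on the subspace $P_{\D_y}\ell^2(\mathcal{V}_y)=\ell^2(\mathcal{V}_y^+)$ the operator $\D_y$ acts diagonally by $n$, because $\kappa_G(x,n,y)=0$ for every $(x,n)\in\mathcal{V}_y^+$ and $\psi_0(n,0)=n$. In particular $\D_y$ is nonnegative on $P_{\D_y}\ell^2(\mathcal{V}_y)$, so $\e^{-\beta\D_y}$ is a bounded operator on this subspace for every $\beta\in\R_{\geq 0}$, and the unboundedness of $\e^{\beta\D_y}$ will be harmless when conjugating operators that shift the spectral decomposition by a finite amount.

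For the first condition, I would show that $P_{\D_y}S_eP_{\D_y}$ coincides with the Toeplitz creation operator $T_e$ on $\ell^2(\mathcal{V}_y^+)$. Given $(x,n)\in\mathcal{V}_y^+$ with $r(x)=s(e)$, the definition of $\pi_y$ gives $S_e\delta_{(x,n)}=\delta_{(ex,n+1)}$; but $\sigma_G^{n+1}(ex)=\sigma_G^n(x)=y$, so $(ex,n+1)\in\mathcal{V}_y^+$, and $P_{\D_y}$ acts trivially on the right. Consequently $P_{\D_y}S_eP_{\D_y}=T_e$, and since the paper states that $\{T_e:e\in E\}$ generates $T_{C^*(G)}$, the set $P_{\D_y}SP_{\D_y}+\Ko_\cN^+$ generates the Toeplitz algebra as required.

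For the second condition, the same computation shows that $P_{\D_y}S_eP_{\D_y}$ maps the $n$-th eigenspace of $\D_y|_{\ell^2(\mathcal{V}_y^+)}$ into the $(n+1)$-st eigenspace. Therefore, for any $\beta\in\R$ and $(x,n)\in\mathcal{V}_y^+$,
$$\e^{\beta\D_y}P_{\D_y}S_eP_{\D_y}\e^{-\beta\D_y}\delta_{(x,n)}=\e^{\beta(n+1)-\beta n}\,T_e\delta_{(x,n)}=\e^{\beta}\,T_e\delta_{(x,n)},$$
so the conjugated operator equals the bounded operator $\e^\beta T_e\in\cN^+$. This covers both $\beta\geq 0$ (where $\e^{-\beta\D_y}$ is bounded and the left multiplication by $\e^{\beta\D_y}$ is only applied after the finite spectral shift by $P_{\D_y}S_eP_{\D_y}$) and $\beta<0$ (by a symmetric argument).

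I do not anticipate a genuine obstacle: every step is algebraic, and the whole argument reduces to noting that $S_e$ implements a shift by exactly one in the $n$-coordinate on $\mathcal{V}_y^+$ and that $\D_y$ is $n$ times the identity on the $n$-th piece. The only point worth spelling out carefully is the domain issue for $\e^{\beta\D_y}$ when $\beta>0$, which is handled by observing that $P_{\D_y}S_eP_{\D_y}\e^{-\beta\D_y}$ has range in the finite-dimensional spectral subspaces of $\D_y$ applied to $\delta_{(x,n)}$ for each fixed $n$, where $\e^{\beta\D_y}$ is defined and the identity above holds on the dense span of these basis vectors, extending by boundedness to all of $\ell^2(\mathcal{V}_y^+)$.
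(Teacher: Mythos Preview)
Your proof is correct and follows essentially the same approach as the paper's own proof: identify $P_{\D_y}S_eP_{\D_y}$ with the creation operator $T_e$ to verify the generating condition, and then observe that conjugation by $\e^{\pm\beta\D_y}$ simply multiplies $T_e$ by the scalar $\e^{\beta}$ because $T_e$ shifts the $n$-grading by one. The paper's argument is a terse two-line version of what you wrote; your additional remarks on domains and the explicit verification that $(ex,n+1)\in\mathcal{V}_y^+$ are correct but not strictly necessary once the shift property is clear.
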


\begin{proof}
Since $P_{\D_y}S_eP_{\D_y}=T_e$, it is clear that $S$ satisfies that $P_{\D_y} SP_{\D_y}+\mathbb{K}(\ell^2(\mathcal{V}_y^+)$ generates the Toeplitz algebra $T_{C^*(G)}$ as a $C^*$-algebra. Moreover, $\e^{\beta\D_y}P_\D S_eP_\D\e^{-\beta\D_y}=\e^{\beta}P_\D S_eP_\D$ is bounded and the proposition follows.
\end{proof}

We conclude the following theorem from Example \ref{diracgraphhea}.

\begin{thm}
\label{kmsforgrpah}
Let $G$ be a finite primitive graph with edge adjacency matrix $A$ and $y\in \Omega_G$. 
For any extended limit $\omega$ as $t\to\log r_\sigma(A)$, 
the KMS-state $\phi_\omega$ on $C^*(G)$ associated with 
the spectral triple $(C_c(\mathcal{G}_G),\ell^2(\mathcal{V}_y),\D_y)$ 
(see Proposition \ref{localizeckdirinpoint} on page \pageref{localizeckdirinpoint}) 
as in Corollary \ref{cor:phi-omega} (see page \pageref{cor:phi-omega}) is given by 
$$\phi_\omega(S_\mu S_\nu^*)=\delta_{\mu,\nu}\frac{w_{s(\mu)}}{\|w\|_{\ell^1}} r_\sigma(A)^{-|\mu|},$$
where $\nu$ and $\mu$ are finite paths and 
$w\in \C^E$ is the $\ell^2$-normalized Perron-Frobenius vector. 
The state $\phi_\omega$ is KMS for the gauge action and 
its inverse temperature is $\log(r_\sigma(A))$.
\end{thm}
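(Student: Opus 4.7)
The theorem essentially follows by combining three ingredients already assembled in the paper: the heat trace computation in Example \ref{diracgraphhea}, the analytic-generation statement in the proposition immediately preceding the theorem, and the fact (noted just above the theorem statement) that the induced flow $\sigma$ coincides with the gauge action. My plan is simply to verify the hypotheses of Corollary \ref{cor:phi-omega} and then compute the limit of the ratio of heat traces.

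First I would check the assumptions of Corollary \ref{cor:phi-omega}. Proposition \ref{localizeckdirinpoint} gives $\mathrm{Li}_1$-summability. Example \ref{diracgraphhea} shows that $\D_y$ has positive essential spectrum with $\beta_{\D_y}=\log r_\sigma(A)$ whenever $G$ is primitive, by exhibiting the meromorphic continuation
$$
\Tr(P_{\D_y}\e^{-t|\D_y|})=\frac{\|w\|_{\ell^1}w_{r(y)}}{1-r_\sigma(A)\e^{-t}}+f(t),
$$
whose only singularity as $t\searrow \log r_\sigma(A)$ is a simple pole. The proposition preceding the theorem establishes that $S=\{S_e:e\in E\}$ is an analytically generating set at every $\beta\in\R$, so in particular at $\beta_{\D_y}$. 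Hence the spectral triple is $\beta_{\D_y}$-analytic.

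Next, the flow $\sigma$ on $T_{C^*(G),\D_y}$ satisfies $\sigma_t(S_\mu S_\nu^*)=\e^{it(|\mu|-|\nu|)}S_\mu S_\nu^*$, as computed in the discussion preceding the theorem. In particular $C^*(G)$ itself is $\sigma$-invariant, so $A_{\D_y}=C^*(G)$ and $\sigma$ is the gauge action. Corollary \ref{cor:phi-omega} therefore produces a KMS-state $\phi_\omega$ on $C^*(G)$ for the gauge action at inverse temperature $\log r_\sigma(A)$.

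Finally I would evaluate $\phi_\omega$ on a typical monomial $S_\mu S_\nu^*$. Example \ref{diracgraphhea} shows that
$$
\Tr(P_{\D_y}S_\mu S_\nu^*\e^{-t|\D_y|})-\delta_{\mu,\nu}\,\frac{w_{s(\mu)}w_{r(y)}\,r_\sigma(A)^{-|\mu|}}{t-\log r_\sigma(A)}
$$
is holomorphic at $t=\log r_\sigma(A)$, while the denominator $\Tr(P_{\D_y}\e^{-t|\D_y|})$ behaves like $\|w\|_{\ell^1}w_{r(y)}/(t-\log r_\sigma(A))$. Taking the quotient, the residues cancel and the holomorphic remainders vanish as $t\to\log r_\sigma(A)$, so the limit exists classically (and hence equals its value under any extended limit $\omega$), giving
$$
\phi_\omega(S_\mu S_\nu^*)=\delta_{\mu,\nu}\,\frac{w_{s(\mu)}w_{r(y)}\,r_\sigma(A)^{-|\mu|}}{\|w\|_{\ell^1}w_{r(y)}}=\delta_{\mu,\nu}\,\frac{w_{s(\mu)}}{\|w\|_{\ell^1}}\,r_\sigma(A)^{-|\mu|},
$$
independent of $\omega$. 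The cancellation of the factor $w_{r(y)}$ is the only substantive point -- it is what makes the limiting state independent of the base point $y$ chosen in the infinite path space. Since all the genuine analytic content is already in Example \ref{diracgraphhea}, there is no serious obstacle here; the role of the theorem is just to record that applying the general construction of Section \ref{sec:KMS} to this particular spectral triple reproduces the standard gauge-KMS state on $C^*(G)$.
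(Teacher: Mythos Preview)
Your proposal is correct and follows essentially the same approach as the paper, which simply states ``We conclude the following theorem from Example \ref{diracgraphhea}'' immediately before the theorem. You have made explicit the ratio-of-residues computation that the paper leaves implicit, but the content is identical.
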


The KMS-state $\phi_\omega$ on $C^*(G)$ in 
Theorem \ref{kmsforgrpah} is the unique KMS-state by \cite{EFW}. 
Numerous authors present constructions of this state and for
more general graphs, eg \cite{aHLRS, aHR, CT, KPgraph}.

\subsection{Group $C^*$-algebras}
\label{diracgroupkms}

For the reduced group $C^*$-algebra of a countable discrete group we considered two types of (semifinite) spectral triples in Subsection \ref{groupcstarexam} (see page \pageref{groupcstarexam}). We now compute the associated KMS-states. 

We fix a length function $\ell$ on the discrete countable group $\Gamma$.
For technical simplicity, we assume that $\Gamma$ is an exact group 
ensuring that $\Gamma$ acts amenably on its Stone-Cech boundary $\partial_{SC}\Gamma$ (see \cite{ozawexact}).
We assume that $(\Gamma,\ell)$ is of at most exponential growth and that $\ell$ is critical (see Definition \ref{criticaldefn} on page \pageref{criticaldefn}). 
Let $(c_b(\Gamma)\rtimes^{\rm alg} \Gamma, \ell^2(\Gamma), \D_\ell)$ denote the associated 
$\mathrm{Li}_1$-summable spectral triple as in Proposition \ref{spectripfromlength}. 
Since $\D_\ell\geq 0$, we have that 
$$T_{c_b(\Gamma)\rtimes_r \Gamma}=c_b(\Gamma)\rtimes_r \Gamma+\Ko(\ell^2(\Gamma))=c_b(\Gamma)\rtimes_r \Gamma.$$ 
The last equality follows from that $c_0(\Gamma)\rtimes_r\Gamma=\Ko(\ell^2(\Gamma))$. We conclude that we have a short exact sequence
$$0\to \Ko(\ell^2(\Gamma))\to T_{c_b(\Gamma)\rtimes_r \Gamma}\to C(\partial_{SC}\Gamma)\rtimes_r \Gamma\to 0.$$
The flow $\sigma^+$ on $T_{c_b(\Gamma)\rtimes_r \Gamma}=c_b(\Gamma)\rtimes_r \Gamma$ is given on an element $a\lambda_g\in c_b(\Gamma)\rtimes \Gamma$ by 
\begin{equation}
\label{sigmapluonsc}
\sigma^+_s(a\lambda_g)=\e^{is(\ell(\cdot)-\ell(g^{-1}\cdot)}a\lambda_g,
\end{equation}
and we conclude that $T_{c_b(\Gamma)\rtimes_r \Gamma}$ is invariant under $\sigma^+$. Therefore $T_{c_b(\Gamma)\rtimes_r \Gamma}=T_{c_b(\Gamma)\rtimes_r \Gamma,\D_\ell}$.

\begin{prop}
Let $\beta\in \R$. The $*$-algebra $c_b(\Gamma)\rtimes^{\rm alg} \Gamma$ is an analytically generating set at $\beta$ for $(c_b(\Gamma)\rtimes^{\rm alg} \Gamma, \ell^2(\Gamma), \D_\ell)$.
\end{prop}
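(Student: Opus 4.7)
The plan is to verify the two requirements of Definition \ref{ass:zero} directly. The crucial simplifying observation is that $\D_\ell \geq 0$ by construction (its eigenvalues are the values $\ell(\gamma)$, which are non-negative), so the non-negative spectral projection is $P_{\D_\ell} = 1$. Consequently $\cN^+ = \cN = \mathbb{B}(\ell^2(\Gamma))$ and $\Ko^+_\cN = \mathbb{K}(\ell^2(\Gamma))$. Compression by $P_{\D_\ell}$ becomes trivial, which makes both conditions largely unpacking of definitions.

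First I would verify the generation condition. Since $P_{\D_\ell} = 1$, we have $P_{\D_\ell}(c_b(\Gamma)\rtimes^{\rm alg}\Gamma)P_{\D_\ell} = c_b(\Gamma)\rtimes^{\rm alg}\Gamma$, which is norm-dense in $c_b(\Gamma)\rtimes_r\Gamma$ by construction. As noted in the discussion immediately preceding the proposition, $c_0(\Gamma)\rtimes_r\Gamma = \mathbb{K}(\ell^2(\Gamma))$, so $T_{c_b(\Gamma)\rtimes_r\Gamma} = c_b(\Gamma)\rtimes_r\Gamma + \mathbb{K}(\ell^2(\Gamma)) = c_b(\Gamma)\rtimes_r\Gamma$. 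Hence $P_{\D_\ell}(c_b(\Gamma)\rtimes^{\rm alg}\Gamma)P_{\D_\ell} + \Ko^+_\cN$ generates $T_{c_b(\Gamma)\rtimes_r\Gamma}$ as a $C^*$-algebra.

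Next I would check the analyticity condition: for every $a\lambda_g \in c_b(\Gamma)\rtimes^{\rm alg}\Gamma$ and every $\beta\in\R$, the operator $\e^{\beta\D_\ell}(a\lambda_g)\e^{-\beta\D_\ell}$, initially defined on $c_c(\Gamma)$, extends to a bounded operator on $\ell^2(\Gamma)$. A direct computation on a basis vector $\delta_\gamma$ gives
\[
\e^{\beta\D_\ell}(a\lambda_g)\e^{-\beta\D_\ell}\delta_\gamma
= \e^{\beta(\ell(g\gamma)-\ell(\gamma))}\,a(g\gamma)\,\delta_{g\gamma}.
\]
The subadditivity $\ell(g\gamma)\leq \ell(g)+\ell(\gamma)$ (and its mirror inequality, obtained by replacing $\gamma$ with $g\gamma$) gives $|\ell(g\gamma)-\ell(\gamma)|\leq \ell(g)$, so the diagonal multiplier above is bounded by $\|a\|_{c_b(\Gamma)}\e^{|\beta|\ell(g)}$, independently of $\gamma$. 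Thus $\e^{\beta\D_\ell}(a\lambda_g)\e^{-\beta\D_\ell}$ extends boundedly to $\cN$. Linearity extends this to all of $c_b(\Gamma)\rtimes^{\rm alg}\Gamma$.

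There is no genuine obstacle here: both conditions follow almost immediately from $P_{\D_\ell}=1$ and the subadditivity of the length function. The only point that merits caution is the order of applying the unbounded operators $\e^{\pm\beta\D_\ell}$, which is why I work first on the algebraic span of $\{\delta_\gamma\}_{\gamma\in\Gamma}$ (on which both operators act diagonally with well-defined values) and then invoke boundedness to extend to all of $\ell^2(\Gamma)$.
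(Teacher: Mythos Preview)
Your proof is correct and follows essentially the same approach as the paper: both arguments begin with the observation that $P_{\D_\ell}=1$ (since $\D_\ell\geq 0$), compute the conjugated operator $\e^{\beta\D_\ell}a\lambda_g\e^{-\beta\D_\ell}$ explicitly as a multiplication-translation operator, and bound it using the subadditivity estimate $|\ell(g\gamma)-\ell(\gamma)|\leq \ell(g)$. You are somewhat more explicit about the generation condition and the domain issue for the unbounded conjugation, while the paper additionally records that the conjugated element lands back in $c_b(\Gamma)\rtimes^{\rm alg}\Gamma$; these are cosmetic differences only.
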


\begin{proof}
Note that $P_{\D_\ell}=1$ because $\D_\ell$ is positive. For $a\lambda_g\in c_b(\Gamma)\rtimes^{\rm alg} \Gamma$, we compute that 
$$\e^{\beta\D_\ell}a\lambda_g\e^{-\beta\D_\ell}=\e^{-\beta(\ell(\cdot)-\ell(g^{-1}\cdot))}a\lambda_g.$$
Since $\|\e^{-\beta(\ell(\cdot)-\ell(g^{-1}\cdot)}a\|_{c_b(\Gamma)}\leq \e^{|\beta|\ell(g^{-1})}\|a\|_{c_b(\Gamma)}$, it holds that $\e^{\beta\D_\ell}a\lambda_g\e^{-\beta\D_\ell}\in c_b(\Gamma)\rtimes^{\rm alg} \Gamma$ and the proposition follows. 
\end{proof}

\begin{prop}
Let $\beta\in \R$. The $*$-algebra $c_b(\Gamma)\rtimes^{\rm alg} \Gamma$ is an analytically generating set at $\beta$ for  $(c_b(\Gamma)\rtimes^{\rm alg} \Gamma, \ell^2(\Gamma, S_\H), \D_c, \cN, \Tr_\tau)$.
\end{prop}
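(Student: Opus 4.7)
The plan is to adapt the argument from the preceding proposition for $(c_b(\Gamma)\rtimes^{\rm alg}\Gamma,\ell^2(\Gamma),\D_\ell)$, handling the two new features: the projection $P_{\D_c}$ is non-trivial (since $\D_c$ has spectrum symmetric about $0$), and $\D_c$ acts by Clifford multiplication on the spinor fibers. Both are reduced to the corresponding statement for $|\D_c|$, which is scalar.

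First I would observe that, since $\mathfrak{c}_S(v)^2 = \|v\|_{\H_\Gamma}^2$, the operator $|\D_c|$ is diagonal multiplication $f(\gamma)\mapsto \ell(\gamma)f(\gamma)$ on $\ell^2(\Gamma,S_\H)$. As $P_{\D_c}=\chi_{[0,\infty)}(\D_c)$ is a spectral projection of $\D_c$, it commutes with every spectral function of $\D_c$; in particular $\e^{\beta\D_c}P_{\D_c}=\e^{\beta|\D_c|}P_{\D_c}$ and $P_{\D_c}\e^{-\beta\D_c}=P_{\D_c}\e^{-\beta|\D_c|}$ on their natural domains. Hence for any $a\lambda_g\in c_b(\Gamma)\rtimes^{\rm alg}\Gamma$,
$$
\e^{\beta\D_c}P_{\D_c}(a\lambda_g)P_{\D_c}\e^{-\beta\D_c} = P_{\D_c}\bigl(\e^{\beta|\D_c|}(a\lambda_g)\e^{-\beta|\D_c|}\bigr)P_{\D_c},
$$
in the sense that the right-hand side is the bounded closure of the left-hand side.

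Next, mimicking the computation in the previous proposition with $|\D_c|$ playing the role of $\D_\ell$, I would obtain
$$
\e^{\beta|\D_c|}(a\lambda_g)\e^{-\beta|\D_c|} = a'\lambda_g,\qquad a'(\gamma):=a(\gamma)\,\e^{\beta(\ell(\gamma)-\ell(g^{-1}\gamma))}.
$$
The subadditivity $|\ell(\gamma)-\ell(g^{-1}\gamma)|\leq \ell(g)$ forces $\|a'\|_{c_b(\Gamma)}\leq \e^{|\beta|\ell(g)}\|a\|_{c_b(\Gamma)}$, so $a'\lambda_g\in c_b(\Gamma)\rtimes^{\rm alg}\Gamma\subseteq\cN$, yielding
$$
\e^{\beta\D_c}P_{\D_c}(a\lambda_g)P_{\D_c}\e^{-\beta\D_c} = P_{\D_c}(a'\lambda_g)P_{\D_c}\in\cN^+.
$$

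Finally, for the generating condition, I would invoke Proposition~\ref{toepiscstar}: by construction $T_A = P_{\D_c}AP_{\D_c}+\Ko_\cN^+$, and the $*$-algebra $c_b(\Gamma)\rtimes^{\rm alg}\Gamma$ is norm-dense in $A=c_b(\Gamma)\rtimes_r\Gamma$, so $P_{\D_c}(c_b(\Gamma)\rtimes^{\rm alg}\Gamma)P_{\D_c}+\Ko_\cN^+$ is norm-dense in, and hence generates, the Toeplitz algebra $T_{c_b(\Gamma)\rtimes_r\Gamma}$ as a $C^*$-algebra. The only delicate point is the manipulation of the unbounded operator $\e^{\beta\D_c}$ on vectors outside $P_{\D_c}\H$, but this is precisely what the sandwiching with $P_{\D_c}$ and the spectral-projection commutation above are designed to tame.
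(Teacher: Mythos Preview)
Your proof is correct and follows essentially the same approach as the paper's: both arguments reduce $\e^{\beta\D_c}P_{\D_c}(\cdot)P_{\D_c}\e^{-\beta\D_c}$ to $P_{\D_c}\e^{\beta|\D_c|}(\cdot)\e^{-\beta|\D_c|}P_{\D_c}$ via the spectral-projection commutation, then compute the conjugation by $\e^{\beta|\D_c|}$ as multiplication by $\e^{\beta(\ell(\cdot)-\ell(g^{-1}\cdot))}$ and bound it using subadditivity of $\ell$. You are slightly more explicit than the paper in spelling out the generating condition via density of $c_b(\Gamma)\rtimes^{\rm alg}\Gamma$ in $A$, which the paper leaves implicit.
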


\begin{proof}
For $a\lambda_g\in c_b(\Gamma)\rtimes^{\rm alg} \Gamma$, we compute for $f\in \ell^2(\Gamma,S_\H)$ that 
$$\e^{\beta|\D_c|}\hat{\pi}_S(a\lambda_g)\e^{-\beta|\D_c|}f(\gamma)=\e^{-\beta(\ell(\gamma)-\ell(g^{-1}\gamma))}a(\gamma)[\tilde{\pi}(g)f](\gamma).$$
The estimate $\|\e^{-\beta(\ell(\cdot)-\ell(g^{-1}\cdot)}a\|_{c_b(\Gamma)}\leq \e^{|\beta|\ell(g^{-1})}\|a\|_{c_b(\Gamma)}$ shows that $\e^{\beta|\D_c|}\hat{\pi}_S(a\lambda_g)\e^{-\beta|\D_c|}\in \hat{\pi}\left(c_b(\Gamma)\rtimes^{\rm alg} \Gamma\right)$. Therefore, 
\begin{align*}
&\e^{\beta\D_c}P_{\D_c} (c_b(\Gamma)\rtimes^{\rm alg} \Gamma) P_{\D_c}\e^{-\beta\D_c}\\&=P_{\D_c}\e^{\beta|\D_c|} (c_b(\Gamma)\rtimes^{\rm alg} \Gamma)\e^{-\beta|\D_c|}P_{\D_c}\subseteq P_{\D_c} (c_b(\Gamma)\rtimes^{\rm alg} \Gamma) P_{\D_c}\subseteq \cN^+,
\end{align*} 
and the proposition follows.
\end{proof}

\begin{defn}
If $\omega$ is an extended limit as $t\to\beta(\Gamma,\ell)$, and $\ell$ is critical, we define the Patterson-Sullivan measure $\mu_\omega$ on 
the Stone-\v{C}ech boundary $\partial_{SC}\Gamma$ as
$$
\int_{\partial_{SC}\Gamma} a\,\mathrm{d}\mu_\omega:=\lim_{t\to \omega}\frac{\sum_{\gamma\in \Gamma} \tilde{a}(\gamma)\e^{-t\ell(\gamma)}}{\sum_{\gamma\in \Gamma} \e^{-t\ell(\gamma)}},
$$
for a function $a\in C(\partial_{SC}\Gamma)$ and where $\tilde{a}\in c_b(\Gamma)$ is any function with $a=\tilde{a}\mod c_0(\Gamma)$.
\end{defn}

\begin{rmk}
It is possible to define the Patterson-Sullivan measure $\mu_\omega$ as an extended weak*-limit of the family of probability measures on $\Gamma$
$$\mu_t=\frac{\sum_{\gamma\in \Gamma} \delta_\gamma\e^{-t\ell(\gamma)}}{\sum_{\gamma\in \Gamma} \e^{-t\ell(\gamma)}}.$$
In the literature, Patterson-Sullivan measures are usually defined as 
weak* accumulation points of $(\mu_t)_{t>\beta(\Gamma, \ell)}$ but 
we allow for a slightly more general construction with extended limits. 
A priori, $\mu_\omega$ is a probability measure on the Stone-\v{C}ech compactification of $\Gamma$. 
Since the support of $\mu_\omega$ is contained in the closed subspace 
$\partial_{SC}\Gamma$ we consider 
$\mu_\omega$ as a measure on  $\partial_{SC}\Gamma$.
\end{rmk}

\begin{thm}
\label{themforgamma}
Let $\Gamma$ be a discrete group and $\phi_\omega$ 
the KMS-state on $C(\partial_{SC}\Gamma)\rtimes_r \Gamma$ constructed as in Corollary \ref{cor:phi-omega} (see page \pageref{cor:phi-omega}) using an extended limit $\omega$ as $t\to\beta(\Gamma,\ell)$ and either of the following two semifinite spectral triples:
\begin{itemize}
\item The spectral triple 
$$(c_b(\Gamma)\rtimes^{\rm alg} \Gamma, \ell^2(\Gamma), \D_\ell)$$
associated with a critical  length function of at most exponential growth as in Proposition \ref{spectripfromlength} (see page \pageref{spectripfromlength}).
\item The semifinite spectral triple 
$(c_b(\Gamma)\rtimes^{\rm alg} \Gamma, \ell^2(\Gamma, S_\H), \D_c, \cN, \Tr_\tau)$ 
associated with a critical proper Hilbert space valued cocycle of at most exponential growth 
as in Proposition \ref{cssfst} (see page \pageref{cssfst}).
\end{itemize}
Then $\phi_\omega$ is given in terms of the Patterson-Sullivan measure $\mu_\omega$ by 
$$
\phi_\omega(a\lambda_g)=\delta_{e,g}\int_{\partial_{SC}\Gamma} a\,\mathrm{d}\mu_\omega.
$$
The state $\phi_\omega$ is KMS at inverse temperature $\beta(\Gamma,\ell)$ for the flow on $C(\partial_{SC}\Gamma)\rtimes_r \Gamma$ induced by the action $\sigma^+$ on $c_b(\Gamma)\rtimes_r\Gamma$ given in Equation \eqref{sigmapluonsc}. Moreover, $\phi_\omega$ extends to a KMS-state at inverse temperature $1$ on the von Neumann algebra $L^\infty(\partial_{SC}\Gamma,\mu_\omega)\overline{\rtimes}\Gamma$ with its Radon-Nikodym flow
$$\sigma^{RN}_s(a\lambda_g)=\left(\frac{\mathrm{d}g_*\mu_{\omega}}{\mathrm{d}\mu_{\omega}}\right)^{is} a\lambda_g.$$
\end{thm}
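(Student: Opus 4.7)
The plan is to first extract the formula for $\phi_\omega$ from the heat trace computations, then apply the general Corollary \ref{cor:phi-omega} to obtain the KMS property, and finally pass to the von Neumann algebra and identify the Radon--Nikodym cocycle.

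First, I would observe that Example \ref{diracgrouphea} already gives, for both spectral triples simultaneously (up to a common multiplicative constant between numerator and denominator),
\[
\phi_{t,0}(a\lambda_g)=\frac{\Tr(P_\D a\lambda_g\e^{-t|\D|})}{\Tr(P_\D\e^{-t|\D|})}=\delta_{e,g}\,\frac{\sum_{\gamma\in\Gamma}a(\gamma)\e^{-t\ell(\gamma)}}{\sum_{\gamma\in\Gamma}\e^{-t\ell(\gamma)}},\qquad a\lambda_g\in c_b(\Gamma)\rtimes^{\rm alg}\Gamma.
\]
Since $\ell$ is proper, the ratio vanishes as $t\searrow\beta(\Gamma,\ell)$ on elements of $c_0(\Gamma)\rtimes\Gamma=\Ko(\ell^2(\Gamma))$, so $\phi_{\omega,0}$ descends to a state $\phi_\omega$ on the quotient $T_{c_b(\Gamma)\rtimes_r\Gamma}/\Ko(\ell^2(\Gamma))=C(\partial_{SC}\Gamma)\rtimes_r\Gamma$ (using that the action is amenable by exactness, so reduced and full crossed products agree on the boundary). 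Applying the extended limit and comparing with the definition of $\mu_\omega$ yields the advertised formula $\phi_\omega(a\lambda_g)=\delta_{e,g}\int_{\partial_{SC}\Gamma} a\,\mathrm{d}\mu_\omega$.

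Second, combining the two preceding propositions (which establish that $c_b(\Gamma)\rtimes^{\rm alg}\Gamma$ is analytically generating at any $\beta\in\R$) with Corollary \ref{cor:phi-omega} immediately gives that $\phi_\omega$ is KMS at inverse temperature $\beta(\Gamma,\ell)$ for the flow $\sigma^+$ descended from $T_{c_b(\Gamma)\rtimes_r\Gamma}$; the explicit formula for $\sigma^+$ is \eqref{sigmapluonsc}.

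Third, I would extend $\phi_\omega$ to the von Neumann algebra via the GNS construction: the covariant representation on $L^2(\partial_{SC}\Gamma,\mu_\omega)\otimes\ell^2(\Gamma)$ yields a faithful normal extension of $\phi_\omega$ to $L^\infty(\partial_{SC}\Gamma,\mu_\omega)\overline{\rtimes}\Gamma$ (amenability of the action ensuring that the weak closure of the reduced crossed product has the expected form). It remains to identify the Radon--Nikodym cocycle. Inserting $X=\lambda_h$ and $Y=a\lambda_{h^{-1}}$ into the KMS identity $\phi_\omega(XY)=\phi_\omega(\sigma^+_{-i\beta(\Gamma,\ell)}(Y)X)$ gives, for every $a\in c_b(\Gamma)$,
\[
\int_{\partial_{SC}\Gamma}a\,\mathrm{d}(h^{-1})_*\mu_\omega=\int_{\partial_{SC}\Gamma}\e^{\beta(\Gamma,\ell)(\ell(\cdot)-\ell(h\cdot))}a\,\mathrm{d}\mu_\omega.
\]
The function $\ell(\cdot)-\ell(h\cdot)$ lies in $c_b(\Gamma)$ because $|\ell(\cdot)-\ell(h\cdot)|\leq\ell(h)$, so it represents an element of $L^\infty(\partial_{SC}\Gamma,\mu_\omega)$ (well defined modulo null sets), and replacing $h$ by $g^{-1}$ identifies $\frac{\mathrm{d}g_*\mu_\omega}{\mathrm{d}\mu_\omega}=\e^{\beta(\Gamma,\ell)(\ell(\cdot)-\ell(g^{-1}\cdot))}$. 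Comparing with \eqref{sigmapluonsc}, this exactly says $\sigma^{RN}_s=\sigma^+_{s\beta(\Gamma,\ell)}$, so KMS at inverse temperature $\beta(\Gamma,\ell)$ for $\sigma^+$ is equivalent to KMS at inverse temperature $1$ for $\sigma^{RN}$.

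The main obstacle is the last step: making precise that the bounded $\Gamma$-function $\ell(\cdot)-\ell(h\cdot)$ defines the correct essentially bounded function on $(\partial_{SC}\Gamma,\mu_\omega)$ and verifying the cocycle identity $\frac{\mathrm{d}(gh)_*\mu_\omega}{\mathrm{d}\mu_\omega}=\frac{\mathrm{d}g_*\mu_\omega}{\mathrm{d}\mu_\omega}\cdot g\!\cdot\!\bigl(\frac{\mathrm{d}h_*\mu_\omega}{\mathrm{d}\mu_\omega}\bigr)$ at the level of the length function, where subadditivity of $\ell$ rather than a strict cocycle identity forces the identification to hold only modulo $\mu_\omega$-null sets. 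Once this is in place, the factor-of-$\beta(\Gamma,\ell)$ rescaling between the two flows is immediate.
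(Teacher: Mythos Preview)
Your first two steps coincide with the paper's argument. The final step, however, is a genuinely different route. The paper does not attempt to identify the Radon--Nikodym derivative; instead it first proves that $\mu_\omega$ is strictly positive (by covering $\partial_{SC}\Gamma$ with $\Gamma$-translates of any open set and using quasi-invariance), and then directly verifies the KMS identity for $\sigma^{RN}$ at inverse temperature $1$ by a chain of change-of-variables computations on $\phi_\omega(a\lambda_g\, b\lambda_h)$, invoking the abstract cocycle identity for Radon--Nikodym derivatives. Your approach instead leverages the already-established KMS property for $\sigma^+$ to extract the explicit formula $\frac{\mathrm{d}g_*\mu_\omega}{\mathrm{d}\mu_\omega}=\e^{\beta(\Gamma,\ell)(\ell(\cdot)-\ell(g^{-1}\cdot))}$ and then observes $\sigma^{RN}_s=\sigma^+_{s\beta(\Gamma,\ell)}$, so the temperature rescaling is immediate. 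Your method yields strictly more: a concrete continuous representative of the Radon--Nikodym cocycle and the identification of the two flows, while also making quasi-invariance (indeed equivalence) of $\mu_\omega$ automatic without a separate positivity argument. The paper's method, by contrast, is more self-contained in that it never compares the two flows.

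One remark on your stated obstacle: it is not actually an obstacle. Since $\ell(\cdot)-\ell(g^{-1}\cdot)\in c_b(\Gamma)$, it extends continuously to the Stone--\v{C}ech compactification and restricts to a continuous function on $\partial_{SC}\Gamma$, so the passage to $L^\infty(\partial_{SC}\Gamma,\mu_\omega)$ is unproblematic. Moreover, the cocycle identity for $b_g(\gamma):=\ell(\gamma)-\ell(g^{-1}\gamma)$ holds \emph{exactly}, not merely modulo null sets: $b_{gh}(\gamma)=b_g(\gamma)+b_h(g^{-1}\gamma)$ is a telescoping identity requiring no property of $\ell$ whatsoever, so subadditivity plays no role here (it was only used earlier to show $b_g$ is bounded).
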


\begin{proof}
By the computations of Example \ref{diracgrouphea} (see page \pageref{diracgrouphea}), the spectral triple associated with a length function as in Proposition \ref{spectripfromlength} have the same heat traces as the semifinite spectral triple associated with a proper Hilbert space valued cocycle as in Proposition \ref{cssfst}. In both cases, Example \ref{diracgrouphea} shows that for $\tilde{a}\lambda_g\in c_b(\Gamma)\rtimes^{\rm alg}\Gamma$ we have 
$$
\phi_{t,0}(\tilde{a}\lambda_g)=\delta_{e,g}\frac{\sum_{\gamma\in \Gamma} \tilde{a}(\gamma)\e^{-t\ell(\gamma)}}{\sum_{\gamma\in \Gamma} \e^{-t\ell(\gamma)}}.
$$
It follows that $\phi_\omega(a\lambda_g)=\delta_{e,g}\int_{\partial_{SC}\Gamma} a\,\mathrm{d}\mu_\omega$ in both cases.

To relate $\phi_\omega$ to the Radon-Nikodym flow, we first show that $\mu_\omega$ is strictly positive, i.e. that $\mu_\omega(U)>0$ for any open set $U\subset \partial_{SC}\Gamma$. For any open set $U\subset \partial_{SC}\Gamma$, the translates $(\gamma U)_{\gamma\in \Gamma}$ cover $\partial_{SC}\Gamma$. If $\mu_\omega(U)=0$, then by quasi-invariance $\mu_\omega(\gamma U)=0$ which contradicts $\mu_\omega$ being a probability measure.

The fact that $\mu_{\omega}$ is strictly positive ensures that the Radon-Nikodym derivatives $\frac{\mathrm{d}g_*\mu_{\omega}}{\mathrm{d}\mu_{\omega}}$ are well defined and strictly positive.
The mapping $g\mapsto \frac{\mathrm{d}g_*\mu_{\omega}}{\mathrm{d}\mu_{\omega}}$ is a cocycle, i.e. for $h,g\in \Gamma$, 
$$\frac{\mathrm{d}(gh)_*\mu_{\omega}}{\mathrm{d}\mu_{\omega}}=\frac{\mathrm{d}g_*\mu_{\omega}}{\mathrm{d}\mu_{\omega}}(g^{-1})^*\left[\frac{\mathrm{d}h_*\mu_{\omega}}{\mathrm{d}\mu_{\omega}}\right].$$
The proof is completed by computing that for $a\lambda_g, b\lambda_h\in C(\partial_{SC}\Gamma)\rtimes^{\rm alg}\Gamma$, we have the identity
\begin{align*}
\phi_\omega(a\lambda_gb\lambda_h)&=\delta_{h,g^{-1}} \phi_\omega(ah^*(b))=\delta_{h,g^{-1}}\int_{\partial_{SC}\Gamma} ah^*(b)\mathrm{d}\mu_{\omega}=\\
&=\delta_{h,g^{-1}}\int_{\partial_{SC}\Gamma} h^*\left(g^*(a)b\right)\mathrm{d}\mu_{\omega}=\delta_{h,g^{-1}}\int_{\partial_{SC}\Gamma} g^*(a)b\mathrm{d}(h_*\mu_{\omega})=\\
&=\delta_{h,g^{-1}}\int_{\partial_{SC}\Gamma} bg^*(a)\frac{\mathrm{d}h_*\mu_{\omega}}{\mathrm{d}\mu_{\omega}}\mathrm{d}\mu_{\omega}=\\
&=\delta_{h,g^{-1}}\int_{\partial_{SC}\Gamma} b(h^{-1})^*\left(a\left(\frac{\mathrm{d}g_*\mu_{\omega}}{\mathrm{d}\mu_{\omega}}\right)^{-1}\right)\mathrm{d}\mu_{\omega}=\\
&=\phi_\omega\left(b\lambda_h\left(\frac{\mathrm{d}g_*\mu_{\omega}}{\mathrm{d}\mu_{\omega}}\right)^{-1}a\lambda_g\right)=\phi_\omega\left(b\lambda_h\sigma^{RN}_{s=i}(a\lambda_g)\right)
\end{align*}
In the third last identity we used the cocycle identity implying that if $hg=e$, then 
\[
(h^{-1})^*\left(\frac{\mathrm{d}g_*\mu_\omega}{\mathrm{d}\mu_\omega}\right)\frac{\mathrm{d}h_*\mu_\omega}{\mathrm{d}\mu_\omega}=1.\qedhere
\]
\end{proof}

\begin{rmk}
The reader should note that $\phi_\omega|_{C^*_r(\Gamma)}$ coincides with the $\ell^2$-trace.
\end{rmk}

\section{KMS-states on Cuntz-Pimsner algebras with their gauge action}
\label{diraccpkms}

In this section we consider the constructions of Corollary \ref{cor:phi-omega} in a broad class 
of examples which include both Cuntz-Krieger algebras and crossed products by $\Z$.
Here we use the techniques from Section \ref{sec:KMS} 
in conjunction with those from Subsection \ref{cpalgexam} to analyse
the KMS states on Cuntz-Pimsner algebras, and compare them to the Laca-Neshveyev correspondence establishing a bijection between KMS-states on Cuntz-Pimsner algebras and 
tracial states on its coefficient algebra.  

\subsection{KMS-states on Cuntz-Pimsner algebras from traces on the coefficient algebra}

Firstly, we shall show that a critical positive trace on $A$ (see Definition \ref{defn:critical} below) gives rise to a KMS-state on the Cuntz-Pimsner algebra 
$O_E$ assuming that $E$ is strictly W-regular. Recall Lemma \ref{ximodsemi} (see page \pageref{ximodsemi}) giving the construction of the $\mathrm{Li}_1$-summable semifinite spectral triple $(\O_E,L^2(\phimod,\tau),\D_\psi,\cN_\tau(\Xi_A),\Tr_\tau)$, where $\cN_\tau(\Xi_A):=(\End^*_A(\phimod)\otimes_A1)''$. 

\begin{lemma}
\label{betacrit}
Let $E$ be a strictly W-regular fgp bi-Hilbertian bimodule over $A$, $\beta\in \R$ and $\tau$ a positive trace on $A$.
The set $S=\{S_{e}:\,e\in E\}\subset\O_E$ is an analytically generating set at $\beta$ for $(\O_E,L^2(\phimod,\tau),\D_\psi,\cN_\tau(\Xi_A),\Tr_\tau)$.
\end{lemma}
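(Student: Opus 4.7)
The plan is to verify the two requirements in Definition \ref{ass:zero} for the set $S=\{S_e:e\in E\}$, exploiting the explicit description of $\D_\psi$ as $\sum_{n,r}\psi(n,r)P_{n,r}$ with $\psi(n,n)=n$ for $n\in\N$ and $\psi(n,r)=-(2r-n)<0$ whenever $r>\max(0,n)$. This identifies the positive spectral projection explicitly as
$$
P_{\D_\psi}=\sum_{n\geq 0}P_{n,n},
$$
the projection onto the (copy of the) Fock module $\Fock\subseteq \Xi_A$, tensored with $L^2(A,\tau)$. On the $n$-th Fock summand $E^{\otimes n}\otimes_AL^2(A,\tau)$, the operator $\D_\psi$ then acts simply as multiplication by the scalar $n$.

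First I would check the generating condition. Because $\O_E$ is generated as a $C^*$-algebra by $\{S_e:e\in E\}$, the $*$-homomorphism $\beta_{\D_\psi}:\O_E\to T_{\O_E}/\Ko^+_{\cN_\tau(\Xi_A)}$ has image generated by $\{\beta_{\D_\psi}(S_e)\}_{e\in E}$. Since $T_{\O_E}$ is by construction the preimage of $\beta_{\D_\psi}(\O_E)$ under the quotient map (see the proof of Proposition \ref{toepiscstar}), the $C^*$-subalgebra of $\cN^+$ generated by $\{P_{\D_\psi}S_eP_{\D_\psi}:e\in E\}\cup \Ko^+_{\cN_\tau(\Xi_A)}$ surjects onto $\beta_{\D_\psi}(\O_E)$ while containing the kernel $\Ko^+_{\cN_\tau(\Xi_A)}$, and therefore equals $T_{\O_E}$.

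The substantive point is the analyticity condition $\e^{\beta\D_\psi}P_{\D_\psi}S_eP_{\D_\psi}\e^{-\beta\D_\psi}\subset \cN^+$. The creation operator $S_e$ sends $E^{\otimes n}$ into $E^{\otimes(n+1)}$ inside the Fock module, so it preserves $P_{\D_\psi}L^2(\Xi_A,\tau)$ and shifts the Fock degree by one. Consequently, on the summand $E^{\otimes n}\otimes_AL^2(A,\tau)$, the composition in question acts as
$$
\e^{\beta(n+1)}\cdot P_{\D_\psi}S_eP_{\D_\psi}\cdot \e^{-\beta n}=\e^\beta\,P_{\D_\psi}S_eP_{\D_\psi},
$$
independently of $n$. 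Assembling the summands gives the identity
$$
\e^{\beta\D_\psi}P_{\D_\psi}S_eP_{\D_\psi}\e^{-\beta\D_\psi}=\e^\beta P_{\D_\psi}S_eP_{\D_\psi}
$$
as an operator identity on a core, from which the bounded extension (and membership in $\cN^+$) is immediate since $S_e\in \O_E$ is bounded.

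The only care point — and what I would flag as the main technical obstacle to check carefully — is to confirm that the formal scalar factorisation on each Fock summand assembles into a bounded operator on the whole Hilbert space, i.e.\ that $P_{\D_\psi}S_eP_{\D_\psi}$ genuinely preserves the Fock-degree grading and that its action is not obstructed by the Hilbert $C^*$-module structure of $\Xi_A$. This follows directly from the identification $S_eS_\mu=S_{e\otimes\mu}$ on Fock-space generators together with the definition of the projections $P_{n,r}$ in Equations \eqref{onrqnr}--\eqref{qnrqnr}, so no non-trivial operator-theoretic work is required beyond this bookkeeping.
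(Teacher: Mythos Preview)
Your proof is correct and follows essentially the same approach as the paper's: both verify the generating condition by noting that $\{S_e\}$ generates $\O_E$ (the paper phrases this via the identification of $T_{\O_E}$ with the Cuntz--Toeplitz algebra $\mathcal{T}_E$ plus compacts, while you argue abstractly via Proposition~\ref{toepiscstar}), and both establish analyticity through the key identity $\e^{\beta\D_\psi}P_{\D_\psi}S_eP_{\D_\psi}\e^{-\beta\D_\psi}=\e^\beta P_{\D_\psi}S_eP_{\D_\psi}$. Your write-up is more detailed in justifying why this identity holds (via the degree-shift property of $S_e$ on the Fock module), but the substance is identical.
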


\begin{proof}
Since $T_{\O_E,\D}$ is precisely the Toeplitz algebra
$T_E$, it is immediate that the set of operators $\{ PS_eP : e\in E\}$
generates $T_{\O_E,\D}$. The analyticity condition
on the Fock space is likewise obvious from the computation 
\[
\e^{\beta\D_\psi}P_{\D_\psi}S_eP_{\D_\psi}\e^{-\beta\D_\psi}=
\e^\beta P_{\D_\psi}S_eP_{\D_\psi}.\qedhere
\]
\end{proof}

\begin{defn}
\label{defn:critical}
Let $E$ be an $A$-bimodule which is fgp from the right and $\tau$ a positive trace on $A$. 
We define the critical value of $(E,\tau)$ as
$$\beta(E,\tau):=\inf\{t\geq 0: \sum_{n=0}^\infty  \tau_*(E^{\otimes_An})\e^{-tn}<\infty\}.$$
We say that $\tau$ is critical for $E$ if 
$$\lim_{t\searrow \beta(E,\tau)} \sum_{n=0}^\infty  \tau_*(E^{\otimes_An})\e^{-tn}=\infty.$$
\end{defn}

\begin{rmk}
Note that the critical value of a trace and the notion of it being critical only depends on $[E]\in KK_0(A,A)$, in fact only on the sequence $(\mathrm{ch}_0(E^{\otimes_An}))_{n\in \N}\subseteq HC_0(A)$ in cyclic homology. Just as in the proof of Lemma \ref{ximodsemi} we obtain the estimate $0\leq \beta(E,\tau)\leq \log(N)$, where $N$ is the number of elements in the left frame and the right frame.

It follows from Definition~\ref{ass:minus-one} and Proposition~\ref{posheatcomp} that the $\mathrm{Li}_1$-summable semifinite spectral triple $(\O_E,L^2(\phimod,\tau),\D_\psi,\cN_\tau(\Xi_A),\Tr_\tau)$ of Lemma \ref{ximodsemi} has positive $\Tr_\tau$-essential spectrum if and only if $\tau$ is critical.
\end{rmk}

By construction, the projection $P_{\D_\psi}$ is the projection onto the Fock module $\Fock$ and therefore $\cN_\tau(\Xi_A)^+= (\End^*_A(\Fock)\otimes_A1)''$ is a subalgebra of $\mathbb{B}(L^2(\Fock,\tau))$. We let $N$ denote the number operator on $L^2(\Fock,\tau)$ --  the self-adjoint operator defined from $N|_{E^{\otimes n}\otimes_AL^2(A,\tau)}=n\mathrm{Id}_{E^{\otimes n}\otimes_AL^2(A,\tau)}$. The next proposition follows from the definition of the Toeplitz algebra of a semifinite spectral triple. 

\begin{prop}
\label{toepliz}
The Toeplitz algebra $T_{O_E}$ of the semifinite spectral triple 
$$(\O_E,L^2(\phimod,\tau),\D_\psi,\cN_\tau(\Xi_A),\Tr_\tau)$$ 
is given by 
$$T_{O_E}=\mathcal{T}_E\otimes_A 1_A+\Ko_{(\End^*_A(\Fock)\ox_A1)''},$$
where $\mathcal{T}_E\subseteq \End^*_A(\Fock)$ is the Cuntz-Toeplitz algebra of $E$. Moreover, the action $\sigma^+$ preserves $T_{O_E}$ and is generated by the number operator in the sense that for $\mu\in E^k$, $\nu\in E^{\otimes l}$ and $K\in \Ko_{(\End^*_A(\Fock)\ox_A1)''}$, we have 
$$\sigma^+_s(T_\mu T_\nu^*+K)=\e^{is(|\mu|-|\nu|)}T_\mu T_\nu^*+\e^{isN}K\e^{-isN}.$$
\end{prop}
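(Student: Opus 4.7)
The plan is to identify the positive spectral projection $P_{\D_\psi}$ explicitly, use that left multiplication by creation operators $S_e$ preserves the Fock submodule of $\phimod$, and then promote compression to a $*$-homomorphism modulo $\Ko_\cN^+$ using the compactness estimate $[P_{\D_\psi},a]\in\Ko_\cN^+$. The main obstacle will be the reverse inclusion $T_{O_E}\subseteq\mathcal{T}_E\otimes_A 1_A+\Ko_\cN^+$; the opposite inclusion is immediate from the definitions.

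First I would establish that $P_{\D_\psi}$ is the $A$-linear projection onto the Fock module $\Fock\subseteq\phimod$. Since $\psi(n,n)=n\ge 0$ while $\psi(n,r)=-(2r-n)<0$ whenever $r>\max\{0,n\}$, the spectral decomposition $\D_\psi=\sum_{n,r}\psi(n,r)P_{n,r}$ immediately gives $P_{\D_\psi}=\sum_{n\ge 0}P_{n,n}$, and~\eqref{qnrqnr} shows $P_{n,n}$ is precisely the projection onto $E^{\otimes n}\subseteq\phimod$. Left multiplication by $S_e\in O_E$ maps $\Fock$ into $\Fock$ via $S_e\cdot S_\rho=S_{e\otimes\rho}$, so $P_{\D_\psi}S_eP_{\D_\psi}$ vanishes on $\Fock^\perp$ and restricts on $\Fock$ to the Fock-space creation operator, giving $P_{\D_\psi}S_eP_{\D_\psi}=T_e\otimes 1_A$. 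The inclusion $\mathcal{T}_E\otimes_A 1_A+\Ko_\cN^+\subseteq T_{O_E}$ then follows because $T_{O_E}$ is a $C^*$-algebra by Proposition~\ref{toepiscstar}, contains $\Ko_\cN^+$ by definition, and contains the generators $T_e\otimes 1_A=P_{\D_\psi}S_eP_{\D_\psi}\in P_{\D_\psi}O_EP_{\D_\psi}$ of $\mathcal{T}_E\otimes_A 1_A$.

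For the reverse inclusion, Lemma~\ref{bddcommsum} gives $[F_{\D_\psi},a]\in\Ko_\cN^+$ for all $a\in O_E$, hence $[P_{\D_\psi},a]=\tfrac12[F_{\D_\psi},a]\in\Ko_\cN^+$. A direct computation then yields
$$
P_{\D_\psi}abP_{\D_\psi}-(P_{\D_\psi}aP_{\D_\psi})(P_{\D_\psi}bP_{\D_\psi})=[P_{\D_\psi},a](1-P_{\D_\psi})bP_{\D_\psi}\in\Ko_\cN^+
$$
for all $a,b\in O_E$, so compression $a\mapsto P_{\D_\psi}aP_{\D_\psi}+\Ko_\cN^+$ is a $*$-homomorphism from $O_E$ to $\cN^+/\Ko_\cN^+$ that sends the generators $S_e$ into $\mathcal{T}_E\otimes 1_A+\Ko_\cN^+$. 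Since $\mathcal{T}_E\otimes_A 1_A+\Ko_\cN^+$ is a $C^*$-subalgebra of $\cN^+$ (being the sum of a $C^*$-subalgebra and a closed ideal), the image of all of $O_E$ under compression lies in $\mathcal{T}_E\otimes_A 1_A+\Ko_\cN^+$, yielding $T_{O_E}=P_{\D_\psi}O_EP_{\D_\psi}+\Ko_\cN^+\subseteq\mathcal{T}_E\otimes_A 1_A+\Ko_\cN^+$.

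For the flow formula, $P_{\D_\psi}\D_\psi P_{\D_\psi}=\sum_{n\ge 0}nP_{n,n}$ coincides with the number operator $N$ on $\Fock$, so $\e^{is\D_\psi}P_{\D_\psi}=\e^{isN}$ and $\sigma^+_s$ acts on $\cN^+$ by conjugation by the unitary $\e^{isN}$. Degree counting then gives $\e^{isN}T_\mu T_\nu^*\e^{-isN}=\e^{is(|\mu|-|\nu|)}T_\mu T_\nu^*$ for $\mu\in E^{\otimes k}$, $\nu\in E^{\otimes l}$, and conjugation by a unitary preserves $\Ko_\cN^+$; the stated formula follows and $T_{O_E}$ is visibly $\sigma^+$-invariant.
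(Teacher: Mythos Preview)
Your proof is correct and supplies exactly the details the paper omits: the paper merely asserts that the proposition ``follows from the definition of the Toeplitz algebra of a semifinite spectral triple'' without further argument, while you spell out the identification of $P_{\D_\psi}$ with the Fock projection, the compression-is-a-$*$-homomorphism-mod-compacts argument via Lemma~\ref{bddcommsum}, and the reduction of $\sigma^+$ to conjugation by $\e^{isN}$. This is the natural elaboration of the paper's implicit reasoning and uses precisely the tools the paper has set up (Proposition~\ref{toepiscstar}, Lemma~\ref{bddcommsum}, and the explicit form of $\D_\psi$).
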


An immediate consequence of Proposition \ref{toepliz} is that $O_E=T_{O_E}/ \Ko_{(\End^*_A(\Fock))''}$ and that the action $\sigma$ on $\O_E$ coincides with the gauge action $\sigma_s(S_\mu S_\nu^*)=\e^{is(|\mu|-|\nu|)}S_\mu S_\nu^*$. The following theorem is readily deduced from the computations of Example \ref{diraccphea} (see page \pageref{diraccphea}).

\begin{thm}
\label{kmscp}
Let $E$ be a strictly W-regular fgp bi-Hilbertian bimodule over $A$.
For any positive trace $\tau$ on $A$ which is critical for $E$ and any extended limit $\omega$ as $t\to\beta(E,\tau)$, 
the KMS-state $\phi_\omega$ on $O_E$ associated with 
the semifinite spectral triple $(\O_E,L^2(\phimod,\tau),\D_\psi,\cN_\tau(\Xi_A),\Tr_\tau)$ 
as in Theorem \ref{cor:phi-omega} (see page \pageref{cor:phi-omega}) is given by 
$$\phi_{\tau,\omega}(S_\mu S_\nu^*)=\delta_{|\mu|,|\nu|}\e^{-\beta(E,\tau)|\mu|}\lim_{t\to \omega}\frac{\sum_{n=0}^\infty\Tr^{E^{\ox n}}_\tau\left((\nu|\mu)_{E^{|\mu|}}\right)\e^{-tn}}{\sum_{n=0}^\infty \Tr^{E^{\ox n}}_\tau(1)\e^{-tn}} ,$$
where $\nu\in E^{\otimes k}$ and $\mu\in E^{\otimes l}$. 
The state $\phi_{\tau, \omega}$ is KMS for the gauge action on $O_E$ and 
its inverse temperature is $\beta(E,\tau)$.
\end{thm}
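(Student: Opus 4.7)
The plan is to verify that the semifinite spectral triple
$(\O_E,L^2(\phimod,\tau),\D_\psi,\cN_\tau(\Xi_A),\Tr_\tau)$ of Lemma~\ref{ximodsemi} satisfies every hypothesis of Corollary~\ref{cor:phi-omega}, and then to read off the closed form of $\phi_\omega$ from the heat--trace computation already performed in Example~\ref{diraccphea} together with cyclicity of $\tau$ and the right frame relation for $E$. Lemma~\ref{ximodsemi} gives $\mathrm{Li}_1$-summability. Proposition~\ref{posheatcomp} yields $\Tr_\tau(P_{\D_\psi}\e^{-t\D_\psi})=\sum_{n\ge 0}\e^{-tn}\tau_*(E^{\otimes_A n})$; by Definition~\ref{defn:critical} this converges precisely for $t>\beta(E,\tau)$ and diverges as $t\searrow\beta(E,\tau)$ because $\tau$ is critical, so $\D_\psi$ has positive $\Tr_\tau$-essential spectrum with $\beta_{\D_\psi}=\beta(E,\tau)$. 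Lemma~\ref{betacrit} supplies an analytically generating set at $\beta(E,\tau)$, so the triple is $\beta(E,\tau)$-analytic. Therefore Corollary~\ref{cor:phi-omega} produces a KMS state $\phi_\omega$ on $A_{\D_\psi}$ at inverse temperature $\beta(E,\tau)$ for the flow $\sigma$.

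Next I would identify $A_{\D_\psi}$ with $\O_E$ and $\sigma$ with the gauge action. This is immediate from Proposition~\ref{toepliz}: it shows $T_{\O_E,\D_\psi}=T_{\O_E}=\mathcal{T}_E\otimes_A 1+\Ko$, so $A_{\D_\psi}=T_{\O_E}/\Ko\cong \O_E$, and it identifies $\sigma^+_s(T_\mu T_\nu^*)=\e^{is(|\mu|-|\nu|)}T_\mu T_\nu^*$, which descends to the gauge action on $\O_E$. Consequently the state produced by Corollary~\ref{cor:phi-omega} is a $\beta(E,\tau)$-KMS state on $\O_E$ for the gauge flow, and is given by the explicit limit
$$
\phi_\omega(S_\mu S_\nu^*)=\lim_{t\to\omega}\frac{\Tr_\tau(P_{\D_\psi}S_\mu S_\nu^*\e^{-t|\D_\psi|})}{\Tr_\tau(P_{\D_\psi}\e^{-t|\D_\psi|})}.
$$

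It remains to simplify the numerator. Starting from the expression \eqref{tracpcocomcme} in Example~\ref{diraccphea}, the identity $(a\cdot x|b\cdot y)_{E^m}=(x|a^*b\cdot y)_{E^m}$ for left actions of $a,b\in A$ rewrites the summand as $\tau\bigl((e_{\overline{\sigma}}\,|\,(e_{\underline{\sigma}}|\mu)(\nu|e_{\underline{\sigma}})\cdot e_{\overline{\sigma}})_{E^{n-k}}\bigr)$, where $k=|\mu|=|\nu|$. Summing over $\overline{\sigma}$ assembles this into $\Tr_\tau^{E^{n-k}}$ applied to the left-multiplication operator $(e_{\underline{\sigma}}|\mu)(\nu|e_{\underline{\sigma}})$. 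Because $\Tr_\tau^{E^{n-k}}$ is a trace on the von Neumann algebra $(\End^*_A(E^{\otimes(n-k)})\otimes 1)''$ into which $A$ maps as left multiplications, I can cycle $(e_{\underline{\sigma}}|\mu)$ past $(\nu|e_{\underline{\sigma}})$, and then sum over $\underline{\sigma}$ using the right-frame identity $\sum_{|\underline{\sigma}|=k}(\nu|e_{\underline{\sigma}})(e_{\underline{\sigma}}|\mu)=(\nu|\mu)_{E^k}$. This collapses the sum to
$$
\Tr_\tau(P_{\D_\psi}S_\mu S_\nu^*\e^{-t|\D_\psi|})=\delta_{|\mu|,|\nu|}\e^{-tk}\sum_{n=0}^\infty\e^{-tn}\Tr_\tau^{E^{\otimes n}}\bigl((\nu|\mu)_{E^k}\bigr),
$$
after the shift $n\mapsto n+k$. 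Combining this with $\Tr_\tau^{E^{\otimes n}}(1)=\tau_*(E^{\otimes_A n})$ from Proposition~\ref{posheatcomp} and letting $t\to\omega$, with $\e^{-tk}\to\e^{-\beta(E,\tau)|\mu|}$, yields the claimed formula. The main point requiring care is the cyclicity step for $\Tr_\tau^{E^{\otimes(n-k)}}$ applied to left multiplications by elements of $A$, but it is a direct consequence of that $A\to\End^*_A(E^{\otimes(n-k)})$ is a $*$-homomorphism landing inside the von Neumann algebra on which $\Tr_\tau^{E^{\otimes(n-k)}}$ is a trace.
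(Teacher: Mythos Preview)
Your argument is correct and its overall architecture---verifying the hypotheses of Corollary~\ref{cor:phi-omega} via Lemma~\ref{ximodsemi}, Proposition~\ref{posheatcomp}, Definition~\ref{defn:critical} and Lemma~\ref{betacrit}, then identifying $A_{\D_\psi}\cong\O_E$ with the gauge flow via Proposition~\ref{toepliz}---matches the paper exactly. The difference lies in how you extract the closed formula. You work directly on the heat trace from Equation~\eqref{tracpcocomcme}: rewrite the summand via adjointability of the left action, recognise $\Tr_\tau^{E^{\otimes(n-k)}}$, invoke its tracial property on left multiplications by $A$, and then collapse the sum over $\underline{\sigma}$ with the frame relation. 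This is a legitimate direct computation. The paper instead uses the already-established KMS property as a shortcut: since $\phi_{\tau,\omega}$ is $\beta(E,\tau)$-KMS for the gauge action, one has $\phi_{\tau,\omega}(S_\mu S_\nu^*)=\delta_{|\mu|,|\nu|}\e^{-\beta(E,\tau)|\mu|}\phi_{\tau,\omega}(S_\nu^*S_\mu)=\delta_{|\mu|,|\nu|}\e^{-\beta(E,\tau)|\mu|}\phi_{\tau,\omega}((\nu|\mu)_{E^{|\mu|}})$, reducing everything to evaluating $\phi_{\tau,\omega}$ on $A$, where the heat-trace numerator is immediately $\sum_n\e^{-tn}\Tr_\tau^{E^{\otimes n}}((\nu|\mu))$ without any manipulation. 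Your route is self-contained and exhibits the structure explicitly; the paper's route is shorter and shows how the KMS condition, once in hand, does the algebraic work for you.
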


\begin{proof}
By Lemma \ref{betacrit} the semifinite spectral triple $(\O_E,L^2(\phimod,\tau),\D_\psi,\cN_\tau(\Xi_A),\Tr_\tau)$ is $\beta$-analytic for any $\beta\in \R$. By Definition~\ref{ass:minus-one} and Proposition~\ref{posheatcomp} it has positive $\Tr_\tau$-essential spectrum. Thus the state $\phi_{\tau, \omega}$ as in Theorem \ref{cor:phi-omega} is KMS for the gauge action on $O_E$ with inverse temperature $\beta(E,\tau)$. Using the computation in Equation \eqref{tracpcocomcme}, we see that 
$$\phi_{\tau,\omega}(S_\mu S_\nu^*)=\delta_{|\mu|,|\nu|}\lim_{t\to \omega}\frac{\sum_{n=|\mu|}^\infty\sum_{|\sigma|=n} \e^{-tn}\tau \big((\,(\mu|e_{\underline{\sigma}})_{E^{|\mu|}}\,e_{\overline{\sigma}}\,|\,(\nu|e_{\underline{\sigma}})_{E^{|\mu|}}\,e_{\overline{\sigma}})_{E^{\otimes (n-|\mu|)}}\big)}{\sum_{n=0}^\infty \sum_{|\rho|=n} \tau((e_\rho|e_\rho)_A)\e^{-tn}} ,$$
for $\nu\in E^{\otimes k}$ and $\mu\in E^{\otimes l}$. 
However, this expression can be vastly simplified using that $\phi_{\tau,\omega}$ is KMS. Using
$$
\phi_{\tau,\omega}(S_\mu S_\nu^*)=\delta_{|\mu|,|\nu|}\e^{-\beta(E,\tau)|\mu|}\phi_{\tau,\omega}(S_\nu^*S_\mu)=\delta_{|\mu|,|\nu|}\e^{-\beta(E,\tau)|\mu|}\phi_{\tau,\omega}((\nu|\mu)_{E^{\otimes |\mu|}}),
$$
we obtain
\begin{align*}
\phi_{\tau,\omega}(S_\mu S_\nu^*)&=\delta_{|\mu|,|\nu|}\e^{-\beta(E,\tau)|\mu|}\phi_\omega((\nu|\mu)_{E^{\otimes |\mu|}})\\
&=\delta_{|\mu|,|\nu|}\e^{-\beta(E,\tau)|\mu|}\lim_{t\to \omega}\frac{\sum_{n=0}^\infty\sum_{|\sigma|=n} \e^{-tn}\tau \big((\,e_{\sigma}\,|\,(\nu|\mu)_{E^{|\mu|}}\,e_{\sigma})_{E^{\otimes n}}\big)}{\sum_{n=0}^\infty \sum_{|\rho|=n} \tau((e_\rho|e_\rho)_A)\e^{-tn}}\\
&=\delta_{|\mu|,|\nu|}\e^{-\beta(E,\tau)|\mu|}\lim_{t\to \omega}\frac{\sum_{n=0}^\infty\Tr^{E^{\ox n}}_\tau\left((\nu|\mu)_{E^{|\mu|}}\right)\e^{-tn}}{\sum_{n=0}^\infty \Tr^{E^{\ox n}}_\tau(1)\e^{-tn}}.
\end{align*}
\end{proof}

The reader should note that in the formula computing $\phi_{\tau,\omega}$, it is only the right inner product on $E$ that appears.

\begin{example}
\label{localhomeoex}
Let us consider the construction from Theorem \ref{kmscp} in a specific example. As in Example \ref{localhomeofirst} (see page \pageref{localhomeofirst}), we consider a compact Hausdorff space $Y$, a surjective local homeomorphism $g:Y\to Y$ and the associated bimodule $E_g$. 
Let us compute $\phi_{\tau,\omega}$ starting from a positive trace $\tau$ on $C(Y)$, i.e. a positive measure $\lambda$ on $Y$. By the argument of Theorem \ref{kmscp}, the KMS-condition on $\phi_{\tau,\omega}$ guarantees that it suffices to describe $\phi_{\tau,\omega}(a)$ for $a\in C(Y)$. By the Riesz representation theorem, 
$$
\phi_{\tau,\omega}(a)=\int_Y a\,\mathrm{d}\lambda_\omega,
$$
for a probability measure $\lambda_\omega$. 
We compute that for $a\in C(Y)$,
$$
\Tr^{E^{\ox n}}_\tau(a)=\sum_{|\sigma|=n}\tau( (e_\sigma|ae_\sigma)_{C(Y)})
=\int_Y \mathfrak{L}_g^n(a)\,\mathrm{d}\lambda
=\int_Y a\,\mathrm{d}[(\mathfrak{L}_g^n)_*\lambda].$$
From Theorem \ref{kmscp}, we conclude that $\lambda_{\omega}$ is given by an extended weak* limit of measures
$$\lambda_\omega=\lim_{t\to \omega}\frac{\sum_{n=0}^\infty \e^{-tn}(\mathfrak{L}_g^n)_*\lambda}{\sum_{n=0}^\infty \e^{-tn}[(\mathfrak{L}_g^n)_*\lambda](Y)}.$$
The KMS-condition on $\phi_{\tau,\omega}$ translates into $(\mathfrak{L}_g)_*\lambda_\omega=\e^{\beta(E,\tau)}\lambda_\omega$ which is readily verified for the measure $\lambda_\omega$.

We remark that the construction above is reminiscent of the method in \cite{waltersruelle} to construct equilibrium measures. In the case that $g$ is mixing, i.e. for all open subsets $U,V\subseteq Y$ there is an $N\geq 0$ such that $g^n(U)\cap V\neq \emptyset$ for all $n\geq N$, then there exists a unique KMS-state on $O_{E_g}$ (see \cite[Theorem 6.1]{DGMW}). In particular, for mixing $g$, the KMS-state $\phi_{\tau,\omega}$ on $O_{E_g}$ does not depend on the choice of trace $\tau$.
\end{example}

\subsection{The Toeplitz construction vs the Laca-Neshveyev correspondence}
\label{subsec:T-vs-LN}

In the previous subsection we saw that there is a mapping from the set of positive critical traces 
on a $C^*$-algebra $A$ to the set of KMS-states on the Cuntz-Pimsner algebra 
$O_E$ when $E$ is strictly W-regular. 
As Example \ref{localhomeoex} shows, this mapping is not injective in general, but 
it is surjective in some cases (e.g. when $g$ is mixing). 
We now compare our construction to the bijective correspondence between 
a certain set of tracial states on $A$ with KMS-states on the Cuntz-Pimsner algebra $O_E$ 
first discovered by Laca-Neshveyev \cite{LN}.

\begin{defn}
\label{ass:3.5}
The positive trace $\tau:A\to\C$ satisfies the Laca-Neshveyev condition for $\alpha\geq 0$ if 
$$
\Tr^E_\tau(L_a)=\e^\alpha\tau(a),
$$
where $L_a$ denotes the left action of $a$ on $E$.
\end{defn}

For notational simplifity we often write $\Tr^E_\tau(a)$ instead of $\Tr^E_\tau(L_a)$. Given a positive trace $\tau:A\to\C$ satisfying the Laca-Neshveyev condition, it was proven by Laca-Neshveyev \cite[Theorem 2.1 and 2.5]{LN} that the expression
$$
\phi_{LN,\tau}(S_\mu S_\nu^*):=
\delta_{|\mu|,|\nu|}\mathrm{e}^{-\alpha|\mu|}\tau((\nu|\mu)_A), 
$$
defines an $\alpha$-KMS state on $O_E$. Moreover, Laca-Neshveyev proved that the construction $\tau\mapsto \phi_{LN,\tau}$ is a bijection between tracial states on $A$ satisfying the Laca-Neshveyev condition for $\alpha\geq 0$ and $\alpha$-KMS states on $O_E$.

The work of Laca-Neshveyev \cite{LN} gives more context to the construction in Theorem \ref{kmscp} (see page \pageref{kmscp}). For a unital $C^*$-algebra $A$, we let $\mathfrak{T}(A)$ denote the set of positive traces on $A$. If $E$ is an $A-A$-correspondence which is finitely generated and projective as a right module, we can also define $\mathfrak{CT}_{E,\alpha}(A)$ as the set of positive critical traces $\tau$ with $\beta(E,\tau)=\alpha$. We also define $\mathfrak{LN}_{E,\alpha}(A)$ as the set of positive traces satisfying the Laca-Neshveyev condition.

Following \cite[Discussion proceeding Definition 2.3]{LN}, we define 
\begin{equation}\label{eq:F}
F_{E,\alpha}:\mathfrak{T}(A)\to \mathfrak{T}(A), \quad F_{E,\alpha}\tau(a)=\Tr^E_\tau(a)\e^{-\alpha}.
\end{equation}

\begin{prop}
\label{tracecompforf}
For any positive trace $\tau$ on a unital $C^*$-algebra and an $A$-$A$-correspondence $E$ which is fgp from the right, it holds that 
$$F^n_{E,\alpha}\tau(a)=\e^{-\alpha n}\Tr^{E^{\ox n}}_\tau(a), \quad n\in \N_+.$$
\end{prop}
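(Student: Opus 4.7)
The proof plan is by induction on $n$, where the base case $n=1$ is nothing but the definition of $F_{E,\alpha}$ (recall $\Tr^E_\tau(a) := \Tr^E_\tau(L_a) = \sum_j \tau((e_j|ae_j)_A)$ via a right frame $(e_j)$ of $E$). The inductive step will follow from unwinding the definition of $\Tr^E_\sigma$ for the intermediate trace $\sigma := F^n_{E,\alpha}\tau$, applying the induction hypothesis, and then repackaging the resulting sum as a single trace on $E^{\otimes(n+1)}$ using the natural frame on iterated tensor powers.

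Assuming $F^n_{E,\alpha}\tau(a) = \e^{-\alpha n}\Tr^{E^{\otimes n}}_\tau(a)$, I would write
\begin{align*}
F^{n+1}_{E,\alpha}\tau(a) &= \e^{-\alpha}\Tr^{E}_{F^n_{E,\alpha}\tau}(a)
= \e^{-\alpha}\sum_j \bigl(F^n_{E,\alpha}\tau\bigr)\bigl((e_j|ae_j)_A\bigr) \\
&= \e^{-\alpha(n+1)} \sum_j \Tr^{E^{\otimes n}}_\tau\bigl((e_j|ae_j)_A\bigr)
= \e^{-\alpha(n+1)}\sum_j \sum_{|\rho|=n}\tau\bigl((e_\rho|(e_j|ae_j)_Ae_\rho)_A\bigr).
\end{align*}
Here the second equality uses the formula for $\Tr^E_\sigma$ in terms of the right frame $(e_j)$, and the third equality invokes the induction hypothesis on the positive element $(e_j|ae_j)_A\in A$.

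The last step is the key combinatorial observation: if $(e_j)$ is a right frame for $E$ and $(e_\rho)_{|\rho|=n}$ the induced right frame on $E^{\otimes n}$, then $(e_j\otimes e_\rho)_{j,|\rho|=n}$ is a right frame on $E^{\otimes(n+1)}$ indexed by multi-indices $\sigma$ of length $n+1$. By the compatibility of right inner products under tensoring,
\[
(e_\rho \mid (e_j|ae_j)_A e_\rho)_A = (e_j\otimes e_\rho \mid a e_j\otimes e_\rho)_A = (e_\sigma\mid ae_\sigma)_A.
\]
Summing this identity over all $(j,\rho)$ therefore produces exactly $\sum_{|\sigma|=n+1}\tau((e_\sigma|ae_\sigma)_A) = \Tr^{E^{\otimes(n+1)}}_\tau(a)$, completing the induction.

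There is no substantive obstacle here; the only bookkeeping point to verify is that $\Tr^{E^{\otimes n}}_\tau$ is independent of the choice of frame (so that the particular choice $(e_\rho)_{|\rho|=n}$ coming from tensoring is allowed), which follows from the fact that $\Phi_k$ is frame-independent as recorded in the excerpt.
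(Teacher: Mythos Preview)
Your proof is correct and follows essentially the same approach as the paper: the paper's proof simply states that ``a direct computation shows'' the formula $F^n_{E,\alpha}\tau(a)=\e^{-\alpha n}\sum_{|\sigma|=n}\tau((e_\sigma|ae_\sigma)_A)$, and your induction is precisely that computation spelled out. One tiny quibble: $(e_j|ae_j)_A$ need not be positive unless $a$ is, but since the induction hypothesis holds for arbitrary $a\in A$ this is harmless.
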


\begin{proof}
By definition, $F_{E,\alpha}\tau(a)=\e^{-\alpha}\sum_{j=1}^N \tau ( e_j|ae_j)_A$ for a right frame $(e_j)_{j=1}^N$ of $E$. A direct computation shows that 
\[
F^n_{E,\alpha}\tau(a)=\e^{-\alpha n}\sum_{|\sigma|=n}\tau (e_\sigma|ae_\sigma)_A=\e^{-\alpha n}\Tr^{E^{\ox n}}_\tau(a).
\qedhere
\]
\end{proof}

\begin{prop}
\label{lnequitofixed}
Let $E$ be an $A$-$A$-correspondence which is fgp from the right. Then $\tau\in \mathfrak{T}(A)$ is a fixed point of $F_{E,\alpha}$ if and only if $\tau$ satisfies the Laca-Neshveyev condition from Definition \ref{ass:3.5} (see page \pageref{ass:3.5}).
\end{prop}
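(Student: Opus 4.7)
The proof is essentially a direct unwinding of the two definitions, so my plan is very short. First I would write down what it means for $\tau$ to be a fixed point of $F_{E,\alpha}$: namely, that for all $a\in A$,
$$F_{E,\alpha}\tau(a)=\tau(a),$$
which by the defining formula \eqref{eq:F} is the equality $\e^{-\alpha}\Tr^E_\tau(L_a)=\tau(a)$, equivalently
$$\Tr^E_\tau(L_a)=\e^\alpha\tau(a).$$

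Next, I would observe that this is exactly the Laca–Neshveyev condition as stated in Definition \ref{ass:3.5}. Since the equivalence holds for all $a\in A$ on each side simultaneously, the two conditions coincide, giving both implications at once.

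There is no real obstacle here; the only thing to check is that the calculation $F_{E,\alpha}\tau(a)=\e^{-\alpha}\Tr^E_\tau(L_a)$ comes directly from Equation \eqref{eq:F} and does not require any further hypothesis on $E$ beyond being fgp from the right (needed only to make $\Tr^E_\tau$ well-defined via a finite right frame, independently of the choice of frame, as in the discussion preceding Proposition \ref{tracecompforf}). Thus the proof is two lines of rewriting.
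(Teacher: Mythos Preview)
Your proposal is correct and matches the paper's own treatment: the paper simply states that the proposition is a direct consequence of Definition~\ref{ass:3.5} and formula~\eqref{eq:F}, which is exactly the two-line rewriting you describe.
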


Proposition \ref{lnequitofixed} is a direct consequence of Definition~\ref{ass:3.5} and the formula~\eqref{eq:F}.

\begin{prop}
\label{quasicompu}Let $E$ be an $A$-$A$-correspondence which is fgp from the right. 
If $\tau\in \mathfrak{T}(A)$ is a tracial state satisfying the Laca-Neshveyev condition for $\alpha\geq 0$, then 
$$
\tau_*(E^{\otimes_A n})=\e^{\alpha n} F_{E,\alpha}^n\tau(1)=\e^{\alpha n}.
$$
\end{prop}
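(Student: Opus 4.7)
The plan is to combine the three preceding propositions essentially verbatim. The second equality $\e^{\alpha n} F_{E,\alpha}^n\tau(1)=\e^{\alpha n}$ is immediate: Proposition \ref{lnequitofixed} identifies traces satisfying the Laca-Neshveyev condition with fixed points of $F_{E,\alpha}$, so $F_{E,\alpha}^n\tau = \tau$, and since $\tau$ is a tracial state we have $\tau(1)=1$.

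The first equality $\tau_*(E^{\otimes_A n}) = \e^{\alpha n} F_{E,\alpha}^n\tau(1)$ reduces, via Proposition \ref{tracecompforf}, to showing
\begin{equation*}
\tau_*(E^{\otimes_A n}) = \Tr^{E^{\otimes n}}_\tau(1).
\end{equation*}
But this is precisely the computation carried out in the proof of Proposition \ref{posheatcomp}: choosing a right frame $(e_j)_{j=1}^N$ of $E$, the induced frame $(e_\rho)_{|\rho|=n}$ of $E^{\otimes n}$ gives a representing projection $p_{E^{\otimes_A n}} = ((e_\mu|e_\nu)_A)_{|\mu|=|\nu|=n}\in M_{N^n}(A)$, whence
\begin{equation*}
\tau_*(E^{\otimes_A n}) = (\tau\otimes \Tr_{M_{N^n}})(p_{E^{\otimes_A n}}) = \sum_{|\rho|=n}\tau((e_\rho|e_\rho)_A) = \Tr^{E^{\otimes n}}_\tau(1).
\end{equation*}

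Chaining the three identities gives $\tau_*(E^{\otimes_A n}) = \Tr^{E^{\otimes n}}_\tau(1) = \e^{\alpha n}F_{E,\alpha}^n\tau(1) = \e^{\alpha n}$. There is no serious obstacle here; the proposition is essentially a bookkeeping lemma that translates the Laca-Neshveyev fixed point condition into the $K$-theoretic pairing $\tau_*(E^{\otimes_A n})$ that controls the heat trace asymptotics via Proposition \ref{posheatcomp}, and the latter is what makes the Laca-Neshveyev condition transparently equivalent to criticality with $\beta(E,\tau)=\alpha$ (a point we expect to be exploited in the ensuing discussion comparing $\phi_{\tau,\omega}$ with $\phi_{LN,\tau}$).
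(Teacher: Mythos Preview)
Your proof is correct and follows the same route as the paper: both use the identification $\tau_*(E^{\otimes_A n})=\Tr_\tau^{E^{\otimes n}}(1)$ from the proof of Proposition \ref{posheatcomp}, Proposition \ref{tracecompforf} to rewrite this as $\e^{\alpha n}F_{E,\alpha}^n\tau(1)$, and Proposition \ref{lnequitofixed} together with $\tau(1)=1$ to conclude. The paper's version is more terse but the content is identical.
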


\begin{proof}
The proposition follows from the computation 
$$
\tau_*(E^{\otimes_A n})=\mathrm{Tr}_\tau^{E^{\otimes n}}(1)=\e^{\alpha n}F_{E,\alpha}^n(\tau)(1),
$$
and Proposition \ref{lnequitofixed}.
\end{proof}

\begin{prop}
\label{someconforf}
Let $E$ be an $A$-$A$-correspondence which is fgp from the right and $\alpha\geq 0$. Then the following holds:
\begin{enumerate}
\item A positive trace $\tau\in \mathfrak{T}(A)$ is critical for $\alpha$ if and only if the positive trace 
$$S_{E,\alpha}^t\tau:=\sum_{n=0}^\infty \e^{-tn}F_{E,\alpha}^n\tau,$$ 
is finite for $t>0$ and satisfies $S_{E,\alpha}^t\tau(1)\to \infty$ as $t\to 0$. In particular, we have an inclusion of sets
$$\mathfrak{LN}_{E,\alpha}(A)\subseteq \mathfrak{CT}_{E,\alpha}(A).$$
\item For any extended limit $\omega$ as $t\to 0$, the mapping 
$$S_{E,\alpha}^\omega:\mathfrak{CT}_{E,\alpha}(A)\to \mathfrak{LN}_{E,\alpha}(A), \quad S_{E,\alpha}^\omega\tau:=\lim_{t\to \omega}\frac{S_{E,\alpha}^t\tau}{S_{E,\alpha}^t\tau(1)},$$
is well defined. Moreover, $(S_{E,\alpha}^\omega)^2\tau=\frac{S_{E,\alpha}^\omega\tau}{S_{E,\alpha}^\omega\tau(1)}$ and $S^\omega_{E,\alpha}$ surjects onto the set of tracial states in $\mathfrak{LN}_{E,\alpha}(A)$.
\end{enumerate}
\end{prop}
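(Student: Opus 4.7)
For part (1), I would apply Proposition \ref{tracecompforf} to rewrite
$$S^t_{E,\alpha}\tau(1) = \sum_{n=0}^\infty \tau_*(E^{\otimes_A n})\,\e^{-(t+\alpha)n}.$$
Finiteness for all $t>0$ is then equivalent to $\alpha\geq \beta(E,\tau)$, while divergence as $t\searrow 0$ forces $\sum_n \tau_*(E^{\otimes_A n})\e^{-\alpha n}=\infty$. Combined, these two conditions give $\alpha = \beta(E,\tau)$ with $\tau$ critical, and conversely. For the inclusion $\mathfrak{LN}_{E,\alpha}(A)\subseteq \mathfrak{CT}_{E,\alpha}(A)$, any non-zero Laca--Neshveyev trace satisfies $\tau_*(E^{\otimes_A n}) = \e^{\alpha n}\tau(1)$ by Proposition \ref{quasicompu}, so the series above becomes a rescaled geometric series with critical exponent exactly $\alpha$ and which diverges there.

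For part (2), I would first note that $F_{E,\alpha}$ is linear and preserves positive traces, since it factors as $\tau\mapsto \tau\circ\Phi\circ L$ where $L\colon A\to\End^*_A(E)$ is the left action ($*$-homomorphism) and $\Phi\colon\End^*_A(E)\to A$ is the standard conditional expectation $T\mapsto \sum_j(e_j|Te_j)_A$ (cf.\ \cite{LN}). Thus each $S^t_{E,\alpha}\tau$ is a positive trace, its normalization is a tracial state, and any extended limit $S^\omega_{E,\alpha}\tau$ is therefore a tracial state. The Laca--Neshveyev condition follows from the re-indexing identity
$$F_{E,\alpha}(S^t_{E,\alpha}\tau) = \sum_{n=0}^\infty \e^{-tn}F^{n+1}_{E,\alpha}\tau = \e^t\big(S^t_{E,\alpha}\tau - \tau\big),$$
after dividing by $S^t_{E,\alpha}\tau(1)$ and passing to $t\to\omega$: the term $\e^t\tau/S^t_{E,\alpha}\tau(1)$ vanishes pointwise on $A$ because the denominator tends to infinity while $\tau$ is fixed, and $\e^t\to 1$. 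This yields $F_{E,\alpha}(S^\omega_{E,\alpha}\tau) = S^\omega_{E,\alpha}\tau$, which is the Laca--Neshveyev condition by Proposition \ref{lnequitofixed}.

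Finally, once $\tau' := S^\omega_{E,\alpha}\tau$ is Laca--Neshveyev, one has $F^n_{E,\alpha}\tau' = \tau'$ for all $n$, hence $S^t_{E,\alpha}\tau' = \tau'/(1-\e^{-t})$; dividing by $S^t_{E,\alpha}\tau'(1) = \tau'(1)/(1-\e^{-t})$ and applying $\omega$ gives $S^\omega_{E,\alpha}\tau' = \tau'/\tau'(1)$, which is the asserted idempotence relation. The same geometric-series computation applied to a tracial state $\tau\in\mathfrak{LN}_{E,\alpha}(A)$ (which lies in $\mathfrak{CT}_{E,\alpha}(A)$ by part (1)) gives $S^\omega_{E,\alpha}\tau = \tau$ because $\tau(1)=1$, proving the surjectivity onto tracial states. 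The main subtlety is justifying the interchange of $F_{E,\alpha}$ with the extended limit, but this reduces to the observation that for each fixed $a\in A$, $F_{E,\alpha}\mu(a)$ is a finite linear combination of values of $\mu$ on fixed elements of $A$, so pointwise (weak$^*$) convergence is manifestly preserved.
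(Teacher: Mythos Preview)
Your proof is correct and follows essentially the same approach as the paper: both argue via the re-indexing identity $S^t_{E,\alpha}\tau - \e^{-t}F_{E,\alpha}S^t_{E,\alpha}\tau = \tau$ (the paper writes this slightly imprecisely as $S^t_{E,\alpha}\tau(1)-F_{E,\alpha}S^t_{E,\alpha}\tau(1)=\tau(1)$, but the intended content is the same after noting $\e^{-t}\to 1$), and both reduce the idempotence and surjectivity to the geometric-series computation $S^t_{E,\alpha}\tau = (1-\e^{-t})^{-1}\tau$ for $\tau\in\mathfrak{LN}_{E,\alpha}(A)$. Your treatment is in fact more complete: you spell out part~(1) via Proposition~\ref{tracecompforf} (the paper only cites Proposition~\ref{quasicompu}, which strictly speaking covers just the inclusion $\mathfrak{LN}_{E,\alpha}(A)\subseteq\mathfrak{CT}_{E,\alpha}(A)$), and you explicitly justify why $F_{E,\alpha}$ commutes with the extended limit, a point the paper leaves implicit.
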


\begin{proof}
Statement 1 is an immediate consequence of Proposition \ref{quasicompu}. The first part of statement 2 follows from the computation 
$$S_{E,\alpha}^t\tau(1)-F_{E,\alpha}S_{E,\alpha}^t\tau(1)=\tau(1)=o(S_{E,\alpha}^t\tau(1)), \quad \mbox{as $t\to 0$ for $\tau \in  \mathfrak{CT}_{E,\alpha}(A)$}.$$
The second part of statement 2 follows from the fact that Proposition \ref{lnequitofixed} implies that for $\tau\in \mathfrak{LN}_{E,\alpha}(A)$, 
\[
S_{E,\alpha}^t\tau:=(1-\e^{-t})^{-1}\tau.
\qedhere
\]
\end{proof}

\begin{rmk}
For Example \ref{localhomeoex}, the mapping $F_{E,\alpha}$ takes the form $F_{E,\alpha}\tau=\e^{-\alpha}\mathfrak{L}_g^*\tau$. In particular, the computations of Example \ref{localhomeoex} is a special case of the constructions in Proposition \ref{someconforf}. We shall see that this holds in general below in Proposition \ref{restricprop}.
\end{rmk}

Using our previous results, Proposition \ref{posheatcomp} (see page \pageref{posheatcomp}) and Proposition \ref{quasicompu} (see page \pageref{quasicompu}), we can deduce a computation of heat traces.

\begin{prop}
\label{specasuln}
Let $E$ be a strictly W-regular fpg bi-Hilbertian bimodule over a unital $C^*$-algebra $A$. If $\tau$ is a tracial state on $A$ satisfying the Laca-Neshveyev condition for $\alpha\geq 0$, 
then 
$$
\Tr_\tau(P_\D\e^{-t|\D_\psi|})=\frac{1}{1-\e^{\alpha-t}}.
$$
In particular, for any tracial state $\tau$ satisfying the Laca-Neshveyev condition for $\alpha\geq 0$ the semifinite spectral triple $(\O_E,L^2(\phimod,\tau),\D_\psi,\cN_\tau(\Xi_A),\Tr_\tau)$ has positive $\Tr_\tau$-essential spectrum with $\beta_\D=\beta(E,\tau)=\alpha$.
\end{prop}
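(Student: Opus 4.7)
The plan is to assemble this result almost entirely from the pieces already established in Section \ref{diraccpkms}. The two key ingredients are Proposition \ref{posheatcomp}, which expresses the positive heat trace purely in terms of the $K$-theoretic trace values of tensor powers of $E$, and Proposition \ref{quasicompu}, which evaluates those trace values explicitly when $\tau$ satisfies the Laca-Neshveyev condition.

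First, I would invoke Proposition \ref{posheatcomp} to write
\[
\Tr_\tau(P_{\D_\psi}\e^{-t|\D_\psi|})=\sum_{n=0}^\infty \e^{-tn}\,\tau_*(E^{\otimes_A n}).
\]
Next, since $\tau$ satisfies the Laca-Neshveyev condition for $\alpha\geq 0$, Proposition \ref{quasicompu} gives $\tau_*(E^{\otimes_A n})=\e^{\alpha n}$. Substituting yields a geometric series
\[
\Tr_\tau(P_{\D_\psi}\e^{-t|\D_\psi|})=\sum_{n=0}^\infty \e^{(\alpha-t)n}=\frac{1}{1-\e^{\alpha-t}},
\]
valid precisely for $t>\alpha$, establishing the main identity.

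For the ``In particular'' statement, the closed form makes everything transparent. Reading off from the geometric series, $\Tr_\tau(P_{\D_\psi}\e^{-t|\D_\psi|})$ is finite exactly when $t>\alpha$ and diverges to $+\infty$ as $t\searrow\alpha$, so by Definition \ref{ass:minus-one} the operator $\D_\psi$ has positive $\Tr_\tau$-essential spectrum with $\beta_\D=\alpha$. The equality $\beta(E,\tau)=\alpha$ then follows from Definition \ref{defn:critical}: the defining series $\sum_{n\geq 0}\tau_*(E^{\otimes_A n})\e^{-tn}$ is literally the same geometric sum $\sum_{n\geq 0}\e^{(\alpha-t)n}$, which converges iff $t>\alpha$ and diverges in the limit $t\searrow\alpha$, showing both that $\beta(E,\tau)=\alpha$ and that $\tau$ is critical for $E$.

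There is essentially no obstacle here; the entire content of the proposition has been distilled into the two cited earlier results, and the remaining work is recognizing a geometric series and matching definitions. The only subtlety worth mentioning explicitly is that strict W-regularity is needed to even apply Proposition \ref{posheatcomp} (which in turn rests on Lemma \ref{ximodsemi}), but this hypothesis is built into the statement.
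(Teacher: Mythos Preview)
Your proof is correct and follows essentially the same approach as the paper: invoke Proposition \ref{posheatcomp} to reduce to $\sum_{n\geq 0}\tau_*(E^{\otimes_A n})\e^{-tn}$, then apply Proposition \ref{quasicompu} to identify $\tau_*(E^{\otimes_A n})=\e^{\alpha n}$ and sum the resulting geometric series. Your treatment of the ``In particular'' clause is slightly more explicit than the paper's, but the argument is the same.
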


\begin{proof}
We compute that 
\begin{align*}
\Tr_\tau(P_\D\e^{-t|\D_\psi|})&=\sum_{n=0}^\infty \tau_*(E^{\otimes_A n})\e^{-tn}=\sum_{n=0}^\infty \e^{-(t-\alpha)n}=\frac{1}{1-\e^{\alpha-t}}.
\end{align*}
In the first step we used Proposition \ref{posheatcomp} and in the second step we used Proposition \ref{quasicompu}.
\end{proof}

We can now reformulate Theorem \ref{kmscp} in terms of the map $F_{E,\alpha}$ and the constructions of Proposition \ref{someconforf}.

\begin{prop}
\label{restricprop}
Assume that $E$ is a strictly W-regular fgp bi-Hilbertian bimodule over $A$. Let $\alpha\geq 0$, $\omega$ be an extended limit as $t\to \alpha$ and $\tau\in \mathfrak{CT}_{E,\alpha}(A)$ a critical trace. 
The KMS-state $\phi_{\tau,\omega}$ defined from Theorem \ref{kmscp} takes the following form:
$$\phi_{\tau,\omega}(S_\mu S_\nu^*)=\delta_{|\mu|,|\nu|} \e^{-\alpha|\mu|}S_{E,\alpha}^{\omega_\alpha}\tau((\nu|\mu)_A),\quad \mu,\nu \in \Fock^{\rm alg},$$
where $\omega_\alpha$ is the extended limit at $0$ obtained from translating $\omega$ by $\alpha$.
\end{prop}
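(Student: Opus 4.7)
The plan is to perform a direct translation from the formula for $\phi_{\tau,\omega}$ in Theorem~\ref{kmscp} into the language of the operator $F_{E,\alpha}$, using Proposition~\ref{tracecompforf}. Since the hypothesis $\tau\in\mathfrak{CT}_{E,\alpha}(A)$ unpacks to $\beta(E,\tau)=\alpha$, the exponential prefactor in Theorem~\ref{kmscp} is already $\e^{-\alpha|\mu|}$, so the only remaining work is to identify the ratio of heat sums with $S^{\omega_\alpha}_{E,\alpha}\tau((\nu|\mu)_A)$.

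First I would apply Proposition~\ref{tracecompforf} to rewrite, for every $a\in A$ and $n\in\N$,
$$\Tr^{E^{\otimes n}}_\tau(a)=\e^{\alpha n}F^n_{E,\alpha}\tau(a).$$
Substituting this into both numerator and denominator of the limit in Theorem~\ref{kmscp} produces
$$\phi_{\tau,\omega}(S_\mu S_\nu^*)=\delta_{|\mu|,|\nu|}\e^{-\alpha|\mu|}\lim_{t\to\omega}\frac{\sum_{n=0}^\infty \e^{-(t-\alpha)n}F^n_{E,\alpha}\tau((\nu|\mu)_A)}{\sum_{n=0}^\infty \e^{-(t-\alpha)n}F^n_{E,\alpha}\tau(1)}.$$

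Next I would perform the change of variables $s:=t-\alpha$. Since $\omega$ is an extended limit as $t\to\alpha$, by definition the translated functional $\omega_\alpha$ is the corresponding extended limit as $s\to 0$; concretely, for a bounded function $f$ on a neighbourhood of $0$, $\lim_{s\to\omega_\alpha}f(s)=\lim_{t\to\omega}f(t-\alpha)$. Using the definition of $S^s_{E,\alpha}\tau=\sum_{n=0}^\infty\e^{-sn}F^n_{E,\alpha}\tau$ from Proposition~\ref{someconforf}, the ratio under the limit is precisely $S^s_{E,\alpha}\tau((\nu|\mu)_A)/S^s_{E,\alpha}\tau(1)$. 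Taking the extended limit as $s\to 0$ with respect to $\omega_\alpha$ and invoking Proposition~\ref{someconforf}(2) (which guarantees that this limit defines $S^{\omega_\alpha}_{E,\alpha}\tau\in\mathfrak{LN}_{E,\alpha}(A)$, so in particular the limit exists and is well behaved), we obtain
$$\phi_{\tau,\omega}(S_\mu S_\nu^*)=\delta_{|\mu|,|\nu|}\e^{-\alpha|\mu|}S^{\omega_\alpha}_{E,\alpha}\tau((\nu|\mu)_A),$$
which is the desired formula.

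The argument is essentially algebraic bookkeeping, so there is no genuine obstacle; the only point requiring care is verifying that the extended limit $\omega_\alpha$ obtained by translating $\omega$ by $\alpha$ is indeed compatible with the setup of Proposition~\ref{someconforf} (where $S^\omega_{E,\alpha}$ is defined for extended limits as $t\to 0$), which follows immediately from the fact that criticality of $\tau$ ensures $S^s_{E,\alpha}\tau(1)\nearrow\infty$ as $s\searrow 0$, exactly matching the hypothesis on $\omega_\alpha$.
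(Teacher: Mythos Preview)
Your proposal is correct and follows essentially the same route as the paper: both arguments substitute Proposition~\ref{tracecompforf} into the formula from Theorem~\ref{kmscp}, factor $\e^{\alpha n}$ out of numerator and denominator, and recognise the resulting ratio (after the shift $s=t-\alpha$) as $S^{\omega_\alpha}_{E,\alpha}\tau((\nu|\mu)_A)$. The only cosmetic difference is that the paper first invokes the KMS condition to reduce to elements $a\in A$ before computing, whereas you plug $a=(\nu|\mu)_A$ directly into the formula of Theorem~\ref{kmscp}; since that formula already has the inner product in place, your shortcut is perfectly legitimate.
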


\begin{proof}
The KMS-condition on $O_E$ reduces the proof to showing that $\phi_{\tau,\omega}(a)=S_{E,\alpha}^{\omega_\alpha}\tau(a)$ for $a\in A$, just as in the proof of Theorem \ref{kmscp}. Using Proposition \ref{tracecompforf} we can compute for $a\in A$ that
\begin{align*}
\phi_{\tau,\omega}(a)
&=\lim_{t\to \omega_0}
\frac{\sum_{n=0}^\infty \Tr_\tau^{E^{\ox n}}(a)\e^{-tn}}{\sum_{n=0}^\infty\Tr_\tau^{E^{\ox n}}(1)\e^{-tn}}=\lim_{t\to \omega}
\frac{\sum_{n=0}^\infty F_{E,\alpha}^n\tau(a)\e^{-(t-\alpha)n}}{\sum_{n=0}^\infty F_{E,\alpha}^n\tau(1)\e^{-(t-\alpha)n}}=S_{E,\alpha}^{\omega_\alpha}\tau(a).
\qedhere
\end{align*}
\end{proof}

Let $KMS_\alpha(O_E)$ denote the set of $\alpha$-KMS states on $O_E$ for the gauge action and $\mathfrak{LNS}_{E,\alpha}(A)$ for the set of tracial states satisfying the Laca-Neshveyev condition.

\begin{thm}
\label{prop:tau-phi=LN}
Let $\alpha\geq 0$ and let $\omega$ be an extended limit as $t\to \alpha$. 
Assume that $E$ is a strictly W-regular fgp bi-Hilbertian bimodule over $A$. 
The mapping 
$$\mathfrak{LNS}_{E,\alpha}(A)\to KMS_\alpha(O_E), \quad \tau\mapsto \phi_{\tau,\omega},$$ 
defined from Theorem \ref{kmscp}, and revisited in Proposition \ref{restricprop}, is a well defined bijection of sets.
More precisely, $\phi_{\tau,\omega}:O_E\to\C$ is a $KMS_\alpha$ state
for the gauge action, which is independent of 
$\omega$ and coincides with $\phi_{LN,\tau}$. 
\end{thm}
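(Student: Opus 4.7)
The proof plan is essentially to string together the identifications already established in the excerpt, so most of the work has been done. I will split it into: (i) verifying that $\phi_{\tau,\omega}$ really lives in $KMS_\alpha(O_E)$ when $\tau\in\mathfrak{LNS}_{E,\alpha}(A)$, (ii) showing that for such $\tau$ the dependence on $\omega$ disappears and the formula reduces to $\phi_{LN,\tau}$, and (iii) deducing bijectivity from the Laca-Neshveyev correspondence.

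For (i) and (ii), take $\tau\in\mathfrak{LNS}_{E,\alpha}(A)$. By Proposition \ref{lnequitofixed}, $\tau$ is a fixed point of $F_{E,\alpha}$, so Proposition \ref{specasuln} ensures that $\tau\in\mathfrak{CT}_{E,\alpha}(A)$ with $\beta(E,\tau)=\alpha$, and hence Theorem \ref{kmscp} applies to produce an $\alpha$-KMS state $\phi_{\tau,\omega}$ on $O_E$ for the gauge action. Proposition \ref{restricprop} then gives the explicit form
$$
\phi_{\tau,\omega}(S_\mu S_\nu^*)=\delta_{|\mu|,|\nu|}\,\mathrm{e}^{-\alpha|\mu|}\,S_{E,\alpha}^{\omega_\alpha}\tau\bigl((\nu|\mu)_A\bigr).
$$
Since $\tau$ is a Laca-Neshveyev fixed point, part 2 of Proposition \ref{someconforf} (see the closing computation $S_{E,\alpha}^t\tau=(1-\mathrm{e}^{-t})^{-1}\tau$ in its proof) yields $S_{E,\alpha}^{\omega_\alpha}\tau=\tau$ for every extended limit $\omega$. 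Substituting this into the formula above gives exactly $\phi_{\tau,\omega}=\phi_{LN,\tau}$, proving both the $\omega$-independence claim and the identification with the Laca-Neshveyev state.

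For (iii), the mapping $\tau\mapsto\phi_{\tau,\omega}$ factors as $\tau\mapsto\phi_{LN,\tau}$, which, by \cite[Theorems 2.1 and 2.5]{LN} (stated in the excerpt preceding Proposition \ref{restricprop}), is a bijection from $\mathfrak{LNS}_{E,\alpha}(A)$ onto $KMS_\alpha(O_E)$ for the gauge action. This immediately gives the bijectivity assertion of the theorem.

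There is essentially no hard step: the theorem is a bookkeeping statement that combines Proposition \ref{restricprop} with the algebraic identity $S_{E,\alpha}^t\tau=(1-\mathrm{e}^{-t})^{-1}\tau$ for Laca-Neshveyev traces, together with the Laca-Neshveyev correspondence. The only place where care is needed is confirming that strict W-regularity is not lost when one passes from $\mathfrak{CT}_{E,\alpha}(A)$ to the smaller set $\mathfrak{LNS}_{E,\alpha}(A)$, but since strict W-regularity is a hypothesis on the bimodule $E$ (not on the trace), this poses no obstruction and the hypotheses of Theorem \ref{kmscp} and Proposition \ref{restricprop} are available verbatim for every $\tau\in\mathfrak{LNS}_{E,\alpha}(A)$.
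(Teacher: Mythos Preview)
Your proof is correct and follows essentially the same route as the paper: both arguments combine Proposition \ref{specasuln} and Theorem \ref{kmscp} for well-definedness, then use Proposition \ref{restricprop} together with the identity $S_{E,\alpha}^{\omega_\alpha}\tau=\tau$ for Laca--Neshveyev traces (Proposition \ref{someconforf}) to identify $\phi_{\tau,\omega}$ with $\phi_{LN,\tau}$, and finally invoke the Laca--Neshveyev bijection. The only cosmetic difference is that the paper reduces to checking $\phi_{\tau,\omega}|_A=\tau$ (since a gauge-KMS state is determined by its restriction to $A$), whereas you substitute directly into the full formula for $\phi_{\tau,\omega}(S_\mu S_\nu^*)$; the underlying ingredients are identical.
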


\begin{proof}
By Theorem \ref{kmscp} and Proposition \ref{specasuln}, the mapping $\tau\mapsto \phi_{\tau,\omega}$ is a well defined mapping from the set of positive traces on $A$ satisfying the Laca-Neshveyev condition for $\alpha$ and 
$\alpha$-KMS-states on $O_E$ for the gauge action. By the Laca-Neshveyev correspondence, the KMS-state $\phi_{\tau,\omega}$ is uniquely determined by the trace $\phi_{\tau,\omega}|_A$. We can therefore deduce that $\phi_{\tau,\omega}=\phi_{LN,\tau}$ and the Theorem upon proving the identity $\phi_{\tau,\omega}|_A=\tau$. This statement follows immediately from Proposition \ref{restricprop} and the second part of Proposition \ref{someconforf}.
\end{proof}

There are some quasi-invariance assumptions
on traces that allows us to compare the KMS-states constructed in 
Theorem \ref{kmscp} and the Laca-Neshveyev correspondence to a simpler construction 
involving $\Phi_\infty$.
While the construction of $\Phi_\infty$ depends on the left inner product on $E$, 
the quasi-invariance condition we impose also depends on the left inner product.
The following quasi-invariance condition is a refinement of the notion of $E$-invariant functionals from \cite{RRS}.

\begin{defn}
\label{ass:three}
Let  $\alpha\geq 0$ and $E$ a finitely generated projective bi-Hilbertian bimodule. We say that a positive trace $\tau$ on $A$ is $\alpha$-quasi-invariant with respect to $E$ and the extended limit $\omega_0\in \ell^\infty(\N)^*$ if for all $n\in \N$ and $\mu,\,\nu\in E^{\otimes n}$ we have
\begin{equation}
\e^{-\alpha |\mu|}\tau((\nu|\mu)_A)
=\lim_{k\to\omega_0}\tau(\Phi_k(T_\mu T_\nu^*)\e^{-\beta_k})
=\lim_{k\to\omega_0}\tau({}_A(\mu|\nu \e^{\beta_{k-|\nu|}})\e^{-\beta_k}).
\label{eq:q-i}
\end{equation}
If $\tau$ is $\alpha$-quasi-invariant with respect to $E$ and some extended limit, we simply say that $\tau$ is $\alpha$-quasi-invariant with respect to $E$.
\end{defn}

Note, that $\Phi_k$ are defined on page~\pageref{Phi_k} (formula~\eqref{Phi_k} and the paragraph after).

Observe that if $E$ is W-regular
then the limit in the definition of quasi-invariance exists,
and so is independent of the extended limit $\omega_0$.

\begin{rmk}
Just like in \cite[Lemma 4.2]{RRS}, if $E$ is full as a right module,
then any positive functional $\tau:A\to\C$ 
which is quasi-invariant in the sense of Definition \ref{ass:three} is a positive trace. To see this,
observe that for all $\mu,\,\nu\in \Fock^{\rm alg}$ and $a\in A$, 
the centrality of the Watatani indices $\e^{\beta_k}\in A$ (see formula~\eqref{dfn:Watatani} for the definition) gives
\begin{align*}
\e^{-\alpha|\mu|}\tau((\nu|\mu)_Aa)
&=\e^{-\alpha|\mu|}\tau((\nu|\mu a)_A)
=\lim_{k\to \omega_0}\tau({}_A(\mu a|\nu \e^{\beta_{k-|\nu|}})\e^{-\beta_k})\\
&=\lim_{k\to \omega_0}\tau({}_A(\mu|\nu a^*\e^{\beta_{k-|\nu|}})\e^{-\beta_k})
=\e^{-\alpha|\mu|}\tau((\nu a^*|\mu )_A)\\
&=\e^{-\alpha|\mu|}\tau(a(\nu|\mu )_A).
\end{align*}
\end{rmk}

\begin{example}
We consider the module $E_g$ over $C(Y)$ defined from a 
surjective local homeomorphism $g:Y\to Y$ as in Example 
\ref{localhomeoex} (see page \pageref{localhomeoex}). 
In this case, $\beta_n=0$ for all $n$ and 
quasi-invariance of a positive trace $\tau$ given by a positive measure $\lambda$ on $Y$ 
is equivalent to the condition 
$$(\mathfrak{L}_g)_*\lambda=\e^{\alpha} \lambda.$$
Another computation shows that this condition is equivalent to the Laca-Neshveyev condition. In particular, for the module $E_g$, quasi-invariance is equivalent to satisfying the Laca-Neshveyev condition.
\end{example}

\begin{example}
Let us consider the Cuntz algebra $O_N$ defined as the 
Cuntz-Pimsner algebra of the $\C$-bimodule $\C^N$. 
In this case, $(\nu|\mu)_A=\overline{{}_A(\mu|\nu)}$ for all $\mu,\nu \in (\C^N)^{\otimes n}$ and 
$\beta_n=n\log(N)$. Therefore, quasi-invariance of a trace $\tau$ on $\C$ is equivalent to 
$$\e^\alpha\tau=N\tau.$$
That is, any non-zero trace on $\C$ is $\log(N)$-quasi invariant.
\end{example}

Our immediate aim is to connect the quasi-invariance of Definition
\ref{ass:three} with the
condition imposed by Laca-Neshveyev, \cite{LN}.

\begin{lemma}
\label{lem:qi-LN}
Let $E$ be an fgp bi-Hilbertian bimodule. Suppose that $\tau$ satisfies the $\alpha$-quasi-invariance condition of Definition \ref{ass:three} with respect to $E$.
Then $\tau$ satisfies the Laca-Neshveyev condition for $\alpha$.
\end{lemma}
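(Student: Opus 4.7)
The natural strategy is to apply the quasi-invariance relation \eqref{eq:q-i} in the simplest possible case and exploit the frame relations. Fix a right frame $(e_j)_{j=1}^N$ for $E$, and for a given $a \in A$ apply \eqref{eq:q-i} with $\mu = a e_j$ and $\nu = e_j$ (so $|\mu| = |\nu| = 1$), obtaining for each $j$
$$
\e^{-\alpha}\tau((e_j|ae_j)_A) = \lim_{k\to\omega_0}\tau\bigl(\Phi_k(T_{ae_j} T_{e_j}^*)\e^{-\beta_k}\bigr).
$$
Summing over $j$ (a finite sum) and pulling the summation through $\tau$ and the extended limit produces $\e^{-\alpha}\Tr^E_\tau(L_a)$ on the left, so the task reduces to simplifying $\sum_{j=1}^N \Phi_k(T_{ae_j} T_{e_j}^*)$.

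The key observation is a Fock space identity: for $k \geq 1$ and a simple tensor $e_\rho = e_{\rho_1} \otimes \cdots \otimes e_{\rho_k} \in E^{\otimes k}$, the calculation
$$
\sum_{j=1}^N T_{ae_j} T_{e_j}^* e_\rho = a\Bigl(\sum_j e_j (e_j|e_{\rho_1})_A\Bigr) \otimes e_{\rho_2} \otimes \cdots \otimes e_{\rho_k} = a\cdot e_\rho
$$
holds by the right frame relation $\sum_j \Theta_{e_j,e_j} = {\rm Id}_E$. Hence $\sum_j T_{ae_j}T_{e_j}^*$ restricted to $E^{\otimes k}$ equals $L_a|_{E^{\otimes k}}$ for every $k \geq 1$, and by linearity of $\Phi_k$, the $A$-linearity of ${}_A(\cdot|\cdot)$ in the first slot, and the definition \eqref{dfn:Watatani}, we get
$$
\sum_{j=1}^N \Phi_k(T_{ae_j} T_{e_j}^*) = \Phi_k(L_a) = \sum_{|\rho|=k} {}_A(a e_\rho|e_\rho) = a\sum_{|\rho|=k}{}_A(e_\rho|e_\rho) = a\,\e^{\beta_k}.
$$

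Combining the two steps,
$$
\e^{-\alpha}\Tr^E_\tau(L_a) = \lim_{k\to\omega_0}\tau\bigl(a\,\e^{\beta_k}\e^{-\beta_k}\bigr) = \tau(a),
$$
which rearranges to $\Tr^E_\tau(L_a) = \e^\alpha \tau(a)$, exactly the Laca-Neshveyev condition of Definition \ref{ass:3.5}. The only nontrivial step is the Fock-space frame identity; the rest is bookkeeping with centrality of the Watatani indices (used only in the harmless cancellation $\e^{\beta_k}\e^{-\beta_k} = 1$). Notably, the argument does not invoke W-regularity, so it applies at the level of generality in which quasi-invariance has been defined.
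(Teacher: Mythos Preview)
Your proof is correct and follows essentially the same approach as the paper's: apply quasi-invariance with $\mu=ae_j$, $\nu=e_j$, sum over the frame, and reduce the right-hand side to $\tau(a\e^{\beta_k}\e^{-\beta_k})=\tau(a)$. The only cosmetic difference is that the paper passes through the left-inner-product form ${}_A(ae_j|e_j\e^{\beta_{k-1}})$ and uses the Watatani recursion, whereas you first identify $\sum_j T_{ae_j}T_{e_j}^*|_{E^{\ox k}}=L_a$ and then evaluate $\Phi_k(L_a)=a\e^{\beta_k}$ directly---the underlying computation is the same.
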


\begin{proof}
This is a computation using Definition \ref{ass:three} 
of quasi-invariance and
a frame $(e_j)$ for $E_A$. So
\begin{align*}
\Tr^E_\tau(L_a)&=\Tr_\tau\bigg(\sum_{j}a\Theta_{e_j,e_j}\bigg)
=\sum_j\tau((e_j|ae_j)_A)
=\e^\alpha\lim_{k\to \omega_0}\tau(\Phi_k(\sum_{j}T_{ae_j}T_{e_j}^*)\e^{-\beta_k})\\
&=\e^\alpha\lim_{k\to \omega_0}\tau(\sum_{j}a{}_A(e_j|e_j\e^{\beta_{k-1}})\e^{-\beta_k})
=\e^\alpha\lim_{k\to \omega_0}\tau(a\e^{\beta_{k}}\e^{-\beta_k})
=\e^\alpha\tau(a).\qedhere
\end{align*}
\end{proof}

If $\tau:A\to \C$ satisfies the $\alpha$-quasi-invariance condition of Definition \ref{ass:three}, then we can rewrite the Laca-Neshveyev KMS state
$\phi_{LN,\tau}:O_E\to\C$ as
\begin{align*}
\phi_{LN,\tau}(T_\mu T_\nu^*)
&=\delta_{|\mu|,|\nu|}\e^{-\alpha |\mu|}\tau((\mu|\nu)_A)=\lim_{k\to \omega_0}\tau(\Phi_k(T_\mu T_\nu^*)\e^{-\beta_k}).
\end{align*}
This computation proves the following. Note that the next result does not require any W-regularity from the module $E$.

\begin{prop}
\label{prop:LN-tau-Phi}
Let $E$ be an fgp bi-Hilbertian bimodule and consider an $\alpha$-quasi-invariant positive trace $\tau:A\to\C$ with respect to $E$. 
The state on $O_E$ defined by
\begin{equation}
\label{prestadefin}
S_\mu S_\nu^*\mapsto \lim_{k\to \omega_0}\tau(\Phi_k(T_\mu T_\nu^*)\e^{-\beta_k}),
\end{equation}
is $\alpha$-KMS for the gauge action on $O_E$ and coincides with $\phi_{LN,\tau}$. 
\end{prop}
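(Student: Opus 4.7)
The plan is to observe that the displayed computation immediately preceding the proposition already reduces matters to a compatibility check on the generators $S_\mu S_\nu^*$. First I would invoke Lemma \ref{lem:qi-LN}, which shows that any $\alpha$-quasi-invariant trace automatically satisfies the Laca-Neshveyev condition for $\alpha$. The Laca-Neshveyev correspondence (as recalled below Definition \ref{ass:3.5}) then guarantees that $\phi_{LN,\tau}(S_\mu S_\nu^*) = \delta_{|\mu|,|\nu|}\e^{-\alpha|\mu|}\tau((\nu|\mu)_A)$ defines an $\alpha$-KMS state on $O_E$ for the gauge action.

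It remains to verify that the formula in Equation \eqref{prestadefin} agrees with $\phi_{LN,\tau}$ on every generator $S_\mu S_\nu^*$. For $|\mu| = |\nu|$, this is literally the $\alpha$-quasi-invariance identity of Definition \ref{ass:three}. For $|\mu| \neq |\nu|$ one must show that $\Phi_k(T_\mu T_\nu^*) = 0$ for every $k$, so that the proposed limit vanishes, matching the Kronecker factor. This is a degree-counting argument: the extension of $\Phi_k$ to $\End^*_A(\Fock)$ is defined by compressing along the orthogonally complemented submodule $E^{\otimes k} \subseteq \Fock$, while $T_\mu T_\nu^*$ sends $E^{\otimes j}$ into $E^{\otimes j+|\mu|-|\nu|}$, so its compression to $E^{\otimes k}$ vanishes identically unless $|\mu| = |\nu|$.

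Once agreement on the $*$-algebraic span of the generators is established, and since $\phi_{LN,\tau}$ is bounded, a standard linearity-and-continuity argument shows that \eqref{prestadefin} uniquely determines a well-defined state on $O_E$ coinciding with $\phi_{LN,\tau}$ everywhere, and hence is $\alpha$-KMS for the gauge action. Note that no W-regularity of $E$ is required, because every non-trivial evaluation reduces to a case $|\mu| = |\nu|$ where the quasi-invariance assumption directly asserts the existence (and value) of the required limit. The only mild obstacle is the vanishing argument for $|\mu| \neq |\nu|$; once that is in hand, the proof reduces to the single-line computation already written just before the proposition statement.
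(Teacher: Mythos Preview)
Your proposal is correct and follows essentially the same approach as the paper, which simply points to the displayed computation immediately preceding the proposition as constituting the proof. You are in fact more careful than the paper: the paper's one-line computation writes the $\delta_{|\mu|,|\nu|}$ factor without explicitly justifying that $\Phi_k(T_\mu T_\nu^*)=0$ when $|\mu|\neq|\nu|$, whereas you supply the degree-counting argument via the compression definition of $\Phi_k$ on $\End^*_A(\Fock)$.
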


If $E$ is W-regular, the definition $\Phi_\infty(S_\mu S_\nu^*):=\lim_{k\to \infty}\Phi_k(T_\mu T_\nu^*)\e^{-\beta_k}$ shows that the state in Equation \eqref{prestadefin} coincides with $\tau\circ \Phi_\infty$. We conclude the following.

\begin{corl}
\label{koroalala}
Let $\tau:A\to\C$ be an $\alpha$-quasi-invariant positive trace with respect to an fgp bi-Hilbertian bimodule $E$. Assume that $E$ is a W-regular. Then, the state $\tau\circ\Phi_\infty$ on $O_E$ is $\alpha$-KMS for the gauge action on $O_E$ and coincides with $\phi_{LN,\tau}$. 
\end{corl}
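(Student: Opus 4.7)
The plan is to deduce this corollary directly from Proposition \ref{prop:LN-tau-Phi} by showing that, under W-regularity, the extended limit appearing in \eqref{prestadefin} collapses to an honest norm limit, and this norm limit is precisely $\Phi_\infty$.

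First I would recall that W-regularity (Definition \ref{ass:one}) was explicitly designed so that, for homogeneous $\mu\in E^{\otimes k}$, the expression $\mathrm{e}^{-\beta_n}\mu\,\mathrm{e}^{\beta_{n-k}}$ converges in norm as $n\to\infty$ to an element $\tilde\mu\in E^{\otimes k}$. Combined with Lemma \ref{computeinnerprod}, this gives that for homogeneous $\mu,\nu\in\algFock$ of the same degree, $\lim_{k\to\infty}\Phi_k(T_\mu T_\nu^*)\mathrm{e}^{-\beta_k}$ exists as a genuine norm limit in $A$, and the functional so defined extends by continuity and $A$-bilinearity to $\Phi_\infty:\T_E\to A$, which annihilates compacts and descends to $O_E$. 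In particular, for any extended limit $\omega_0$ and any homogeneous $T_\mu T_\nu^*$,
\[
\lim_{k\to\omega_0}\tau\bigl(\Phi_k(T_\mu T_\nu^*)\mathrm{e}^{-\beta_k}\bigr) \;=\; \tau\bigl(\Phi_\infty(S_\mu S_\nu^*)\bigr),
\]
since the sequence already converges in norm and $\tau$ is norm continuous.

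Next, I would invoke Proposition \ref{prop:LN-tau-Phi}, which requires only $\alpha$-quasi-invariance of $\tau$ (no W-regularity), to conclude that the state on $O_E$ given by $S_\mu S_\nu^*\mapsto\lim_{k\to\omega_0}\tau(\Phi_k(T_\mu T_\nu^*)\mathrm{e}^{-\beta_k})$ is $\alpha$-KMS for the gauge action and equals $\phi_{LN,\tau}$. Combining this with the identification in the previous paragraph, we obtain $\tau\circ\Phi_\infty = \phi_{LN,\tau}$ on the dense $*$-subalgebra spanned by homogeneous elements $S_\mu S_\nu^*$, and hence on all of $O_E$ by continuity. The $\alpha$-KMS property and the coincidence with $\phi_{LN,\tau}$ transfer from Proposition \ref{prop:LN-tau-Phi} without further work.

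Since both Proposition \ref{prop:LN-tau-Phi} and the existence of $\Phi_\infty$ under W-regularity are already established, there is no real obstacle — the corollary is just the observation that W-regularity upgrades the extended-limit formula to a norm-limit formula, so the abstract state of Proposition \ref{prop:LN-tau-Phi} is represented concretely as $\tau\circ\Phi_\infty$. The only point worth stating carefully is that one should verify the identification on the $*$-algebraic dense subspace of finite linear combinations of $S_\mu S_\nu^*$ and then extend by norm continuity, which uses that both $\tau\circ\Phi_\infty$ and $\phi_{LN,\tau}$ are bounded functionals on $O_E$.
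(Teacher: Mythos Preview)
Your proposal is correct and follows essentially the same approach as the paper: the paper simply notes that under W-regularity the norm limit $\Phi_\infty(S_\mu S_\nu^*)=\lim_{k\to\infty}\Phi_k(T_\mu T_\nu^*)\mathrm{e}^{-\beta_k}$ exists, so the extended-limit state in \eqref{prestadefin} coincides with $\tau\circ\Phi_\infty$, and then Proposition \ref{prop:LN-tau-Phi} gives the conclusion. Your write-up is a slightly more detailed version of the same one-line deduction.
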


By Theorem \ref{prop:tau-phi=LN} and Corollary \ref{koroalala}, we have that $\phi_{\omega,\tau}=\tau\circ \Phi_\infty$ for any $\alpha$-quasi-invariant positive trace $\tau$ and extended limit $\omega$ at $\alpha$ assuming that $E$ is strictly regular.

\subsection{Obstructions to bi-Hilbertian bimodule structures}
\label{sub:no-left}

In the two previous subsections, we assumed our $A$-bimodule $E$ to be an fgp bi-Hilbertian bimodule, and imposed the additional assumption of strict W-regularity (see Definitions \ref{ass:one} 
and \ref{ass:two}). The assumptions allowed us to construct a semifinite spectral
triple from a Kasparov module relying on both the left and the right inner product on $E$. 
Instead, we can just use \cite{LN} to proceed from a KMS-state directly to a semifinite spectral triple whose associated KMS-state as in Corollary \ref{cor:phi-omega} coincides with the original KMS-state.
In order to compare the indirect approach for strictly W-regular modules to the direct approach from the KMS-state we will need to extend our module to von Neumann algebra coefficients, and along the way we derive obstructions to having the structure of a strictly W-regular bi-Hilbertian bimodule structure on an $A-A$-correspondence.

We suppose that we have a finitely generated projective right $A$-module $E_A$, 
with $A$ unital, and carrying a unital left action of $A$. 
Let $\tau:A\to\C$ be a faithful positive trace
satisfying the Laca-Neshveyev condition for $\alpha\geq 0$ (see 
Definition \ref{ass:3.5} on page \pageref{ass:3.5}),
and define the associated KMS-state on $O_E$ by
$$
\phi_{LN,\tau}(S_\mu S_\nu^*):=\delta_{|\mu|,|\nu|}\tau((\nu|\mu)_A)\e^{-\alpha|\mu|},\quad \mu,\,\nu\in \Fock^{\rm alg}.
$$
By construction, $\phi_{LN,\tau}|_A=\tau$.

The Cuntz-Pimsner algebra $O_E$ acts on the GNS-space $L^2(O_E,\phi_{LN,\tau})$ by left multiplication and, since $\phi_{LN,\tau}$ restricts to a trace on $A$, $A$ acts by both left and right multiplication on $L^2(O_E,\phi_{LN,\tau})$. 
For notational simplicity, we identify $\tau$ with its normal extension to $A''$. Note that $A''$ is independent of whether we take the bicommutant in $L^2(O_E,\phi_{LN,\tau})$ or $L^2(A,\tau)$ and by faithfulness of $\tau$ we can identify $A$ with its image under the GNS-representation and obtain an inclusion $A\subseteq A''$.

\begin{prop}
\label{adoubprim}
Let $E$ be an fgp right $A$-Hilbert $C^*$-module with a left unital action of $A$, $\tau$ a faithful positive trace on $A$ satisfying the Laca-Neshveyev condition and let $P_0:L^2(O_E,\phi_{LN,\tau})\to L^2(A,\tau)\subset L^2(O_E,\phi_{LN,\tau})$ denote the orthogonal projection. It then holds that 
$$A''=P_0O_E''P_0.$$
\end{prop}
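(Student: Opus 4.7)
The plan is to establish the two containments $A''\subseteq P_0O_E''P_0$ and $P_0O_E''P_0\subseteq A''$ separately, in both cases regarded as equalities of subalgebras of $\mathbb{B}(L^2(A,\tau))$. The key mechanism will be to combine the given right action of $A$ on $L^2(O_E,\phi_{LN,\tau})$ with the standard Tomita-Takesaki commutant calculation for the faithful trace $\tau$.

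For the easy direction $A''\subseteq P_0O_E''P_0$, I would first observe that the left action of $A\subseteq O_E$ preserves the closed subspace $L^2(A,\tau)\subseteq L^2(O_E,\phi_{LN,\tau})$, since $ab\in A$ for all $a,b\in A$; self-adjointness of the $\ast$-algebra $A$ then forces $L^2(A,\tau)$ to be reducing, so that $P_0aP_0=a|_{L^2(A,\tau)}$, and this compression is nothing but the GNS representation of $A$ at $\tau$, whose double commutant is by definition $A''$. A Kaplansky density plus weak-compactness argument, lifting a WOT-approximating net in the unit ball of $A$ to a net in the unit ball of $O_E\subseteq O_E''$ and extracting a WOT-cluster point $T\in O_E''$, produces for each $x\in A''$ a $T\in O_E''$ with $P_0TP_0=x$.

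For the reverse inclusion, the crucial ingredient is the already-mentioned right action $\rho:A\to\mathbb{B}(L^2(O_E,\phi_{LN,\tau}))$. To justify its existence as a bounded $\ast$-representation, I would verify that $A$ lies in the centralizer of $\phi_{LN,\tau}$: since $\phi_{LN,\tau}$ is KMS for the gauge action $\gamma$ and $A=O_E^\gamma$, the modular automorphism group $\sigma^{\phi_{LN,\tau}}_t$, which is a rescaling of $\gamma$, fixes $A$ pointwise. For such $a$ the centralizer identity gives $\|xa\|^2_{\phi_{LN,\tau}}=\phi_{LN,\tau}(x^*xaa^*)$, and writing $\|a\|^2-aa^*=b^2$ with $b=b^*\in A$ (again in the centralizer) the bound $\phi_{LN,\tau}(bx^*xb)\geq 0$ yields $\|xa\|^2_{\phi_{LN,\tau}}\leq\|a\|^2\|x\|^2_{\phi_{LN,\tau}}$, so $\rho(a)$ extends by continuity.

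Having $\rho$ in hand, the remainder of the argument is formal. Since $\rho(A)$ commutes with left multiplication by $O_E$, it commutes with all of $O_E''$; since $\rho(A)$ preserves $L^2(A,\tau)$, it commutes with $P_0$. Hence for every $T\in O_E''$, the compression $P_0TP_0$ commutes with $\rho(a)|_{L^2(A,\tau)}$ for every $a\in A$. But $\rho(A)|_{L^2(A,\tau)}$ is the right regular representation of $A$ on $L^2(A,\tau)$, whose commutant inside $\mathbb{B}(L^2(A,\tau))$ is the left von Neumann algebra $A''$ by the standard commutant theorem for a faithful trace (the modular conjugation $J$ exchanges left and right multiplication). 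Thus $P_0TP_0\in A''$, giving the remaining inclusion. The main hurdle will be the verification that $A$ sits inside the modular centralizer of $\phi_{LN,\tau}$, on which the whole commutant calculation hinges; everything after that, including the Kaplansky-style passage from algebras to von Neumann algebras, is routine.
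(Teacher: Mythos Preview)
Your argument is correct and takes a different route from the paper. One small slip: you write ``$A=O_E^\gamma$'', but $A$ is in general a proper subalgebra of the gauge fixed-point algebra (which also contains all $S_\mu S_\nu^*$ with $|\mu|=|\nu|$). This is harmless, since you only use $A\subseteq O_E^\gamma$ to see that the modular group of $\phi_{LN,\tau}$---a rescaling of $\gamma$---fixes $A$ pointwise, and hence that $A$ lies in the centralizer.

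The paper's proof of the nontrivial inclusion is purely algebraic: it approximates $T\in O_E''$ in WOT by finite sums $T_\lambda=\sum_j S_{\mu_{\lambda,j}}S_{\nu_{\lambda,j}}^*$ and then asserts that
\[
P_0T_\lambda P_0=\sum_{j:\,|\mu_{\lambda,j}|=|\nu_{\lambda,j}|=0}S_{\mu_{\lambda,j}}S_{\nu_{\lambda,j}}^*\in A,
\]
so that WOT-continuity of compression gives $P_0TP_0\in A''$. That displayed identity, however, is not correct as written: for $|\mu|=|\nu|=k>0$ one computes $\langle b,\,P_0S_\mu S_\nu^*P_0\,a\rangle=\e^{-\alpha k}\tau((\nu\,|\,ab^*\mu)_A)$, which need not vanish (already for $O_N$ one has $P_0S_iS_i^*P_0=N^{-1}\neq 0$). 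Your commutant argument via the right action $\rho$ of $A$ bypasses this computation entirely and yields $P_0O_E''P_0\subseteq (\rho(A)|_{L^2(A,\tau)})'=A''$ in one stroke; it is both cleaner and more robust. The Kaplansky density step you use for the easy inclusion is fine, though it could be shortened by noting that the WOT-closure of $A$ inside $O_E''$ already commutes with $P_0$ and restricts to $A''$ on $L^2(A,\tau)$.
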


\begin{proof}
It is clear that $A''\subseteq P_0O_E''P_0$. To prove the converse inclusion, take $T\in P_0O_E''P_0$ and write $T=P_0T_0P_0$ where $T_0$ is the WOT-limit of a net $T_\lambda=\sum_j S_{\mu_{\lambda,j}}S_{\nu_{\lambda,j}}^*\in O_E$. We have that 
$$P_0T_\lambda P_0=\sum_{j: |\mu_{\lambda,j}|=|\nu_{\lambda,j}|=0}S_{\mu_{\lambda,j}}S_{\nu_{\lambda,j}}^*,$$
so $P_0T_\lambda P_0\in A$ and $T\in A''$.
\end{proof}

We can define a conditional expectation
$$\tilde{\Phi}_\infty:O_E''\to A'', \quad\tilde{\Phi}_\infty(S):=P_0SP_0,$$
which is well defined by Proposition \ref{adoubprim}.
Using the expectation $\tilde{\Phi}_\infty$ 
we can define a right module $\Xi_{A''}$ by 
completing $O_E$ in the norm defined by the inner product
$$
(S_1|S_2)_{A''}:=\tilde{\Phi}_\infty(S_1^*S_2),\quad S_1,\,S_2\in O_E.
$$

It is clear that $L^2(O_E,\phi_{LN,\tau})=L^2(\Xi_{A''},\tau)$. 
The construction of $\Xi_{A''}$ does not require $E_A$ 
to be biHilbertian, just an $A$-$A$-correspondence. 
The following result follows from the relations defining the Cuntz-Pimnser algebra and the fact that $\tilde{\Phi}_\infty$ is a conditional expectation.

\begin{prop}
\label{phitildeinfocm}
For $\mu\in E^{\ox k}$ and $\nu\in E^{\ox l}$, 
$$\tilde{\Phi}_\infty(S_\mu^* S_\nu)=\delta_{|\mu|,|\nu|}( \mu|\nu)_A.$$
In particular, the map $\mu\mapsto S_\mu$ extends to an $A''$-linear isometric embedding $\Fock\otimes_A A''\to \Xi_{A''}$ of the Fock module. 
\end{prop}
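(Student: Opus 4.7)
My plan is to prove the identity by splitting into the cases $|\mu|=|\nu|$ and $|\mu|\neq|\nu|$. First I would note that in $O_E$ one always has the Cuntz-Pimsner-type computation $S_\mu^*S_\nu=\delta_{|\mu|,|\nu|}(\mu|\nu)_A$ when $k=l$ (by iterating the relation $S_e^*S_f=(e|f)_A$ and peeling off tensor factors), while for $k\neq l$ the element $S_\mu^*S_\nu$ has nonzero gauge degree $|\nu|-|\mu|$. For the first case the claim is immediate: $(\mu|\nu)_A\in A\subseteq A''$, and since $P_0$ is the identity on $L^2(A,\tau)$ and $A''$ preserves $L^2(A,\tau)\subset L^2(O_E,\phi_{LN,\tau})$ (by Proposition \ref{adoubprim}), we get $P_0(\mu|\nu)_A P_0 = (\mu|\nu)_A$.

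For the case $|\mu|\neq|\nu|$, the plan is to test $P_0S_\mu^*S_\nu P_0$ against the dense set $A\Omega\subset L^2(A,\tau)$, where $\Omega$ is the GNS cyclic vector. For $a,b\in A$,
$$\langle a\Omega,\, P_0 S_\mu^*S_\nu P_0\, b\Omega\rangle = \langle a\Omega,\, S_\mu^*S_\nu b\Omega\rangle = \phi_{LN,\tau}(a^* S_\mu^* S_\nu b).$$
The element $a^* S_\mu^*S_\nu b$ has gauge degree $|\nu|-|\mu|\neq 0$, so from the explicit Laca-Neshveyev formula $\phi_{LN,\tau}(S_{\mu'}S_{\nu'}^*)=\delta_{|\mu'|,|\nu'|}\tau((\nu'|\mu')_A)\e^{-\alpha|\mu'|}$ and the gauge invariance of $\phi_{LN,\tau}$ this vanishes. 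Hence $P_0 S_\mu^*S_\nu P_0=0$, which completes the first part of the statement.

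For the embedding statement, the natural map is $\mu\otimes_A a\mapsto S_\mu\cdot a$ for $\mu\in E^{\otimes k}$ and $a\in A''$, where the dot denotes the right $A''$-action on $\Xi_{A''}$ coming from right multiplication on $O_E$ extended by continuity. I would check well-definedness and the isometry property simultaneously via the inner product identity just proved: for $\mu\in E^{\otimes k}$, $\nu\in E^{\otimes l}$, $a,b\in A''$,
$$(S_\mu\cdot a\,|\,S_\nu\cdot b)_{A''} = a^*\tilde\Phi_\infty(S_\mu^* S_\nu)b = \delta_{k,l}\,a^*(\mu|\nu)_A b,$$
which is precisely the $A''$-valued inner product on $\Fock\otimes_A A''$ when $\Fock$ carries the orthogonal direct sum structure from the right $A$-inner products on the $E^{\otimes k}$. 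This shows the map is well-defined on the algebraic Fock module, isometric, and $A''$-linear on the right; continuity then extends it to all of $\Fock\otimes_A A''$.

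The only step that needs a little care is the first: rigorously writing $S_\mu^*S_\nu$ when $k\neq l$ and checking that the KMS formula indeed kills every matrix element of the form $\phi_{LN,\tau}(a^* S_\mu^* S_\nu b)$. But this is essentially bookkeeping with the defining relations and the explicit form of $\phi_{LN,\tau}$, rather than a genuine obstacle.
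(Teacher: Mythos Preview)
Your argument is correct and matches the paper's own justification, which is the single sentence ``The following result follows from the relations defining the Cuntz-Pimsner algebra and the fact that $\tilde{\Phi}_\infty$ is a conditional expectation.'' You have simply unpacked this: the Cuntz-Pimsner relation $S_\mu^*S_\nu=(\mu|\nu)_A$ for $|\mu|=|\nu|$ together with $\tilde\Phi_\infty|_{A''}=\mathrm{id}$ handles the equal-length case, while gauge invariance of $\phi_{LN,\tau}$ (equivalently, that $S_\mu^*S_\nu$ is a creation or annihilation operator of nonzero degree when $|\mu|\neq|\nu|$, hence maps $L^2(A,\tau)$ off itself) handles the unequal-length case.
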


We now turn to describing the WOT-closure of $E$ inside $O_E''$. We will identify $E$ with an $A$-sub-bimodule of $O_E$ via $\mu\mapsto S_\mu$.

\begin{lemma}
\label{lem:weak-biHilb}
Let $A$ be a unital $C^*$-algebra.
Let $E_A$ be a finitely generated projective right $A$-module with a unital
left action, and suppose that $\tau:A\to\C$ satisfies the Laca-Neshveyev
condition on $E$ for $\alpha\geq 0$. Then 
$$
E'':=\overline{E}^{\rm WOT}\subset O_E''
$$
is an $A''$-bimodule. Moreover, the following hold:
\begin{enumerate}
\item As right $A''$-modules, $E''\cong E\otimes_A A''$ and the isomorphism is an isomorphism of $A''$-Hilbert $C^*$-modules when equipping $E''$ with the right inner product
$$( \mu|\nu)_{A''}:=\tilde{\Phi}_\infty(S_\mu^* S_\nu), \quad\mu,\nu\in E''.$$
\item The right $A''$-Hilbert $C^*$-module $E''$ is finitely generated and projective.
\item If $E$ is finitely generated and projective as a left $A$-module, then $E''$ is finitely generated and projective as a left $A''$-module.
\item If the implication 
\begin{equation}
\label{lefass}
P_0ee^*P_0=0\Rightarrow e=0 \quad \forall e\in E'',
\end{equation}
holds, the expression
$$
{}_A(e|f)^{~}:=P_0S_eS_f^*P_0
$$
gives a left inner product on $E''$ making it into a bi-Hilbertian bimodule. The right Watatani index of $E''$ is $1$ and $\topop_k={\rm Id}_{(E'')^{\ox k}}$ for all $k$ (and so is invertible).
\item If $E''$ is a finitely generated projective module from the left and the implication \eqref{lefass} holds, then $E''$ is a strictly $W$-regular fgp bi-Hilbertian bimodule over $A''$. 
\end{enumerate}
\end{lemma}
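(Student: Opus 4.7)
The plan is to address the five bullets in turn, with the substantive work concentrated in (1) and (4); the others follow by assembly.

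For (1), observe first that left and right multiplications in $O_E''$ by fixed bounded operators are WOT-continuous on bounded sets, so $E''=\overline{E}^{\rm WOT}$ is stable under the left and right $A''$-actions inherited from the chain $A\subseteq A''\subseteq O_E''$. These two actions commute by associativity of $O_E''$, making $E''$ an $A''$-bimodule. Define $\iota\colon E\otimes_A A''\to E''$ by $\iota(e\otimes a):=S_e\cdot a$; well-definedness follows from the $A$-balancing, and Proposition \ref{phitildeinfocm} yields
$$
\tilde\Phi_\infty\bigl((S_e a)^*(S_f b)\bigr)=a^*\tilde\Phi_\infty(S_e^*S_f)\,b=a^*(e|f)_A\,b,
$$
so $\iota$ is isometric for the two inner products, hence injective. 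Surjectivity hinges on the Cuntz-Pimsner relation $\sum_j S_{e_j}S_{e_j}^*=1_{O_E}$, valid in the unital fgp case: for $T\in E''$ one has $T=\sum_j S_{e_j}(S_{e_j}^*T)$, and WOT-continuity of left multiplication by $S_{e_j}^*$ together with the inclusion $S_{e_j}^*E\subseteq A$ gives $S_{e_j}^*T\in \overline{A}^{\rm WOT}=A''$, so $T=\iota\bigl(\sum_j e_j\otimes S_{e_j}^*T\bigr)$.

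Statement (2) is then immediate: writing $E\cong pA^N$ for a projection $p\in M_N(A)$ gives $E''\cong p(A'')^N$. For (3), the analogous argument applies on the left: if $E$ is fgp as a left $A$-module, it is a direct summand of some free left module $A^M$; transporting through $\otimes_A A''$ and using that the left $A$-action on $E''$ extends WOT-continuously to a left $A''$-action via $O_E''$ exhibits $E''$ as a direct summand of $(A'')^M$ as a left $A''$-module, hence fgp.

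For (4), the checks that ${}_A(e|f):=P_0S_eS_f^*P_0$ is a left $A''$-valued inner product on $E''$ are direct: the range lies in $A''$ by Proposition \ref{adoubprim}; left $A''$-sesquilinearity uses that $P_0$ commutes with left multiplication by $A''$, since $L^2(A,\tau)\hookrightarrow L^2(O_E,\phi_{LN,\tau})$ is $A''$-equivariant; conjugate symmetry and positivity are immediate; definiteness is exactly the hypothesis \eqref{lefass}. Iterating the Cuntz-Pimsner relation yields $\sum_{|\rho|=k}S_{e_\rho}S_{e_\rho}^*=1_{O_E}$, so
$$
\mathrm{e}^{\beta_k}=\sum_{|\rho|=k}{}_A(e_\rho|e_\rho)=P_0\cdot 1_{O_E}\cdot P_0=1_{A''},
$$
that is $\beta_k=0$ for all $k$. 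Consequently $\topop_k(\nu)=\lim_n \mathrm{e}^{-\beta_n}\nu\,\mathrm{e}^{\beta_{n-k}}=\nu$, i.e.\ $\topop_k=\mathrm{Id}_{(E'')^{\otimes k}}$. Finally, (5) is the assembly of the preceding: (2) together with the fgp-left hypothesis gives the fgp bimodule structure, (4) provides the left inner product and $\topop_k=\mathrm{Id}$, and the trivial decomposition $\topop_k=c_kP_k$ with $c_k=1_{A''}$ and $P_k=\mathrm{Id}_{(E'')^{\otimes k}}$ satisfies Definition~\ref{ass:two}, as both factors are patently central. The main obstacle is the surjectivity step in (1): it requires the Cuntz-Pimsner relation combined with the observation that $S_{e_j}^*$ maps $E''$ into $A''$, a small but essential point without which the identification $E''\cong E\otimes_A A''$ would be unavailable.
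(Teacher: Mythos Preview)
Your treatment of (1)--(3) is correct and essentially matches the paper, though your surjectivity argument in (1) via the Cuntz--Pimsner relation $\sum_j S_{e_j}S_{e_j}^*=1_{O_E}$ and the observation $S_{e_j}^*E''\subseteq A''$ is more explicit than the paper's. Likewise, your Watatani index computation in (4) is cleaner: you obtain $\mathrm{e}^{\beta_k}=\sum_{|\rho|=k}P_0S_{e_\rho}S_{e_\rho}^*P_0=P_0\cdot 1_{O_E}\cdot P_0=1_{A''}$ directly, whereas the paper argues via faithfulness of $\tau$ applied to $\langle a,\sum_j{}_A(e_j|e_j)a\rangle$.

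However, there is a genuine gap in your proof of (4). By Definition~\ref{cond:one}, a bi-Hilbertian bimodule requires that the left and right inner products induce \emph{equivalent norms}. You verify the left inner-product axioms (sesquilinearity, symmetry, positivity, definiteness via~\eqref{lefass}), but you never establish that $\|{}_A(e|e)\|$ and $\|(e|e)_{A''}\|$ are comparable. One direction is immediate: $\|{}_A(e|e)\|=\|P_0S_eS_e^*P_0\|\le\|S_eS_e^*\|=\|S_e^*S_e\|=\|(e|e)_{A''}\|$. The reverse inequality is the substantive point, and without it the claim that $E''$ is bi-Hilbertian is not justified. The paper addresses this explicitly: it argues that the operator norm of $S_e^*S_e$ is already attained by $S_e^*P_0S_e$, using precisely the positive-definiteness hypothesis~\eqref{lefass}, which yields the equality $\|{}_A(e|e)\|=\|(e|e)_{A''}\|$. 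You should insert this argument before concluding that $E''$ is bi-Hilbertian. As a minor additional point, you do not verify that the left and right actions are mutually adjointable for the two inner products (e.g.\ that ${}_A(eb|f)={}_A(e|fb^*)$); the paper notes this check is straightforward but does record it.
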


\begin{proof}
The initial statement of the lemma is clear since $E$ is an $A$-sub-bimodule of $O_E$, so its WOT closure is a bimodule over $A''$. To prove statement 1. we note that $E\otimes_A A''\cong EA''\subseteq O_E''$. Moreover, using that $E$ is finitely generated and projective, it follows that $E''=EA''$ and therefore $E''\cong E\otimes_A A''$ follows. Statement 1. now follows from Proposition \ref{phitildeinfocm}. Statement 2. follows from statement 1. because $E$ is finitely generated and projective over $A$. Statement 3. is proven in a similar way as Statement 1., indeed if $E$ is finitely generated and projective as a left $A$-module then $E''\cong A''\otimes_A E$ as a left $A$-module. 

Statement 4. is less trivial. Assuming that the implication \eqref{lefass} holds, it is straight-forward to verify that the left and right actions are compatible, i.e. that the $A$-action from the left/right is adjointable for the right/left inner product. For $E''$ to be a bi-Hilbertian bimodule it remains to show that the norm arising from the left inner product is equivalent to the norm arising from the right inner product. For any $e\in E$, we compute that 
\begin{align}
\nonumber
\Vert{}_A(e|e)\Vert_A=\Vert P_0S_{e}S_{e}^*P_0\Vert_{L^2(A,\tau)}
&=\Vert S_{e}^*P_0S_{e}\Vert_{L^2(E^*,\phi_\tau)}
=\Vert S_{e}^*S_{e}\Vert_{L^2(E^*,\phi_\tau)}=\\
\label{relatingthetwoinnprod}
&=\Vert(e|e)_A\Vert_{L^2(E^*,\phi_\tau)}=\Vert(e|e)_A\Vert_A
\end{align}
where the norm of $S_{e}^*S_{e}$
is attained by $S_{e}^*P_0S_{e}$ on $L^2(E^*,\phi_\tau)$ by the 
assumption that ${}_A(\cdot|\cdot)$ is positive definite.
Equation \eqref{relatingthetwoinnprod} shows that the two norms $\sqrt{\Vert(\cdot |\cdot)_A\Vert_A}$ and $\sqrt{\Vert{}_A(\cdot |\cdot)\Vert_A}$ on $E$ are equivalent.

To finalize the proof of statement 4., we compute the right Watatani index of $E$, which exists because $E''$ is finitely generated projective by statement 2. We compute on 
$L^2(A,\tau)\subset L^2(O_E,\phi_\tau)$ that
$$
\langle a, \sum_j{}_A(e_j|e_j)a\rangle
=\langle a,\sum_jP_0\pi^{(1)}(\Theta_{e_j,e_j})P_0a\rangle=
\phi_\tau(a^*P_0\pi^{(1)}({\rm Id_E})P_0a)=\phi_\tau(a^*1_Aa)=\tau(a^*a).
$$
By the faithfulness of $\tau$ and Cuntz-Pimsner covariance 
we can now deduce that the right
Watatani index is equal to $1_A$. This immediately implies that
$\topop_k={\rm Id}_{(E'')^{\ox k}}$ for all $k$.

Finally, statement 5. follows from that under the stated assumptions, $E''$ is an fgp bi-Hilbertian bimodule (using statements 1., 2. and 4.) and by statement 4. $\topop_k$ satisfies the condition in Definition \ref{ass:two}, so $E''$ is strictly W-regular.
\end{proof}

\begin{rmk}
\label{leftdoubprime}
There are examples of $A$-$A$-correspondences $E$ that are fgp from the right but not fgp from the left such that $E''$ is fgp from the left. These examples come from (certain) self-similar dynamical systems, see \cite{KajWat}.

Here is a simple example. Let $A=C([0,1])$, 
$$
\gamma_1:[0,1]\to[0,1]\quad\gamma_1(x)=x/2,\qquad
\gamma_2:[0,1]\to[0,1]\quad\gamma_2(x)=1/2+x/2,
$$
and $E=C(\{(\gamma_1(x),x):x\in[0,1]\}\cup \{(\gamma_2(x),x):x\in[0,1]\})$.
The correspondence structure is defined for $a,\,b\in A$ and $e\in E$ by
$$
(a\cdot e\cdot b)(\gamma_j(x),x)=a(\gamma_j(x))e(\gamma_j(x),x)b(x)
\quad\mbox{and}\quad (e_1|e_2)_A(x)=\sum_{j=1,2}\overline{e_1(\gamma_j(x),x)}e(\gamma_j(x),x).
$$

The graphs of $\gamma_1$ and $\gamma_2$ in $[0,1]\times [0,1]$ are disjoint and their respective characteristic function $\chi_1$ and $\chi_2$ are elements of $E$. One checks directly that $\{\chi_1,\chi_2\}$ is a right frame for $E_A$, and since $(\chi_1|\chi_2)=0$, there is an isomorphism of right Hilbert $C^*$-modules $E_A\cong C[0,1]\oplus C[0,1]$. We conclude that $E_A$ is fgp from the right. Using the frame $\{\chi_1,\chi_2\}$, we can identify ${}_AE\cong C[0,1/2]\oplus C[1/2,1]$ as a left $C[0,1]$-module. As a left module, ${}_AE$ is therefore finitely generated but clearly not projective as the rank of $E_x:=E/C_0([0,1]\setminus \{x\})E$ is discontinuous at $x=1/2$.

We shall now see that it is even impossible for a left inner product compatible with the right inner product to exist. Let $0\leq \phi\in A$ be $1$ on $[0,1/2]$. Then if we have a compatible left $A$-valued inner product
$$
{}_A(\chi_1|\chi_2)(x)={}_A(\phi\cdot\chi_1|\chi_2)(x)=\phi(x){}_A(\chi_1|\chi_2)(x)
={}_A(\chi_1|\chi_2)(x)\phi(x)={}_A(\chi_1|\phi\cdot\chi_2)(x).
$$
Taking the infimum over such $\phi$ we see that the support of ${}_A(\chi_1|\chi_2)$ is contained in $\{1/2\}$. Then for arbitrary $a,\,b\in A$
\begin{align*}
&{}_A(a\chi_1+b\chi_2|a\chi_1+b\chi_2)\\
&=a{}_A(\chi_1|\chi_1)a^*
+b{}_A(\chi_2|\chi_2)b^*+a(1/2)b^*(1/2){}_A(\chi_1|\chi_2)+b(1/2)a^*(1/2){}_A(\chi_2|\chi_1).
\end{align*}
From here one can show that any inner product taking values in the continuous functions takes values in the functions vanishing at $1/2$. Then one shows that the associated norm can not
be equivalent to the right inner product.

The situation is better for $E''$. We consider the trace $\tau(a):=\int_0^1 a(x)\mathrm{d}x$ on $C[0,1]$. A short computation shows that $\Tr_\tau^E=2\tau$ so $\tau$ satisfies the Laca-Neshveyev condition for $\alpha=\log(2)$ and extends to a KMS-state on $O_E$ at inverse temperature $\log(2)$. It is readily verified that $C[0,1]''=L^\infty[0,1]$, $E''\cong L^\infty[0,1]\oplus L^\infty[0,1]$ as a right module and $E''=L^\infty[0,1/2]\oplus L^\infty[1/2,1]$ as a left module. In particular, $E''$ is an fgp bi-Hilbertian bimodule over $L^\infty[0,1]$. 

The same discussion applies to any `graph separated' iterated function system satisfying the open set condition. See \cite{KajWat} for more details.
\end{rmk}

\begin{thm}
\label{prop:left-noleft}
Let $E_A$ be a right $A$-Hilbert $C^*$-module with a unital left action and assume that $E$ is 
finitely generated both as a left and a right $A$-module and $E''$ is finitely generated and projective both as a left and a right $A''$-module. Then $E_A$ has a left inner 
product such that $E$ is an fgp bi-Hilbertian bimodule,
has finite right Watatani index and is $W$-regular with 
$\topop_k$ invertible for all $k$
if and only if $\tilde{\Phi}_\infty$ is faithful and $\tilde{\Phi}_\infty(O_E)\subseteq A$. 
\end{thm}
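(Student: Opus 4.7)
The strategy is to establish both implications through the candidate left inner product formula ${}_A(e|f):=\tilde{\Phi}_\infty(S_eS_f^*)=P_0S_eS_f^*P_0$, appealing to Lemma \ref{lem:weak-biHilb}(4) for the reverse direction and identifying $\tilde{\Phi}_\infty$ with the canonical expectation $\Phi_\infty$ of Lemma \ref{computeinnerprod} in the forward direction.

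For the reverse direction, assuming $\tilde{\Phi}_\infty$ is faithful and $\tilde{\Phi}_\infty(O_E)\subseteq A$, I would set ${}_A(e|f):=\tilde{\Phi}_\infty(S_eS_f^*)\in A$ for $e,f\in E$. Faithfulness delivers the positivity condition \eqref{lefass} required by Lemma \ref{lem:weak-biHilb}(4), after extending $\tilde{\Phi}_\infty$ to the WOT closure. That lemma then endows $E''$ with a strictly W-regular bi-Hilbertian structure over $A''$ with right Watatani index equal to $1$ and $\topop_k=\mathrm{Id}_{(E'')^{\otimes k}}$. The $A$-valuedness hypothesis ensures that this inner product restricts to an $A$-valued left inner product on $E\subset E''$, and the Watatani-related quantities for $E$ computed using a right $A$-frame of $E$ (which simultaneously serves as a right $A''$-frame of $E''$) inherit the values $\beta_n=0$ and $\topop_k=\mathrm{Id}_{E^{\otimes k}}$. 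Left projectivity of $E$ over $A$ follows from the freshly constructed left inner product together with the left projectivity of $E''$ over $A''$.

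For the forward direction, assuming $E$ has the full bi-Hilbertian structure with finite Watatani index, W-regularity, and invertible $\topop_k$, Lemma \ref{computeinnerprod} produces a well-defined positive $A$-bilinear expectation $\Phi_\infty:O_E\to A$ with $\Phi_\infty(S_\mu S_\nu^*)=\delta_{|\mu|,|\nu|}\,{}_A(\mu|\nu)\,c_n$ for $|\mu|=|\nu|=n$, using Remark \ref{centralityandasstwo} and invertibility of $\topop_n$ to write $\tilde{\nu}=c_n\nu$ with $c_n$ central. The key step is to prove that $\Phi_\infty$ agrees with $\tilde{\Phi}_\infty$ on $O_E$, which immediately yields $\tilde{\Phi}_\infty(O_E)\subseteq A$. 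For this, I would show that $\tau\circ\Phi_\infty$ is an $\alpha$-KMS state on $O_E$ for the gauge action restricting to $\tau$ on $A$; Theorem \ref{prop:tau-phi=LN} then forces $\tau\circ\Phi_\infty=\phi_{LN,\tau}$. Combined with the defining characterization $\phi_{LN,\tau}(a^*xb)=\tau(a^*\tilde{\Phi}_\infty(x)b)$ for $a,b\in A$, $x\in O_E$, together with faithfulness of $\tau$ on $A''$, this forces $\Phi_\infty=\tilde{\Phi}_\infty$. Faithfulness of $\tilde{\Phi}_\infty=\Phi_\infty$ on $O_E$ follows from positivity of the left inner product on $E^{\otimes n}$ combined with faithfulness of $\tau$ on $A$.

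The principal obstacle will be establishing the KMS property of $\tau\circ\Phi_\infty$, or equivalently the identity
$$
\tau\big({}_A(\mu|\nu)\,c_n\big)=e^{-\alpha n}\,\tau\big((\nu|\mu)_A\big)\qquad\text{for all }\mu,\nu\in E^{\otimes n},
$$
i.e.\ quasi-invariance of $\tau$ in the sense of Definition \ref{ass:three} deduced from the Laca-Neshveyev condition under the present regularity assumptions. I expect the argument to hinge on the identity $c_n=\Phi_n(P_n)^{-1}=e^{-\beta_n}$ from Remark \ref{centralityandasstwo} (using $P_n=\mathrm{Id}$ via invertibility of $\topop_n$), combined with iterated Laca-Neshveyev $\Tr_\tau^{E^{\otimes n}}(\,\cdot\,)=e^{\alpha n}\tau(\,\cdot\,)$ and an analysis of how the central Watatani index mediates between the left and right $A$-valued inner products under $\tau$.
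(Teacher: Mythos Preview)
Your reverse direction matches the paper's proof exactly: both invoke Lemma~\ref{lem:weak-biHilb} to obtain the bi-Hilbertian structure on $E''$ with Watatani index $1$ and $\topop_k=\mathrm{Id}$, then restrict to $E$ using the hypothesis $\tilde{\Phi}_\infty(O_E)\subseteq A$.

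For the forward direction, the paper is much terser: it cites \cite{RRS} directly for the fact that $\Phi_\infty(S_\mu S_\nu^*)={}_A(\mu|\topop_{|\nu|}(\nu))$ is a faithful conditional expectation (so you need not re-derive faithfulness from positivity of the left inner product and faithfulness of $\tau$), and then asserts that $\Phi_\infty=\tilde{\Phi}_\infty$ ``is a computation.'' You have correctly unpacked what that computation must establish: since $\tau\circ\tilde{\Phi}_\infty=\phi_{LN,\tau}$ by construction of the GNS space, the equality $\Phi_\infty=\tilde{\Phi}_\infty$ is equivalent, via $A$-bilinearity and faithfulness of $\tau$ on $A''$, to $\tau\circ\Phi_\infty=\phi_{LN,\tau}$, which is precisely the quasi-invariance identity $\tau({}_A(\mu|\topop_n\nu))=\e^{-\alpha n}\tau((\nu|\mu)_A)$. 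So both arguments hinge on the same step. Your detour through Theorem~\ref{prop:tau-phi=LN} is unnecessary, however: once quasi-invariance holds, Corollary~\ref{koroalala} (or even the definition of $\phi_{LN,\tau}$) already gives $\tau\circ\Phi_\infty=\phi_{LN,\tau}$ directly, without appealing to uniqueness of KMS states. The paper does not spell out the quasi-invariance verification either, treating it as routine; your identification of it as the ``principal obstacle'' is accurate, and your sketch via $c_n=\e^{-\beta_n}$ and iterated Laca--Neshveyev is the right starting point.
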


We remark that if $E$ is W-regular and $\topop_k$ is invertible for all $k$, then $E$ is strictly W-regular by \cite[Lemma 3.8]{GMR}.

\begin{proof}
First suppose that $\tilde{\Phi}_\infty$ is faithful and $\tilde{\Phi}_\infty(O_E)\subseteq A$. 
Lemma \ref{lem:weak-biHilb} shows that $E''$ has a 
left inner product making $E$ (not just $E''$) bi-Hilbertian
with right Watatani index $1_{A}$ and (invertible) 
$\topop_k={\rm Id}_{(E)^{\ox k}}$. Since $E$, both as a left and a right module, 
is finitely generated and admits a Hilbert $C^*$-module structure it is also projective.

Conversely, if $E$ is an fgp bi-Hilbertian bimodule with finite right 
Watatani index and is W-regular with invertible $\topop_k$, 
then \cite{RRS} proves that 
the map $\Phi_\infty(S_\mu S_\nu^*)
={}_A(\mu|\topop_{|\nu|}(\nu))$ is a (faithful)
conditional expectation. 
That it agrees with $\tilde{\Phi}_\infty$ is a computation.
\end{proof}

\begin{rmk} 
The issue with $\topop_k$ being non-invertible is as follows.
Since $\topop_k=\topop_1\ox\topop_1\ox\cdots\ox\topop_1$,
the non-invertibility occurs with $\topop_1$. Supposing
$O_E$ to be strictly W-regular, we can define the right module
$\Xi_A$ as the completion of $O_E$ for the norm coming from $\Phi_\infty$.
Then $\tilde{\Phi}_\infty(S_eS_f^*)={}_A(e|\topop_1(f))$, and so if
$\topop_1$ is not invertible, we do not get a left inner product in this way.
See \cite[Example 3.10]{RRS} for an example where we have strict W-regularity with $\topop_1$ not being invertible.

Heuristically, one should view the passage to $\Xi_A$ as erasing the information about the
left inner product on $E$, corresponding to the kernel of $\topop_1$. On the other hand, if $\topop_k$ is invertible for all $k$ then we can replace
our left inner products ${}_A(\cdot|\cdot)^{E^{\ox k}}$ by
${}_A(\cdot|\topop_k(\cdot))^{E^{\ox k}}$ and obtain an equivalent
inner product structure with right Watatani index 1.
\end{rmk}

We can now relate our constructions above back to KMS-states. 

\begin{prop}
Let $A$ be a unital $C^*$-algebra, $E$ a finitely generated projective right $A$-Hilbert $C^*$-module with a unital adjointable left $A$-action, $\alpha\geq 0$ and $\tau$ a positive trace on $A$. We assume that this data satisfies the following conditions:
\begin{itemize}
\item $\tau$ satisfies the Laca-Neshveyev condition. 
\item The $A''$-bimodule $E''$ is finitely generated and projective from the left.
\item The implication \eqref{lefass} (see page \pageref{lefass}) holds.
\end{itemize}
We define a semifinite spectral triple $(O_E, L^2(O_E,\phi_{LN,\tau}),\D_\psi, \cN, \Tau)$ using that $L^2(O_E,\phi_{LN,\tau})=L^2(O_{E''},\phi_{LN,\tau})=L^2(\Xi_{A''},\tau)$ and pulling back the semi-finite spectral triple defined from the fgp bi-Hilbertian $A''$-bimodule $E''$ as in Lemma \ref{ximodsemi} along the inclusion $O_E\to O_{E''}$. This semi-finite spectral triple is $\mathrm{Li}_1$-summable, $\alpha$-analytic, has positive essential $\Tau$-spectrum with $\beta_{\D_\psi}=\alpha$ and its associated KMS-state for the gauge action (as in Corollary \ref{cor:phi-omega} on page \pageref{cor:phi-omega}) coincides with $\phi_{LN,\tau}$.
\end{prop}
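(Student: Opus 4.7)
\medskip

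My plan is to reduce everything to the strictly W-regular setting by passing from $(E,A,\tau)$ to $(E'',A'',\tau)$, apply Lemma~\ref{ximodsemi}, and then pull back along the inclusion $O_E\hookrightarrow O_{E''}$. First I would verify that the hypotheses put $E''$ in the scope of Lemma~\ref{lem:weak-biHilb}(5), so that $E''$ is a strictly W-regular fgp bi-Hilbertian $A''$-bimodule. The trace $\tau$ extends normally to $A''$ (acting on $L^2(A,\tau)$ inside $L^2(O_E,\phi_{LN,\tau})$), and I would check that this extension still satisfies the Laca-Neshveyev condition for $E''$: picking a right frame $(e_j)_{j=1}^N$ of $E$ (which remains a right frame of $E''$ by Lemma~\ref{lem:weak-biHilb}(1)), the map $F_{E'',\alpha}$ computed with this frame agrees on $A$ with $F_{E,\alpha}$, and normality propagates the fixed-point identity to all of $A''$.

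Next I would apply Lemma~\ref{ximodsemi} to $(E'',A'',\tau)$, giving the $\mathrm{Li}_1$-summable semifinite spectral triple $(\O_{E''},L^2(\Xi_{A''},\tau),\D_\psi,\cN_\tau(\Xi_{A''}),\Tr_\tau)$. The two Hilbert space identifications needed are then easy: by a direct KMS computation using the gauge action, one finds
\[
\tilde{\Phi}_\infty(S_\mu S_\nu^*)=\delta_{|\mu|,|\nu|}\,\mathrm{e}^{-\alpha|\mu|}(\nu|\mu)_A,
\]
hence $\phi_{LN,\tau}=\tau\circ \tilde{\Phi}_\infty$ on $O_E$, which identifies the GNS space $L^2(O_E,\phi_{LN,\tau})$ with $L^2(\Xi_{A''},\tau)$; the same formula interpreted on $O_{E''}$ gives the middle identification. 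Pulling back along $O_E\hookrightarrow O_{E''}$ preserves the Hilbert space, the operator $\D_\psi$, the ambient von Neumann algebra and trace, so $\mathrm{Li}_1$-summability is automatic.

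For the remaining structural properties I would invoke results already in the paper. Positive $\Tau$-essential spectrum with $\beta_{\D_\psi}=\alpha$ follows from Proposition~\ref{specasuln} applied to $(E'',A'',\tau)$, since $\Tr_\tau(P_\D\mathrm{e}^{-t|\D_\psi|})=(1-\mathrm{e}^{\alpha-t})^{-1}$ is computed purely from the summability data, which is unchanged by restricting the algebra. For $\alpha$-analyticity I would run the argument of Lemma~\ref{betacrit} directly with the generating set $S=\{S_e:e\in E\}\subseteq O_E$: the identity $\mathrm{e}^{\beta\D_\psi}P_{\D_\psi}S_eP_{\D_\psi}\mathrm{e}^{-\beta\D_\psi}=\mathrm{e}^\beta P_{\D_\psi}S_eP_{\D_\psi}$ still holds, and $P_{\D_\psi}SP_{\D_\psi}+\Ko^+_\cN$ generates the Toeplitz algebra of $O_E$ because $E$ generates $O_E$.

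Finally, to identify the associated KMS-state with $\phi_{LN,\tau}$, I would apply Corollary~\ref{cor:phi-omega} together with Theorem~\ref{prop:tau-phi=LN} at the $(E'',A'',\tau)$ level: the latter identifies $\phi_\omega$ on $O_{E''}$ with $\phi_{LN,\tau}$ (the extension of our KMS-state), and its restriction to $O_E$ recovers the original $\phi_{LN,\tau}$. Alternatively one can compute directly: by Proposition~\ref{quasicompu}, $\Tr_\tau^{(E'')^{\otimes n}}(a)=\mathrm{e}^{\alpha n}\tau(a)$ for $a\in A''$, so the formula of Theorem~\ref{kmscp} telescopes to $\phi_\omega(S_\mu S_\nu^*)=\delta_{|\mu|,|\nu|}\mathrm{e}^{-\alpha|\mu|}\tau((\nu|\mu)_A)=\phi_{LN,\tau}(S_\mu S_\nu^*)$ independently of $\omega$. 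The main technical point I expect to require some care is the transfer of the Laca-Neshveyev condition from $A$ to $A''$, since it rests on the normal extendability of $\tau$ and the simultaneous use of the right frame on both levels.
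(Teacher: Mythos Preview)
The paper does not provide a proof of this final proposition; the statement itself sketches the construction (pass to $E''$, apply Lemma~\ref{ximodsemi}, pull back along $O_E\hookrightarrow O_{E''}$) and the paper ends. Your proposal is exactly the elaboration the authors have in mind: Lemma~\ref{lem:weak-biHilb}(5) for strict W-regularity of $E''$, Lemma~\ref{ximodsemi} for the semifinite spectral triple, Proposition~\ref{specasuln} for positive essential spectrum and $\beta_{\D_\psi}=\alpha$, the argument of Lemma~\ref{betacrit} for $\alpha$-analyticity, and Theorem~\ref{prop:tau-phi=LN} (or the direct computation via Proposition~\ref{quasicompu}) for identifying the KMS-state with $\phi_{LN,\tau}$.

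One small correction: the operator-valued formula $\tilde{\Phi}_\infty(S_\mu S_\nu^*)=\delta_{|\mu|,|\nu|}\,\e^{-\alpha|\mu|}(\nu|\mu)_A$ is not what $\tilde{\Phi}_\infty$ gives; by definition $\tilde{\Phi}_\infty(S_\mu S_\nu^*)=P_0S_\mu S_\nu^*P_0={}_{A''}(\mu|\nu)$ in the left inner product of Lemma~\ref{lem:weak-biHilb}(4), which is generally different as an element of $A''$. What you actually need is only the scalar identity $\phi_{LN,\tau}=\tau\circ\tilde{\Phi}_\infty$, and this is immediate from the GNS construction since $1_A$ is the cyclic vector: $\tau(\tilde{\Phi}_\infty(T))=\langle 1_A,P_0TP_0\cdot 1_A\rangle=\langle 1_A,T\cdot 1_A\rangle=\phi_{LN,\tau}(T)$. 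The Hilbert space identification $L^2(O_E,\phi_{LN,\tau})=L^2(\Xi_{A''},\tau)$ is in fact stated explicitly in the paper just before Proposition~\ref{phitildeinfocm}, so you can simply cite it. With this adjustment your argument goes through.
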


\end{document}